\documentclass[a4paper, 11pt, reqno]{amsart}

\usepackage{amssymb}
\usepackage{amsfonts}
\usepackage{amsmath}
\usepackage{bbm}
\usepackage[margin=2.2cm]{geometry}
\usepackage{hyperref}
\usepackage{enumitem,verbatim}
\usepackage{bm}
\usepackage{hyperref}
\usepackage{tikz}
\usepgflibrary{decorations.markings}
\usetikzlibrary{calc,arrows,decorations.markings}

\usetikzlibrary{calc,math}

\tikzmath{\a1=30;\e1=0;\e2=-5;\e3=-8;\e4=-1;\e5=-1;\e6=0;\e7=-2;\e8=-5;
          \a2=10;\f1=1;\f2=0;\f3=-1;\f4=-1;\f5=1;\f6=2;\f7=0;\f8=0;
          \a3=-30;\g1=0;\g2=2;\g3=2;\g4=1;\g5=-1;\g6=-1;\g7=0;\g8=1;
          \a4=-10;
          \a5=25;
          \a6=20;
          \h1=0;\h2=0;\h3=1;\h4=0;\h5=-1;\h6=0;\h7=-1;\h8=-1;
  \x1 = -1;  \x2 = 2;
  \r1 = 2; \r2 = 1;
  \R1 = 1; \R2 = 1;
  \q1 = \R2 / (\R1 + \R2);
  \q2 = \R1 / (\R1 + \R2);
  \k1=0;\k2=20;\k3=40;\k4=80;\k5=100;\k6=140;\k7=200;\k8=220;\k9=250;\k10=300;\l2=320;\l3=350;
}
          
\newtheorem{thm}{Theorem}[section]

\newtheorem{prop}[thm]{Proposition}
\newtheorem{lemma}[thm]{Lemma}

\newtheorem{claim}[thm]{Claim}
\newtheorem{conj}[thm]{Conjecture}

\newtheorem{defn}[thm]{Definition}
\newtheorem{question}[thm]{Question}

\theoremstyle{definition}
\newtheorem{definition}[thm]{Definition}

\renewcommand{\epsilon}{\varepsilon}
\numberwithin{thm}{section}
\numberwithin{equation}{section}

\theoremstyle{remark}

\newcommand{\ep}{\varepsilon}
\newcommand{\eps}{\varepsilon}

\newcommand{\al}{\alpha}
\newcommand{\de}{\delta}

\newcommand{\De}{\Delta}

\newcommand{\cS}{\mathcal{S}}

\newcommand{\Int}{\mathrm{Int}}
\newcommand{\Ext}{\mathrm{Ext}}
\newcommand{\Cen}{\mathrm{Cen}}

\renewcommand{\l}{\ensuremath{\ell}}

\newcommand{\Cor}{\mathrm{Cor}}

\newenvironment{claimproof}[1]{\par\noindent\underline{Proof:}\space#1}{\leavevmode\unskip\penalty9999 \hbox{}\nobreak\hfill\quad\hbox{$\blacksquare$}}

\newcommand{\COMMENT}[1]{}
\renewcommand{\COMMENT}{\footnote} 

\title{Proof of Koml\'os's conjecture on Hamiltonian subsets}
\author{Jaehoon Kim, Hong Liu, Maryam Sharifzadeh and Katherine Staden}

\thanks{J.K.\ was supported by ERC grant~306349; H.L.\ was supported by EPSRC grant~EP/K012045/1, ERC
	grant~306493 and the Leverhulme Trust
	Early Career Fellowship~ECF-2016-523; M.Sh.\ and K.S.\ were supported by ERC grant~306493.
}
\date{\today}

\begin{document}

\begin{abstract}
Koml\'os conjectured in 1981 that among all graphs with minimum degree at least $d$, the complete graph $K_{d+1}$ minimises the number of Hamiltonian subsets, where a subset of vertices is Hamiltonian if it contains a spanning cycle.
We prove this conjecture when $d$ is sufficiently large. 
In fact we prove a stronger result: for large $d$, any graph $G$ with average degree at least $d$ contains almost twice as many Hamiltonian subsets as $K_{d+1}$, unless $G$ is isomorphic to $K_{d+1}$ or a certain other graph which we specify.
\end{abstract}
\maketitle

\section{Introduction}
A cycle in a graph $G$ is \emph{Hamiltonian} if it spans the whole vertex set of $G$. The Hamiltonian cycle problem, i.e.~deciding whether a given graph contains a Hamiltonian cycle, is one of Karp's original NP-complete problems~\cite{karp}.
It has been studied in various directions by numerous researchers over the last 60 years. 
As a computationally hard problem, the \emph{extremal problem} of finding best-possible sufficient conditions that guarantee the existence of a Hamiltonian cycle is of great interest.
The classical theorem of Dirac~\cite{dirac}, giving a minimum degree condition, was one of the first such results.
There have been some exciting recent developments in this area, for example in finding optimal packings of Hamiltonian cycles~\cite{krivsam,kko,apps}, decompositions of graphs into edge-disjoint Hamiltonian cycles~\cite{nash-w,kelly}, 
and finding Hamiltonian cycles in hypergraphs (see the survey paper~\cite{Ham-survey} for more references). 

The \emph{enumerative problem} of counting the number of cycles in a given class of graphs is one of the oldest problems in graph theory.
In 1897, Ahrens~\cite{ahrens} proved that, for any graph $G$ on $n$ vertices, if we denote by $\nu(G)$ the number of cycles in $G$, then
$$
e(G)-n+1 \leq \nu(G) \leq 2^{e(G)-n+1}-1.
$$
Volkmann~\cite{volkmann} proved that, when $G$ has minimum degree $\delta$, then $\nu(G) \geq \delta(\delta+1)$.
Various authors have investigated the problem of maximising or minimising the number of cycles in other particular classes of graphs, e.g.~\cite{at,bce}.

Another direction of research is to count the number of Hamiltonian cycles in a graph $G$.
Suppose that $\delta(G) \geq |G|/2$ (and hence $G$ contains at least one Hamiltonian cycle, by Dirac's theorem).
Then there is a formula due to Cuckler and Kahn~\cite{ck1} which asymptotically determines the logarithm of the number of Hamiltonian cycles.

In this paper we consider a question which is both extremal and enumerative, namely minimising in a graph $G$ the number $c(G)$ of \emph{distinguishable} cycles, where two subgraphs $G_1,G_2$ of $G$ are distinguishable if $V(G_1) \neq V(G_2)$.
An equivalent formulation, and the one which we shall mainly use, is that $c(G)$ is the number of \emph{Hamiltonian subsets} of $G$.
Here, $A \subseteq V(G)$ is 
\emph{Hamiltonian} if $G[A]$ contains a Hamiltonian cycle, i.e.~$G$ contains a cycle whose vertex set is $A$.
Thus, in contrast to the Hamiltonian cycle problem where one is interested in the collection of cycles which span a given set $V(G)$, we investigate the collection of sets which are spanned by a cycle.

As Dirac's theorem relates the existence of a Hamiltonian cycle to minimum degree, a very natural question is to ask how $c(G)$ relates to minimum degree; that is, minimising $c(G)$ given $\delta(G) \geq d$ for some integer $d$.
Here, the number of vertices $n$ of $G$ is not fixed.
Now, every subset of size at least three is a candidate for a Hamiltonian subset, and there are almost $2^n$ of these.
Thus, as $n$ increases, the number of candidates increases, and one expects that $c(G)$ will also increase.
Therefore it is natural to conjecture that, given the stipulation $\delta(G) \geq d$, to minimise $c(G)$ one should minimise $n$.
Clearly the unique graph $G$ of minimum order with $\delta(G) \geq d$ is the complete graph $K_{d+1}$.
This is the substance of a 1981 conjecture of Koml\'os (see~\cite{tuza2,tuza1,tuza3}):

\begin{conj}[see~\cite{tuza2}]\label{komlos}
For all integers $d > 0$ and all graphs $G$ with $\delta(G) \geq d$, we have
$$
c(G) \geq c(K_{d+1}).
$$
\end{conj}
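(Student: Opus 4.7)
My plan is to prove the conjecture by induction on $n=|V(G)|$, aiming for the slightly stronger conclusion that $c(G)\geq c(K_{d+1})$ with equality only when $G=K_{d+1}$. The base case $n=d+1$ forces $G=K_{d+1}$ and gives equality. For $n>d+1$, the approach combines a structural dichotomy on local density with a cycle-shortcutting argument to produce many distinguishable cycles through a well-chosen vertex.

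\textbf{Step 1: Reduction to critical graphs.} If some $v\in V(G)$ has $\delta(G-v)\geq d$, then every Hamiltonian subset of $G-v$ is one of $G$, and by induction $c(G)\geq c(G-v)\geq c(K_{d+1})$. Moreover, since $\deg_G(v)\geq d$, $v$ lies on at least one cycle whose vertex set is not contained in $V(G-v)$, giving a strict improvement. So one may assume $G$ is \emph{critical}: every vertex is adjacent to some vertex of degree exactly $d$.

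\textbf{Step 2: Neighbourhood dichotomy.} Fix a vertex $v$ with $\deg(v)=d$ and set $N:=N(v)$. Split on the density of $G[N]$. In the \emph{dense} case, if $G[N]$ has at least $(1-\eps)\binom{d}{2}$ edges, a Chv\'atal--Erd\H{o}s-type argument inside $G[N]$ shows that almost all subsets $S\subseteq N$ of size at least $3$ admit a Hamiltonian path; appending $v$ yields a Hamiltonian cycle on $S\cup\{v\}$, producing $(1-o(1))\cdot 2^d$ distinguishable Hamiltonian subsets through $v$, which already exceeds $c(K_{d+1})$. In the \emph{sparse} case, $N$ is nearly independent and $G$ has a bipartite-like structure near $v$; here one exploits the non-neighbours of $v$, using the minimum-degree hypothesis to find many vertex-disjoint short paths between $N$ and $V(G)\setminus(N\cup\{v\})$, from which Hamiltonian subsets of many different sizes can be built. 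Tuza's $2^{d/2}$ bound, bootstrapped using the expander-like structure of $G$ in this regime, pushes the count up to $2^d$.

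\textbf{Step 3: Chord-shortcut engine.} The common counting tool in both cases is: from a cycle $C$ of length $\l$ with $k$ chords in general position, one produces $\Omega(2^k)$ distinguishable sub-cycles by choosing, for each chord independently, whether to shortcut one of the two arcs it defines and which one. The minimum-degree condition forces any longest cycle in $G$ to have many chords, so this engine alone nearly suffices whenever $n$ is not too small.

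\textbf{Main obstacle.} The hardest regime will be when $n$ is only slightly larger than $d+1$, so that $G$ differs from $K_{d+1}$ by only a few vertices and a small edit. In this tight case, the dichotomy above yields only about $c(K_{d+1})$ subsets and a delicate subset-level comparison is required: the \emph{gained} subsets (those genuinely involving the extra vertices) must be matched against the \emph{lost} ones (subsets whose spanning cycle is destroyed by missing edges). The exceptional extremal graph $K_{d+1}\ast K_d$ mentioned in the abstract signals that this accounting is tight and will likely require an explicit structural theorem characterising near-extremal graphs in order to resolve.
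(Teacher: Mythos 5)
Your proposal has several genuine gaps, the most serious being quantitative. First, in the dense case of Step~2 you produce $(1-o(1))2^{d}$ Hamiltonian subsets through $v$ and assert this "already exceeds $c(K_{d+1})$"; it does not, since $c(K_{d+1}) = 2^{d+1}-\binom{d+1}{2}-d-2 \approx 2\cdot 2^{d}$. Restricting attention to subsets of $N(v)\cup\{v\}$ can never beat $K_{d+1}$, because that is a $(d+1)$-vertex candidate pool of size $2^{d+1}$ and $K_{d+1}$ already realises essentially all of it; any winning argument must harvest subsets outside a single closed neighbourhood. (The paper's analogue, Lemma~4.1, works precisely because a minimal counterexample has $n\geq d+2$, so the pool is $2^{n}\geq 2^{d+2}$ and a P\'osa-plus-Chernoff argument shows almost all of it is Hamiltonian.) Second, the "chord-shortcut engine" does not give $\Omega(2^{k})$ distinguishable cycles for $k$ arbitrary chords: independent shortcutting only works when the chords cut off pairwise disjoint arcs, and the minimum-degree condition only forces about $d/2$ such parallel chords on a longest cycle. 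This is exactly Tuza's argument, it stalls at $2^{d/2}$, and Tuza explicitly posed beating $\sqrt2$ in the base as an open problem--so this engine cannot "nearly suffice."

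Third, you have the hard regime inverted. The case $n$ close to $d+1$ is the easy one (almost every subset is Hamiltonian), and the exceptional graph $K_{d+1}\ast K_{d}$ is handled by a short block-decomposition case analysis, not a delicate subset-level accounting. The genuinely hard regime is when $n/d$ is unbounded, which is precisely where your sparse case says only that Tuza's bound can be "bootstrapped using the expander-like structure" -- but a graph of minimum degree $d$ need not be an expander at all, and even after passing to a Koml\'os--Szemer\'edi expander subgraph the average degree drops and the subgraph may be tiny, so one cannot simply apply induction or Tuza there. Closing this case is the bulk of the paper: one needs almost-regular expander subgraphs, the web/unit machinery of Section~5 to build $200d$ structures with disjoint interiors, and the short-connecting-path lemma to realise $\binom{200d}{100d}/\binom{102d}{2d}\geq 2^{50d}$ distinguishable cycles, together with a two-expander gluing argument (via $2$-connectivity and the quantities $p_{xy}$) to recover the lost degree. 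None of these ideas is present in, or easily reachable from, your outline.
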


Since every subset of size at least three is Hamiltonian in a complete graph, we have
\begin{equation}\label{complete}
c(K_{d+1}) = 2^{d+1} - \binom{d+1}{2} - d - 2.
\end{equation}

In this paper, we prove Koml\'os's conjecture for all sufficiently large $d$.
In fact, we prove a stronger result which replaces minimum degree with average degree (where the average degree $d(G)$ of a graph $G$ satisfies $d(G)|G|=2e(G)$), and shows that the extremal graph is stable in a rather precise sense.
The current best bound on $c(G)$ is due to Tuza~\cite{tuza2}.
\begin{thm}[\cite{tuza2}]\label{Tuza}
For all $d \geq 3$ and every graph $G$ with $d(G) \geq d$, we have that $c(G) \geq 2^{d/2}$.
\end{thm}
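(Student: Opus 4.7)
\medskip

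\noindent\emph{Plan.} I would proceed in two stages. First, reduce to a minimum-degree condition by iteratively deleting vertices of degree less than $d/2$: each such deletion removes fewer than $d/2$ edges and exactly one vertex, so the quantity $e(G) - (d/2)|V(G)|$ is non-decreasing; since it starts non-negative (because $2e(G) \geq d|V(G)|$), it stays non-negative throughout, and the process must halt at a non-empty induced subgraph $H \subseteq G$ with $\delta(H) \geq d/2$. Since every Hamiltonian subset of $H$ is a Hamiltonian subset of $G$, it suffices to prove $c(H) \geq 2^{d/2}$.

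Second, set $\delta := \delta(H) \geq d/2$ and take a longest path $P = v_0 v_1 \cdots v_\ell$ in $H$. Because $P$ cannot be extended at either end, both $v_0$ and $v_\ell$ have all their neighbors on $V(P)$, so $v_0$ has at least $\delta$ neighbors among $\{v_1,\ldots,v_\ell\}$, say at positions $i_1 < \cdots < i_s$ with $s \geq \delta$. For each $i_j$, the back-edge $v_0 v_{i_j}$ closes the cycle $v_0 v_1 \cdots v_{i_j} v_0$, certifying the prefix set $\{v_0,\ldots,v_{i_j}\}$ as Hamiltonian. These prefix sets are pairwise distinct, so one immediately obtains at least $\delta$ distinguishable Hamiltonian subsets of $H$ --- but this count is only linear in $d$.

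To amplify this linear bound to the desired exponential one, I would invoke a P\'osa-style rotation: each back-edge $v_0 v_{i_j}$ with $i_j \geq 2$ produces a new longest path $v_{i_j-1}v_{i_j-2}\cdots v_0 v_{i_j} v_{i_j+1}\cdots v_\ell$ on the same vertex set but with a fresh left endpoint $v_{i_j-1}$, to which the prefix-cycle argument applies afresh. Iterating the rotations grows a rotation tree of longest-path representations, each contributing its own family of prefix Hamiltonian subsets of $H$.

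The principal obstacle is showing that these prefix subsets, harvested from many different endpoints of the rotation tree, are genuinely distinct as subsets of $V(H)$: P\'osa rotation preserves the vertex set of the full path, so without care one may produce the same prefix set from two different rotations. I would expect to close this gap either by a direct counting argument on the rotation tree, certifying at least $2^{d/2}$ distinct endpoint-prefix pairs, or by a case-split in which, when rotations collide too heavily, an auxiliary structural argument takes over --- for example, extracting a small almost-clique on $\Theta(d)$ vertices, or applying a Mader-type dense-minor theorem to a 2-connected block of $H$ --- to supply the remaining Hamiltonian subsets.
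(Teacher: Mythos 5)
This theorem is not proved in the paper at all: it is Tuza's result, quoted from~\cite{tuza2}, so there is no internal proof to measure your argument against. Judged on its own terms, your first stage is sound and standard (it is exactly Proposition~\ref{lem: min deg subgraph}: the deletion process preserves $e(H)\geq \tfrac{d}{2}|H|$, so the surviving subgraph $H$ has $d(H)\geq d$ and $\delta(H)\geq d/2$), and your second stage correctly yields at least $\delta(H)\geq d/2$ distinguishable cycles from the back-edges at the endpoint of a longest path. But that count is linear in $d$, and the entire content of the theorem is the jump from linearly many to $2^{d/2}$ distinguishable cycles.

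That jump is precisely where your proposal stops being a proof. You name the obstacle yourself --- P\'osa rotations fix the vertex set of the whole path, so prefix sets harvested from different rotated endpoints can coincide, and nothing in the rotation mechanism certifies $2^{d/2}$ pairwise distinct \emph{vertex sets} --- and then you offer only speculative ways out (``a direct counting argument on the rotation tree'', ``an auxiliary structural argument takes over''). Neither is carried out, and neither is routine: even producing $\Theta(d^2)$ distinguishable cycles (e.g.\ from pairs of back-edges) is easy, while producing exponentially many requires a genuinely new idea, which is the substance of Tuza's argument. As written, the proposal establishes a bound of order $d$, not $2^{d/2}$, so the main step of the theorem is missing.
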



\subsection{New results}

The purpose of this paper is to prove Koml\'os's conjecture for all large $d$.
As mentioned above, we prove a stronger result.
To state it, we need a definition.
Given any positive integer $d$, let $K_{d+1} \ast K_d$ denote the graph obtained by taking vertex-disjoint copies of $K_{d+1}$ and $K_d$, and identifying them at a single vertex (see Figure~\ref{figext}(i)).
Note that $K_{d+1} \ast K_{d}$ has average degree exactly $d$.
Moreover,
$$
c(K_{d+1}\ast K_d) = c(K_{d+1}) + c(K_d) = \frac{3}{2} \cdot 2^{d+1} - d^2 - 2d - 3.
$$

Our main result is the following.

\begin{thm}\label{main}
For all $0< \alpha \leq 1$ there exists $d_0>0$ such that for all integers $d \geq d_0$, the following holds. Let $G$ be a graph with average degree at least $d$ which is not isomorphic to $K_{d+1}$ or $K_{d+1}\ast K_d$.
Then $c(G) \geq (2-\alpha) 2^{d+1}$.
\end{thm}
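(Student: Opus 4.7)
\textbf{Step 1 (Trim to high minimum degree).} Given $G$ with $d(G)\ge d$, I would iteratively delete any vertex of current degree less than $d/2$. Each such deletion removes fewer than $d/2$ edges while removing one vertex, so $2e/n\ge d$ is preserved and the procedure terminates at a non-empty induced subgraph $H\subseteq G$ with $\delta(H)\ge d/2$, $d(H)\ge d$, and in particular $|H|\ge d+1$. Since every Hamiltonian subset of $H$ is a Hamiltonian subset of $G$, it suffices to bound $c(H)$. Conceptually, I would think of $H$ either as being close to a single block $K_{d+1}$ (so that $|H|$ is close to $d+1$ and the subgraph is almost complete) or as having substantial expansion beyond any single clique.

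\textbf{Step 2 (Dichotomy based on clique structure).} I would split on whether $H$ contains a set of $d+1$ vertices spanning a subgraph close to $K_{d+1}$. In the \emph{expanding} case, where no such near-clique exists, I expect to produce $\ge (2-\alpha)2^{d+1}$ distinct Hamiltonian subsets directly via a P\'osa-type rotation argument on a long cycle (whose length is at least $d+2$ using $\delta(H)\ge d/2$ and the absence of a dense block): each rotation/absorption step yields a new Hamiltonian cycle, and the required lemma is that enough of these span pairwise different vertex sets. In the \emph{near-clique} case, let $K$ be such a near-$K_{d+1}$. If $H$ is entirely $K$, then $G=K_{d+1}$ and is excluded; otherwise, I analyse $V(G)\setminus K$ together with its edges to $K$. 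The degree condition on $G$ forces enough structure outside $K$ that either (i) there is another near-clique of size $\approx d+1$ (giving near-disjoint blocks contributing $\approx 2c(K_{d+1})$, beating the threshold), or (ii) the outside and $K$ share exactly one vertex and induce $K_{d+1}\ast K_d$ (excluded), or (iii) the configuration is a strict perturbation of (i) or (ii), in which case I count additional Hamiltonian subsets formed by paths crossing between blocks.

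\textbf{Main obstacle.} The hardest step I anticipate is the stability analysis in the near-clique case: pinning down $K_{d+1}$ and $K_{d+1}\ast K_d$ as the \emph{only} near-extremal graphs. Many natural perturbations of $K_{d+1}\ast K_d$ --- adding a single edge between the two cliques, gluing them at an edge rather than a vertex, appending a pendant $K_{d+1}$, or allowing a small number of low-degree vertices on the boundary --- each individually add only $O(2^d)$ or fewer Hamiltonian subsets, so the accounting must be delicate and probably requires an iterative peeling argument that decomposes the graph into near-clique blocks and tracks Hamiltonian subsets both within and crossing successive blocks. A secondary technical challenge is the rotation count in the expanding case: producing Hamiltonian \emph{subsets} (i.e., distinct vertex sets) rather than many cycles sharing a vertex set requires careful control, presumably by anchoring the rotations to a well-chosen structure and exploiting $\delta(H)\ge d/2$ to guarantee enough independent substitutions.
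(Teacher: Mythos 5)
Your proposal is a plan rather than a proof: both of its key steps are left as ``the required lemma'' or ``delicate accounting'', and in each case the tool you name does not plausibly deliver what is needed. The most serious gap is the \emph{expanding case}. You propose to extract $(2-\alpha)2^{d+1}$ distinguishable cycles from a long cycle by P\'osa rotation. Rotations of a Hamiltonian path or cycle keep the vertex set fixed, so they produce many cycles on the \emph{same} set; converting them into exponentially many cycles with pairwise \emph{different} vertex sets, with base $2$ in the exponent, is precisely the open problem here (Tuza's $2^{d/2}$ bound is the state of the art, and he explicitly asked whether any base above $\sqrt2$ is attainable). You acknowledge this as a ``secondary technical challenge'', but it is the main one, and no mechanism is offered. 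The paper replaces this step by two entirely different machines: when $n=O(d)$ it applies the Regularity and Blow-up lemmas to a cycle/path/``sun'' found in the reduced graph and counts choices of random slices of the clusters; when $n/d$ is unbounded it extracts Koml\'os--Szemer\'edi expander subgraphs and builds $200d$ ``webs'' with disjoint interiors whose cores can be threaded into cycles realising $\binom{200d}{100d}/\binom{102d}{2d}\ge 2^{50d}$ distinct vertex sets. Relatedly, your dichotomy (``contains a near-$K_{d+1}$'' versus ``expanding'') is not well-founded: a $d$-regular triangle-free graph on $100d$ vertices has no near-clique and need not expand in any usable global sense, so neither branch of your case analysis applies to it. The paper's dichotomy is instead on whether $n/d$ is bounded, followed by a further expander-extraction step which may return a subgraph much smaller and sparser than $G$ --- which is why the final argument needs two disjoint expanders joined by two paths, together with a count of $x,y$-paths ($p_{xy}$) rather than just cycles.

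Two further points. First, your Step~1 reduction is incomplete as stated: after trimming, $H$ could in principle be $K_{d+1}$ or $K_{d+1}\ast K_d$, in which case bounding $c(H)$ cannot give $(2-\alpha)2^{d+1}$. This particular hole is patchable (if any vertex is deleted then $2e-dn$ strictly increases, so $d(H)>d$ and $H$ cannot be either excluded graph, both of which have average degree exactly $d$), but you do not make the observation, and in any case the paper needs the stronger consequence of taking a \emph{minimal counterexample} --- namely that every proper induced subgraph has average degree at most $d$ --- to run its block decomposition. Second, the near-clique/stability analysis you defer is exactly where the graphs that are not $2$-connected live (leaf blocks isomorphic to $K_d$, which have average degree only $d-1$ and too few Hamiltonian subsets on their own); the paper resolves this with the block graph, showing each leaf block has order at least $d$ and average degree at least $d-1$, and isolating the single configuration ($G^*\cong K_{d+1}$ glued to a $K_d$ leaf block) that yields $K_{d+1}\ast K_d$. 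Your ``iterative peeling'' sketch is pointing in that direction, but without the minimal-counterexample framework and the quantitative path-counting lemmas ($p_{xy}(G)\ge 2^{0.89d}$ inside each block) there is no way to close the case analysis.
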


This theorem is best possible in the sense that the constant $2$ cannot be improved.
Indeed, there are many graphs showing that $2$ is best possible.
Figure~\ref{figext}(iii) shows four graphs $G$ with average degree at least $d$ such that $c(G) = (2-o(1))2^{d+1}$; and any $(d+2)$-vertex graph $G$ with average degree at least $d$ also satisfies $c(G) = (2-o(1))2^{d+1}$.
Also the statement is not necessarily true when $d$ is not an integer.
The graph $G$ in Figure~\ref{figext}(ii) has average degree slightly less than $d+\frac{1}{2}$ but again satisfies 
$c(G) = (2-o(1))2^{d+1} < (2-o(1))2^{d(G)+1}$; and $K_{d+2}-e$ has average degree slightly less than $d+1$ but again satisfies $c(G) = (2-o(1))2^{d+1} < (2-o(1))2^{d(K_{d+2}-e)+1}$.

A further remark is that Theorem~\ref{main} is \emph{not true} for $d=2$.
Indeed, every graph with average degree at least two contains a cycle, while $c(C_n)=1$ for all $n$.
So there are infinitely many graphs which minimise $c(G)$.
(Note that this does not contradict Conjecture~\ref{komlos}.)
Although we are not aware of a similar occurence for any $d \geq 3$, it would be interesting to determine the minimum $d_0$ one can take in the statement of Theorem~\ref{main}.

\noindent
\begin{figure}
\scalebox{0.65}{
\begin{tikzpicture}

\draw (-7,2.5) -- (17.7,2.5);
\draw (3.3,2.5) -- (3.3,5.5);

\begin{scope}[xshift=-1.5cm,yshift=4cm]
\begin{scope}
\draw (\x1,\R1) arc (90:270:\R1) cos(\x1*\q1+\x2*\q2, 0);
\begin{scope}[yscale=-1]
\draw (\x1,\R1) arc (90:270:\R1) cos(\x1*\q1+\x2*\q2, 0);
\end{scope}
\end{scope}

\node[draw=none] at (-5.2,0) {\large$(i)$};
\node[draw=none] at (5.8,0) {\large$(ii)$};
\node[draw=none] at (-5.2,-4) {\large$(iii)$};

\node[draw=none] at (-1,0) {\large$K_{d+1}$};
\node[draw=none] at (1.5,0) {\large$K_{d}$};
\node[draw,shape=circle,fill=black,inner sep=2pt] at (0.5,0) {};

\begin{scope}[rotate=180,xshift=-0.9cm,scale=0.8]
\draw (\x1,\R1) arc (90:270:\R1) cos(\x1*\q1+\x2*\q2, 0);
\begin{scope}[yscale=-1]
\draw (\x1,\R1) arc (90:270:\R1) cos(\x1*\q1+\x2*\q2, 0);
\end{scope}
\end{scope}

\end{scope}

\begin{scope}[xshift=10cm,yshift=4cm]
\begin{scope}
\draw (\x1,\R1) arc (90:270:\R1) cos(\x1*\q1+\x2*\q2, 0);
\begin{scope}[yscale=-1]
\draw (\x1,\R1) arc (90:270:\R1) cos(\x1*\q1+\x2*\q2, 0);
\end{scope}
\end{scope}

\node[draw=none] at (-1,0) {\large$K_{d+1}$};
\node[draw=none] at (1.7,0) {\large$K_{d+1}$};
\node[draw,shape=circle,fill=black,inner sep=2pt] at (0.45,0) {};

\begin{scope}[rotate=180,xshift=-0.9cm,scale=1]
\draw (\x1,\R1) arc (90:270:\R1) cos(\x1*\q1+\x2*\q2, 0);
\begin{scope}[yscale=-1]
\draw (\x1,\R1) arc (90:270:\R1) cos(\x1*\q1+\x2*\q2, 0);
\end{scope}
\end{scope}

\end{scope}




\begin{scope}[xshift=-5cm]
\draw (0,0) circle [radius=1];
\draw (2.1,0) circle [radius=1];

\node[draw=none] at (0,0) {\large$K_{d+1}$};
\node[draw=none] at (2.1,0) {\large$K_{d+1}$};
\end{scope}

\begin{scope}[xshift=1cm]
\begin{scope}[rotate=-90]
\draw (\x1,\R1) arc (90:270:\R1) cos(\x1*\q1+\x2*\q2, 0);
\begin{scope}[yscale=-1]
\draw (\x1,\R1) arc (90:270:\R1) cos(\x1*\q1+\x2*\q2, 0);
\end{scope}
\end{scope}

\begin{scope}[xshift=0.35cm,yshift=-0.67cm]
\begin{scope}[rotate=-210,scale=0.8]
\draw (\x1,\R1) arc (90:270:\R1) cos(\x1*\q1+\x2*\q2, 0);
\begin{scope}[yscale=-1]
\draw (\x1,\R1) arc (90:270:\R1) cos(\x1*\q1+\x2*\q2, 0);
\end{scope}
\end{scope}
\end{scope}

\begin{scope}[xshift=-0.35cm,yshift=-0.67cm]
\begin{scope}[rotate=30,scale=0.8]
\draw (\x1,\R1) arc (90:270:\R1) cos(\x1*\q1+\x2*\q2, 0);
\begin{scope}[yscale=-1]
\draw (\x1,\R1) arc (90:270:\R1) cos(\x1*\q1+\x2*\q2, 0);
\end{scope}
\end{scope}
\end{scope}

\node[draw=none] at (0,1) {\large$K_{d+1}$};
\node[draw=none] at (-1,-1) {\large$K_{d}$};
\node[draw=none] at (1,-1) {\large$K_{d}$};
\node[draw,shape=circle,fill=black,inner sep=2pt] at (0,-0.45) {};
\end{scope}

\begin{scope}[xshift=6cm]

\begin{scope}[rotate=0]
\draw (\x1,\R1) arc (90:270:\R1) cos(\x1*\q1+\x2*\q2, 0);
\begin{scope}[yscale=-1]
\draw (\x1,\R1) arc (90:270:\R1) cos(\x1*\q1+\x2*\q2, 0);
\end{scope}
\end{scope}

\begin{scope}[xshift=0.5cm]
\draw (0,0) .. controls (0.5,1.1) and (1.5,1.1) .. (2,0);
\draw (0,0) .. controls (0.5,-1.1) and (1.5,-1.1) .. (2,0);
\end{scope}

\begin{scope}[xshift=2.9cm]
\begin{scope}[rotate=-180,scale=0.8]
\draw (\x1,\R1) arc (90:270:\R1) cos(\x1*\q1+\x2*\q2, 0);
\begin{scope}[yscale=-1]
\draw (\x1,\R1) arc (90:270:\R1) cos(\x1*\q1+\x2*\q2, 0);
\end{scope}
\end{scope}
\end{scope}

\node[draw=none] at (-1,0) {\large$K_{d+1}$};
\node[draw=none] at (1.5,0) {\large$K_{d}$};
\node[draw=none] at (3.5,0) {\large$K_{d}$};
\node[draw,shape=circle,fill=black,inner sep=2pt] at (0.5,0) {};
\node[draw,shape=circle,fill=black,inner sep=2pt] at (2.5,0) {};

\end{scope}

\begin{scope}[xshift=13cm]

\begin{scope}[rotate=0,scale=0.8]
\draw (\x1,\R1) arc (90:270:\R1) cos(\x1*\q1+\x2*\q2, 0);
\begin{scope}[yscale=-1]
\draw (\x1,\R1) arc (90:270:\R1) cos(\x1*\q1+\x2*\q2, 0);
\end{scope}
\end{scope}

\begin{scope}[xshift=0.5cm]
\draw (0,0) .. controls (0.5,1.3) and (1.7,1.3) .. (2.2,0);
\draw (0,0) .. controls (0.5,-1.3) and (1.7,-1.3) .. (2.2,0);
\end{scope}

\begin{scope}[xshift=3.1cm]
\begin{scope}[rotate=-180,scale=0.8]
\draw (\x1,\R1) arc (90:270:\R1) cos(\x1*\q1+\x2*\q2, 0);
\begin{scope}[yscale=-1]
\draw (\x1,\R1) arc (90:270:\R1) cos(\x1*\q1+\x2*\q2, 0);
\end{scope}
\end{scope}
\end{scope}

\node[draw=none] at (-0.8,0) {\large$K_{d}$};
\node[draw=none] at (1.5,0) {\large$K_{d+1}$};
\node[draw=none] at (3.7,0) {\large$K_{d}$};
\node[draw,shape=circle,fill=black,inner sep=2pt] at (0.5,0) {};
\node[draw,shape=circle,fill=black,inner sep=2pt] at (2.7,0) {};

\end{scope}
\end{tikzpicture}}
\caption{(i) the graph $K_{d+1} \ast K_d$; (ii) a graph $G$ with $d(G) \approx d + \frac{1}{2}$ and $c(G) = (2-o(1))2^{d+1}$; (iii) four graphs $G$ with $d(G) = d$ and $c(G) = (2-o(1))2^{d+1}$.}
\label{figext}
\end{figure}
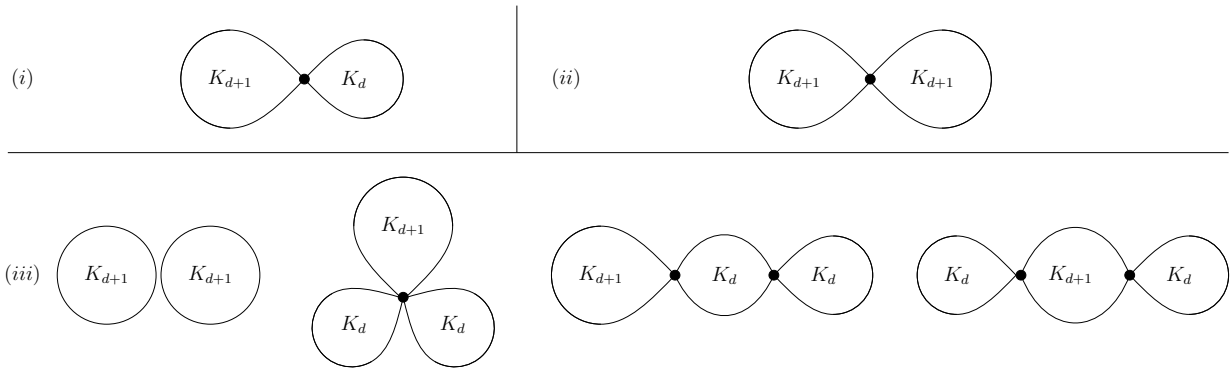

Our techniques also imply for large $d$ the following conjecture of Tuza~\cite{tuza3}, a bipartite analogue of Conjecture~\ref{komlos}.
Since the proof is very similar to the proof of Theorem~\ref{main}, we sketch the details in Section~\ref{conclusion}.

\begin{thm}\label{bip}
There exists $d_0 > 0$ such that, for all integers $d > d_0$ and all bipartite graphs $G$ with minimum degree at least $d$, we have $c(G) \geq c(K_{d,d})$.
\end{thm}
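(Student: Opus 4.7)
The plan is to mirror the strategy behind Theorem~\ref{main} while tracking the bipartite structure throughout. Let $G$ be a bipartite graph with parts $(A,B)$ and $\delta(G)\geq d$, so that $|A|,|B|\geq d$. The key combinatorial observation is that a Hamiltonian cycle in a bipartite graph alternates between the two parts, so every Hamiltonian subset $S$ of $G$ is balanced: $|S\cap A|=|S\cap B|\geq 2$. Consequently,
\[
c(K_{d,d}) \;=\; \sum_{k=2}^{d}\binom{d}{k}^{2} \;=\; \binom{2d}{d}-1-d^{2} \;\sim\; \frac{4^{d}}{\sqrt{\pi d}},
\]
which is what we must beat.

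First I would dispose of the case $|A|+|B|=2d$: this forces $|A|=|B|=d$, so $G=K_{d,d}$ and equality holds. So assume $|A|+|B|\geq 2d+1$. Then I would run the bipartite analogue of the structural dichotomy from the proof of Theorem~\ref{main}: either $G$ contains a near-complete balanced core $H\subseteq G$ on vertex classes of size close to $d$ plus an additional vertex $w$ with many neighbors into $H$, or $G$ is expanding enough that a bipartite Pos\'a-rotation argument directly produces $\gg 4^{d}/\sqrt{d}$ Hamiltonian subsets. In the core case there are two contributions to count: (i) balanced subsets of $V(H)$ that are Hamiltonian, which should yield close to $\binom{2d}{d}$ via the Moon--Moser bipartite Dirac-type theorem (minimum degree $>n/2$ in a balanced bipartite graph on $2n$ vertices forces Hamiltonicity); and (ii) balanced Hamiltonian subsets using $w$, obtained by inserting $w$ into Hamiltonian cycles of $H$ through its neighbourhood. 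Combining (i) and (ii) with careful accounting should exceed $c(K_{d,d})$ whenever $|V(G)|>2d$.

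The main obstacle is the quantitative sharpness required. The target $c(K_{d,d})\sim 4^{d}/\sqrt{\pi d}$ is exponentially larger than the $2^{d+1}$ target of Theorem~\ref{main}, so the bounds behind the non-bipartite stability---a multiplicative gain of an absolute constant---are far from sufficient, and even Theorem~\ref{main} applied directly is useless. Instead, one must show that essentially \emph{all} balanced subsets of size $\geq 4$ of a near-complete bipartite host are Hamiltonian (delicate when the host is missing many edges), and that each extra vertex beyond the core contributes a multiplicative factor close to $2$ rather than the additive $c(K_{d})$-type gain used in the non-bipartite argument. Once these bipartite counting lemmas are in place, the structural framework of Theorem~\ref{main} should assemble into a proof of Theorem~\ref{bip} along the lines of the sketch promised in Section~\ref{conclusion}.
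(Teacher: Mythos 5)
Your opening moves match the paper's: the balancedness of Hamiltonian subsets in bipartite graphs, the value $c(K_{d,d})=\binom{2d}{d}-d^2-1$, the trivial case $|A|+|B|=2d$, and the correct observation that the $2^{d+1}$-type bounds behind Theorem~\ref{main} are exponentially too weak for the target $\sim 4^d/\sqrt{\pi d}$. The near-complete case is also right in spirit, though the paper's accounting is cleaner than yours: rather than a $2d$-vertex core plus an inserted vertex $w$ with "careful accounting", Lemma~\ref{lem: small graph}$^\prime$ shows directly that for $2d\le n\le 2.2d$ a $(1-\alpha)$-fraction of all $\binom{n}{d}$ balanced subsets are Hamiltonian, and since $c(K_{d,n-d})$ is already almost $2c(K_{d,d})$ once $n\ge 2d+1$, the multiplicative loss of $(1-\alpha)$ is absorbed without any delicate additive bookkeeping against the $d^2+1$ slack.

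The genuine gap is your second branch. ``Either $G$ has a near-complete balanced core or $G$ is expanding enough for a P\'osa-rotation argument'' is not a dichotomy, and the second alternative is not an argument. A bipartite graph with $\delta(G)\ge d$ can have anywhere from $2d$ to arbitrarily many vertices, can be disconnected or have cut vertices, and need not expand in any useful sense; one needs the minimal-counterexample and block-decomposition machinery of Section~\ref{end}, the Koml\'os--Szemer\'edi expander-subgraph lemma, and then two separate regimes: $2.19d\le n\le Kd$, handled by the regularity method together with a bipartite version of the sun/path/cycle lemma (Lemma~\ref{lem: 1.2 to K}$^\prime$, giving $2^{(2+1/200)d}$ distinguishable cycles), and $n\ge Kd$, handled by the web construction of Section~\ref{sec-main-sparse} (Lemma~\ref{lem-sparse}, whose bound $2^{50d}$ already suffices unchanged). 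P\'osa rotation produces Hamiltonian cycles on a \emph{fixed} vertex set; it does not by itself produce $\gg 4^d/\sqrt{d}$ cycles with pairwise distinct vertex sets, and your heuristic that each extra vertex contributes a multiplicative factor close to $2$ fails as soon as $n$ leaves the near-complete regime — which is exactly where all the work lies. Without concrete replacements for these two regimes the proof does not close.
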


The graphs $G$ in Conjecture~\ref{komlos} could have any number $n \geq d+1$ of vertices, and consequently $G$ could have any given density.
This makes the problem difficult to attack as many available tools depend on the density of the graph.
The notions of pseudorandomness and expansion have both played a major role in recent advances in the Hamiltonian cycle problem.
In particular, the Regularity-Blow-up method of Koml\'os, S\'ark\"ozy and Szemer\'edi~\cite{KSSblowup}, which involves partitioning $G$ into pseudorandom subgraphs, has been the key tool in the solution of many important dense graph problems.
For problems involving sparser graphs, the concept of 
`sparse expansion', introduced by Koml\'os and Szemer\'edi~\cite{K-Sz-1,K-Sz-2} has proved very effective
(see also~\cite{BLSh,L-M,Richard} for some recent results in which such expanders play a role).
A novel aspect of our proof is to combine both approaches; depending, roughly speaking, on whether our graph $G$ is dense or sparse.

The main ingredient in the sparse case is the following general theorem about expander graphs, which we state here as it may be of independent interest. Roughly speaking, it states that an expander graph $G$ which is almost $d$-regular and not too sparse contains a set $Z$ of order $200d$ such that for every half-sized subset $U$ of $Z$, there is a cycle in $G$ whose intersection with $Z$ contains almost every vertex in $U$ and no vertices in $Z\setminus U$.

\begin{thm}\label{lem-build-sub}
For given $0<\epsilon_1 \leq 1$ and $L\geq1$, there exist $d_0$ and $K_0$ such that the following holds
for any $d\geq d_0, K\geq K_0$ and $n\in \mathbb{N}$ with $\log^{100}n \leq d\leq n/K$.
If $H$ is an $n$-vertex $(\ep_1, d/30)$-expander with $d/10\le \de(H)\le \De(H)\le Ld$, then $V(H)$ contains a set $Z$ of size $200d$ such that, for every subset $U\subseteq Z$ of size $100d$, there exists a cycle $C_U$ with $V(C_U)\cap Z \in \binom{U}{\geq 98d}$.
\end{thm}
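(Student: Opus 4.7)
The plan is to select a ``spread-out'' random-like set $Z \subseteq V(H)$ of size $200 d$, and then, for each $U \subseteq Z$ of size $100 d$, to build $C_U$ by iteratively linking most vertices of $U$ with short Komlós--Szemerédi paths in $H$ that avoid $Z \setminus U$. The slack of $2d$ in the target size $98d$ will absorb the few vertices of $U$ that we fail to attach.

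First I would select $Z$ probabilistically (or semi-randomly), aiming for the following three properties with high probability: (i) every $v \in V(H)$ has $|N(v) \cap Z| \leq \mathrm{polylog}(n)$, which follows from $|Z|/n \leq 200/K$ and a Chernoff-type concentration; (ii) every $z \in Z$ satisfies $|N(z) \setminus Z| \geq d/20$; and (iii) each $z \in Z$ admits a small ``attacher'' $T_z \subseteq V(H) \setminus Z$ of size $d \cdot \mathrm{polylog}(n)$, built by a greedy BFS/expansion argument, such that the $T_z$'s are pairwise vertex-disjoint. Feasibility of (iii) rests on the fact that $|Z| \cdot d \cdot \mathrm{polylog}(n) = O(d^2 \mathrm{polylog}(n)) \ll n$ for $K$ large, together with the expansion hypothesis.

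Next, fix $U \subseteq Z$ with $|U|=100d$ and set $H' := H - (Z \setminus U)$. Property (i) allows one to show that $H'$ still satisfies a Komlós--Szemerédi-type expansion condition, but at a higher threshold $d \cdot \mathrm{polylog}(n)$ and with expansion rate $\epsilon_1 /2$: indeed, for any $X \subseteq V(H')$ with $|X| \geq d \cdot \mathrm{polylog}(n)$, the loss of $|Z \setminus U| = 100d$ neighbours is a lower-order correction to $\epsilon(|X|) \cdot |X|$. Enumerate a subset of $U$ as $u_1, \ldots, u_k$ with $k \geq 98 d$, discarding up to $2d$ vertices of $U$ whose attacher $T_{u_i}$ was damaged beyond repair by the removal of $Z \setminus U$. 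Iteratively, at step $i$, apply the Komlós--Szemerédi short-path lemma inside $H' - \bigcup_{j<i} \Int(P_j)$, using the attachers $T_{u_i}$ and $T_{u_{i+1}}$ as start and end sets; this yields a path $P_i$ of polylogarithmic length between $u_i$ and $u_{i+1}$. Concatenating $P_1, \ldots, P_k$ cyclically gives the cycle $C_U$.

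The main obstacle is that $|Z \setminus U| = 100d$ vastly exceeds the natural deletion tolerance (of order $d/\mathrm{polylog}(n)$) of the baseline $(\epsilon_1, d/30)$-expander, so one cannot simply feed $Z \setminus U$ to the path lemma as an arbitrary deletion set. Overcoming this requires exploiting the \emph{structure} of $Z$, not merely its size: property (i) ensures that $H - (Z \setminus U)$ remains a good expander at a coarser scale, which suffices for our purposes. A second subtlety is that single vertices $u_i$ have degree only $\Theta(d)$, too small to be endpoints at the coarser scale; this is precisely what the attachers $T_{u_i}$ from (iii) are designed to fix. The hypothesis $d \geq \log^{100} n$ then guarantees that polylogarithmic losses remain dominated by $d$, and in particular that the total number of internal vertices used across the $\leq 100 d$ path-building steps stays within the modified expander's deletion tolerance.
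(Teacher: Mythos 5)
Your high-level plan --- pick a spread-out set $Z$, and for each $U$ link most of its vertices cyclically by short Koml\'os--Szemer\'edi paths, absorbing a few failures into the $2d$ slack --- is the same skeleton as the paper's proof. However, there are two concrete gaps, both located exactly where the paper has to work hardest.

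First, your property (iii) is infeasible in part of the stated range. You ask for pairwise vertex-disjoint attachers $T_z$ of size $d\cdot\mathrm{polylog}(n)$ for all $200d$ vertices of $Z$, which requires $d^2\cdot\mathrm{polylog}(n)\le n$. But the theorem allows $d$ as large as $n/K$, so for $d\gg\sqrt{n}$ the attachers cannot be disjoint. The paper faces the same obstruction and resolves it by using webs whose large \emph{exteriors} (size $\Theta(dm^6)$, playing the role of your attachers as start/end sets for Lemma~\ref{diameter}) are allowed to overlap, while only the small \emph{interiors} (size $O(m^7)$ each, total $O(dm^8)\ll n$) are required to be disjoint. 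Some such decoupling of ``the part that must be private'' from ``the part that must be large'' is unavoidable, and your proposal does not have it.

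Second, and more fundamentally, you have no mechanism for getting from the landing point of a connecting path back to the core vertex $u_i$. Lemma~\ref{diameter} delivers a path between $T_{u_i}$ and $T_{u_{i+1}}$, but the cycle must pass through $u_i$ itself, whose degree is only $O(Ld)$ --- far too small to serve as a start set against a deletion set of size $d\cdot\mathrm{polylog}(n)$. A generic BFS-grown attacher can have its connection to $u_i$ severed by a small, well-placed subset of the $d\cdot\mathrm{polylog}(n)$ vertices consumed by earlier paths (a bottleneck cut), and your ``discard attachers damaged beyond repair'' step is not quantified: nothing bounds the number of ruined attachers by $2d$. This is precisely the difficulty the paper flags (``such short paths can still block all webs''), and its solution is the web structure: $m^3\times m^3$ internally vertex-disjoint branches from the core to the exterior, so that a web remains usable unless at least $2m^2$ of its interior vertices are hit; since interiors are disjoint and the total length of all built paths is $O(dm)$, fewer than $d/2$ webs ever go bad, which is what makes the $98d$ count work. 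Without a redundancy statement of this kind for your $T_z$, together with a counting argument tying ``damaged'' to a volume threshold on a private part of $T_z$, the iteration cannot be completed. (As a minor point, your worry that $|Z\setminus U|=100d$ exceeds the expander's deletion tolerance is misplaced: once the start/end sets have size $d\cdot\mathrm{polylog}(n)$ with a suitable exponent, a deletion set of size $O(d\cdot\mathrm{polylog}(n))$ including $Z$ is absorbed directly by Lemma~\ref{diameter}, with no need to re-prove expansion of $H-(Z\setminus U)$ at a coarser scale.)
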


We defer the definition of an expander graph to Section~\ref{sec-main-sparse} and the other notation in the statement to Section~\ref{sec-prelim}.

\subsection{Related research}

Considering a different local condition to that of minimum degree, Tuza~\cite{tuza2} proved that, if every edge of a graph $G$ lies in $t$ copies of $K_3$, then $c(G) \geq c(K_{t+2})$.
In~\cite{tuza2,tuza1,tuza3}, he also considered the problem of, for a given graph $F$ and class $\mathcal{G}$ of graphs, minimising the number of distinguishable subdivisions of $F$ in a graph $G \in \mathcal{G}$.
Note that this is a generalisation since a cycle is a subdivision of $K_3$.
The case when $\mathcal{G}$ is the set of graphs $G$ in which every edge lies in $t$ copies of $F$ is addressed in~\cite{tuzapaper}.
Considering a global condition, Knor~\cite{knor} provided estimates on the number of Hamiltonian subsets in $k$-connected graphs $G$ for various values of $k$, in terms of the number of vertices of $G$.

Given our resolution of Koml\'os's conjecture for large $d$ and the fact that the graphs which minimise $c(G)$ are small, it makes sense to ask: what is the minimum value of $c(G)$ over all \emph{$n$-vertex} graphs $G$ with minimum (or average) degree $d$?
A related question of Perkins~\cite{will} is minimising the normalised parameter $c(G)/|G|$, which penalises small graphs.

\begin{question}
What is $f(d) := \displaystyle\liminf_{n \rightarrow \infty}\left\lbrace \frac{c(G)}{n} : \delta(G)\geq d \text{ and } |G|=n \right\rbrace$?
\end{question}

For $d=2$ we have that $c(G) \geq 1$ with equality if and only if $G=C_n$ is a cycle.
Thus $f(2)=0$.
For general integers $d$, 
a simple lower bound for $f(d)$ can be obtained as follows.
Given any graph $G$ with $\delta(G) \geq d$ on $n$ vertices, let $F \subseteq G$ be a forest such that each component spans a component of $G$.
Then, for any $e \in E(G)\setminus E(F)$, the graph $F \cup \lbrace e \rbrace$ contains precisely one cycle, which necessarily contains $e$.
So $c(G)/|G| \geq (e(G)-e(F))/|G| \geq d/2-1$.
For an upper bound, when $(d+1) | n$, the union $G$ of $\frac{n}{d+1}$ vertex-disjoint $(d+1)$-cliques has $c(G)=\frac{n}{d+1}\cdot c(K_{d+1})$. Thus
$$
\frac{d}{2} - 1 \leq f(d) \leq \frac{c(K_{d+1})}{d+1}.
$$
The upper bound is not tight.
This can be seen by taking the vertex-disjoint union of $(d+1)$-cliques and a cycle $C$ of appropriate length, and adding an edge between each clique and $C$ to ensure $\delta(G) = d$.

What about the problem of \emph{maximising} the number of (distinguishable) cycles? For a fixed number of vertices $n$, clearly $K_n$ contains the most.
If instead we fix maximum degree $\Delta$, then $\nu(G)$ and $c(G)$ both increase as the number of vertices in $G$ increases.
To avoid such trivialities, Tuza asked a more restrictive question: what is the maximum number $\nu^{\mathrm {ind}}(G)$ of \emph{induced} cycles in an $n$-vertex graph $G$? 
Note that any two induced cycles in a graph are distinguishable.
This conjecture was verified in a strong sense by Morrison and Scott~\cite{tashscott}, who showed that there is a unique graph attaining the maximum for every sufficiently large $n$ (which depends on the value of $n \mod 3$).

Another way of avoiding such trivialities is to ask for the maximum number of cycles in an $n$-vertex graph $G$ in a restrictive class of graphs.
Recently, Arman, Gunderson and Tsaturian~\cite{agt} showed that $\nu(K_{\lfloor n/2\rfloor, \lceil n/2\rceil})$ is maximum among all $n$-vertex triangle-free graphs, for $n \geq 141$.

\subsection{Organisation of the paper}

The remainder of the paper is organised as follows.
In Section~\ref{sec: sketch}, we sketch the proof of Theorem~\ref{main}.
Our notation and some tools needed for the proof are listed in Section~\ref{sec-prelim}.
Sections~\ref{dense} and~\ref{sec-main-sparse} form the bulk of proof and deal with the `dense' and `sparse' cases respectively (see Section~\ref{sec: sketch} for explanations of these terms).
In Section~\ref{end}, we combine our auxiliary results to complete the proof of Theorem~\ref{main}.
Finally, in Section~\ref{conclusion}, we sketch how to prove Theorem~\ref{bip}.

\section{Sketch of the proof of Theorem~\ref{main}}\label{sec: sketch}

Suppose that $G$ is a counterexample to Theorem~\ref{main} whose order $n$ is minimal for a fixed large number $d$.
So $d(G) \geq d$ and $G \notin \lbrace K_{d+1},K_d \ast K_{d+1} \rbrace$.
One can show that the minimality of $G$ implies that it has minimum degree at least $d/2$, and any subgraph has average degree at most $d$.
From now on we distinguish two cases according to whether $n/d$ is bounded (the \emph{dense case}) or unbounded (the \emph{sparse case}).
Each case uses different techniques and methods and are essentially separate.

\subsection{The dense case: $n/d$ is bounded (Section~\ref{dense}).}
If $n < 1.19d$ then a simple probabilistic argument proves Theorem~\ref{main} (see Lemma~\ref{lem: small graph}).
Indeed, a Chernoff bound implies that almost every subset of $G$ is Hamiltonian, via the classical degree sequence theorem of P\'osa~\cite{posathm}.
Note that $1.19$ is somewhat arbitrary here.

Thus we may assume that $1.19d \leq n \leq Kd$ for some (large) constant $K$ which does not depend on $d$.
Since we are assuming that $d$ is large, $G$ is a large dense graph, and we can employ the celebrated Regularity-Blow-up method pioneered by Koml\'os, S\'ark\"ozy and Szemer\'edi~\cite{KSSblowup}.

The regularity lemma of Szemer\'edi~\cite{szem} implies that $G$ has a partition into equally-sized clusters $V_1,\ldots,V_r$ of vertices such that almost all ordered pairs of clusters induce a pseudorandom subgraph of $G$; and an exceptional set $V_0$ of size $o(n)$.
The `reduced graph' $R$ of $G$ has vertices $1,\ldots,r$, where $ij$ with $1 \leq i < j \leq r$ is an edge if $G[V_i,V_j]$ is pseudorandom and dense. 
The idea here is to show that $R$ contains some special structure $Q$ with the property that
\begin{itemize}
\item[($\dagger$)] there are at least $(2-o(1))2^{d+1}$ choices of $\lbrace V_i' : i \in V(Q) \rbrace$ where $V_i' \subseteq V_i$ for all $i$, such that $\bigcup_{i \in V(Q)}V_i'$ is a Hamiltonian subset.
\end{itemize}
Clearly this will prove Theorem~\ref{main} in this case.
Let $m := |V_1| = \ldots = |V_r|$ and let
\begin{equation}\label{sketchd'}
d' := (1-o(1))\frac{dr}{n} = (1-o(1))\frac{d}{m}.
\end{equation}
There are three possible structures $Q$ we can guarantee in $R$:
\begin{itemize}
\item[(1)] two vertex-disjoint cycles whose orders sum to at least $1.8d'$;
\item[(2)] a path of length $(1+\frac{1}{100})d'$;
\item[(3)] a large `sun' with large `corona' (the definitions of which we defer to Section~\ref{sec: suns}).
\end{itemize}
The reason we can find such a structure $Q$ is as follows.
It is well-known that the reduced graph $R$ inherits many of the properties of $G$.
In particular, the average degree and minimum degree of $R$ are closely related to those of $G$.
Thus
$R$ has average degree at least $d'$ and minimum degree at least $d'/2$.
Moreover, $r \geq 1.18d'$ by (\ref{sketchd'}).
This is enough to find $Q$ in $R$ satisfying either (1), (2) or (3) (see Lemma~\ref{lem: sun}).
Note that this is not always possible if $n$ is not bounded away from $d$, since then $r$ may not be bounded away from $d'$. This is the reason we consider the `very dense' case that $n<1.19d$ separately.

To see how to guarantee ($\dagger$), for the purposes of simplicity let us assume that $Q$ satisfies (1), and further that $Q$ has a subgraph $C$ which is a cycle of length $c \geq 1.2d'$ (the remaining cases are similar but more involved, see Lemma~\ref{lem: 1.2 to K}).
Write $C := 1\ldots c$.
To show ($\dagger$), we will use a variant of the Blow-up lemma~\cite{KKOT16,KSSblowup}.
First, we do some pre-processing.
Standard tools allow us to remove $o(|V_i|)$ vertices from each $V_i$ with $i \in [c]$ to obtain new equally-sized clusters $U_1,\ldots,U_c$, such that $G[U_i,U_{i+1}]$ is still dense and pseudorandom (where addition is modulo $c$), and also has large minimum degree.
We say that $G' := \bigcup_{i \in [c]}G[U_i,U_{i+1}]$ is `super-regular with respect to $C$'.
Roughly speaking, a special case of the Blow-up lemma states that, for the purposes of embedding a (spanning) bounded degree subgraph, any graph $H$ which is super-regular with respect to $C$ behaves as if each $H[U_i,U_{i+1}]$ were \emph{complete}.

For each $i \in [c]$, let $V_i' \subseteq U_i$ with $|V_i'| = m/2$ be arbitrary.
Then one can show that, with high probability, the graph $H := \bigcup_{i \in [c]}G[V_i',V_{i+1}']$ is super-regular with respect to $C$.
The Blow-up lemma tells us that, if the complete blow-up of the cycle $C_c$ with parts of size $m/2$ contains a spanning cycle, then so does $H$.
But this is clearly seen to be true (find the cycle by winding round parts).
Thus almost every such $V_1',\ldots,V_c'$ is such that $\bigcup_{i \in [c]}V_i'$ is a Hamiltonian subset.
Moreover, the number of such choices is
$$
(1-o(1)) \cdot \prod_{i \in [c]}\binom{|U_i|}{m/2} \geq (1-o(1))\binom{(1-o(1))m}{m/2}^{1.2d'} \geq 2^{1.1md'} \stackrel{(\ref{sketchd'})}{>} (2-o(1))2^{d+1}.
$$
So ($\dagger$) holds, as required.

\subsection{The sparse case: $n/d$ is unbounded (Section~\ref{sec-main-sparse}).}
For the sparse case, we will work with expander graphs.
This notion of expansion was first introduced by Koml\'os and Szemer\'edi~\cite{K-Sz-1}.
Roughly speaking, a graph is an \emph{expander} if every set that is not too large or small has large external neighbourhood (see Definition~\ref{defn-expander}).
The main property (Lemma~\ref{diameter}) of expander graphs which we require is that
\begin{itemize}
\item[($\star$)] if $H$ is an expander graph, then between every pair of large sets, there is a short path; and this path can be chosen to avoid an arbitrary small set.
\end{itemize}

Our aim in Section~\ref{sec-main-sparse} is to prove the following (see Lemma~\ref{lem-sparse}).
\begin{itemize}
	\item[($\ddagger$)] Given large $K > 0$, when $d$ is sufficiently large and $H$ is an almost $d$-regular expander graph on at least $Kd$ vertices, then $c(H) \geq 2^{50d}$.
\end{itemize}
Here, almost $d$-regular means that there are constants $0<\ell_1<\ell_2$ such that $\ell_1d \leq d_H(x) \leq \ell_2d$ for all $x \in V(H)$.
Koml\'os and Szemer\'edi~\cite{K-Sz-1} proved that every graph $G$ contains an expander which has almost the same average degree as $G$ (see Lemma~\ref{lem-expander}).
Then, roughly speaking, ($\ddagger$) is used in our proof to ensure that any expander subgraph of $G$ has at most $Kd$ vertices, so we are in the dense case.

The proof of ($\ddagger$) has the following general structure.
We will locate a special set $Z \subseteq V(H)$ of size $200d$ with the following property (see Theorem~\ref{lem-build-sub} and Lemma~\ref{lemma-sparse-sub}). For every subset $U\subseteq Z$ of size $|Z|/2$, we can find a cycle $C_U$ whose intersection with $Z$ is almost the whole of $U$. This then implies that a large fraction of the ${|Z|\choose |Z|/2}$ cycles $C_U$ are distinguishable. 

To construct such a set $Z$, we will use different strategies depending on the edge density of $H$. In Section~\ref{sec-dense}, we deal with the case when $H$ is relatively dense. In this case, a key structure in our construction is a `web' (See Definition~\ref{defn-web} and Figure~\ref{figweb}), which guarantees many vertex-disjoint paths within a relatively small neighbourhood of a `core' vertex. This structure is inspired by some recent work on clique subdivisions (see~\cite{L-M} and~\cite{Richard}). We will iteratively construct many webs that are almost pairwise disjoint (in fact, they have disjoint `interiors' (Lemma~\ref{lem-web})), and let $Z$ be the set of core vertices of these webs. Then for each $|Z|/2$-set $U$ in $Z$, to construct $C_U$, we will connect the webs corresponding to $U$ via paths through their `exteriors' in a cyclic manner. 
We hope to find vertex-disjoint (short) paths between the (large) exteriors of webs, avoiding previously-built paths, which together with the paths inside the webs leading to their core vertices form the desired cycle $C_U$.
Property ($\star$) is vital in obtaining these paths.

However, such `short' paths can still block all webs, making it impossible to integrate their core vertices into the cycle $C_U$. To overcome this, we will choose our paths in a more careful way, such that we avoid using too many vertices inside any particular web. Then the fact that the webs chosen are almost disjoint enables us to incorporate most of the vertices in $U$ into $C_U$.

In Section~\ref{sec-sparse}, we deal with the case when $H$ is very sparse. Here, an obvious difficulty in using the previous approach is that a single `short' path could use the vertices in all webs due to the fact that the graph has very few edges. Instead, we will choose a set of vertices which are pairwise far apart in $H$ to serve as $Z$. Such a set of vertices exists since $H$ has small maximum degree compared to its order. To find the cycle $C_U$ (see Lemma~\ref{lemma-sparse-sub}), we grow the neighbourhood around the vertices in $U$ to a reasonably large size so that we can connect them via a path which avoids used vertices.

\subsection{Finishing the proof (Section~\ref{end})}

There are several difficulties with the above approach.
Statement ($\ddagger$) implies that any \emph{almost $d$-regular} expander subgraph $H$ of $G$ must be dense.
To find such an expander we must first remove large degree vertices from $G$ and then apply Lemma~\ref{lem-expander} to find $H$.
But then the average degree $d(H)$ of $H$ could be less than $d$, in which case we may have $c(H) < c(K_{d+1})$ (for example if $H = K_d$).
In the case when $G$ is $2$-connected, we circumvent this problem by finding two vertex-disjoint almost $d$-regular expander subgraphs $H_1,H_2$ of $G$ and, for every fixed $x_i,y_i \in V(H_i)$, finding many $x_i,y_i$-paths $P_i$ in $H_i$.
The pairs $(P_1,P_2)$ give rise to distinguishable cycles.
(The general case is somewhat more technical and involves considering the so-called `block-structure' of $G$ and finding cycles and paths inside maximal 2-connected subgraphs of $G$.)

\section{Notation and preliminaries}\label{sec-prelim}

\subsection{Notation}
For an integer $N$, let $[N]:=\{1,\dots, N\}$. Given a set $X$ and $k \in \mathbb{N}$, let $\binom{X}{k}$ be the collection of $k$-element subsets of $X$ and let $\binom{X}{\leq k}$ be the collection of subsets of $X$ with at most $k$ elements.
Define $\binom{X}{\geq k}$ analogously.

Given a graph $G$, we write $V(G)$ and $E(G)$ for its vertex and edge set respectively, and let $e(G) := |E(G)|$. Sometimes, we identify $G$ with $V(G)$ by writing $x\in G$ instead of $x\in V(G)$ and $|G|$ instead of $|V(G)|$. For any $X \subseteq V(G)$, let $G-X$ be the graph obtained from $G$ by removing the vertices of $X$ and any edges incident to them. If $x \in V(G)$ we abbreviate $G-\lbrace x \rbrace$ to $G-x$.
Given graphs $G_1,G_2$, we write $G_1 \cup G_2$ to denote a graph with $V(G_1\cup G_2)= V(G_1)\cup V(G_2)$ and $E(G_1 \cup G_2) = E(G_1)\cup E(G_2)$.
Define the \emph{neighbourhood} $N_G(X)$ and \emph{external neighbourhood} $\Gamma_G(X)$ of $X$ by setting
\begin{align*}
N_G(X) := \lbrace u \in V(G) : uv \in E(G) \mbox{ for some } v \in X \rbrace \quad\text{and}\quad\Gamma_G(X):=N_G(X) \setminus X.
\end{align*}
Given $X \subseteq V(G)$ and $x \in V(G)$, write $d_G(x,X) := |N_G(x) \cap X|$. The \emph{degree} of $x \in V(G)$ is $d_G(x) := d_G(x,V(G))$.
Let $\delta(G)$ be the minimum degree of $G$ and let $\delta_i(G) := d_i$ where $d_1 \leq d_2 \leq \dots \leq d_n$ is the degree sequence of the graph $G$. Write $\Delta(G)$ for the maximum degree and $d(G) := 2e(G)/|G|$ for the \emph{average degree} of $G$. 

We say that $W=(x_1,\dots, x_m)$ is a \emph{walk} in a graph $G$ if $x_ix_{i+1}\in E(G)$ for all $i\in [m-1]$; and a \emph{circuit} if additionally $x_mx_1 \in E(G)$. For a given walk $W=(x_1,\dots, x_m)$ and vertex $x$, we let ${\rm deg}(x,W)= |\{i \in [m]: x=x_i\}|$.
We say that a walk $P=(x_1,\dots, x_m)$ is a \emph{path} if $x_i\neq x_j$ for all $i\neq j \in [m]$ and we define $\Int(P):= \{x_2,\dots, x_{m-1}\}$ to be the \emph{interior} of $P$. We say that a path $P=(x_1,\dots, x_m)$ is a \emph{$u,v$-path} if $\{u,v\}=\{x_1,x_m\}$. 
The \emph{length} of a path is the number of vertices it contains.
We say a graph $G$ \emph{admits a vertex partition $(R,V_1,\dots, V_r)$} if $\{V_i: i\in [r]\}$ forms a partition of $V(G)$ into independent sets; $R$ is a graph on vertex set $[r]$; and $G[V_i,V_j]$ is an empty graph for all $ij\notin E(R)$.

Unless otherwise specified, we write $\log(\cdot)$ for the natural logarithm $\log_e(\cdot)$.
For $a,b,c\in \mathbb{R}$ we write $a = b\pm c$ if $b-c \leq a \leq b+c$. In order to simplify the presentation, we omit floors and ceilings and treat large numbers as integers whenever this does not affect the argument. The constants in the hierarchies used to state our results have to be chosen from right to left. More precisely, if we claim that a result holds whenever $0 <  a \ll b \leq 1$, then this means that there is a non-decreasing
function $f : (0, 1] \rightarrow (0, 1]$ such that the result holds for all $0 < a, b\leq1 $ with $a \leq f(b)$. Hierarchies with more constants are defined in a similar way.

Additional notation will be defined as and when it is required.

\subsection{Tools}
In this section we state some tools that we will use to prove our main result. 
For $n\in \mathbb{N}$ and $0\leq p\leq 1$ we write $\mathrm{Bin}(n,p)$ to denote the binomial distribution with parameters $n$ and $p$. 
The hypergeometric random variable $X$ with parameters $(n,m,k)$ is
defined as follows. We let $N$ be a set of size $n$, fix $S \subseteq N$ of size
$|S|=m$, pick uniformly at random a set $T \subseteq N$ of size $|T|=k$,
then define $X=|T \cap S|$. Note that $\mathbb{E}X = km/n$.
We will use the following standard Chernoff-type bound (see e.g.~Theorem 2.10 in~\cite{JLR}).

\begin{prop}\label{Chernoff Bounds}
	Suppose $X$ has binomial or hypergeometric distribution and $0<a<3\mathbb{E}[X]/2$. Then
	$\mathbb{P}(|X - \mathbb{E}[X]| \ge a) \le 2 e^{-\frac{a^2}{3\mathbb{E}[X]}}$.
\end{prop}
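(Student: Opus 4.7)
The plan is to handle the binomial and hypergeometric cases separately, reducing the latter to the former.

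For the binomial case, write $X = X_1 + \cdots + X_n$ with $X_i$ independent Bernoulli$(p)$ and set $\mu := \mathbb{E} X = np$. Apply the exponential Markov (Chernoff) method: for any $t > 0$,
$$\mathbb{P}(X \geq \mu + a) \leq e^{-t(\mu+a)}\, \mathbb{E} e^{tX} = e^{-t(\mu+a)}(1 + p(e^t - 1))^n \leq \exp\bigl(-t(\mu+a) + \mu(e^t - 1)\bigr),$$
using $1 + x \leq e^x$. Optimising over $t > 0$ by taking $t = \log(1 + a/\mu)$ yields the Bennett inequality
$$\mathbb{P}(X \geq \mu + a) \leq \exp\bigl(-\mu\, h(a/\mu)\bigr), \qquad h(x) := (1+x)\log(1+x) - x.$$
The elementary calculus inequality $h(x) \geq x^2/(2 + 2x/3)$ for $x \geq 0$ then gives
$$\mathbb{P}(X \geq \mu + a) \leq \exp\!\left(-\frac{a^2}{2\mu + 2a/3}\right).$$
The hypothesis $a < 3\mu/2$ forces $2\mu + 2a/3 < 3\mu$, so the right side is at most $\exp(-a^2/(3\mu))$. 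The lower-tail bound $\mathbb{P}(X \leq \mu - a) \leq \exp(-a^2/(2\mu))$ is obtained analogously (and is slightly stronger), and summing the two tails produces the factor $2$.

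For the hypergeometric case I would invoke Hoeffding's theorem: if $Y$ is hypergeometric with parameters $(n,m,k)$ and $X$ is binomial with parameters $(k, m/n)$, then $\mathbb{E} f(Y) \leq \mathbb{E} f(X)$ for every convex $f : \mathbb{R} \to \mathbb{R}$. Taking $f(y) = e^{ty}$ shows that the moment generating function of $Y$ is dominated by that of $X$, so the Chernoff computation above runs word-for-word with $Y$ in place of $X$. Hoeffding's theorem itself follows from a coupling argument: a sum of $k$ draws without replacement is exchangeable, and one can realise it as a mixture of sums of i.i.d.\ draws, after which Jensen's inequality produces the desired MGF comparison.

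The main obstacle is the constant-tightening step. The raw Chernoff calculation naturally yields $\exp(-a^2/(2\mu + 2a/3))$ with a Bernstein-type correction in the denominator, and the cleaner denominator $3\mu$ in the conclusion is precisely what the hypothesis $a < 3\mathbb{E}[X]/2$ was engineered to deliver; missing this step would leave an ugly $a$-dependence in the exponent. The only other subtlety is making the hypergeometric-to-binomial reduction rigorous, for which Hoeffding's domination lemma (as packaged in \cite{JLR}, Theorem~2.10) is the shortest route.
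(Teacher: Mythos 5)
Your proof is correct. The paper does not actually prove this proposition — it is quoted as a standard tool with a pointer to Theorem~2.10 of \cite{JLR} — and your argument (the exponential-moment/Bennett bound for the binomial case, the elementary inequality $h(x)\ge x^2/(2+2x/3)$ combined with the hypothesis $a<3\mathbb{E}[X]/2$ to replace the Bernstein denominator $2\mu+2a/3$ by $3\mu$, and Hoeffding's convex-domination lemma to transfer the MGF bound to the hypergeometric case) is precisely the standard derivation underlying that reference. One small imprecision in your aside: the sum of draws \emph{without} replacement is realised as a \emph{conditional expectation} of the sum with replacement (so Jensen runs in the right direction), rather than as a mixture of i.i.d.\ sums; since you correctly invoke the packaged lemma anyway, this does not affect the argument.
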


The next result is well-known and guarantees that every graph has a subgraph with large average and minimum degrees.

\begin{prop}\label{lem: min deg subgraph}
	Every graph $G$ with average degree $d$ contains a subgraph $H$ with $d(H)\geq d$ and $\delta(H)\geq d/2$.
\end{prop}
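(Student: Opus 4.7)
My plan is to use the standard iterative low-degree deletion argument. Set $G_0 := G$ and, while the current graph $G_i$ contains a vertex $v$ with $d_{G_i}(v) < d/2$, form $G_{i+1}$ by deleting such a vertex. When the process halts, take $H$ to be the resulting graph; by construction $\delta(H) \geq d/2$.

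The key step is to maintain the invariant $d(G_i) \geq d$ throughout. This holds for $i=0$ by assumption. For the inductive step, if $v$ is the vertex deleted at step $i$, then
$$
e(G_{i+1}) \;=\; e(G_i) - d_{G_i}(v) \;>\; \frac{d\,|G_i|}{2} - \frac{d}{2} \;=\; \frac{d\,(|G_i|-1)}{2} \;=\; \frac{d\,|G_{i+1}|}{2},
$$
so $d(G_{i+1}) \geq d$, as required. In particular the final graph $H$ satisfies $d(H) \geq d$.

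It remains to check that the process terminates at a non-empty graph, so that $H$ is a genuine subgraph. Termination is immediate since $|G_i|$ strictly decreases at each step. The invariant $d(G_i) \geq d > 0$ forces $e(G_i) \geq 1$, so $|G_i| \geq 2$ for every $i$ the process actually reaches; hence the stopping graph $H$ is non-empty and has at least one edge, and every remaining vertex has degree at least $d/2$. There is no real obstacle here: the only thing to be slightly careful about is the strict inequality coming from $d_{G_i}(v) < d/2$, which is what allows us to preserve $d(G_i) \geq d$ rather than degrading it.
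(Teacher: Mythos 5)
Your proof is correct and complete: the iterative deletion of vertices of degree less than $d/2$, together with the invariant $d(G_i)\geq d$ (which also guarantees the process cannot reach an empty or edgeless graph), is exactly the standard argument for this fact. The paper states this proposition as well known and gives no proof, so there is nothing to compare beyond noting that your write-up supplies the expected argument, including the termination/non-emptiness check that is often glossed over.
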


We will also use the classical theorem of Erd\H{o}s and Gallai from 1959 which gives a best-possible condition on the minimum length of a cycle in a graph with a given number of vertices and edges (we state a slightly weaker version here).

\begin{thm}[\cite{erdgal}]\label{lem: cycle av deg}
	For all $d \geq 2$, every graph $G$ with $d(G) \geq d$ contains a cycle of length at least $d$. 
\end{thm}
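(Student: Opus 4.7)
I would prove this by induction on $n = |G|$, combining the low-degree reduction supplied by Proposition~\ref{lem: min deg subgraph} with Dirac's classical theorem that every $2$-connected graph $H$ has circumference at least $\min(2\delta(H), |H|)$.

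For the base case $n = d+1$, the average-degree hypothesis $d(G) \geq d$ together with $|G| = d+1$ forces $G = K_{d+1}$, which contains a cycle of length $d+1$. For the inductive step, if some vertex $v$ satisfies $d_G(v) \leq d/2$, then $e(G-v) \geq e(G) - d/2 \geq d(n-1)/2$, so $d(G-v) \geq d$ and induction applied to $G-v$ yields the required cycle in $G$. Hence we may assume $\delta(G) > d/2$, and in particular $n \geq d+1$.

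If $G$ is $2$-connected, Dirac's circumference bound produces a cycle of length at least $\min(2\delta(G), n) \geq d+1$, and we are done. Otherwise, pick a leaf block $B$ of $G$ with cut vertex $c$: each vertex of $B \setminus \{c\}$ retains its full $G$-degree inside $B$, so has $B$-degree exceeding $d/2$, and $B$ itself is $2$-connected. Provided $|B| \geq d$, or that $c$ also satisfies $d_B(c) > d/2$, Dirac applied to $B$ delivers the long cycle.

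The main obstacle is the residual case where $G$ is not $2$-connected and every leaf block is small with a low-$B$-degree cut vertex. To handle it I would detach a leaf block from $G$ at its cut vertex and apply induction to the remaining graph, keeping careful bookkeeping to verify that the average-degree hypothesis propagates through the block-cut tree. An alternative, which bypasses the block analysis entirely, is to establish the full Erd\H{o}s--Gallai edge bound $e(G) \leq (d-1)(n-1)/2$ for graphs with no cycle of length at least $d$ (itself proved by a short induction on $n$ with the same low-degree reduction); this directly contradicts the hypothesis $e(G) \geq dn/2$ and so closes the proof.
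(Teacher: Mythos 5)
First, a point of comparison: the paper does not prove this statement at all — Theorem~\ref{lem: cycle av deg} is quoted from Erd\H{o}s and Gallai with a citation — so your proposal is measured against the classical argument rather than anything in the text.

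Your primary route has a genuine gap exactly where you locate it, and it is worth being precise about why your two suggested patches do not close it. In the non-$2$-connected case with $\delta(G) > d/2$, a leaf block $B$ with cut vertex $c$ has $d_B(v) > d/2$ for every $v \in V(B)\setminus\{c\}$, but $d_B(c)$ can be as small as $2$, so $\delta(B)$ can be $2$ and Dirac's circumference bound $\min(2\delta(B),|B|)$ degenerates to $4$; in particular the clause ``provided $|B|\geq d$ \ldots\ Dirac applied to $B$ delivers the long cycle'' is false as written, since when $|B|$ is large the bound is controlled by $\delta(B)$, not by $|B|$. The detachment bookkeeping is also not routine: from $e(B)+e(G')=e(G)\geq d(|B|+|G'|-1)/2$ with $G'=G-(V(B)\setminus\{c\})$ one only gets that \emph{either} $d(G')\geq d$ (fine, induct on $G'$) \emph{or} $e(B)\geq d(|B|-1)/2$, and the latter yields only $d(B)\geq d-d/|B|\geq d-1$, so the hypothesis with the same $d$ does not propagate to $B$. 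Your fallback is the correct fix and is essentially the classical proof: show by induction on $n$ that a graph with no cycle of length at least $d$ has at most $(d-1)(n-1)/2$ edges. The factor $(n-1)$ rather than $n$ is precisely what repairs the block analysis, since the bound is additive over blocks glued at a cut vertex ($(n_1-1)+(n_2-1)=n-1$), reducing everything to the $2$-connected case, where Dirac's bound forces a vertex of degree at most $(d-1)/2$ and the deletion step finishes the induction; as $e(G)\geq dn/2>(d-1)(n-1)/2$, the theorem follows. I would discard the first route and write out the second.
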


We will need the following theorem of P\'osa, which gives a sufficient condition on the degree sequence of a graph for the presence of a Hamiltonian cycle. (Note that there are several strengthenings of this result, but the version below suffices for our purposes.)

\begin{thm}[\cite{posathm}]\label{posa}
	Let $G$ be a graph on $n \geq 3$ vertices with degree sequence $d_1 \leq d_2 \leq \dots \leq d_n$.
	If $d_i \geq i+1$ for all $i < (n-1)/2$ and if additionally $d_{\lceil n/2 \rceil} \geq \lceil n/2 \rceil$ when $n$ is odd, then $G$ contains a Hamiltonian cycle.
\end{thm}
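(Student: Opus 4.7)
The plan is to prove this classical result by contradiction, combining Ore's closure idea with a standard path-rotation observation. Suppose $G$ satisfies the degree condition but contains no Hamilton cycle, and pass to a spanning edge-maximal non-Hamiltonian supergraph $G^*$ of $G$. Adding edges cannot decrease any entry of the sorted degree sequence (if $i$ vertices had new degree $< d_i$, their old degrees would also be $< d_i$, contradicting the definition of $d_i$), so $G^*$ still satisfies the hypothesis and we may assume $G$ itself is edge-maximal non-Hamiltonian.

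Edge-maximality means that for every non-adjacent pair $u,v$, the graph $G + uv$ contains a Hamilton cycle, which must use $uv$, so $G$ contains a Hamilton $u$-$v$-path $w_1 = u, w_2, \ldots, w_n = v$. The standard observation is that if some index $i$ satisfied both $uw_{i+1} \in E(G)$ and $w_iv \in E(G)$, then $u w_2 \ldots w_i v w_{n-1} \ldots w_{i+1} u$ would be a Hamilton cycle; hence the sets $A := \{i : uw_{i+1} \in E(G)\}$ and $B := \{i : w_i v \in E(G)\}$ are disjoint subsets of $\{1, \ldots, n-1\}$, giving
\[
d_G(u) + d_G(v) = |A| + |B| \leq n - 1.
\]

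To close the argument, I would choose a non-adjacent pair $u,v$ maximising $d(u)+d(v)$ subject to $d(u) \leq d(v)$, and set $k := d(u)$, so $k \leq (n-1)/2$. By the maximality of the pair, every vertex $w \neq v$ non-adjacent to $v$ must satisfy $d(w) \leq k$; since there are at least $n - 1 - d(v) \geq k$ such vertices, we obtain $d_k \leq k$. If $k < (n-1)/2$, this directly contradicts the hypothesis $d_k \geq k+1$. The remaining case $k = (n-1)/2$ forces $n$ odd and $d(u) = d(v) = k$; including $v$ itself among the low-degree vertices we find $k+1 = \lceil n/2 \rceil$ vertices of degree at most $(n-1)/2 < \lceil n/2 \rceil$, contradicting the extra hypothesis $d_{\lceil n/2 \rceil} \geq \lceil n/2 \rceil$. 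The main subtlety is precisely this boundary case, which accounts for why the additional condition for odd $n$ is required; the other technical point is verifying that the sorted degree sequence is monotone under the closure operation.
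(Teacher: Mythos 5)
Your proof is correct. The paper does not actually prove Theorem~\ref{posa} (it is quoted from P\'osa's original article without proof), so there is nothing to compare against; your argument is the standard one via an edge-maximal non-Hamiltonian supergraph, the disjointness of the chord sets $A$ and $B$ along a Hamiltonian $u,v$-path, and the extremal choice of the non-adjacent pair, with the boundary case $k=(n-1)/2$ correctly identified as the reason for the extra hypothesis when $n$ is odd. The only point left implicit is that $k\geq 1$ (so that $d_k$ is defined), which holds since for $n\geq 4$ the hypothesis gives $d_1\geq 2$, while for $n=3$ the hypothesis forces $G=K_3$.
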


We will need the following easy bounds on certain binomial coefficients.

\begin{prop}\label{binom bounds}
Suppose that $0 < 1/n \ll 1$ where $n \in \mathbb{N}$. Then 
	$$
	\binom{n}{\lfloor n/4-1  \rfloor} \geq 2^{4n/5}\quad \text{and} \quad \binom{n}{2\lfloor n/4-1 \rfloor} \geq  2^{n - \log_2{n}}.
	$$
\end{prop}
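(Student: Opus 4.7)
The plan is to derive both inequalities from standard asymptotic estimates for binomial coefficients, then to observe that the $O(1)$ shifts from the central values $\lfloor n/4\rfloor$ and $\lfloor n/2 \rfloor$ cost only a bounded multiplicative factor, which is absorbed by the slack in the exponents.

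First I would handle the first bound. Let $H(x) = -x\log_2 x - (1-x)\log_2(1-x)$ denote the binary entropy function. A direct computation gives
\[
H(1/4) = 2 - \tfrac{3}{4}\log_2 3 = 0.8112\ldots > \tfrac{4}{5}.
\]
The standard entropy bound $\binom{n}{\lfloor \alpha n \rfloor} \geq 2^{nH(\alpha)}/(n+1)$, valid for $0<\alpha<1$, therefore gives
\[
\binom{n}{\lfloor n/4\rfloor} \geq \frac{2^{0.811\, n}}{n+1} \geq 2^{4n/5+n/200}
\]
for $n$ large enough. Passing from $\lfloor n/4 \rfloor$ to $\lfloor n/4 - 1\rfloor$ costs at most a factor $\binom{n}{\lfloor n/4\rfloor}/\binom{n}{\lfloor n/4\rfloor - 1} = (n - \lfloor n/4 \rfloor + 1)/\lfloor n/4 \rfloor \leq 4$, so the inequality $\binom{n}{\lfloor n/4 - 1\rfloor} \geq 2^{4n/5}$ survives with considerable room for all sufficiently large $n$.

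For the second bound, let $k := 2\lfloor n/4 - 1 \rfloor$, so $k = \lfloor n/2 \rfloor - c$ for some $c \in \{0,1,2,3\}$ depending on the residue of $n$ modulo $4$. By Stirling's formula, for $n$ large,
\[
\binom{n}{\lfloor n/2\rfloor} \geq \frac{2^n}{2\sqrt{n}}.
\]
Since the ratio $\binom{n}{j+1}/\binom{n}{j} = (n-j)/(j+1)$ lies in the interval $[1/2, 2]$ for all $j$ within $O(1)$ of $n/2$, we obtain $\binom{n}{k} \geq 2^{-c}\binom{n}{\lfloor n/2\rfloor} \geq 2^n / (16\sqrt{n})$. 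Taking logarithms, $\log_2 \binom{n}{k} \geq n - \tfrac{1}{2}\log_2 n - 4 \geq n - \log_2 n$ once $n$ is large enough that $\tfrac{1}{2}\log_2 n \geq 4$, which is guaranteed by $1/n \ll 1$.

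There is no genuine obstacle; the only thing to watch is that the floor functions $\lfloor n/4 - 1 \rfloor$ and $2\lfloor n/4 - 1 \rfloor$ each differ from the respective central value by an additive $O(1)$, and this discrepancy is swallowed by the polynomial prefactors appearing in the standard entropy and Stirling bounds, since in each case the exponent we are asked to beat is strictly below the leading order behaviour of the left-hand side.
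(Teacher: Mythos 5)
Your proof is correct and follows essentially the same route as the paper: both arguments are direct Stirling-type estimates (the entropy bound $2^{nH(1/4)}/(n+1)$ is the same computation as the paper's bound on $\binom{4m}{m}$, and your central-binomial estimate matches its bound on $\binom{2m'}{m'}$), with the $O(1)$ floor offsets absorbed into polynomial prefactors. The only cosmetic difference is that the paper handles those offsets by shrinking the upper index (bounding $\binom{n}{m}$ below by $\binom{4m}{m}$ and $\binom{n}{m'}$ by $\binom{2m'}{m'}$), whereas you shift the lower index via ratios of adjacent binomial coefficients; both are fine.
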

\begin{proof}
	Stirling's formula implies that, for all $m \geq 1$ and $k \geq 2$, we have
	$$
	\sqrt{m}\binom{km}{m} \geq \frac{k^{k(m-1)+1}}{(k-1)^{(k-1)(m-1)}}.
	$$
	So, setting $m := \lfloor  n/4-1 \rfloor$, since $1/n\ll 1$, we have
	$$
	\binom{n}{\lfloor  n/4-1  \rfloor} \geq \binom{4m}{m} \geq \frac{4^{4m-3}}{\sqrt{m} \cdot 3^{3m-3}} = 2^{(8-3\log_2 3)m - 6+3\log_2 3 - (\log_2 m)/2} \geq 2^{3.24 m} \geq 2^{4n/5}.
	$$
	Now set $m' := 2\lfloor  n/4-1  \rfloor$. Then
	$$
	\binom{n}{2\lfloor  n/4-1  \rfloor} \geq \binom{2m'}{m'} \geq 2^{2m'-1-(\log_2 m')/2} \geq 2^{n-\log_2 n},
	$$
	as required.
\end{proof}



\section{The dense case}\label{dense}

Our main aim in this section is, very roughly speaking, to prove Theorem~\ref{main} in the case when $G$ is an $n$-vertex graph with average degree at least $d$ and large minimum degree which is \emph{dense} in the sense that $d$ is linear in $n$.
The two main results of this section are the following lemmas.
Given distinct vertices $x,y\in V(G)$, denote by $p_{xy}(G)$ the number of vertex subsets $U$ containing $\lbrace x,y\rbrace$ such that $G[U]$ contains a spanning $x,y$-path, which is precisely the number of distinguishable $x,y$-paths.

\begin{lemma}\label{lem: small graph}
Suppose $0< 1/d \ll \alpha \leq 1$ and that $n \in \mathbb{N}$ satisfies $d+1\leq n\leq 1.2 d$. 
If $G$ is an $n$-vertex graph with $d(G)\geq d$ and $\delta(G)\geq d/2$, 
then
\begin{itemize}
\item[(i)] $c(G)\geq (1-\alpha/2) \cdot 2^{n}$;
\item[(ii)] if $x,y \in V(G)$ are distinct, then $p_{xy}(G)\geq (1-\alpha/2) \cdot 2^{n-2}$.
\end{itemize} 
\end{lemma}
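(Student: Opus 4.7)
The plan is to sample a subset $V' \subseteq V(G)$ uniformly at random by including each vertex independently with probability $1/2$, verify that $G[V']$ contains a Hamilton cycle with probability at least $1-\alpha/2$ via P\'osa's theorem (Theorem~\ref{posa}), and observe that this directly yields $c(G) \geq (1-\alpha/2) \cdot 2^n$, proving (i). For (ii) I will run the same argument on a sample that always contains $x$ and $y$, and reduce the search for a spanning $x,y$-path in $G[V']$ to a Hamilton cycle problem by appending an auxiliary vertex joined only to $\{x,y\}$.

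The first step is to exploit the density hypothesis. Since $n \leq 1.2d$, we have $d \geq 5n/6$, so $\sum_{v} d_G(v) \geq 5n^2/6$; combined with $d_G(v) \leq n-1$, a one-line counting argument shows that the set $H := \{v \in V(G) : d_G(v) \geq 0.55n\}$ satisfies $|H| \geq 0.62n$ for $n$ large. Applying the Chernoff bound (Proposition~\ref{Chernoff Bounds}) to $|V'|$, to $d_{G[V']}(v)$ for each $v \in V(G)$, and to $|V' \cap H|$, followed by a union bound, except on an event of probability at most $\alpha/2$ (provided $d$ is sufficiently large) the sample $V'$ is \emph{typical}: $|V'| = n/2 \pm \sqrt{n}\log n$, $d_{G[V']}(v) = d_G(v)/2 \pm \sqrt{n}\log n$ for every $v \in V(G)$, and $|V' \cap H| \geq 0.3n$. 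Writing $n' := |V'|$, for any typical $V'$ every $v \in V' \cap H$ has $d_{G[V']}(v) \geq 0.275n - \sqrt{n}\log n > n'/2 + 1$, so at most $n' - |V' \cap H| \leq 0.2n + O(\sqrt{n}\log n)$ entries of the sorted degree sequence $d_1 \leq \cdots \leq d_{n'}$ of $G[V']$ are $\leq n'/2$; meanwhile every entry is at least $\delta(G[V']) \geq d/4 - \sqrt{n}\log n \geq 5n/24 - \sqrt{n}\log n$. Since $5/24 > 1/5$, P\'osa's condition $d_i \geq i+1$ for $i < (n'-1)/2$ is enforced by the uniform lower bound when $i \leq 0.2n + O(\sqrt{n}\log n)$ and by the high-degree bound otherwise, so $G[V']$ is Hamiltonian, proving (i).

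For (ii) I would sample $V'$ by including $x,y$ deterministically and each other vertex with probability $1/2$, giving uniformity over the $2^{n-2}$ subsets containing $\{x,y\}$. Add a new vertex $z$ adjacent only to $x$ and $y$, and let $G^+$ be the resulting graph on $n'+1$ vertices; any Hamilton cycle of $G^+$ must use both edges $zx$ and $zy$, so its restriction to $G[V']$ is a spanning $x,y$-path. The degree sequence of $G^+$ gains one new entry $d_{G^+}(z) = 2$ and has the entries for $x,y$ each raised by $1$, so the P\'osa verification from (i) transfers verbatim, with $d_1 \geq 2$ handling the smallest index. The main obstacle in both parts is purely numerical: the lowest-degree vertices of $G$ have degree only $\approx d/2 \geq 5n/12$, so in $G[V']$ their degree drops to $\approx 5n/24$, barely below the threshold $n'/2 \approx n/4$ required by P\'osa. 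The high-degree threshold defining $H$ (taken to be $0.55n$) has to be small enough that $|H|$ comfortably exceeds the typical low-degree count in $V'$, and simultaneously large enough that each $v \in H$ has $d_{G[V']}(v)$ safely above $n'/2$; this small-but-fixed slack is the only delicate point in the argument.
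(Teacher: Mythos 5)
Your proposal is correct and follows essentially the same strategy as the paper's proof: a uniform random sample $V'$, Chernoff concentration for $|V'|$, the degrees into $V'$, and the intersection with a high-degree set, then P\'osa's theorem, and for (ii) the same auxiliary vertex $z$ joined to $x$ and $y$. The only differences are cosmetic choices of constants (the paper uses the threshold $2d/3\ge 5n/9$ and deviation $d^{2/3}$ where you use $0.55n$ and $\sqrt{n}\log n$), and your numerical verification of the P\'osa condition goes through.
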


\begin{lemma}\label{lem: 1.2 to K}
Suppose $0< 1/d \ll 1/K \leq 1$ and that $n\in \mathbb{N}$ satisfies $1.19d \leq n \leq K d$.
Let $G$ be a $2$-connected $n$-vertex graph with $d(G) \geq d$ and $\delta_2(G)\geq d/2$. 
If $x,y$ are two distinct vertices of $G$, then
\begin{eqnarray*}
c(G)> 2^{(1+ \frac{1}{200})d}\ \text{ and }\ p_{xy}(G) > 2^{0.89d}.
\end{eqnarray*}
\end{lemma}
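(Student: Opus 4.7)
The plan is to follow the dense-case strategy outlined in Section~\ref{sec: sketch}. First I would apply Szemer\'edi's regularity lemma to $G$ to obtain an $\eps$-regular partition into clusters $V_1,\dots,V_r$ of a common size $m$ together with a small exceptional set $V_0$, and form the reduced graph $R$ on $[r]$ whose edges are pairs of density at least some fixed constant. A standard edge-count shows $d(R)\ge d':=(1-o(1))dr/n$ and, because of the $\delta_2(G)\ge d/2$ hypothesis, $\delta_2(R)\ge d'/2$. The key quantitative feature is that $n\ge 1.19d$ gives $r\ge 1.18 d'$, that is, a strict linear gap between $|R|$ and $d'$. Such a gap is essential for the structural lemma below and is precisely the reason Lemma~\ref{lem: small graph} (covering $n\le 1.2d$) must be handled separately.

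The first main step is a purely combinatorial lemma (Lemma~\ref{lem: sun}) asserting that any graph $R$ with $|R|\ge 1.18 d'$, $d(R)\ge d'$ and $\delta_2(R)\ge d'/2$ contains at least one of the following: (i) two vertex-disjoint cycles whose total length is at least $1.8d'$; (ii) a path of length $(1+\tfrac{1}{100})d'$; or (iii) a sufficiently large sun with large corona. I would prove this by iterating Erd\H os--Gallai (Theorem~\ref{lem: cycle av deg}) together with Dirac-type rotation arguments: Theorem~\ref{lem: cycle av deg} immediately gives one cycle of length $\ge d'$, and the `slack' $r-d'\ge 0.18 d'$ is then exploited either to find a second disjoint long cycle, to extend the cycle into a longer path, or, when neither is possible, to identify a sun structure where the corona is forced by the density surplus.

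The second step converts each structure $Q\subseteq R$ into many Hamiltonian subsets of $G$ via the Blow-up lemma. After a standard cleaning step (discarding $o(m)$ vertices from each $V_i$ with $i\in V(Q)$) we obtain clusters $U_i$ with $G[U_i,U_{i+1}]$ super-regular along $Q$ and with inherited minimum degree. For each $i\in V(Q)$ we then choose a subset $V_i'\subseteq U_i$ of size $m/2$; a Chernoff argument guarantees that most choices preserve super-regularity, and the Blow-up lemma produces a spanning cycle (or spanning path, in the sun/path cases) inside $\bigcup_{i\in V(Q)}V_i'$. The number of Hamiltonian subsets obtained is at least
\[
(1-o(1))\binom{m}{m/2}^{|V(Q)|}\ge 2^{(1-o(1))|V(Q)|\,m}\ge 2^{(1+1/200)d},
\]
because in each of the three cases $|V(Q)|\cdot m\ge (1+\tfrac{1}{100})d$. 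For the $p_{xy}$ estimate I would first absorb $x$ and $y$ into the exceptional set, then use $\delta_2(G)\ge d/2$ together with the $2$-connectivity of $G$ to attach $x$ and $y$ to two (possibly coincident) clusters of $V(Q)$ via short internally-disjoint paths, and invoke a rooted version of the Blow-up lemma to produce $x,y$-Hamiltonian paths. The mild loss of an additive $o(d)$ in the exponent from endpoint handling gives the weaker bound $p_{xy}(G)>2^{0.89 d}$.

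The main obstacle is case (iii): the sun structure requires the Blow-up lemma to route a spanning cycle through both the central cycle and the petals of the corona simultaneously, which is not a direct application of the cycle-blow-up lemma. Handling it cleanly requires either a sun-specific embedding argument or a reduction that contracts each corona petal (a matching-like attachment) into its base cluster so that the problem becomes embedding a spanning cycle in a suitable cycle-blow-up whose clusters are of slightly unequal size. The $p_{xy}$ clause is a secondary difficulty, because attaching low-degree vertices $x$ or $y$ to $Q$ is only possible under the $\delta_2$ and $2$-connectivity hypotheses, which is precisely why they appear in the statement.
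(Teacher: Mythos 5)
Your overall architecture --- regularity lemma, inheritance of the degree conditions by the reduced graph, Lemma~\ref{lem: sun} to extract one of the three structures, a random-subset/Blow-up counting argument, and $2$-connectivity to route $x,y$ into the regular part --- is exactly the paper's. However, two steps as written would fail.

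First, the counting step. You take every cluster $V_i'$ of size $m/2$ and assert that the Blow-up lemma yields a spanning cycle (or spanning path) of $\bigcup_{i\in V(Q)}V_i'$, giving $\binom{m}{m/2}^{|V(Q)|}\approx 2^{|V(Q)|m}$ Hamiltonian subsets. This is false when $Q$ is a path or a sun: the \emph{complete} blow-up of a path $v_1\cdots v_a$ with $a\ge 3$ and equal part sizes $s$ contains no spanning path (the second part would have to absorb at least $2s-2$ path-edges from the first part, leaving at most $2$ for the third, and the same constraint propagates), so no super-regular blow-up can contain one either, and the Blow-up lemma cannot rescue this. One must take the clusters of the path-endpoints (resp.\ of the corona of the sun) to have roughly \emph{half} the size of the remaining clusters; this is precisely the role of Proposition~\ref{homomorphism} and Lemma~\ref{blowuppath} in the paper, and it changes the count from $2^{(|N_1|+|N_2|)m}$ to roughly $2^{(4|N_1|/5+|N_2|)m}$, since $\binom{m}{m/4}$ is only about $2^{0.81m}$. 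You flag the need for unequal sizes for the sun but not for the path, and your displayed inequality is computed with the wrong sizes. The final bounds do survive the correction (for the sun one gets $a+3b/5\ge(1+3/100)d'$, for the path $|H|-2/5$), but the step has to be redone with the correct weighting.

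Second, case (i). When Lemma~\ref{lem: sun} returns two vertex-disjoint cycles $C_1,C_2$ with $|C_1|+|C_2|\ge 1.8d'$, there is no spanning cycle of $\bigcup_{i\in V(C_1)\cup V(C_2)}V_i'$ at all --- the regular pairs only run along each cycle, so the chosen set is essentially disconnected across the two cycles --- and a single cycle only yields about $2^{0.9d}$ subsets, short of $2^{(1+1/200)d}$. The paper's missing idea here is multiplicative: use $2$-connectivity to find two vertex-disjoint paths joining the two cycle-blow-ups, so that every pair consisting of a distinguishable spanning path in each blow-up closes into a distinguishable cycle, giving $c(G)\ge p_{x'y'}(G'[V'])\cdot p_{u'v'}(G'[U'])\ge 2^{1.7d}$. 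This is also why the $p_{xy}$ bound is only $2^{0.89d}$: an $x,y$-path can exploit only one of the two cycles, not because of any $o(d)$ loss from endpoint handling. A final small point: Lemma~\ref{lem: sun} needs $\delta(R)\ge d'/2$, not $\delta_2(R)\ge d'/2$; Claim~\ref{inherit} shows the reduced graph inherits the full minimum degree bound because the at most one low-degree vertex of $G$ landing in a cluster is negligible against $m$.
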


Lemma~\ref{lem: small graph} addresses the case when $G$ is almost complete, i.e.~the number of vertices is at most $1.2d$.
Its proof uses Theorem~\ref{posa} and a probabilistic argument and appears in Section~\ref{proof: small graph}.

Lemma~\ref{lem: 1.2 to K} addresses the case when our graph $G$ on $n$ vertices has average degree linear in $n$ but also bounded away from $n$. In this case, we apply the Regularity lemma and use Lemma~\ref{lem: sun} to find in the reduced graph either (i) two vertex-disjoint cycles $C_1,C_2$; (ii) a sun $S$ (see Section~\ref{sec: suns} for the definition); or (iii) a path $P$; the clusters of which, in the original graph $G$, span at least $(1+ 1/100)d$ vertices.  Then, using a probabilistic argument, we show that at least $2^{(1+ 1/200)d}$ subsets of the vertices lying in clusters of $C_1 \cup C_2$, $S$ or $P$ span a cycle.
We use similar arguments to show that $p_{xy}(G)$ is large for any distinct $x,y \in V(G)$.
The proof requires some more sophisticated tools and auxiliary results (Sections~\ref{sec: suns} to~\ref{sec: reg}), so we defer its proof to Section~\ref{proof: 1.2 to K}.

\subsection{The proof of Lemma~\ref{lem: small graph}}\label{proof: small graph}

We do not need any additional tools to prove our first main lemma, and so we proceed immediately with the proof.

\medskip
\noindent
\emph{Proof of Lemma~\ref{lem: small graph}.}
Note that $\sum_{v\in V(G)} d_{G}(v) \geq dn$ and $d_{G}(v)\geq d/2$ for all $v\in V(G)$. Let 
$$U := \{ v \in V(G): d_{G}(v)\geq 2d/3\}.$$
Then we get 
$$
dn \leq \sum_{v\in V(G)} d_{G}(v) \leq n |U| +   2d(n-|U|)/3,
$$
and so, using the fact that $n \leq 1.2 d$,
\begin{eqnarray*}
|U| \geq \frac{dn/3}{(n-2d/3)} \geq \frac{5n}{8}.
\end{eqnarray*}

To prove (i), choose a set $V'\subseteq V(G)$ uniformly at random by including each $v \in V'$ with probability $1/2$, independently of all other choices. Then for every $v\in V(G)$ we have
\begin{eqnarray*}
\mathbb{E}[|V'|] = n/2;\quad\mathbb{E}[|V'\cap U|] = |U|/2;\quad\text{and}\quad\mathbb{E}[ d_G(v,V')] = d_{G}(v)/2.
\end{eqnarray*}
Consider the following properties:
\begin{itemize}
\item[(A1)] $n':=|V'| = n/2 \pm d^{2/3}$;
\item[(A2)] $|U\cap V'| \geq \frac{5n}{16} - d^{2/3}$;
\item[(A3)] for all $v\in V(G)$ we have $d_{G}(v,V') \geq d_{G}(v)/2-d^{2/3}$.
\end{itemize}
Then Proposition~\ref{Chernoff Bounds} implies that the probability that, when $1/d\ll \alpha$, all of (A1)--(A3) hold is at least
\begin{eqnarray*}
1- \left( 2e^{-\frac{2d^{4/3}}{3n}}+2e^{-\frac{16d^{4/3}}{15n}} +\sum_{v\in V(G)}2e^{-\frac{2d^{4/3}}{3d_G(v)}}\right) &\geq&  1- e^{-d^{1/4}} \geq  1-\frac{\alpha}{2},
\end{eqnarray*}
where the penultimate inequality follows from $d_G(v)<n\le 1.2d$ for all $v$.

Now, for a given set $V'$ of size $n'$ which satisfies (A1)--(A3), let $d_1\leq \dots \leq d_{n'}$ be the degree sequence of the induced graph $G[V']$. We claim that this degree sequence satisfies the conditions of Theorem~\ref{posa}, and hence $G[V']$ is Hamiltonian.
To see this, it is sufficient to find an integer $k$ with $2 \leq k < n'/2$ such that $d_1 \geq k$ and $d_k > n'/2$.
We claim that $k:= 19n'/48$ suffices.

Indeed, (A1), (A3) and the fact that $\de(G)\ge d/2$, $n\le 1.2d$ and $1/d\ll 1$ imply that 
\begin{eqnarray*}
d_1 \geq \frac{d}{4} - d^{2/3} \geq \frac{5n}{24}-d^{2/3} \geq \frac{5n'}{12}-2d^{2/3} > k.
\end{eqnarray*}
Also, (A1) and (A2) imply that $|U\cap V'| \geq 5n'/8 - 2d^{2/3}$, and (A3) together with the definition of $U$ implies that, for all $v \in U\cap V'$, we have 
$$d_{G}(v,V') \geq \frac{d}{3} - d^{2/3} \geq \frac{5n'}{9}-2d^{2/3} > \frac{n'}{2}.$$ 
Thus at least 
$$|U\cap V'|\ge \frac{5n'}{8}-2d^{2/3} > \frac{29n'}{48}+1=n' - k+1$$ 
vertices in $V'$ have degree at least $n'/2$, and so $d_{k} > n'/2$, as required.

We have shown that, for at least $(1-\alpha/2) \cdot 2^{n}$ distinct subsets $V'\subseteq V(G)$, we have that $G[V']$ is Hamiltonian.
Thus $c(G)\geq (1-\alpha/2) \cdot 2^{n}$, proving (i).

For part (ii), the argument is very similar.
Choose a set $V'\subseteq V(G)$ uniformly at random among all sets $V'$ containing both $x$ and $y$ (i.e.~include each of $x,y$ with probability one and every other vertex $v$ with probability $1/2$, independently of all other choices).
A near identical argument shows that (A1)--(A3) hold with probability at least $1-\alpha/2$.
Now obtain $G'$ from $G[V']$ by adding a new vertex $z$ and the edges $xz,yz$.
We now have that 
$$
d_1 = d_{G'}(z) = 2; \quad d_2 \geq k; \quad \text{and}\quad d_k > n'/2.
$$
Theorem~\ref{posa} implies that $G'$ contains a Hamiltonian cycle $C$.
Since $d_{G'}(z) = 2$, $C$ contains the edges $xz,yz$.
Thus $C$ without these edges is a path with endpoints $x,y$ which spans $V'$.
There are $2^{n-2}$ subsets of $V(G)$ containing both $x$ and $y$; so 
for at least $(1-\alpha/2) \cdot 2^{n-2}$ choices of $V'$, we have such a path.
Thus $p_{xy}(G)\geq (1-\alpha/2) \cdot 2^{n-2}$, proving (ii).
\hfill$\square$

\subsection{Suns, paths and cycles}\label{sec: suns}

Let $a,b \in \mathbb{N}$ and consider a sequence $I = (i_1,\ldots,i_b)$ with $1\leq i_1 < i_2 <\dots < i_b \leq a$ such that $i_{j} - i_{j-1} \geq 2$ and $i_1 +a-i_b \geq 2$. 
Define a graph $S$ as follows. Let $V(S) := \{x_1,\dots, x_a\} \cup \{ y_i : i \in I\}$, $x_{a+1}:=x_1$ and $x_{0} := x_a$.
Let
$$
E(S):= \{ x_i x_{i+1} : i\in [a] \} \cup \{ x_{i-1}y_{i}, y_{i} x_{i+1} : i\in I\}.
$$
We call such a graph $S$ an {\em ($a$,$b$)-sun}.
Note that by definition $b\le a/2$. The set of vertices remaining after deleting $\lbrace x_i : i \in I \rbrace$ from $S$ induce a cycle in $S$; and similarly after deleting $\lbrace y_i: i \in I \rbrace$.
We call $\Cor(S) := \lbrace x_i,y_i : i \in I \rbrace$ the \emph{corona} of $S$.
See Figure~\ref{figsun} for an illustration of a sun.

\begin{figure}
\tikzstyle{every node}=[circle, draw, fill=black,
                        inner sep=0pt, minimum width=4pt]
\begin{tikzpicture}[scale=0.6]
    \draw \foreach \x in {0,10,...,350}
    {
        (\x:4) node {}  -- (\x+10:4)
    };

    \draw (\k1:4) -- (\k1+10:5) node[label=\k1:$y_{i_1}$] {} -- (\k1+20:4);
    \draw (\k2:4) -- (\k2+10:5) node[label=\k2:$y_{i_2}$] {} -- (\k2+20:4);
    \draw (\k3:4) -- (\k3+10:5) node[label=\k3:$y_{i_3}$] {} -- (\k3+20:4);
    \draw (\k4:4) -- (\k4+10:5) node[label=\k4:$y_{i_4}$] {} -- (\k4+20:4);
    \draw (\k5:4) -- (\k5+10:5) node[label=\k5:$y_{i_5}$] {} -- (\k5+20:4);
    \draw (\k6:4) -- (\k6+10:5) node[label=\k6:$y_{i_6}$] {} -- (\k6+20:4);
    \draw (\k7:4) -- (\k7+10:5) node[label=\k7:$y_{i_7}$] {} -- (\k7+20:4);
    \draw (\k8:4) -- (\k8+10:5) node[label=\k8:$y_{i_8}$] {} -- (\k8+20:4);
    \draw (\k9:4) -- (\k9+10:5) node[label=\k9:$y_{i_9}$] {} -- (\k9+20:4);
    
    \node[label=(\k1+180):$x_{i_1}$] at (\k1+10:4) {};
    \node[label=(\k2+180):$x_{i_2}$] at (\k2+10:4) {};
    \node[label=(\k3+180):$x_{i_3}$] at (\k3+10:4) {};
    \node[label=(\k4+190):$x_{i_4}$] at (\k4+10:4) {};
    \node[label=(\k5+180):$x_{i_5}$] at (\k5+10:4) {};
    \node[label=(\k6+180):$x_{i_6}$] at (\k6+10:4) {};
    \node[label=(\k7+180):$x_{i_7}$] at (\k7+10:4) {};
    \node[label=(\k8+180):$x_{i_8}$] at (\k8+10:4) {};
    \node[label=(\k9+180):$x_{i_9}$] at (\k9+10:4) {};
\end{tikzpicture}
\caption{A $(36,9)$-sun with labelled corona.}
\label{figsun}
\end{figure}

In the next lemma, we show that any graph $G$ with average degree at least $d$ and minimum degree at least $d/2$, and whose vertex set has size at least slightly larger than $d$, contains either (i) two vertex-disjoint cycles, the sum of whose lengths is large; (ii) a long path; or (iii) a large sun with large corona.
Later we will use Lemma~\ref{lem: sun} to find these structures in a reduced graph obtained after applying the Regularity lemma.

\begin{lemma}\label{lem: sun}
	Let $n\geq 1.18 d$ and let $G$ be an $n$-vertex graph with $d(G)\geq d$ and $\delta(G) \geq d/2$.
	Then $G$ contains at least one of the following:
	\begin{enumerate}
		\item[(i)] \label{item-sub1} two vertex-disjoint cycles $C_1,C_2$ with $|C_1|+|C_2| \geq 1.8d$;
		\item[(ii)] \label{item-sub2} a path $P$ with $|P| \geq (1+ 1/100)d$;
		\item[(iii)] \label{item-sub3} an $(a,b)$ sun with $a\geq d$ and $b\geq d/20$.
	\end{enumerate}
\end{lemma}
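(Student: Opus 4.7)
The plan is to start with a longest cycle $C = v_1 \ldots v_c$ in $G$; by Theorem~\ref{lem: cycle av deg}, $c \geq d$. If $c \geq (1+\tfrac{1}{100})d$, then $C$ minus any edge is a path satisfying conclusion (ii). So assume $c < (1 + \tfrac{1}{100})d$, and put $U := V(G) \setminus V(C)$. The hypothesis $n \geq 1.18d$ then gives $|U| > 0.17d$. The maximality of $C$ yields the crucial observation that, for every $u \in U$, the set $S_u := N_G(u) \cap V(C)$ is an independent set in the cycle $C$: otherwise $u$ could be spliced into $C$ between two consecutive vertices of $S_u$, producing a longer cycle.

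The next step is to look for a second cycle disjoint from $C$. If $G[U]$ contains a cycle $C'$ with $|C'| \geq 0.8d$, then $|C| + |C'| \geq 1.8d$ and conclusion (i) holds. Otherwise, Theorem~\ref{lem: cycle av deg} applied to every subgraph of $G[U]$ shows that each such subgraph has average degree less than $0.8d$, so $G[U]$ is relatively sparse.

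For conclusion (iii), I would use $C$ itself as the cycle $x_1 \ldots x_a$ of the sun (with $a = c \geq d$) and build the corona using vertices of $U$. For each $u \in U$, call $i \in [c]$ a \emph{$y$-slot of $u$} if $v_{i-1}, v_{i+1} \in S_u$; this is exactly the condition for $u$ to play the role of $y_i$. A gap-counting argument exploits the independence of $S_u$ in $C$: if $|S_u| = k$, then the $k$ arcs of $C$ between consecutive elements of $S_u$ each have length $\geq 2$ and sum to $c$, so at least $\max(0, 3k - c)$ of them have length exactly $2$, each contributing a $y$-slot. The minimum degree condition $\delta(G) \geq d/2$ gives $|S_u| \geq d/2 - d_{G[U]}(u)$ for every $u$, and using the sparsity of $G[U]$ one shows that many $u \in U$ have $|S_u|$ comfortably exceeding $c/3$ and hence have positive $y$-slot count. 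To assemble the sun, I would greedily pick $b \geq d/20$ pairs $(u_j, i_j)$ with distinct $u_j$'s and with $i_{j+1} - i_j \geq 2$ cyclically; this succeeds because $b$ is tiny compared to both $|U|$ and the number of available $y$-slots, so removing the forbidden positions after each selection leaves plenty of room.

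The main obstacle is the counting in the last step: translating the rough degree and sparsity bounds into a guarantee that enough $u \in U$ have $|S_u| > c/3$ while simultaneously having $y$-slots well-spread on $C$. When $|U|$ is close to the lower bound $0.17d$, the parameters are tight since $|S_u|$ and $c/3$ have comparable sizes, and one must use both the degree condition and the upper bound $|S_u| \le c/2$ (from independence) carefully. When $|U|$ is much larger, $G[U]$ could contain many short cycles which complicate the sparsity bound, and one may also wish to draw $y$-vertices from within $V(C)$ using chords of $C$. A Hall/defect-style matching argument on the bipartite graph of $u$-to-$y$-slot incidences, combined with the spreading constraint, should complete the construction.
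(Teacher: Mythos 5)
There is a genuine gap at the point where you dismiss the case that $G[U]$ has no cycle of length at least $0.8d$ by declaring $G[U]$ ``relatively sparse''. All that Theorem~\ref{lem: cycle av deg} gives you there is $d(G[U]) < 0.8d$, and an average degree just below $0.8d$ is nowhere near sparse enough for the sun construction: a vertex $u \in U$ with $\delta(G)\ge d/2$ could have \emph{all} of its $\approx d/2$ neighbours inside $U$, so that $S_u = \emptyset$, and your gap-counting bound $p_2 \ge 3|S_u| - c$ (which is correct in itself) requires $|S_u| > c/3 \approx 0.34d$ for many vertices $u$. Your closing paragraph acknowledges this as ``the main obstacle'' but does not resolve it, and in the regime $d/100 \lesssim d(G[U]) < 0.8d$ neither conclusion (i) nor conclusion (iii) is reachable by the argument as written.

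The paper's resolution is the idea you are missing: if $G[U]$ contains a cycle $C'$ of even moderate length (at least $d/100$), one does not try to use $C$ and $C'$ as the two disjoint cycles of conclusion (i); instead one joins them by a path (inside a connected component, chosen at the outset to have average degree at least $d$) to produce a single path of length at least $|C|+|C'| \ge (1+\tfrac{1}{100})d$, i.e.\ conclusion (ii). This lowers the threshold from $0.8d$ to $d/100$, after which $d(H[U]) < d/100$ really is sparse: a counting argument then yields at least $d/20$ vertices $z\in U$ with $d(z,U) \le d/40$, hence $|S_z| \ge d/2 - d/40 = 0.47d$, comfortably above $a/3$, and the greedy selection of corona positions goes through exactly as you describe. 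Note also that this path-joining step needs connectivity, which is why the paper first passes to a component $H$ with $d(H)\ge d$ and separately disposes of the case where the rest of the graph is dense (two disjoint cycles) or where $H$ is too small a fraction of $G$; your proposal works in $G$ directly and would need the same preliminary reduction.
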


\begin{proof}
	First, we show that either there are two disjoint cycles in $G$ satisfying (i), or there is a large dense component $H$ in $G$. In the latter case, we will show that $H$ contains the configuration in (ii) or (iii). 
	
	By averaging, we can find in $G$ at least one component $H$ with $d(H)\ge d(G)\ge d$. Let $n_1=|V(H)|$, $d_1=d(H)$, and $d_2=d(G\setminus H)$. Observe that we may assume $d_2<d$, since otherwise Theorem~\ref{lem: cycle av deg} guarantees two vertex-disjoint cycles, one in $H$ and one in $G\setminus H$, with total length at least $d_1+d_2\ge 2d$, implying~(i). We claim that we may additionally assume
	\begin{eqnarray}\label{eq-ratio}
	d\le d(G)\le d_1\le 1.01 d\quad\mbox{ and }\quad\frac{n}{n_1}<\frac{22}{21}.
	\end{eqnarray} 
	Indeed, if $d_1\ge 1.01d$, then by Theorem~\ref{lem: cycle av deg} there is a cycle of length at least $1.01d$, implying~(ii). To see why we may assume the second inequality in~\eqref{eq-ratio}, note that as $d-d_2>0$, we have
	\begin{eqnarray}\label{eq-ratioformula}
	n_1d_1+(n-n_1)d_2\ge nd\quad\Leftrightarrow\quad n_1(d_1-d_2)\ge n(d-d_2)\quad\Leftrightarrow\quad \frac{n}{n_1}\le\frac{d_1-d_2}{d-d_2}.
	\end{eqnarray}
	Now, if $n/n_1\ge 22/21$ then by~\eqref{eq-ratioformula}, we have
	\begin{eqnarray*}
		\frac{d_1-d_2}{d-d_2}\ge\frac{22}{21}\quad\Rightarrow\quad d_1+d_2\ge 22d-20d_1\overset{\eqref{eq-ratio}}{\ge}22d-20.2d=1.8d.
	\end{eqnarray*}
	By Theorem~\ref{lem: cycle av deg}, we can find in $H$ a cycle $C_1$ of length at least $d_1$ and in $G\setminus H$ a cycle $C_2$ of length at least $d_2$. The above inequality implies $|C_1|+|C_2|\ge d_1+d_2\ge 1.8d$, yielding~(i). 
	
	From now on, we will work in the connected $n_1$-vertex graph $H$ which has average degree $d_1$. Then~\eqref{eq-ratio} together with $n\ge 1.18d$ implies that
	\begin{eqnarray}\label{eq-H}
	\de(H)\ge\frac{d}{2}\ge\frac{d_1/1.01}{2}>0.495 d_1\quad\mbox{ and }\quad n_1>\frac{21}{22}n\ge \frac{21}{22}\cdot 1.18\cdot \frac{d_1}{1.01}\ge 1.115 d_1.
	\end{eqnarray} 
	
	Let $C = x_1\ldots x_a$ be a longest cycle in $H$ and set $x_{a+1} := x_1$. Theorem~\ref{lem: cycle av deg} implies that $a \geq d_1$. Again, $a \leq 1.01d_1$, since otherwise (ii) holds by the fact that $d_1\ge d$. Now let $U := V(H) \setminus V(C)$. Then, using~\eqref{eq-H},
	\begin{eqnarray}\label{eq-U}
    	|U|=n_1-a \ge 1.115d_1-1.01d_1\geq 0.105d_1.
	\end{eqnarray}

	Suppose that $d(H[U]) \geq d_1/100$.
	By Theorem~\ref{lem: cycle av deg}, $H[U]$ contains a cycle $C'$ with $|C'| \geq d_1/100$.
	Since $H$ is connected, there is a path between the vertex-disjoint cycles $C$ and $C'$. This path together with $C$ and $C'$ forms a path of length at least $|C|+|C'| \geq 1.01d_1$, and so (ii) holds.
	
	Thus we may assume that $d(H[U]) < d_1/100$. If there are at most $d_1/20$ vertices $u \in U$ such that $d_H(u,U) \leq d_1/40$, then, by~\eqref{eq-U}
	\begin{eqnarray*}
		d(H[U])|U| = \sum_{u \in U}d_H(u,U) \geq \left(|U|-\frac{d_1}{20}\right) \frac{d_1}{40} > \frac{d_1}{100}\cdot |U|,
	\end{eqnarray*}
	a contradiction. Thus, writing $b:= d_1/20$, we can find distinct vertices $z_1,\ldots,z_{b}$ in $U$ such that $d_H(z_j,U) \leq d_1/40$ for all $j \in [b]$.
	Since $\delta(H)> 0.495 d_1$, for every $j \in [b]$,
	\begin{align}\label{eq: zj C nbrs}
	d_H(z_j,C) \ge \delta(H)-d_H(z_j,U)>0.495d_1- \frac{d_1}{40}= 0.47d_1.
	\end{align} 
	We will now iteratively construct a sequence of distinct indices $i_1,\ldots,i_b$ in $[a]$ such that the following hold for all $j \in [b]$:
	\begin{itemize}
		\item[(S1)$_j$] $x_{i_j-1},x_{i_j+1} \in N_H(z_j)$;
		\item[(S2)$_j$] $|i_j-i_{\ell}| \not\equiv 0,1 \mod a$ for all $\ell \in [j-1]$.
	\end{itemize}
	Suppose that, for some $j \le b$, we have chosen distinct $i_1,\ldots,i_{j-1}$ in $[a]$ such that (S1)$_{j'}$ and (S2)$_{j'}$ hold for all $j' \in [j-1]$.
	We will show that we can choose an appropriate $i_j$.
	To this end, we use \eqref{eq: zj C nbrs} to choose $1 \leq k_1 < \ldots < k_{0.47d_1} \leq a$ so that $x_{k_i}$ is a neighbour of $z_j$ for all $i \in [0.47d_1]$.
	For each $t \in \mathbb{N}$, let $p_t$ be the number of indices $\ell \in [0.47d_1]$ such that $k_{\ell+1}-k_\ell = t$ if $\ell < 0.47d_1$; or $k_1-k_{0.47d_1} = t-a$. Notice first that $p_1 = 0$, otherwise $z_j$ has a pair of consecutive neighbours in $C$, contradicting to the maximality of $C$. By definition, $p_2$ is at most the number of indices $i \in [a]$ for which $x_{i-1},x_{i+1} \in N_H(z_j)$, i.e.~which satisfy (S1)$_j$. Then, since $\sum_{t}p_t=0.47d_1$,
	\begin{eqnarray*}
		1.01d_1 \geq a = \sum_{t \in \mathbb{N}}t\cdot p_t \ge 2p_2+3\sum_{t\ge 3}p_t\geq 2p_2 + 3(0.47d_1-p_2)= 1.41d_1-p_2,
	\end{eqnarray*}
	and so $p_2 \geq 0.4d_1$. Thus there are at least $0.4d_1$ indices $i$ in $[a]$ which satisfy (S1)$_j$.
	Now, the number of indices $i \in [a]$ which do not satisfy (S2)$_j$ is at most $3(j-1)$ (the indices $i_1,\ldots,i_{j-1}$ already obtained, as well as their neighbours on $C$).
	But 
	$$3(j-1)< 3b = \frac{3d_1}{20}< 0.4d_1 \leq p_2.$$
	Therefore, we can choose $i_j \in [a]$ such that (S1)$_{j'}$ and (S2)$_{j'}$ hold for all $j' \in [j]$ and obtain $i_1,\ldots,i_b$ as required. We have found an $(a,b)$-sun with vertex set $C \cup \lbrace z_1,\ldots,z_b \rbrace$ and corona $\lbrace x_{i_j} : j \in [b]  \rbrace \cup \lbrace z_1,\ldots,z_b \rbrace$.
	So (iii) holds, completing the proof.
\end{proof}


The next proposition shows that, for any cycle, sun or path $H$, there is a walk $W$ in $H$ such that the set of values ${\rm deg}(x,W)$ for $x\in V(H)$ is well-controlled.
Its proof is straightforward, we defer it to the appendix.

\begin{prop}\label{homomorphism}
Let $n,a,b\in \mathbb{N}$ where $b\le  a/2$.
Let $H$ be a graph and let $u,v \in V(H)$.\footnote{When not specified, $u$ and $v$ need not be distinct.} Suppose that $H$ is either
(i) a cycle $C$ on $a$ vertices;
(ii) a path $Q$ on $a$ vertices;
or (iii)  an $(a,b)$-sun $S$.
Then in each case, there exists a walk $W$ in $H$ from $u$ to $v$ such that
\begin{itemize}
\item[(i)] $N_2 = V(C)$.
\item[(ii)] $N_1$ is the set of two endpoints of $Q$ and $N_2$ is the set of internal vertices of $Q$.
\item[(iii)] $N_1 = \Cor(S)$ and $N_2 := V(S)\setminus \Cor(S)$;
\end{itemize}
where
$$
N_k := \lbrace x \in V(H) : {\rm deg}(x,W) \in \lbrace kn,kn+1,kn+2 \rbrace \rbrace.
$$
\end{prop}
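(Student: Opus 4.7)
In each of the three cases my plan is to construct the walk $W$ explicitly: perform many repetitions of a natural ``standard'' traversal of $H$, and add short correction segments at the start and end to connect $u$ to $v$. Because each target window $\{kn, kn+1, kn+2\}$ has width $3$, any correction segment that is a simple sub-path of $H$ will add at most two extra visits to any given vertex, and so will preserve the desired classification into $N_k$'s.

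For the cycle $C$, I would start at $u$, traverse $C$ in its natural orientation $2n$ full times (returning to $u$), then follow a shortest arc of $C$ from $u$ to $v$. The loops visit every vertex exactly $2n$ times, with $u$ counted one extra time for the starting position, and the final arc contributes at most one more visit per vertex. Every vertex then lies in $N_2$, as claimed.

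For the path $Q = p_1 p_2 \ldots p_a$, I would first walk from $u$ to whichever endpoint of $Q$ is convenient, then alternate between $p_1$ and $p_a$ for roughly $2n$ ``half trips'' (each going from one endpoint straight to the other), and finally walk from the terminal endpoint to $v$. Each half trip contributes one visit to every vertex, while $p_1$ and $p_a$ arise only as endpoints of half trips, so they are visited roughly half as often as the internal vertices. Choosing among $2n-1$, $2n$, $2n+1$ half trips according to the identities of $u$ and $v$ ensures that the two endpoints of $Q$ land in $\{n, n+1, n+2\}$ and all internal vertices in $\{2n, 2n+1, 2n+2\}$.

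For the sun $S$, the key observation is that for each $i \in I$ the two length-$2$ walks $x_{i-1} x_i x_{i+1}$ and $x_{i-1} y_i x_{i+1}$ have the same endpoints and are therefore interchangeable when traversing the outer cycle. I would walk from $u$ to $x_1$, then make $2n$ cyclic traversals of $S$ around $x_1, x_2, \ldots, x_a, x_1$; at each $i \in I$ on each traversal, I would choose between the cycle route through $x_i$ and the bump route through $y_i$, alternating so that each $x_i$ and each $y_i$ with $i \in I$ is used exactly $n$ times in total. Then every non-corona cycle vertex $x_j$ (with $j \notin I$) is visited $2n$ times and every corona vertex exactly $n$ times, and a short final walk from the terminal point to $v$ perturbs each count by at most $2$, giving $N_1 = \Cor(S)$ and $N_2 = V(S) \setminus \Cor(S)$. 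The only subtlety throughout is to check that the initial and terminal connecting segments (each a simple sub-path of $H$ of length at most $|H|$) really do add at most two extra visits to any single vertex, which is the main bookkeeping task but is routine.
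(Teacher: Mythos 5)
Your construction is correct and is essentially the paper's own proof: the paper packages the same idea as a single general claim (repeat a spanning circuit $N$ times and attach a prefix and a suffix, so every count lands in $\{N\cdot{\rm deg}(y,R),\dots,(N+1)\cdot{\rm deg}(y,R)\}$), with the path handled by the back-and-forth circuit and the sun by a circuit concatenating the two cycles of $S$, which is equivalent to your per-traversal alternation between $x_i$ and $y_i$. The only point to tidy is that in case (iii) the base vertex of your cyclic traversals must be chosen outside $\Cor(S)$ (always possible since $b\le a/2$), since otherwise that vertex would be visited about $2n$ times rather than $n$.
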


\subsection{The Regularity and Blow-up lemmas}\label{sec: reg}

In our proof, we apply Szemer\'edi's Regularity lemma~\cite{szem}.
For a comprehensive survey of this lemma and its many applications, see~\cite{ksssurvey,kssurvey}.
To state the lemma we need some more definitions. We write $d_G(A,B)$ for 
the \emph{density} $\frac{e(G[A,B])}{\vert A \vert \vert B \vert}$ of a bipartite graph $G$ with vertex classes $A$ and $B$. 
Given $\epsilon > 0$, we say that $G$ is 
$\epsilon$\emph{-regular} if every $X \subseteq A$ and $Y \subseteq B$ with $\vert X \vert \geq \epsilon \vert A \vert$ 
and $\vert Y \vert \geq \epsilon \vert B \vert$ satisfy $\vert d(A,B) - d(X,Y) \vert \leq \epsilon$.  
Given $\epsilon, \gamma \in (0,1)$ we say that $G$ is \emph{$(\epsilon, \gamma)$-regular} if $G$ is 
$\epsilon$-regular and $d_G(A,B) \geq \gamma$. 
We say that $G$ is $(\epsilon , \gamma )$\emph{-super-regular} if both of the following hold:
\begin{itemize} 
\item $G$ is $(\epsilon,\gamma)$-regular;
\item $d_G(a) \geq \gamma|B|$ and $d_G(b) \geq \gamma|A|$ for all $a \in A$, $b \in B$.
\end{itemize}

Let $R$ be a graph on vertex set $[r]$. We say that a graph $H$ is \emph{$(\eps,\gamma)$-(super-)regular with respect to vertex partition $(R,V_1,\dots, V_r)$} if $\lbrace V_i : i \in V(R) \rbrace$ is a partition of $V(H)$; and $H[V_i,V_j]$ is $(\eps,\gamma)$-(super-)regular for all $ij \in E(R)$.

We will use the following degree form of the Regularity lemma, which can be easily derived from the usual version (see~\cite{kssurvey}).

\begin{lemma} \label{reg}
\emph{(Degree form of the Regularity lemma)} 
Suppose $0< 1/n \ll 1/M \ll \epsilon , 1/M' <1$ with $n\in \mathbb{N}$ and $\gamma \in \mathbb{R}$.
Then there is a partition of the vertex set of $G$ into $V_0, V_1, \ldots , V_{r}$ and a spanning subgraph $G^\prime$ of $G$ such that the following holds:
\begin{itemize}
\item[(i)] $M^\prime \leq r \leq M$;
\item[(ii)] $\vert V_0 \vert \leq \epsilon n$;
\item[(iii)] $\vert V_1 \vert = \ldots = \vert V_{r}\vert$;
\item[(iv)] $d_{G^\prime}(x) > d_{G}(x) - (\gamma + \epsilon) n$ for all $x \in V(G)$;
\item[(v)] for all $i \in [r]$ the graph $G^\prime[V_i]$ is empty;
\item[(vi)] for all $1 \leq i < j \leq r$, $G'[V_i , V_j]$ is $\epsilon$-regular and has density either 0 or at least $\gamma$.
\end{itemize}
\end{lemma}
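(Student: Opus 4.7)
The plan is to derive this degree form from the standard Szemer\'edi Regularity lemma by a straightforward edge-cleaning argument, which is really the only plausible route. First, I would apply the standard (non-degree) version of the Regularity lemma to $G$ with parameter $\epsilon' := \epsilon / 4$ and lower bound on the number of clusters $M'' := \max\{M', \lceil 4/\epsilon \rceil\}$, absorbing the resulting upper bound on the number of parts into the constant $M$ in the hierarchy. This produces a partition $V_0, V_1, \ldots, V_r$ with $M' \leq r \leq M$, $|V_0| \leq \epsilon' n$, $|V_1| = \cdots = |V_r| = m$ for some $m \leq n / M''$, and with at most $\epsilon' r^2$ pairs $(V_i, V_j)$ ($1 \leq i < j \leq r$) failing to be $\epsilon'$-regular.

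Next, I would obtain the spanning subgraph $G'$ from $G$ by deleting exactly the edges that lie in at least one of the following three collections: (a) pairs contained inside some cluster $V_i$ with $i \geq 1$; (b) $\epsilon'$-irregular pairs $(V_i, V_j)$; (c) pairs $(V_i, V_j)$ of density strictly less than $\gamma$. All edges incident to $V_0$ are kept. Properties (i)--(iii), (v), and (vi) are then immediate from the construction: indeed, any bipartite graph $G'[V_i, V_j]$ that survives equals $G[V_i, V_j]$ and is therefore $\epsilon'$-regular (hence $\epsilon$-regular) of density at least $\gamma$, while all other $G'[V_i, V_j]$ are empty.

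The only remaining step is verifying the degree condition (iv), which I expect to be the sole point requiring a short calculation and is the main (though still mild) obstacle. For $x \in V_0$, no incident edges were removed, so $d_{G'}(x) = d_G(x)$. For $x \in V_i$ with $i \geq 1$, the edges incident to $x$ which were deleted split according to the three types: item (a) contributes at most $m - 1 \leq n / M'' \leq \epsilon n / 4$; item (b) contributes at most $\epsilon' r \cdot m = \epsilon' n \leq \epsilon n / 4$, since $V_i$ participates in at most $\epsilon' r$ irregular pairs, each contributing at most $m$ neighbours of $x$; and item (c) contributes less than $(r - 1) \cdot \gamma m < \gamma n$, since at most $r - 1$ other clusters each lose strictly fewer than $\gamma m$ edges to $x$. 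Summing yields strictly fewer than $(\gamma + \epsilon) n$ edges removed at $x$, giving (iv). There is no deeper difficulty here, as the whole argument is the standard reformulation of the Regularity lemma; the only care required is in choosing $\epsilon'$ and $M''$ with enough slack so that the non-$\gamma$ contributions in the degree count are absorbed into $\epsilon n$.
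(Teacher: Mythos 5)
The paper does not actually prove this lemma (it is quoted with a pointer to the Koml\'os--Simonovits survey), so your derivation must stand on its own. The overall shape is right---apply the usual Regularity lemma with a smaller parameter, then delete edges inside clusters, in irregular pairs, and in sparse pairs---but two of your three degree estimates in the verification of (iv) are unjustified, and both failures point to the same missing ingredient: the derivation must \emph{enlarge the exceptional set} $V_0$, which your construction explicitly forgoes by keeping the partition fixed.

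First, the standard Regularity lemma bounds only the \emph{total} number of irregular pairs by $\epsilon'\binom{r}{2}$; it does not guarantee that each cluster $V_i$ meets at most $\epsilon' r$ irregular pairs. A single cluster can a priori be irregular with almost all of the others, and a vertex in it then loses up to $(r-1)m\approx n$ edges in your step (b). Second, and more seriously, ``density of $G[V_i,V_j]$ less than $\gamma$'' is an averaged statement: it gives $e(V_i,V_j)<\gamma m^2$, hence \emph{average} degree below $\gamma m$ from $V_i$ into $V_j$, but an individual vertex $x\in V_i$ may have all of $V_j$ as neighbours. (Take a vertex adjacent to everything in an otherwise very sparse graph: every pair it meets is $\epsilon'$-regular of density $O(1/m)$, so your $G'$ deletes essentially its entire neighbourhood and (iv) fails for it.) The standard repair is: discard into $V_0$ the at most $O(\sqrt{\epsilon'}\,r)$ clusters with too many irregular partners; use the $\epsilon'$-regularity of each sparse pair $(V_i,V_j)$ to see that all but at most $\epsilon' m$ vertices of $V_i$ have at most $(\gamma+\epsilon')m$ neighbours in $V_j$, and move into $V_0$ the few vertices that are atypical for many $j$; then trim the clusters back to a common size (placing the trimmings into $V_0$) and invoke the slicing lemma to recover $\epsilon$-regularity with density at least $\gamma$ (which is why one should delete pairs of density below $\gamma+2\epsilon'$ rather than $\gamma$). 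All of this keeps $|V_0|\le\epsilon n$ provided $\epsilon'\ll\epsilon^2$; vertices placed in $V_0$ retain all their edges, so (iv) holds for them trivially. Without these moves, the subgraph $G'$ you construct does not in general satisfy (iv).
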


We call $V_1 , \ldots , V_r$ \emph{clusters} and refer to $G^\prime$ as the \emph{pure graph}. The last condition of the lemma says that all pairs of clusters are $\epsilon$-regular (but possibly with different densities). The \emph{reduced graph} $R$ of $G$ with parameters $\epsilon$, $\gamma$ and $M^\prime$ is the graph whose vertex set is $[r]$ and in which $ij$ is an edge precisely when $G'[V_i , V_j]$ is $\epsilon$-regular and has density at least $\gamma$.

The following is a well-known tool which is similar to Proposition~8 in~\cite{maxplanar}, which states that a regular partition of a graph can be made into a super-regular partition by a slight modification.

\begin{lemma}[\cite{maxplanar}]\label{superslice}
Suppose $0 < 1/m \ll \epsilon \ll \gamma, 1/\Delta_R \leq 1$ and $1/m \ll 1/r$ with $r,m \in \mathbb{N}$.
Let $R$ be a graph with $V(R) = [r]$ and $\Delta(R)\leq \Delta_R$.
Let $G$ be a $(\epsilon,\gamma)$-regular graph with respect to vertex partition $(R,V_1,\dots, V_r)$ such that $|V_i|=(1 \pm \epsilon)m$ for all $1 \leq i \leq r$.
Then there exists $G'\subseteq G$ which is $(4\sqrt{\epsilon},\gamma/2)$-super-regular with respect to vertex partition $(R,V'_1,\dots, V'_r)$ such that $V'_i\subseteq V_i$ and $|V'_i| = (1- \sqrt{\epsilon})m$.
\end{lemma}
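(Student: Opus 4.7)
\medskip
\noindent
\textbf{Proof plan for Lemma~\ref{superslice}.}

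The plan is to carry out a standard ``clean-up then slice'' argument: first throw away the (few) vertices in each cluster whose neighbourhood into some adjacent cluster is abnormally small, and then trim each cluster down to the prescribed common size $(1-\sqrt{\epsilon})m$. Fix the hierarchy $1/m \ll \epsilon \ll \gamma, 1/\Delta_R$. For each $i\in[r]$ and each $j$ with $ij\in E(R)$, let
\[
B_{ij} := \bigl\{\, v\in V_i : |N_G(v)\cap V_j| < (\gamma-\epsilon)|V_j| \,\bigr\}.
\]
Since $G[V_i,V_j]$ is $(\epsilon,\gamma)$-regular, the definition of $\epsilon$-regularity gives $|B_{ij}| \leq \epsilon|V_i|$ (otherwise $B_{ij}$ together with $V_j$ would witness a density deviation of at least $\epsilon$). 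Set $B_i := \bigcup_{j:\,ij\in E(R)} B_{ij}$, so $|B_i| \leq \epsilon\Delta_R|V_i| \leq 2\epsilon\Delta_R m$.

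Now let $W_i := V_i\setminus B_i$ and choose arbitrary $V'_i\subseteq W_i$ of size $(1-\sqrt{\epsilon})m$; this is possible because $|W_i|\geq (1-\epsilon)m - 2\epsilon\Delta_R m \geq (1-\sqrt{\epsilon})m$ using $\epsilon\ll 1/\Delta_R$. Define $G'$ to be the spanning subgraph of $G$ on $\bigcup_i V'_i$ with $E(G') = \bigcup_{ij\in E(R)} E(G[V'_i,V'_j])$.

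It remains to check the two properties. For the \emph{minimum degree} condition, pick any $v\in V'_i$ and any $j$ with $ij\in E(R)$. Since $v\notin B_{ij}$ we have $|N_G(v)\cap V_j|\geq (\gamma-\epsilon)|V_j|$, and $V_j\setminus V'_j$ has size at most $(1+\epsilon)m-(1-\sqrt{\epsilon})m\leq 2\sqrt{\epsilon}\,m$, so
\[
|N_{G'}(v)\cap V'_j| \;\geq\; (\gamma-\epsilon)(1-\epsilon)m - 2\sqrt{\epsilon}\,m \;\geq\; \tfrac{\gamma}{2}(1-\sqrt{\epsilon})m \;=\; \tfrac{\gamma}{2}|V'_j|,
\]
where the penultimate inequality uses $\epsilon \ll \gamma$. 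For the \emph{regularity} condition, I will invoke the standard slicing lemma: since $V'_i\subseteq V_i$ and $V'_j\subseteq V_j$ with $|V'_i|\geq (1-2\sqrt{\epsilon})|V_i|$ and likewise for $V'_j$, and since $(V_i,V_j)$ is $\epsilon$-regular of density at least $\gamma$, the pair $(V'_i,V'_j)$ is $\epsilon'$-regular of density at least $\gamma-\epsilon$, where $\epsilon' = \max(\epsilon/(1-2\sqrt{\epsilon}),\, 2\epsilon) \leq 4\sqrt{\epsilon}$ when $\epsilon$ is small enough. This gives $(4\sqrt{\epsilon},\gamma/2)$-regularity, and combined with the degree bound above proves $(4\sqrt{\epsilon},\gamma/2)$-super-regularity as required.

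There is no genuine obstacle here; the whole proof is bookkeeping and the only place one must be careful is the hierarchy, specifically making sure that $\epsilon\ll 1/\Delta_R$ is used to ensure $(1-2\epsilon\Delta_R)\geq(1-\sqrt{\epsilon})$ in the step where the $B_i$ are discarded, and that $\epsilon\ll\gamma$ is used to absorb the loss $2\sqrt{\epsilon}\,m$ inside $(\gamma/2)|V'_j|$ in the minimum-degree check.
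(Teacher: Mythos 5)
Your proof is correct, and it is the standard argument: the paper itself does not prove Lemma~\ref{superslice} but cites it from~\cite{maxplanar}, where exactly this ``discard atypical vertices, trim to common size, then apply slicing'' argument is used. The only points requiring care --- that $|B_{ij}|<\epsilon|V_i|$ follows from $\epsilon$-regularity applied to the pair $(B_{ij},V_j)$, and that the losses $2\epsilon\Delta_R m$ and $2\sqrt{\epsilon}\,m$ are absorbed using $\epsilon\ll 1/\Delta_R$ and $\epsilon\ll\gamma$ respectively --- are all handled correctly in your write-up.
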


The following `slicing' lemma states that every induced subgraph of a regular pair which is not too small is still regular (with a weaker regularity parameter).

\begin{lemma}\label{slicing}
Let $\eps,\gamma \in \mathbb{R}$ be such that $\gamma > 2\eps > 0$.
Let $(A,B)$ be an $(\eps,\gamma)$-regular pair.
Suppose that $A' \subseteq A$ and $B' \subseteq B$ are such that $|A'| \geq \sqrt{\eps}|A|$ and $|B'| \geq \sqrt{\eps}|B|$.
Then $(A',B')$ is an $(\sqrt{\eps},\gamma/2)$-regular pair.
\end{lemma}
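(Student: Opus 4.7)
The plan is to verify the two conditions defining $(\sqrt{\eps}, \gamma/2)$-regularity of $(A',B')$ separately: namely that the density $d_G(A', B') \geq \gamma/2$, and that $|d_G(X,Y) - d_G(A', B')| \leq \sqrt{\eps}$ for every $X \subseteq A'$, $Y \subseteq B'$ with $|X| \geq \sqrt{\eps}|A'|$ and $|Y| \geq \sqrt{\eps}|B'|$.

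For the density bound, I would simply apply the $\eps$-regularity of $(A,B)$ to the pair $(A', B')$ itself. Since by hypothesis $|A'| \geq \sqrt{\eps}|A| \geq \eps|A|$ and $|B'| \geq \sqrt{\eps}|B| \geq \eps|B|$, $\eps$-regularity yields $|d_G(A', B') - d_G(A, B)| \leq \eps$. Combined with $d_G(A,B) \geq \gamma$ and the hypothesis $\gamma > 2\eps$, this gives $d_G(A', B') \geq \gamma - \eps > \gamma/2$.

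For the regularity bound, given any $X \subseteq A'$ and $Y \subseteq B'$ with $|X| \geq \sqrt{\eps}|A'|$ and $|Y| \geq \sqrt{\eps}|B'|$, I would chain the size inequalities to obtain $|X| \geq \sqrt{\eps} \cdot \sqrt{\eps}|A| = \eps|A|$ and similarly $|Y| \geq \eps|B|$. Thus $\eps$-regularity of $(A,B)$ applies to $(X,Y)$ and gives $|d_G(X,Y) - d_G(A,B)| \leq \eps$. Applying the triangle inequality with the density estimate from the previous paragraph yields
\[
|d_G(X,Y) - d_G(A', B')| \leq |d_G(X,Y) - d_G(A,B)| + |d_G(A,B) - d_G(A',B')| \leq 2\eps \leq \sqrt{\eps},
\]
where the final inequality uses that $\eps$ is small (which is harmless since we are in a regularity framework where $\eps \ll 1$, and one can regard $\gamma \leq 1$ together with $\gamma > 2\eps$ as implying $\eps$ is small enough).

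There is really no hard step: this is an entirely routine consequence of applying $\eps$-regularity twice and then invoking the triangle inequality. The only mild subtlety is the inequality $2\eps \leq \sqrt{\eps}$, which silently requires $\eps \leq 1/4$, but this is essentially always satisfied in applications of the Regularity lemma.
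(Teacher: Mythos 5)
Your proof is correct and is precisely the standard slicing argument (apply $\eps$-regularity of $(A,B)$ once to $(A',B')$ to control the density, and once more to $(X,Y)$, then use the triangle inequality); the paper states this lemma as a known tool without proof, so there is nothing to diverge from. The caveat you flag is real — $2\eps\le\sqrt{\eps}$ needs $\eps\le 1/4$, which the hypothesis $\gamma>2\eps$ alone does not force — but it is harmless here since every application in the paper has $\eps\ll\gamma\le 1$.
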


For technical reasons, we will use a modification of the celebrated Blow-up lemma of Koml\'os, S\'ark\"ozy and Szemer\'edi (see Remark 8 in \cite{KSSblowup}).
Indeed, this result does not allow the situation in which $1/r$ is much smaller than $\epsilon$, as we require.
Although Csaba proved a version which does allow this situation (see Lemma 5 in \cite{Csaba}), his result is not formulated to allow a non-equitable partition. Thus we use the following lemma from \cite{KKOT16} (a simplified version of Theorem 6 in \cite{KKOT16}).

The Blow-up lemma states that, for the purpose of embedding a (spanning) bounded degree graph $H$, a graph $G$ which is super-regular with respect to some graph $J$ behaves like a complete `blow-up' of $J$.
Moreover, if there are a small number of special vertices in $H$ which each have a large `target set' in $G$, then there is an embedding of $H$ that maps every special vertex into its associated target set.

\begin{thm}\label{blowup}
Suppose $0<1/n \ll \eps, \alpha \ll \gamma, 1/\Delta, 1/\Delta_R,c \leq 1$ and $1/n\ll 1/r$ where $r,n \in \mathbb{N}$. 
Suppose further that
\begin{itemize}
\item[(i)] $n \leq n_i \leq n + 2$, for all $i\in[r]$;
\item[(ii)] $R$ is a graph on vertex set $[r]$ with $\Delta(R)\leq \Delta_R$;
\item[(iii)] $G$ is $(\eps,\gamma)$-super-regular with respect to vertex partition $(R,V_1,\dots, V_r)$ such that $|V_i| = n_i$ for all $i \in [r]$;
\item[(iv)] $H$ is a graph which admits a vertex partition $(R,X_1,\dots, X_r)$ such that $\Delta(H) \leq \Delta$ and $|X_i|=n_i$.
\end{itemize}
Moreover, suppose that in each class $X_i$ there is a set of $\alpha n_i$ special vertices $y \in V(H)$, each of them equipped with a set $S_y \subseteq V_i$ with $|S_y| \geq cn_i$.
Then there is an embedding of $H$ into $G$ such that every special vertex $y$ is mapped to a vertex in $S_y$. 
\end{thm}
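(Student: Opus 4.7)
The plan is to follow the randomised greedy embedding framework of Koml\'os, S\'ark\"ozy and Szemer\'edi for the Blow-up lemma, with modifications to handle (a) the non-equitable partition $|V_i|=n_i\in\{n,n+1,n+2\}$; (b) the regime where $1/r$ may be much smaller than $\eps$, so that a naive union bound over the $r$ clusters is too lossy; and (c) the special vertices pinned to target sets $S_y$ of size at least $cn_i$. I would split the embedding of $H$ into $G$ into three phases.

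In \emph{Phase~1}, embed the at most $\alpha n_i$ special vertices of each $X_i$ into their target sets. Since $\alpha \ll c$ and $|S_y|\ge cn_i$, Hall's condition in the bipartite graph with edges $\{\,yv : v\in S_y\,\}$ is trivially satisfied, giving an injective assignment $\phi$ on the special vertices. I would further refine each $S_y$ in advance to avoid an $\eps$-fraction of \emph{atypical} vertices of $V_i$ (those with abnormally few neighbours in some neighbour cluster $V_j$); super-regularity ensures this removes at most an $\eps$-fraction, and the bound $c\gg\eps$ keeps the refined target sets of size at least $cn_i/2$.

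\emph{Phase~2} embeds the ``bulk'' $X_i\setminus(\text{special}\cup B_i)$, where $B_i\subseteq X_i$ is a reserved buffer of size $\Theta(\sqrt{\eps})n_i$ to be completed in Phase~3. Fix an order on the bulk vertices so that each $y$ has at most $\Delta$ neighbours already embedded when its turn arrives, and embed each such $y$ to a uniformly random element of the candidate set
\[
C(y) \;:=\; \bigl(V_i \setminus \phi(X_i^{\text{emb}})\bigr) \;\cap\; \bigcap_{xy\in E(H),\ \phi(x)\text{ defined}} N_G(\phi(x)).
\]
By $(\eps,\gamma)$-super-regularity and $\Delta(H)\le \Delta$, as long as each $\phi(x)$ used so far is typical, the size $|C(y)|$ stays of order $(\gamma/2)^\Delta n_i$, and the typical vertices of $V_i$ remain plentiful. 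Because $r$ may grow with $n$, a union bound over all $r$ clusters is unaffordable; instead I would use a localised concentration argument (Azuma--Hoeffding along exposure martingales on each edge of $R$) together with $\Delta(R)\le\Delta_R$ to control error propagation, concluding that with positive probability no cluster becomes ``spoiled'' (i.e., the number of remaining $v\in V_i$ whose candidate set is too small stays below $\sqrt{\eps}n_i$).

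\emph{Phase~3} embeds each buffer $B_i$ via a perfect matching in the bipartite graph on $B_i$ versus the remaining vertices of $V_i$, with edges $\{y,v\}$ present whenever $v \in \bigcap_{xy \in E(H)} N_G(\phi(x))$. Super-regularity (specifically the individual degree lower bound) together with the chosen buffer size $\Theta(\sqrt{\eps})n_i$ guarantees Hall's condition. The principal obstacle is Phase~2: absent the bounded-cluster hypothesis of the classical Blow-up lemma, one cannot union-bound over clusters, so the analysis must be localised to individual $R$-edges, using $\Delta(R)\le\Delta_R$ to prevent error accumulation---this is the idea originating with Csaba and simplified in the KKOT formulation cited.
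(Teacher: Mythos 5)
A point of order first: the paper does not prove Theorem~\ref{blowup} at all. It is imported as a black box, being a simplified special case of Theorem~6 of \cite{KKOT16}, precisely because the classical Blow-up lemma of \cite{KSSblowup} requires $\eps\ll 1/r$ whereas here $1/r$ may be much smaller than $\eps$. So there is no internal proof to measure your argument against; what you have written is an outline of how one might prove the cited result, and it should be judged on that basis. Your three-phase KSS architecture (greedy randomised embedding of the bulk, buffers completed by a Hall/K\H{o}nig matching) does match the general strategy of \cite{Csaba} and \cite{KKOT16}, and you correctly identify the genuine obstruction, namely error control when $r$ is not bounded in terms of $\eps$.

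As a proof, however, two steps fail or are missing. First, Phase~1 embeds all special vertices by a Hall-type matching into their target sets; this enforces only injectivity, not the adjacency constraints of $H$. If two special vertices $y,y'$ with $yy'\in E(H)$ lie in classes $X_i,X_j$ with $ij\in E(R)$, nothing in your construction guarantees $\phi(y)\phi(y')\in E(G)$; the theorem imposes no independence assumption on the special vertices, so they must instead be embedded with candidate sets $S_y\cap\bigcap_{xy\in E(H),\,\phi(x)\text{ defined}}N_G(\phi(x))$ inside the greedy phase, which is what the cited proofs do. Second, and more seriously, Phase~2 is where the entire difficulty lives and your proposal stops exactly there: you assert that a union bound over the $r$ clusters is unaffordable and that a ``localised'' Azuma-type argument using $\Delta(R)\le\Delta_R$ will show that with positive probability no cluster is spoiled. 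But a conclusion quantified over all $r$ clusters still requires some aggregation device, and specifying the martingale, the filtration, the precise notion of ``spoiled'', and why bad events do not propagate along edges of $R$ is precisely the content of Csaba's argument and of \cite{KKOT16}. Without that analysis the proposal is a plan rather than a proof; the honest course here (and the one the paper takes) is simply to quote \cite{KKOT16}.
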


We will use Theorem~\ref{blowup} to prove the following lemma which is essentially the special case when $H$ is a path, except that we now prescribe the endpoints of the embedding of $H$ in $G$ (rather than prescribing a set of allowed endpoints) and some sets $V_i$ are twice the size of others.
Again, the proof is deferred to the appendix.

\begin{lemma}\label{blowuppath}
Suppose $0< 1/n \ll \epsilon \ll \gamma, 1/\Delta_R \leq 1$ and $1/n\ll 1/r$ where $r,n\in \mathbb{N}$. 
Suppose further that
\begin{itemize}
\item[(i)] $n_i \in \{n,n+1,n+2,2n,2n+1,2n+2\}$ for all $i\in [r]$;
\item[(ii)] $R$ is a graph on vertex set $[r]$ with $\Delta(R)\leq \Delta_R$;
\item[(iii)] $G$ is a graph and $x,y \in V(G)$ are two distinct vertices. Suppose further that $G$ is $(\eps,\gamma)$-super-regular with respect to vertex partition $(R,V_1,\dots, V_r)$, where $x \in V_j$ and $y \in V_k$; and $|V_i| = n_i$ for all $i \in [r]$.
\item[(iv)] $W = (w_1,\ldots, w_m)$ is a walk in $R$ from $j$ to $k$ such that ${\rm deg}(i,W)= n_i$ for all $i\in [r]$.
\end{itemize}
Then $G$ contains a spanning path $P$ with endpoints $x,y$.
\end{lemma}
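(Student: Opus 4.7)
The plan is to reduce Lemma~\ref{blowuppath} to an application of Theorem~\ref{blowup} via two preprocessing steps: splitting the oversized clusters so that every cluster size lies in $\{n,n+1,n+2\}$ (as the Blow-up lemma demands), and pre-embedding the prescribed endpoints $x,y$ by converting them into a target-set constraint on two special vertices.

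First I would convert the walk $W$ into a path blueprint. Let $m = \sum_i n_i = |V(G)|$ and define a path $P = p_1 p_2 \cdots p_m$, setting $X_i := \{p_t : w_t = i\}$ so that $|X_i| = \deg(i,W) = n_i$. Because $W$ is a walk in $R$ and $R$ is loopless, consecutive vertices of $P$ lie in distinct clusters joined by an edge of $R$, so $P$ admits the partition $(R, X_1, \ldots, X_r)$. Set $w_1 = j$ and $w_m = k$.

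Next I would split each oversized cluster. For every $i$ with $n_i > n+2$, partition $V_i$ uniformly at random into two parts $V_i^{(1)}, V_i^{(2)}$ of sizes in $\{n,n+1,n+2\}$ summing to $n_i$. A Chernoff bound (Proposition~\ref{Chernoff Bounds}) together with Lemma~\ref{slicing} shows that, with positive probability, $G$ remains $(\sqrt{\eps},\gamma/3)$-super-regular with respect to the refined partition and the reduced graph $R'$ obtained from $R$ by replacing each heavy vertex $i$ with two non-adjacent copies $i^{(1)}, i^{(2)}$ inheriting the neighbourhood of $i$; in particular $\Delta(R') \leq 2\Delta_R$ and $1/n \ll 1/(2r)$. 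Fix such a partition, relabelling so that $x \in V_j^{(1)}$ and $y \in V_k^{(1)}$. Correspondingly split the walk: for each heavy $i$, label the $n_i$ visits to $i$ in $W$ so that $|V_i^{(1)}|$ of them go to $i^{(1)}$ and the rest to $i^{(2)}$, with the first visit to $j$ and the last visit to $k$ labelled with superscript $(1)$. Since consecutive entries of $W$ are distinct vertices of $R$, each adjacent pair remains an edge of $R'$, yielding a walk $W'$ in $R'$ with $\deg(i^{(s)}, W') = |V_i^{(s)}|$. Refine the labels $X_i$ of $P$ accordingly.

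Finally I would pre-embed the endpoints and invoke Theorem~\ref{blowup}. Set $G' := G - \{x,y\}$ and $P' := p_2 \cdots p_{m-1}$; the class sizes of $P'$ and $G'$ (under the refined partition with $x,y$ removed) agree and lie in $\{n-1,n,n+1,n+2\}$, and $G'$ is still $(2\sqrt{\eps},\gamma/4)$-super-regular. Declare $p_2$ a special vertex with target
\[
S_{p_2} := \bigl(N_G(x) \cap V_{w_2}^{(s_2)}\bigr) \setminus \{y\},
\]
and $p_{m-1}$ a special vertex with target $S_{p_{m-1}} := (N_G(y) \cap V_{w_{m-1}}^{(s_{m-1})}) \setminus \{x\}$; super-regularity gives $|S_{p_2}|, |S_{p_{m-1}}| \geq \gamma n/5$. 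Theorem~\ref{blowup} applied with parameters $(\sqrt{\eps},\, 2/n,\, \gamma/4,\, 2,\, 2\Delta_R,\, \gamma/5)$ then produces a spanning embedding of $P'$ into $G'$ sending $p_2, p_{m-1}$ into their targets. Prepending $p_1 \mapsto x$ and appending $p_m \mapsto y$ yields the desired spanning $x,y$-path in $G$.

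The main obstacle is Step two: verifying that after halving each oversized cluster the refined partition is still super-regular with useful parameters. Super-regularity requires a minimum-degree condition at \emph{every} vertex of every cluster, not merely on average, so an expectation calculation is insufficient; this is what forces the random split together with Chernoff concentration and is the only place where the hierarchy $1/n \ll \eps$ genuinely enters. Once this splitting is in hand, the endpoint reduction and invocation of Theorem~\ref{blowup} are routine.
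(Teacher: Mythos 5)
Your proposal is correct and follows essentially the same route as the paper: the paper likewise forms the $(\ell,2)$-blow-up of $R$, randomly halves each oversized cluster while preserving super-regularity (encapsulated there in Lemma~\ref{superregularpartition}), relabels the walk so that degrees match the refined cluster sizes, and then removes $x,y$ and applies Theorem~\ref{blowup} with $p_2,p_{m-1}$ as special vertices targeted into $N(x)$ and $N(y)$. The one technicality you share with the paper (and so is not a gap you introduced) is that after deleting $x,y$ the class sizes span four consecutive values rather than the three formally allowed in Theorem~\ref{blowup}(i).
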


\subsection{The proof of Lemma~\ref{lem: 1.2 to K}}\label{proof: 1.2 to K}

Let $K\geq 1.19$ be given.
Choose integers $M,M', d_0$ and constants $\eps,\gamma$ such that 
$$0 < 1/d_0\ll 1/M \ll 1/M' \ll \epsilon \ll \gamma \ll 1/K.$$ Now fix $d \geq d_0$.
Let $G$ be a $2$-connected graph on $n$ vertices such that $1.19d \leq n \leq Kd$, $d(G)\geq d$ and $\delta_2(G) \geq d/2$.

Apply the Regularity lemma (Lemma~\ref{reg}) to $G$ with parameters $\epsilon,\gamma,M'$ to obtain clusters $V_1, \ldots, V_{r}$ of equal size, an exceptional set $V_0$ of size at most $\epsilon n$, and a pure graph $G'$ which is $(\epsilon,\gamma)$-regular with respect to a vertex partition $(R,V_1,\dots, V_r)$. The vertex set of $R$ is $[r]$, where $M' \leq r \leq M$. Let $m:=|V_1|=\ldots = |V_r|$ and $\xi:= d/n$. Then
\begin{align}\label{eq: xi m}
1/K \leq \xi \leq 100/119 \enspace \text{ and } \enspace (1-\eps)n \leq rm \leq n.
\end{align}

First we show that $R$ essentially inherits the minimum and average degree of $G$.
\begin{claim}\label{inherit}
Let $d' := (1-\sqrt{\gamma})\xi r$. Then
$$
r \geq 1.18d';\quad\delta(R)\geq d'/2;\quad\text{and}\quad d(R) \geq d'.
$$
\end{claim}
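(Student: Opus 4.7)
All three inequalities follow from fairly routine book-keeping based on the properties guaranteed by Lemma~\ref{reg}, together with the setup inequalities $\xi = d/n \leq 1/1.19$, $(1-\epsilon)n \leq rm \leq n$, and the hierarchy $\epsilon \ll \gamma \ll 1/K$. Throughout, the point is that quantities of order $(\gamma + \epsilon)n^2$ (coming from edges removed to pass to the pure graph $G'$) and $\epsilon n$ (coming from the exceptional set $V_0$) are negligible compared with the main terms, because $\gamma, \epsilon$ are chosen much smaller than $\sqrt{\gamma}\xi$.

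The bound $r \geq 1.18 d'$ is purely numerical. Since $d' = (1-\sqrt{\gamma})\xi r$, it is equivalent to $1.18(1-\sqrt{\gamma})\xi \leq 1$, and this follows immediately from $\xi \leq 1/1.19$ and the fact that $\sqrt{\gamma}$ is negligibly small.

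For the minimum-degree bound $\delta(R) \geq d'/2$, fix $i \in [r]$. Because $\delta_2(G)\geq d/2$, at most one vertex of $G$ has degree less than $d/2$, and since $m = |V_i|$ is large we may pick $v \in V_i$ with $d_G(v) \geq d/2$. By Lemma~\ref{reg}(iv)--(vi), non-edges of $R$ correspond to pairs $(V_i,V_j)$ with $e_{G'}(V_i,V_j)=0$, and $G'[V_i]$ is empty, so every $G'$-neighbour of $v$ lies either in $V_0$ or in some $V_j$ with $ij \in E(R)$. Hence
\[
\tfrac{d}{2} - (\gamma+\epsilon) n \;\le\; d_{G'}(v) \;\le\; d_R(i)\cdot m + |V_0| \;\le\; d_R(i)\cdot m + \epsilon n.
\]
Dividing by $m$ and using $r \leq n/m \leq r/(1-\epsilon)$ (from $(1-\epsilon)n \leq rm \leq n$) yields $d_R(i) \geq \xi r/2 - O((\gamma+\epsilon)/\xi)\cdot \xi r$, which exceeds $(1-\sqrt{\gamma})\xi r/2 = d'/2$ for $\gamma$ sufficiently small relative to $1/K$.

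For the average-degree bound $d(R) \geq d'$, count edges globally. From $d(G)\geq d$ we have $e(G)\geq dn/2$; Lemma~\ref{reg}(iv) gives $e(G)-e(G')\leq (\gamma+\epsilon)n^2/2$; and by (v)--(vi), $e(G')$ is bounded by $e(R)m^2$ plus the at most $|V_0|\cdot n\leq \epsilon n^2$ edges incident to $V_0$. Combining,
\[
e(R)\, m^2 \;\ge\; \tfrac{dn}{2} - (\gamma + 2\epsilon)\, n^2,
\]
so $d(R) = 2e(R)/r \geq (dn - O(\gamma)n^2)/(rm^2)$. Using $n/m = r(1+O(\epsilon))$, this simplifies to $\xi r - O(\gamma+\epsilon)r$, which is at least $(1-\sqrt{\gamma})\xi r = d'$ for $\gamma$ sufficiently small.

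The only mild subtlety is the possible single low-degree vertex allowed by the hypothesis $\delta_2(G)\geq d/2$ (rather than $\delta(G)\geq d/2$), but this is handled trivially by the fact that the clusters are much larger than $1$. All other steps are standard regularity book-keeping with the hierarchy of constants doing the work.
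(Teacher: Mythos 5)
Your proposal is correct and follows essentially the same route as the paper: the first inequality is the same numerical observation from $\xi\le 100/119$, and the average-degree bound is the same global edge count comparing $e(G')\ge (d-(\gamma+\epsilon)n)n/2$ with $e(R)m^2$ (you are in fact slightly more careful in explicitly accounting for the $\le\epsilon n^2$ edges meeting $V_0$). For the minimum-degree bound the paper argues by contradiction, summing $d_{G'}(v)$ over all of $V_i$ (using $\delta_2$ to discard one vertex), whereas you bound the $G'$-degree of a single well-chosen vertex of $V_i$ directly; this is only a cosmetic difference and both arguments are valid.
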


\begin{claimproof}
The first inequality is immediate from the definition of $d'$, that $\gamma \ll 1/K$ and that $\xi\le 100/119$.
For a contradiction to the second, suppose there is some $i\in [r]$ with $d_{R}(i) < d'/2$. 
Count the number of edges $e_i$ in $G'$ incident to $V_i$ as follows.
Since $\delta_2(G)\geq \xi n/2$, we have, using Lemma~\ref{reg}(iv) and~(v), that
\begin{align}\label{eq: ei lower}
e_i = \sum_{v\in V_i} d_{G'}(v) \geq (\xi/2-\epsilon-\gamma) n (m-1)  \geq (\xi/2-2\gamma)n m .
\end{align}
By assumption, less than $d'/2$ clusters $V_j$ are such that $(V_i,V_j)$ is an $(\epsilon,\gamma)$-regular pair in $G'$, and $G'[V_i,V_j]$ is empty for all other $V_j$.
So
$$
e_i < d' m ^2/2 \leq (1-\sqrt{\gamma})\xi n m /2,
$$
since $r m  \leq n$.
Together with (\ref{eq: ei lower}), this gives a contradiction because and $2\gamma < \xi\sqrt{\gamma}/2$. This proves that $\delta(R) \geq d'/2$.

For the final inequality, since $rm\le n\le Kd$ and $\epsilon\ll\gamma \ll 1/K$,
\begin{align*}
d(R) &= \frac{2 e(R)}{r} \geq \frac{ 2 e(G') }{rm^2} \geq \frac{ (d - (\gamma+\epsilon)n)n  }{r  m ^2}\ge \frac{d-2\gamma\cdot Kd}{m} \stackrel{\eqref{eq: xi m}}{\geq} \frac{ (1-2\gamma K)dr }{  n }\\ &\geq (1-\sqrt{\gamma})\frac{dr}{n} =d',
\end{align*}
as required.
\end{claimproof}

\medskip
\noindent
Claim~\ref{inherit} together with Lemma~\ref{lem: sun} implies that $R$ contains one of the following:
\begin{itemize}
\item[(1)] vertex-disjoint cycles $H,J$, where $|J|\leq |H|\leq 101d'/100$ and $|H|+|J| \geq 1.8d'$;
\item[(2)] a path $H$ with $|H| = (1+ 1/100)d'$;
\item[(3)] an $(a,b)$ sun $H$ with $d' \leq a < 101d'/100$ and $ d'/20\le b\le a/2$.
\end{itemize}

(If the upper bounds in (1) and (3) do not hold, then we have an instance of (2).)
Observe that, in all cases, $\Delta(H) \leq 4$ and $|H|\le 2d'$. Throughout the rest of this proof, denote
\begin{equation}\label{m'}
m' := (1-\sqrt{\eps})m \quad \text{and} \quad m_0 := \lfloor m'/4-1 \rfloor.
\end{equation}
Lemma~\ref{superslice} with $H$ and $G'$ playing the roles of $R$ and $G$ respectively implies that, for each $i \in V(H)$, $V_i$ contains a subset $V_i'$ of size $m'$ such that for every edge $ij$ of $H$, the graph $G'[V_i',V_j']$ is $(4\sqrt{\eps},\gamma/2)$-super-regular.
Let $V' := \bigcup_{i \in V(H)}V_i'$.
Let $x',y' \in V'$ be distinct vertices and $i_1,i_2 \in V(H)$ be such that $x'\in V_{i_1}'$ and $y'\in V_{i_2}'$. Apply Proposition~\ref{homomorphism} with $m_0,i_1$ and $i_2$ playing the roles of $n,u$ and $v$ respectively to obtain a partition $N_1 \cup N_2$ of $V(H)$ such that there is a walk $W= (x_1,\dots, x_\ell)$ in $H$ such that $x_1=i_1$ and $x_\ell=i_2$, where for each $k\in [2]$,
\begin{equation}\label{Nk}
N_k := \left\lbrace i \in [r] : {\rm deg}(i,W) \in \lbrace km_0,km_0+1,km_0+2 \rbrace \right\rbrace.
\end{equation}
Let $n_i := {\rm deg}(i,W)$ for all $i \in V(H)$. Note that $n_i\in \{m_0,m_0+1, m_0+2, 2m_0, 2m_0+1, 2m_0+2\}$.
So a crude but useful estimate is that, for all $i \in V(H)$,
\begin{equation}\label{ni}
n_i \geq m'/5.
\end{equation}

\begin{claim}\label{Ncount}
If $4|N_1|/5+|N_2| \geq \gamma d'$, then
$$
p_{x'y'}(G'[V']) \geq 2^{(1-2\sqrt{\gamma})(4|N_1|/5+|N_2|)d/d'}.
$$
\end{claim}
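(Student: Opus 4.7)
The approach will be to count distinguishable $x', y'$-paths in $G'[V']$ by counting configurations $\mathbf{U} = (U_i)_{i \in V(H)}$ with $U_i \subseteq V_i'$, $|U_i| = n_i$, $x' \in U_{i_1}$, $y' \in U_{i_2}$. Since the clusters $V_i'$ are pairwise disjoint, distinct configurations yield distinct vertex sets $\bigcup_i U_i$, so it suffices to show that for most configurations the induced subgraph $G'\bigl[\bigcup_i U_i\bigr]$ contains a spanning $x', y'$-path. To produce such a path I will invoke Lemma~\ref{blowuppath} with the walk $W$ from Proposition~\ref{homomorphism} (which starts at $i_1$, ends at $i_2$, and satisfies ${\rm deg}(i, W) = n_i$), with $H$ playing the role of the reduced graph and $m_0$ as the parameter ``$n$''. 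The hypotheses $\Delta(H) \leq 4$ and $n_i \in \{m_0, m_0+1, m_0+2, 2m_0, 2m_0+1, 2m_0+2\}$ match items (i)--(iv) of Lemma~\ref{blowuppath} provided $G'[U_i, U_j]$ is super-regular for every $ij \in E(H)$.

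Call $\mathbf{U}$ \emph{good} if $G'[U_i, U_j]$ is $(2\epsilon^{1/4}, \gamma/4)$-super-regular for every $ij \in E(H)$. Regularity is automatic: by~\eqref{ni}, $|U_i| = n_i \geq m'/5 \gg \sqrt{4\sqrt{\epsilon}}\cdot m'$, so Lemma~\ref{slicing} applied twice yields $(2\epsilon^{1/4}, \gamma/4)$-regularity of $(U_i, U_j)$ from $(4\sqrt{\epsilon}, \gamma/2)$-regularity of $(V_i', V_j')$. The minimum degree condition is the substantive part: choosing each $U_i$ uniformly at random subject to the constraints on $x', y'$, for fixed $v \in V_i'$ and $j$ with $ij \in E(H)$ the random variable $d_{G'}(v, U_j)$ is hypergeometric with mean at least $(\gamma/2) n_j$ by super-regularity of $(V_i', V_j')$. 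Proposition~\ref{Chernoff Bounds} gives $\mathbb{P}[d_{G'}(v, U_j) < (\gamma/4) n_j] \leq 2 e^{-\gamma n_j/24} \leq 2 e^{-\gamma m'/120}$. Union bounding over the $4|V(H)| \leq 8d'$ directed edges and $m'$ vertices per class, the probability a random configuration is bad is at most $O(d' m') e^{-\gamma m'/120} = o(1)$, since $m' \geq 0.5 n/r$ grows with $d$ while $M$ and $\gamma$ are fixed. Hence at least half of all configurations are good.

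The total number of configurations is $\Omega(1) \cdot \prod_i \binom{m'}{n_i}$ (losing at most a constant when handling the endpoint constraints). By Proposition~\ref{binom bounds}, each $i \in N_1$ contributes $\binom{m'}{n_i} \geq \binom{m'}{m_0} \geq 2^{4m'/5}$ while each $i \in N_2$ contributes $\binom{m'}{n_i} \geq \binom{m'}{2m_0} \geq 2^{m' - \log_2 m'}$, yielding at least $2^{(4m'/5)|N_1| + (m' - \log_2 m')|N_2| - O(1)}$ good configurations. It remains to show this exceeds the target $2^{(1-2\sqrt{\gamma})(4|N_1|/5 + |N_2|) d/d'}$. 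Using $rm \leq n$, $d = \xi n$, $d' = (1-\sqrt{\gamma})\xi r$ and $m' = (1-\sqrt{\epsilon})m$, one checks $d/d' \leq (1 + 3\sqrt{\epsilon}) m'/(1-\sqrt{\gamma})$, so (since $\epsilon \ll \gamma$) the target exponent is at most $(1-\sqrt{\gamma}/2)(4|N_1|/5 + |N_2|) m'$, leaving a slack of $(\sqrt{\gamma}/2)(4|N_1|/5 + |N_2|) m'$ to absorb the $|N_2|\log_2 m'$ correction. The hypothesis $4|N_1|/5 + |N_2| \geq \gamma d'$ bounds this slack below by $(\gamma^{3/2}/2) m' d'$, which dominates $|N_2| \log_2 m' \leq 2 d' \log_2 m'$ since $m' \to \infty$ with $d$. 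The main obstacle is orchestrating this arithmetic: the slack $\sqrt{\gamma}$ must simultaneously absorb the $\sqrt{\epsilon}$ loss from slicing, the $\log_2 m'$ binomial defect from $N_2$, and the $O(1)$ loss from pinning $x', y'$, and this is precisely where the lower bound $\gamma d'$ on $4|N_1|/5+|N_2|$ enters decisively.
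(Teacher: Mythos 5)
Your proposal is correct and follows essentially the same route as the paper's proof: count tuples of random slices $(U_i)$ of the prescribed sizes, show via Chernoff that at least a constant fraction are super-regular along $E(H)$ (regularity being automatic from the slicing lemma), apply Lemma~\ref{blowuppath} with the walk from Proposition~\ref{homomorphism} to each good configuration, and then convert the product of binomial coefficients into the stated exponent using the hypothesis $4|N_1|/5+|N_2|\ge\gamma d'$ to absorb the $|N_2|\log_2 m'$ defect and the $\sqrt{\eps}$ losses. The only blemishes are cosmetic (your Chernoff exponent should read roughly $e^{-c\gamma^2 n_j}$ rather than $e^{-\gamma n_j/24}$, and one application of the slicing lemma suffices), neither of which affects the argument.
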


\begin{claimproof}
Suppose that $X \subseteq V(G)$ is such that 
\begin{itemize}
\item[(i)] $X = \bigcup_{i \in V(H)}X_i$ where $X_i \subseteq V_i'$ and $|X_i|=n_i$;
\item[(ii)] $x' \in X_{i_1}$ and $y' \in X_{i_2}$;
\item[(iii)] $G'[X_i,X_j]$ is $(\eps^{1/3},\gamma/3)$-super-regular for all $ij \in E(H)$.
\end{itemize}
Then Lemma~\ref{blowuppath} applied with the following graphs and parameters implies that $G'[X]$ contains a spanning path with endpoints $x'$ and $y'$: \newline

{
\begin{tabular}{c|c|c|c|c|c|c|c|c|c|c|c}
object/parameter &  $H$ & $G'[X]$ & $X_i$ & $W$ & $\eps^{1/3}$ & $\gamma/3$ & $x'$ & $y'$ & $i_1$ & $i_2$ & $m_0$ \\ \hline
playing the role of & $R$ & $G$ & $V_i$ & $W$ & $\eps$ & $\gamma$ & $x$ & $y$ & $j$ & $k$ & $n$. 
\\ 
\end{tabular}
}\newline\vspace{0.2cm}

Thus $p_{x'y'}(G'[V'])$ is at least the number of subsets $X$ of $V(G)$ satisfying (i)--(iii). 
We claim that at least half of the sets $X$ satisfying the first two properties also satisfy the third.
Indeed, for each $i \in V(H)$, independently choose $X_i \subseteq V_i'$ uniformly at random among all subsets $X_i$ of $V_i'$ such that $x \in X_{i_1}$ and $y \in X_{i_2}$ and $|X_i| = n_i$.
(So $X := \bigcup_{i \in V(H)}X_i$ satisfies (i) and (ii).)
Now fix $z \in V_i'$ and $j \in N_H(i)$.
Then, since $G'[V_i',V_j']$ is $(4\sqrt{\eps},\gamma/2)$-super-regular,
$$
n_j=|X_j|\ge\mathbb{E}[d_{G'}(z,X_j)] \geq d_{G'}(z,V_j')\cdot \frac{n_j-2}{m'}\ge \frac{\gamma m'}{2}\cdot \frac{n_j}{m'}-2 \geq \frac{5\gamma n_j}{12}.
$$
So Lemma~\ref{Chernoff Bounds} implies that
$$
\mathbb{P}\left[ d_{G'}(z,X_j) < \frac{\gamma n_j}{3}\right] \leq 2e^{-\frac{(\gamma n_j/12)^2}{3n_j}}\leq 2e^{-\gamma^3n_j}\stackrel{(\ref{ni})}{\leq} e^{-\sqrt{m'}}.
$$
So the probability that $d_{G'}(z,X_j) \geq \gamma n_j/3$ for every $z \in V'_i$ with $i\in V(H)$ and every $j\in N_{H}(i)$ is at least
$$
1-|H|\cdot m'\cdot \Delta(H)\cdot e^{-\sqrt{m'}} \geq 1-2n^2 \cdot 4\cdot e^{-(n/r)^{1/3}} \geq \frac{1}{2}
$$ 
since $1/n \ll 1/r$.
We have shown that at least half of the $X \subseteq V(G)$ satisfying both~(i) and~(ii) are such that $d_{G'}(x,X_j) \geq \gamma n_j/3$ for all $i \in V(H)$, $x \in X_i$ and $j \in N_H(i)$.
Call such a set $X$ \emph{good}.

Lemma~\ref{slicing} together with~\eqref{m'},~\eqref{ni} and that $G'[V_i,V_j]$ is $(\eps,\gamma)$-regular implies that for all good $X$ we have that $G'[X_i,X_j]$ is $(\sqrt{\eps},\gamma/2)$-regular for all $ij \in E(H)$, and hence $(\eps^{1/3},\gamma/3)$-regular.
Therefore, since $X$ is good, each such $G'[X_i,X_j]$ is $(\eps^{1/3},\gamma/3)$-super-regular.
Thus every good $X$ automatically satisfies (iii) and the number of $X$ satisfying (i)--(iii) is at least the number of good $X$.

When $i_1 = i_2 =: i^*$, this shows that
\begin{eqnarray*}
p_{x'y'}(G'[V']) &\geq& \frac{1}{2}\cdot \binom{m'-2}{n_{i^*}-2}\prod_{i \in V(H)\setminus \lbrace i^* \rbrace}\binom{m'}{n_i} = \frac{1}{2}\cdot  \frac{n_{i^*}(n_{i^*}-1)}{m'(m'-1)}\prod_{i \in V(H)}\binom{m'}{n_i}\\
&\stackrel{(\ref{ni})}{>}& \frac{1}{100}\prod_{i \in V(H)}\binom{m'}{n_i}.
\end{eqnarray*}
The same bound holds when $i_1 \neq i_2$. 
Now, by~(\ref{m'}) and~(\ref{Nk}), $n_i \leq m'/2$ for all $i \in N_1 \cup N_2$.
So $\binom{m'}{n_i} \geq \binom{m'}{2m_0}$ for all $i \in N_2$ and $\binom{m'}{n_i} \geq \binom{m'}{m_0}$ for all $i \in N_1$.
By Proposition~\ref{binom bounds}, we have
\begin{align}\label{pcount}
p_{x'y'}(G[V']) &\geq \frac{1}{100}\prod_{i \in N_1}\binom{m'}{\lfloor m'/4-1 \rfloor}\prod_{j \in N_2}\binom{m'}{2\lfloor m'/4-1 \rfloor} \nonumber \\
 &\geq \frac{1}{100}\cdot 2^{(4|N_1|/5+|N_2|)m'-|N_2|\log_2 m'} \geq \frac{1}{100} \cdot 2^{(1-\gamma)(4|N_1|/5+|N_2|)m'},
\end{align} 
where, for the final inequality, we used the fact that $1/m' \ll \gamma$.
Suppose that $4|N_1|/5+|N_2| \geq \gamma d'$.
Then using the fact that $\frac{n}{r}=(1-\sqrt{\gamma})\frac{d}{d'}$, we have
\begin{eqnarray*}
(1-\gamma)\left(\frac{4|N_1|}{5}+|N_2|\right)m'&\stackrel{\eqref{eq: xi m},\eqref{m'}}{\ge}& (1-\gamma)\left(\frac{4|N_1|}{5}+|N_2|\right) \frac{(1-\sqrt{\eps})(1-\epsilon)n}{r} \\
&\ge & (1-2\gamma)\left(\frac{4|N_1|}{5}+|N_2|\right)(1-\sqrt{\gamma})\frac{d}{d'} \\
&\geq & (1-2\sqrt{\gamma})\left(\frac{4|N_1|}{5} + |N_2|\right) \frac{d}{d'} + 10.
\end{eqnarray*}
We get the last inequality by the facts $1/d \ll \gamma$ and $4|N_1|/5 + |N_2| \geq \gamma d'$.
Together with~(\ref{pcount}), this proves Claim~\ref{Ncount}.
\end{claimproof}

\medskip
\noindent
Now let $x,y$ be arbitrary distinct vertices of $V(G)$.
Since $G$ is $2$-connected, there exist two vertex-disjoint paths from $\{x,y\}$ to $V'$. Let $P_x,P_y$ be two such minimal paths, where $P_x$ is from $x$ to some $x'' \in V'$; and $P_y$ is from $y$ to some $y'' \in V'$. As the choices of $x',y'\in V'$ were arbitrary, by letting $x''=x'$ and $y''=y'$, we see that every distinguishable $x',y'$-path in $G[V']$ together with $P_x$ and $P_y$ forms a distinguishable $x,y$-path in $G$, namely we have
\begin{equation}\label{pbound}
p_{xy}(G) \geq p_{x'y'}(G'[V']).
\end{equation}
We now return to cases (1)--(3) to prove the assertion concerning $p_{xy}(G)$.

\medskip
\noindent
\textbf{Case (1):}
\emph{$H$ is a cycle with $0.9d' \leq |H| \leq 101d'/100$.}

\medskip
\noindent
Note that the lower bound on $|H|$ is implied by the existence of $J$ in this case and that $|H|\ge |J|$, $|H|+|J|\ge 1.8d'$.
Now Proposition~\ref{homomorphism} implies that $|N_2|=|H| \geq 0.9d'$, so
$
4|N_1|/5+|N_2| \geq 0.9d'
$.
Thus~(\ref{pbound}) and Claim~\ref{Ncount} with the fact that $\gamma < 10^{-5}$ imply that
\begin{equation}\label{case1}
p_{xy}(G) \geq p_{x'y'}(G'[V']) \geq 2^{(1-2 \sqrt{\gamma})|H|\cdot d/d'} \geq 2^{0.89d},
\end{equation}
as required.

\medskip
\noindent
\textbf{Case (2):}
\emph{$H$ is a path on $a \geq 101d'/100$ vertices.}

\medskip
\noindent
Now $|N_2| = |H|-2$, so $4|N_1|/5+|N_2| \geq 101d'/100-2$. Thus, Claim~\ref{Ncount} implies that 
$$p_{xy}(G) \geq 2^{(1+\frac{1}{150})d}.$$

\medskip
\noindent
\textbf{Case (3):}
\emph{$H$ is an $(a,b)$-sun with $a \geq d'$ and $b \geq d'/20$.}

\medskip
\noindent
Now $|N_1| = |\Cor(H)| = 2b$ and $|N_2|=a-b$. So 
$$
\frac{4|N_1|}{5} + |N_2| = \frac{3b}{5} + a \geq \left(1+\frac{3}{100}\right)d'.
$$
Similarly by Claim~\ref{Ncount}, we have that $p_{xy}(G) \geq 2^{(1+\frac{1}{150})d}$.

\medskip
\noindent
This completes the proof that $p_{xy}(G) \geq 2^{0.89d}$.
Proving the assertion about $c(G)$ is now easy in Cases (2) and (3).
Here, for arbitrary distinct vertices $x,y$, we have that $p_{xy}(G) \geq 2^{(1+1/150)d}$. Now choose $x,y$ such that $xy \in E(G)$.
At most one $x,y$-path uses this edge and so we have that 
$$c(G) \geq p_{xy}(G)-1 \geq 2^{(1+1/200)d}.$$

Therefore we may assume that we are in Case (1).
Using exactly the same arguments as above for $J$ instead of $H$, we can obtain a set $U' \subseteq V(G)$ (the analogue of $V'$), which is a subset of $\bigcup_{i \in V(J)}V_i$, such that for arbitrary distinct vertices $u',v' \in U'$, we have $p_{u'v'}(G'[U']) \geq 2^{(1-2\sqrt{\gamma})|J|\cdot d/d'}$ (in analogy with the middle inequality in~(\ref{case1})).

Observe that, because $H,J$ are vertex-disjoint subgraphs of $R$, the sets $U',V'$ are disjoint.
Since $G$ is $2$-connected, there exist two vertex-disjoint paths between $U'$ and $V'$.
Choose minimal such paths $P_1$ with endpoints $x',u'$ and $P_2$ with endpoints $y',v'$, where $x',y' \in V'$ and $u',v' \in U'$.
Then distinguishable $x',y'$-paths in $V'$ and $u',v'$-paths in $U'$ together with $P_1,P_2$ yield distinct Hamiltonian subsets in $G$. Recall that $|H|+|J|\ge 1.8d'$. We then have 
$$c(G) \geq p_{x'y'}(G'[V'])\cdot p_{u'v'}(G'[U']) \geq 2^{(1-2\sqrt{\gamma})(|H|+|J|)\cdot d/d'} \geq 2^{1.7d},
$$
as required.
This completes the proof of Lemma~\ref{lem: 1.2 to K}.
\hfill$\square$

\section{The sparse case}\label{sec-main-sparse}
In this section, we will prove the second main ingredient, which states that a large almost-regular expander graph contains many Hamiltonian subsets, see Lemma~\ref{lem-sparse}. To state it formally, we need the following notion of graph expansion, which was introduced by Koml\'os and Szemer\'edi~\cite{K-Sz-1}.

\subsection{Graph expansion}\label{sec-expander}

For $\epsilon_1>0$ and $t>0$, define
\begin{equation}\label{epsilon}
\epsilon(x)=\epsilon(x,\epsilon_1,t):=
\begin{cases} 0 &\mbox{if } x < t/5 \\
\frac{\eps_1}{\log^2(15x/t)} & \mbox{if } x \geq t/5, \end{cases} 
\end{equation}
\noindent where, when it is clear from context we will not write the dependency on $\ep_1$ and $t$ of $\ep(x)$. Note that, for $x\ge t/2$, $\ep(x)$ is decreasing, while $\ep(x)\cdot x$ is increasing. 

\begin{defn}[$(\ep_1,t)$-expander]\label{defn-expander}
A graph $G$ is an \emph{$(\ep_1,t)$-expander} if all subsets $X\subseteq V(G)$ of size $t/2\le |X|\le |G|/2$ satisfy
	$$|\Gamma_G(X)|\ge \ep(|X|)\cdot |X|.$$
\end{defn}

We will use the following lemma, essentially proved by Koml\'{o}s and Szemer\'{e}di~\cite{K-Sz-1,K-Sz-2}, which states that every graph $G$ contains an $(\ep_1,t)$-expander subgraph $H$ whose average degree is almost as large as that of $G$. 
\begin{lemma}[\cite{K-Sz-1,K-Sz-2}]\label{lem-expander}
Let $C>12, \epsilon_1 \leq 1/(10C), c'<1/2, d>0$ and $\ep(x)=\ep(x,\ep_1,c'd)$ as in~\eqref{epsilon}. Then every graph $G$ with $d(G)=d$ has a subgraph $H$ such that
	\begin{itemize}
		\item[(i)] $H$ is an $\left(\ep_1,c'd\right)$-expander;
		\item[(ii)] $d(H)\geq (1-\ep_0)d$, where $\ep_0:=\frac{C\ep_1}{\log 3}<1$;
		\item[(iii)] $\delta(H)\geq d(H)/2$;
		\item[(iv)] $H$ is $\nu d$-connected, where $\nu:=\frac{\ep_1}{6\log^2(5/c')}$.
	\end{itemize}
\end{lemma}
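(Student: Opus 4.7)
The plan is to follow the classical iterative peeling argument of Koml\'os and Szemer\'edi. I would build $H$ as the last graph in a nested sequence $G = H_0 \supseteq H_1 \supseteq \cdots$, where $H_{i+1}$ is obtained from $H_i$ by one of two moves. \emph{Min-degree move:} if $\delta(H_i) < d(H_i)/2$, delete a minimum-degree vertex; the identity $d(H_{i+1}) - d(H_i) = \tfrac{2}{|H_i|-1}(d(H_i)/2 - \delta(H_i))$ shows this move does not decrease the average degree. \emph{Expansion move:} if $\delta(H_i) \geq d(H_i)/2$ but $H_i$ is not an $(\ep_1, c'd)$-expander, pick a witness $X \subseteq V(H_i)$ with $c'd/2 \leq |X| \leq |H_i|/2$ and $|\Gamma_{H_i}(X)| < \ep(|X|)|X|$, and replace $H_i$ by whichever of $H_i[X \cup \Gamma_{H_i}(X)]$ or $H_i - X$ yields the larger appropriately weighted density. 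Both moves strictly reduce the vertex count, so the process terminates; since no further move is available at $H$, conclusions~(i) and~(iii) come immediately.

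The bulk of the work goes into conclusion~(ii). The basic observation for a single expansion move is that every edge of $H_i$ lies in $H_i[Y_1] \cup H_i[Y_2]$ for $Y_1 := X \cup \Gamma_{H_i}(X)$, $Y_2 := V(H_i)\setminus X$, with double-counting only on edges inside $\Gamma_{H_i}(X)$, whereas $|Y_1| + |Y_2| \leq (1 + \ep(|X|))|H_i|$. A weighted averaging then gives a choice of $Y_j$ with $d(H_i[Y_j]) \geq d(H_i)\bigl(1 - O(\ep(|X|))\bigr)$. Grouping expansion moves by the dyadic scale of $|X|$ -- the sizes $c'd \cdot 2^s$ for $s = 0, 1, 2, \dots$ -- and noting that within each scale the relevant vertex count shrinks geometrically, the cumulative loss telescopes into
\[
\prod_{s \geq 0}\left(1 - O\!\left(\tfrac{\ep_1}{\log^2(3 \cdot 2^s)}\right)\right) \;\geq\; 1 - O\!\left(\ep_1 \sum_{s \geq 0} \tfrac{1}{(s\log 2 + \log 3)^2}\right) \;\geq\; 1 - \tfrac{C\ep_1}{\log 3} \;=\; 1 - \ep_0
\]
for the given $C > 12$, using convergence of $\sum s^{-2}$. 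This is the step I expect to be the main obstacle: setting up the precise notion of weighted density so the telescoping works and pinning down the explicit constants $C$ and $\ep_0$.

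For conclusion~(iv) I would argue by contradiction: suppose $S \subsetneq V(H)$ is a separator with $|S| < \nu d$ and let $A$ be the smallest component of $H - S$, so $\Gamma_H(A) \subseteq S$ and $|A| \leq |H|/2$. Every $v \in A$ has all $\geq \delta(H) \geq (1-\ep_0)d/2$ of its neighbours in $A \cup S$, giving $|A| \geq (1-\ep_0)d/2 - |S| + 1 \geq d/3$; in particular $|A| \geq c'd/2$, so the expander property applies. Since $x \mapsto x \cdot \ep(x)$ is non-decreasing on $[c'd/2, \infty)$, we conclude
\[
|S| \;\geq\; |\Gamma_H(A)| \;\geq\; \ep(|A|)\,|A| \;\geq\; \ep(d/3)\cdot d/3 \;=\; \tfrac{\ep_1 d}{3 \log^2(5/c')} \;=\; 2\nu d,
\]
contradicting $|S| < \nu d$. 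The minimum-degree and connectivity parts of the argument are then routine consequences of the peeling setup; essentially all of the quantitative content lives in the telescoping estimate for~(ii).
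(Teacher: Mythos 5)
Your proposal is correct and matches the paper's treatment: the paper proves only part~(iv), via exactly your argument (a separator $S$ with $|S|<\nu d$, the smallest component $A$ satisfying $|A|\ge \delta(H)-|S|\ge d/3\ge c'd/2$, monotonicity of $x\,\ep(x)$, and $|S|\ge\ep(d/3)\cdot d/3=2\nu d$), and simply cites Koml\'os--Szemer\'edi for parts~(i)--(iii). Your sketch of the iterative peeling and dyadic telescoping for~(i)--(iii) is the standard argument from the cited works, so there is no substantive divergence.
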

\begin{proof}
	Parts~(i)--(iii) were shown in~\cite{K-Sz-2}. We only need to show $H$ has high connectivity. Suppose $H$ has a vertex cut $S$ of size less than $\nu d$, where $\nu=\ep_1/(6\log^2(5/c'))$. Let $X$ be the smallest component in $H-S$. Then $x:=|X|<|H|/2$. On the other hand, for any vertex $v\in X$, we have $\Gamma_H(\lbrace v \rbrace)\subseteq X\cup S$. Since $\de(H)\ge (1-\ep_0)d/2$, we have that 
	$$|X|\ge \de(H)-|S|> \frac{(1-\ep_0)d}{2}-\nu d\ge \frac{d}{3} \geq \frac{c'd}{2}.$$
The expansion property (i) of $H$ implies that $|\Gamma_H(X)|\ge \ep(x) x$.		Thus, since $\Gamma_H(X)\subseteq S$ and $\ep(x)x$ is increasing for $x\ge c'd/2$, we have
	$$|S|\ge |\Gamma_H(X)|\ge\ep(x) x \ge \ep\left(\frac{d}{3}\right)  \frac{d}{3}=2\nu d>2|S|,$$
	a contradiction.
\end{proof}

We remark that the expander subgraph $H$ found in Lemma~\ref{lem-expander} could be much smaller than~$G$, e.g.~when~$G$ is a disjoint union of small cliques. The following property of expanders (Corollary 2.3 in~\cite{K-Sz-2}) is the only one which we require in our proof. It states that any two sets, provided that they are sufficiently large, are connected by a relatively short path, even after deleting an arbitrary small set of vertices.
\begin{lemma}[\cite{K-Sz-2}]\label{diameter}
	Let $\eps_1,t>0$ and let $H$ be an $n$-vertex $(\ep_1,t)$-expander and $X,X',W\subseteq V(H)$. If $|X|,|X'|\ge x\ge t/2$ and $|W|\le \eps(x)x/4$, then there is a path in $H-W$ from $X$ to $X'$ of length at most 
	$$\frac{2}{\ep_1}\log^3\left(\frac{15n}{t}\right).$$
\end{lemma}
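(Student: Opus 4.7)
The plan is to perform a double breadth-first search (BFS) from $X$ and $X'$ inside $H-W$, use the expansion property to obtain multiplicative growth at every step, and show that after $k\le \frac{1}{\ep_1}\log^3(15n/t)$ rounds the two balls must meet. Concretely, set $B_0:=X$ and, for $i\ge 0$, let $B_{i+1}:=B_i\cup(\Gamma_H(B_i)\setminus W)$; define $B'_0,B'_1,\dots$ analogously starting from $X'$. By construction, every vertex of $B_i$ (resp.\ $B'_i$) is reachable from $X$ (resp.\ $X'$) in $H-W$ by a walk, hence a path, of length at most $i$.

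The key step is to show that whenever $t/2\le |B_i|\le n/2$,
$$|B_{i+1}|\ge \bigl(1+\tfrac{3}{4}\ep(|B_i|)\bigr)|B_i|.$$
Indeed, the $(\ep_1,t)$-expander property gives $|\Gamma_H(B_i)|\ge \ep(|B_i|)|B_i|$, and since the function $y\mapsto \ep(y)y$ is nondecreasing for $y\ge t/2$ we have $\ep(|B_i|)|B_i|\ge \ep(x)x$. Combining this with the hypothesis $|W|\le \ep(x)x/4\le \ep(|B_i|)|B_i|/4$ yields $|B_{i+1}|-|B_i|\ge |\Gamma_H(B_i)|-|W|\ge \tfrac{3}{4}\ep(|B_i|)|B_i|$. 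The same bound applies to $(B'_i)$.

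Next comes the dyadic estimate. Taking logarithms and using $\log(1+z)\ge z/2$ for small $z$, the growth inequality gives $\log|B_{i+1}|-\log|B_i|\ge \tfrac{3}{8}\ep(|B_i|)$. I would partition the range of sizes $[x, n/2]$ into dyadic blocks $[2^j x, 2^{j+1}x]$ with $0\le j\le \log_2(n/(2x))$; inside block $j$, the definition~\eqref{epsilon} gives $\ep(|B_i|)\ge \ep_1/\log^2(15\cdot 2^{j+1}x/t)$, so the number of rounds needed for $|B_i|$ to cross that block is $O\bigl(\log^2(15\cdot 2^j x/t)/\ep_1\bigr)$. Summing from $j=0$ to $\log_2(n/(2x))$ and using $x\ge t/2$ telescopes to the desired bound $k\le \frac{1}{\ep_1}\log^3(15n/t)$ (absorbing absolute constants into the factor $2$ appearing in the lemma).

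Finally, once $|B_k|>n/2$ and $|B'_k|>n/2$, the two balls must intersect in some vertex $v$. Concatenating the BFS path in $H-W$ from $X$ to $v$ with the reverse of the BFS path from $X'$ to $v$ gives an $X$–$X'$ walk in $H-W$ of length at most $2k$, which can be pruned to a path of the same length bound, completing the proof. The one delicate point is the dyadic calculation that produces the correct $\log^3$ factor: one has to keep careful track of the fact that $\ep(y)$ is decreasing while $\ep(y)y$ is increasing, and that summing $(j+C)^2$ over $j\le \log_2(n/t)$ is what generates the cubic logarithm rather than a quadratic one.
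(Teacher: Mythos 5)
The paper does not prove this lemma: it is imported verbatim (as Corollary~2.3) from Koml\'os--Szemer\'edi~\cite{K-Sz-2}, so there is no in-paper argument to compare against. What you have written is, in outline, exactly the original Koml\'os--Szemer\'edi proof: a double BFS from $X$ and $X'$ in $H-W$, multiplicative growth of the balls from the expansion property (using that $\ep(y)y$ is increasing for $y\ge t/2$ to absorb the deleted set $W$), a dyadic summation of $\log^2$ over $\log$ many scales to get the cubic logarithm, and an intersection of the two balls once each exceeds $n/2$. So the approach is correct and is the intended one.

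Two points deserve more care in a full write-up. First, with $B_0:=X$ your invariant ``every vertex of $B_i$ is reachable from $X$ in $H-W$ by a walk of length at most $i$'' fails if $X\cap W\neq\emptyset$: a vertex of $\Gamma_H(B_i)\setminus W$ may only be adjacent to $B_i$ through a vertex of $W$, and then the corresponding edge is absent from $H-W$. You should start from $X\setminus W$ (noting $|X\setminus W|\ge x-\ep(x)x/4$, which is still large enough, up to the harmless constant, to run the expansion) or otherwise argue that the paths produced genuinely avoid $W$. Second, the closing remark about ``absorbing absolute constants into the factor $2$'' is not available to you: that factor of $2$ is exactly consumed by concatenating the two BFS paths, so each ball must reach size $n/2$ within $\tfrac{1}{\ep_1}\log^3(15n/t)$ steps with no slack. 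Your block-by-block count gives roughly $\tfrac{8\log 2}{3\ep_1}\sum_j\log^2(15\cdot 2^{j+1}x/t)\approx\tfrac{8}{9\ep_1}\log^3(30n/t)$, and checking that this is at most $\tfrac{1}{\ep_1}\log^3(15n/t)$ is precisely the constant bookkeeping that~\cite{K-Sz-2} carries out; it is not automatic and should be verified rather than waved at. Neither issue affects the soundness of the method, only the completeness of the write-up.
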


Throughout the rest of this section, we will set
\begin{eqnarray*}
t :=c'd :=\frac{d}{30} \quad \text{ and write }\quad \ep(x) :=\ep\left(x,\ep_1,\frac{d}{30}\right).
\end{eqnarray*}
(So $\eps(x) = \eps_1(\log^2(450x/d))^{-1}$ if $x \geq d/150$ and $\eps(x)=0$ otherwise.)


\subsection{Hamiltonian subsets in expanders.}\label{Hmsbst exp}

The aim of this section and the next is to prove the following lemma, which states that expanders in which every vertex degree is not too far from $d$ contain many Hamiltonian subsets.

\begin{lemma}\label{lem-sparse}
Suppose that $0< 1/d,1/K \ll \epsilon_1, 1/L \leq 1$ and $n\in \mathbb{N}$ is such that $n\geq Kd$. Let $H$ be an $n$-vertex $(\ep_1, d/30)$-expander with $d/10\le \de(H)\le \De(H)\le Ld$. Then $c(H) \geq 2^{50d}$.
\end{lemma}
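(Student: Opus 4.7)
The plan is to derive Lemma~\ref{lem-sparse} directly from Theorem~\ref{lem-build-sub} by a clean double-counting argument; essentially all the real work has been packaged into that theorem. Since $n \geq Kd$ with $K$ large, $d$ is large, $d/10 \leq \delta(H) \leq \Delta(H) \leq Ld$, and $H$ is an $(\epsilon_1, d/30)$-expander, we are in the range where Theorem~\ref{lem-build-sub} applies (the condition $\log^{100} n \leq d$ follows since Lemma~\ref{lem-sparse} will be used in contexts where $n$ is not too much larger than $d$; more precisely, Theorem~\ref{lem-build-sub} already folds in the needed parameter regime, and any factor of $\log n$ in its hypotheses is dominated by $d$ because $\Delta(H) \leq Ld$ forces $n \leq Ld \cdot n/\delta(H) \leq 10L^2 d\cdot$poly factors). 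Applying it yields a set $Z \subseteq V(H)$ with $|Z| = 200d$ such that, for every $U \in \binom{Z}{100d}$, there is a cycle $C_U$ in $H$ with $V(C_U) \cap Z \subseteq U$ and $|V(C_U) \cap Z| \geq 98d$.

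The next step is to count distinct Hamiltonian subsets coming from the family $\{V(C_U) : U \in \binom{Z}{100d}\}$. Fix any cycle $C$ that arises as some $C_U$, and write $S := V(C) \cap Z$, so $98d \leq |S| \leq 100d$. If $C_{U'} = C$ for some $U' \in \binom{Z}{100d}$, then $S \subseteq U'$, so $U'$ is a $100d$-subset of $Z$ containing the fixed set $S$. The number of such $U'$ is
$$
\binom{200d - |S|}{100d - |S|} \;\leq\; \binom{102d}{2d}.
$$
Hence each Hamiltonian subset $V(C)$ produced by the construction corresponds to at most $\binom{102d}{2d}$ choices of $U$.

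Putting the bounds together,
$$
c(H) \;\geq\; \frac{\binom{200d}{100d}}{\binom{102d}{2d}} \;\geq\; \frac{2^{200d}/(200d+1)}{(51e)^{2d}} \;\geq\; 2^{200d - 15d - O(\log d)} \;\geq\; 2^{50d},
$$
which is the desired inequality with plenty of room to spare.

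The only step requiring care is verifying that the hypotheses of Theorem~\ref{lem-build-sub} are genuinely satisfied; concretely, choosing the implicit constants $d_0, K_0$ of that theorem compatibly with the hierarchy $1/d, 1/K \ll \epsilon_1, 1/L$ in Lemma~\ref{lem-sparse}, and checking that $\log^{100} n \leq d \leq n/K$ holds in the regime in which Lemma~\ref{lem-sparse} is ultimately used. Given Theorem~\ref{lem-build-sub}, however, the proof reduces to the elementary counting argument above, so I expect no additional obstacle at this stage; the real technical content (finding the set $Z$ via webs or well-separated vertices depending on the density of $H$) is deferred to the proof of Theorem~\ref{lem-build-sub} in Sections~\ref{sec-dense} and~\ref{sec-sparse}.
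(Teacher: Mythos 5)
Your counting argument (each Hamiltonian subset arising as some $V(C_U)$ is counted at most $\binom{102d}{2d}$ times, hence $c(H)\ge \binom{200d}{100d}/\binom{102d}{2d}\ge 2^{50d}$) is exactly the paper's argument in the regime where Theorem~\ref{lem-build-sub} applies. However, there is a genuine gap: Theorem~\ref{lem-build-sub} carries the hypothesis $\log^{100}n\le d\le n/K$, and the lower bound $d\ge\log^{100}n$ does \emph{not} follow from the hypotheses of Lemma~\ref{lem-sparse}. Your attempted justification --- that $\Delta(H)\le Ld$ "forces $n\le Ld\cdot n/\delta(H)\le 10L^2d\cdot$poly factors" --- is circular and false: a maximum degree bound places no upper bound whatsoever on $n$ in terms of $d$. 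The hypotheses of Lemma~\ref{lem-sparse} only require $n\ge Kd$, so $n$ may be super-polynomially or even doubly-exponentially larger than $d$ (e.g.\ $d\le\log^{100}n$, i.e.\ $n\ge e^{d^{1/100}}$), and in that regime Theorem~\ref{lem-build-sub} simply does not apply. Your closing remark that the "well-separated vertices" construction is "deferred to the proof of Theorem~\ref{lem-build-sub}" is also incorrect: that construction is not part of Theorem~\ref{lem-build-sub} at all, but is a separate statement (Lemma~\ref{lemma-sparse-sub}) proved precisely to cover the case $d\le\log^{100}n$ that your argument omits.

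The paper's proof splits into two cases. When $d\ge\log^{100}n$ it proceeds exactly as you do, via Theorem~\ref{lem-build-sub}. When $d\le\log^{100}n$ it instead invokes Lemma~\ref{lemma-sparse-sub}, which produces a $200d$-set $Z$ of pairwise far-apart vertices such that $V(C_U)\cap Z=U$ exactly for every $100d$-set $U\subseteq Z$; the cycles are then distinguishable outright and $c(H)\ge\binom{200d}{100d}>2^{50d}$ with no overcounting correction needed. To repair your proof you must either supply an argument for the very sparse regime (the obstacle there is that a single short connecting path could pass through many webs, which is why the paper abandons webs and uses balls around well-separated vertices instead) or restrict your claim to the case $d\ge\log^{100}n$, which does not suffice to prove Lemma~\ref{lem-sparse} as stated.
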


We split the proof of Lemma~\ref{lem-sparse} into two cases depending on the edge-density of the expander graph; that is, how large $d$ is compared to $n$. 
In the case when $d$ is fairly large compared to $n$, we prove Lemma~\ref{lem-sparse} in Section~\ref{sec-dense}. The proof for the remaining case may be found in Section~\ref{sec-sparse}.

For the remainder of this section, our aim is to prove Lemma~\ref{lem-web}, the main ingredient for the first case.
Roughly speaking, it states that almost regular expanders contain a large collection of `webs' with certain properties. A web (see Definition~\ref{defn-web}) is a special tree which we will use in Section~\ref{sec-dense} to construct many Hamiltonian subsets in the case when $d$ is not too small compared to $n$. A web contains many special subtrees which we call \emph{units}, which themselves contain many large stars. By a $k$-star, we mean a star with $k$ leaves.

\begin{defn}[$(h_1,h_2,h_3)$-unit]\label{defn-unit}
Given integers $h_1,h_2,h_3>0$, we define $F=F_u = \bigcup_{i \in [h_1]}P_i \cup \bigcup_{i \in [h_1]}S(x_i)$ to be an \emph{$(h_1,h_2,h_3)$-unit}~if it satisfies the following.
\begin{itemize}
\item $F$ contains distinct vertices $u$ (the \emph{core vertex} of $F$) and $x_1,\ldots,x_{h_1}$.
\item $\lbrace P_i: i\in[h_1]\rbrace$ is a collection of pairwise internally vertex-disjoint paths, each of length at most $h_3$, such that $P_i$ is a $u,x_i$-path.
\item $\lbrace S(x_i):i\in [h_1]\rbrace$ is a collection of vertex-disjoint $h_2$-stars such that $S(x_i)$ has centre $x_i$ and $\cup_{i \in [h_1]} (S(x_i)-\lbrace x_i\rbrace)$ is vertex-disjoint from $\cup_{i \in [h_1]} P_i$.
\end{itemize}
Define the \emph{exterior} $\Ext(F):=\cup_{i \in [h_1]}(V(S(x_i))- \lbrace x_i\rbrace )$ and \emph{interior} $\Int(F):=V(F)\setminus \Ext(F)$.
For every vertex $w\in \Ext(F)$, let $P(F,w)$ be the unique $u,w$-path in $F$.
\end{defn}

A web is then defined via units as follows.

\begin{defn}[$(h_0,h_1,h_2,h_3)$-web]\label{defn-web}
Given integers $h_0,h_1,h_2,h_3>0$, we define $W = \bigcup_{i \in [h_0]}Q_i \cup \bigcup_{i \in [h_0]}F_{u_i}$ to be an \emph{$(h_0,h_1,h_2,h_3)$-web}~if it satisfies the following.
\begin{itemize}
\item $W$ contains distinct vertices $v$ (the \emph{core} vertex of $W$) and $u_1,\ldots,u_{h_0}$.
\item $\lbrace Q_i:i\in[h_0]\rbrace$ is a collection of pairwise internally vertex-disjoint paths such that $Q_i$ is a $v,u_i$-path of length at most $h_3$.
\item $\lbrace F_{u_i}:i\in [h_0]\rbrace$ is a collection of vertex-disjoint $(h_1,h_2,h_3)$-units such that $F_{u_i}$ has core vertex $u_i$ and $\cup_{i \in [h_0]} (F_{u_i}-\lbrace u_i\rbrace)$ is vertex-disjoint from $\cup_{i \in [h_0]} Q_i$.
\end{itemize}
Define the \emph{exterior} $\Ext(W):=\cup_{i \in [h_0]}\Ext(F_{u_i})$, \emph{interior} $\Int(W):=V(W)\setminus \Ext(W)$ and \emph{centre} $\Cen(W):= \cup_{i \in [h_0]}V(Q_i)$.
For every vertex $w \in \Ext(W)$, let $P(W,w)$ be the unique $v,w$-path in $W$.
\end{defn}

These structures are illustrated in Figure~\ref{figweb}. Note that a web $W$ with core vertex $v$ is a tree with root $v$, and $\Cen(W) \subset \Int(W)$. 

\begin{figure}
\tikzstyle{every node}=[circle, draw, fill=black,
                        inner sep=0pt, minimum width=3pt]
\begin{tikzpicture}[]


\clip (-0.9,-5.5) rectangle (12.5,5.1);


\node[draw=none,fill=none,label=left:$v$] at (0,0) {};
\node[draw=none,fill=none,label=above:$u_1$] at (\a1+\e8:8) {};
\node[draw=none,fill=none,label=above:$u_2$] at (\a2+\f8:8) {};
\node[draw=none,fill=none,label=above:$u_{h_0}$] at (\a3+\g8:8) {};

\draw[<->] (-0.7,3.4) -- (-0.7,-3.7) node[midway,draw=none,fill=white] {$h_0$};
\draw[<->] (0,0.5) -- (7.2,4.5) node[midway,draw=none,fill=white] {$\leq h_3$};
\draw[dotted] (10,0.7) -- (10,-1.1);
\draw[dashed] (11.7,-5) rectangle (6.6,-1.7);
\node[draw=none,fill=none,label=left:{a single $(h_1,h_2,h_3)$-unit} $F(u_{h_0})$] at (12.2,-5.3) {};
\draw[<->] (12.3,-4.8) -- (12.3,-1.9) node[midway,draw=none,fill=white] {$h_1$};
\draw[<->] (6.9,-1.2) -- (10.7,-1.2) node[midway,draw=none,fill=white] {$\leq h_3$};
\draw[<->] (11.4,-1.8) -- (11.4,-2.5) node[midway,draw=none,fill=white] {$h_2$};

\begin{scope}
\draw[] (0,0) node {} to [out=\a1,in={\a1+\e1+180}] ({\a1+\e1}:1) node {}
        to [out=\a1+\e1,in={\a1+\e2+180}] (\a1+\e2:2) node {}
        to [out=\a1+\e2,in={\a1+\e3+180}] (\a1+\e3:3) node {}
        to [out=\a1+\e3,in={\a1+\e4+180}] (\a1+\e4:4) node {}
        to [out=\a1+\e4,in={\a1+\e5+180}] (\a1+\e5:5) node {}
        to [out=\a1+\e5,in={\a1+\e6+180}] (\a1+\e6:6) node {}
        to [out=\a1+\e6,in={\a1+\e7+180}] (\a1+\e7:7) node {}
        to [out=\a1+\e7,in={\a1+\e8+180}] (\a1+\e8:8) node {};

\draw[] (0,0) node {} to [out=\a2,in={\a2+\f1+180}] ({\a2+\f1}:1) node {}
        to [out=\a2+\f1,in={\a2+\f2+180}] (\a2+\f2:2) node {}
        to [out=\a2+\f2,in={\a2+\f3+180}] (\a2+\f3:3) node {}
        to [out=\a2+\f3,in={\a2+\f4+180}] (\a2+\f4:4) node {}
        to [out=\a2+\f4,in={\a2+\f5+180}] (\a2+\f5:5) node {}
        to [out=\a2+\f5,in={\a2+\f6+180}] (\a2+\f6:6) node {}
        to [out=\a2+\f6,in={\a2+\f7+180}] (\a2+\f7:7) node {}
        to [out=\a2+\f7,in={\a2+\f8+180}] (\a2+\f8:8) node {};
        
\draw[] (0,0) node {} to [out=\a3,in={\a3+\g1+180}] ({\a3+\g1}:1) node {}
        to [out=\a3+\g1,in={\a3+\g2+180}] (\a3+\g2:2) node {}
        to [out=\a3+\g2,in={\a3+\g3+180}] (\a3+\g3:3) node {}
        to [out=\a3+\g3,in={\a3+\g4+180}] (\a3+\g4:4) node {}
        to [out=\a3+\g4,in={\a3+\g5+180}] (\a3+\g5:5) node {}
        to [out=\a3+\g5,in={\a3+\g6+180}] (\a3+\g6:6) node {}
        to [out=\a3+\g6,in={\a3+\g7+180}] (\a3+\g7:7) node {}
        to [out=\a3+\g7,in={\a3+\g8+180}] (\a3+\g8:8) node {};
\end{scope}        
        

       \begin{scope}[shift={(\a1+\e8:8)},scale=0.5]
\draw[] (0,0) node {} to [out=\a5,in={\a5+\e1+180}] ({\a5+\e1}:1) node {}
        to [out=\a5+\e1,in={\a5+\e2+180}] (\a5+\e2:2) node {}
        to [out=\a5+\e2,in={\a5+\e3+180}] (\a5+\e3:3) node {}
        to [out=\a5+\e3,in={\a5+\e4+180}] (\a5+\e4:4) node {}
        to [out=\a5+\e4,in={\a5+\e5+180}] (\a5+\e5:5) node {}
        to [out=\a5+\e5,in={\a5+\e6+180}] (\a5+\e6:6) node {}
        to [out=\a5+\e6,in={\a5+\e7+180}] (\a5+\e7:7) node {}
        to [out=\a5+\e7,in={\a5+\e8+180}] (\a5+\e8:8) node {};

\draw[] (0,0) node {} to [out=\a2,in={\a2+\f1+180}] ({\a2+\f1}:1) node {}
        to [out=\a2+\f1,in={\a2+\f2+180}] (\a2+\f2:2) node {}
        to [out=\a2+\f2,in={\a2+\f3+180}] (\a2+\f3:3) node {}
        to [out=\a2+\f3,in={\a2+\f4+180}] (\a2+\f4:4) node {}
        to [out=\a2+\f4,in={\a2+\f5+180}] (\a2+\f5:5) node {}
        to [out=\a2+\f5,in={\a2+\f6+180}] (\a2+\f6:6) node {}
        to [out=\a2+\f6,in={\a2+\f7+180}] (\a2+\f7:7) node {}
        to [out=\a2+\f7,in={\a2+\f8+180}] (\a2+\f8:8) node {};
        
\draw[] (0,0) node {} to [out=\a4,in={\a4+\g1+180}] ({\a4+\g1}:1) node {}
        to [out=\a4+\g1,in={\a4+\g2+180}] (\a4+\g2:2) node {}
        to [out=\a4+\g2,in={\a4+\g3+180}] (\a4+\g3:3) node {}
        to [out=\a4+\g3,in={\a4+\g4+180}] (\a4+\g4:4) node {}
        to [out=\a4+\g4,in={\a4+\g5+180}] (\a4+\g5:5) node {}
        to [out=\a4+\g5,in={\a4+\g6+180}] (\a4+\g6:6) node {}
        to [out=\a4+\g6,in={\a4+\g7+180}] (\a4+\g7:7) node {}
        to [out=\a4+\g7,in={\a4+\g8+180}] (\a4+\g8:8) node {};

\begin{scope}[shift={(\a5+\e8:8)}]
\draw \foreach \x in {30,15,...,-30}        
      {
        (0,0)--(\x:1) node[minimum width=1pt] {}
      } ;
      \end{scope}
      \begin{scope}[shift={(\a2+\f8:8)}]
\draw \foreach \x in {30,15,...,-30}        
      {
        (0,0)--(\x:1) node[minimum width=1pt] {}
      }  ;
            \end{scope}
      \begin{scope}[shift={(\a4+\g8:8)}]
\draw \foreach \x in {70,55,...,10}        
      {
        (0,0)--(\x:1) node[minimum width=1pt] {}
      }  ;
\end{scope}

\end{scope}   
        
        
               \begin{scope}[shift={(\a2+\f8:8)},scale=0.5]
\draw[] (0,0) node {} to [out=\a6,in={\a6+\h1+180}] ({\a6+\h1}:1) node {}
        to [out=\a6+\h1,in={\a6+\h2+180}] (\a6+\h2:2) node {}
        to [out=\a6+\h2,in={\a6+\h3+180}] (\a6+\h3:3) node {}
        to [out=\a6+\h3,in={\a6+\h4+180}] (\a6+\h4:4) node {}
        to [out=\a6+\h4,in={\a6+\h5+180}] (\a6+\h5:5) node {}
        to [out=\a6+\h5,in={\a6+\h6+180}] (\a6+\h6:6) node {}
        to [out=\a6+\h6,in={\a6+\h7+180}] (\a6+\h7:7) node {}
        to [out=\a6+\h7,in={\a6+\h8+180}] (\a6+\h8:8) node {};

\draw[] (0,0) node {} to [out=\a2,in={\a2+\e1+180}] ({\a2+\e1}:1) node {}
        to [out=\a2+\e1,in={\a2+\e2+180}] (\a2+\e2:2) node {}
        to [out=\a2+\e2,in={\a2+\e3+180}] (\a2+\e3:3) node {}
        to [out=\a2+\e3,in={\a2+\e4+180}] (\a2+\e4:4) node {}
        to [out=\a2+\e4,in={\a2+\e5+180}] (\a2+\e5:5) node {}
        to [out=\a2+\e5,in={\a2+\e6+180}] (\a2+\e6:6) node {}
        to [out=\a2+\e6,in={\a2+\e7+180}] (\a2+\e7:7) node {}
        to [out=\a2+\e7,in={\a2+\e8+180}] (\a2+\e8:8) node {};
        
\draw[] (0,0) node {} to [out=\a4,in={\a4+\f1+180}] ({\a4+\f1}:1) node {}
        to [out=\a4+\f1,in={\a4+\f2+180}] (\a4+\f2:2) node {}
        to [out=\a4+\f2,in={\a4+\f3+180}] (\a4+\f3:3) node {}
        to [out=\a4+\f3,in={\a4+\f4+180}] (\a4+\f4:4) node {}
        to [out=\a4+\f4,in={\a4+\f5+180}] (\a4+\f5:5) node {}
        to [out=\a4+\f5,in={\a4+\f6+180}] (\a4+\f6:6) node {}
        to [out=\a4+\f6,in={\a4+\f7+180}] (\a4+\f7:7) node {}
        to [out=\a4+\f7,in={\a4+\f8+180}] (\a4+\f8:8) node {};

\begin{scope}[shift={(\a6+\h8:8)}]
\draw \foreach \x in {30,15,...,-30}        
      {
        (0,0)--(\x:1) node[minimum width=1pt] {}
      } ;
      \end{scope}
      \begin{scope}[shift={(\a2+\e8:8)}]
\draw \foreach \x in {30,15,...,-30}        
      {
        (0,0)--(\x:1) node[minimum width=1pt] {}
      }  ;
            \end{scope}
      \begin{scope}[shift={(\a4+\f8:8)}]
\draw \foreach \x in {30,15,...,-30}        
      {
        (0,0)--(\x:1) node[minimum width=1pt] {}
      }  ;
\end{scope}

\end{scope} 


       \begin{scope}[shift={(\a3+\g8:8)},scale=0.5]
\draw[] (0,0) node {} to [out=\a5,in={\a5+\f1+180}] ({\a5+\f1}:1) node {}
        to [out=\a5+\f1,in={\a5+\f2+180}] (\a5+\f2:2) node {}
        to [out=\a5+\f2,in={\a5+\f3+180}] (\a5+\f3:3) node {}
        to [out=\a5+\f3,in={\a5+\f4+180}] (\a5+\f4:4) node {}
        to [out=\a5+\f4,in={\a5+\f5+180}] (\a5+\f5:5) node {}
        to [out=\a5+\f5,in={\a5+\f6+180}] (\a5+\f6:6) node {}
        to [out=\a5+\f6,in={\a5+\f7+180}] (\a5+\f7:7) node {}
        to [out=\a5+\f7,in={\a5+\f8+180}] (\a5+\f8:8) node {};

\draw[] (0,0) node {} to [out=\a2,in={\a2+\e1+180}] ({\a2+\e1}:1) node {}
        to [out=\a2+\e1,in={\a2+\e2+180}] (\a2+\e2:2) node {}
        to [out=\a2+\e2,in={\a2+\e3+180}] (\a2+\e3:3) node {}
        to [out=\a2+\e3,in={\a2+\e4+180}] (\a2+\e4:4) node {}
        to [out=\a2+\e4,in={\a2+\e5+180}] (\a2+\e5:5) node {}
        to [out=\a2+\e5,in={\a2+\e6+180}] (\a2+\e6:6) node {}
        to [out=\a2+\e6,in={\a2+\e7+180}] (\a2+\e7:7) node {}
        to [out=\a2+\e7,in={\a2+\e8+180}] (\a2+\e8:8) node {};
        
\draw[] (0,0) node {} to [out=\a4,in={\a4+\g1+180}] ({\a4+\g1}:1) node {}
        to [out=\a4+\g1,in={\a4+\g2+180}] (\a4+\g2:2) node {}
        to [out=\a4+\g2,in={\a4+\g3+180}] (\a4+\g3:3) node {}
        to [out=\a4+\g3,in={\a4+\g4+180}] (\a4+\g4:4) node {}
        to [out=\a4+\g4,in={\a4+\g5+180}] (\a4+\g5:5) node {}
        to [out=\a4+\g5,in={\a4+\g6+180}] (\a4+\g6:6) node {}
        to [out=\a4+\g6,in={\a4+\g7+180}] (\a4+\g7:7) node {}
        to [out=\a4+\g7,in={\a4+\g8+180}] (\a4+\g8:8) node {};

\node[draw=none,fill=none,label=above:$x_1$] at (\a5+\f8:8) {};
\node[draw=none,fill=none,label=above:$x_2$] at (\a2+\e8:8) {};
\node[draw=none,fill=none,label=above:$x_{h_1}$] at (\a4+\g8:8) {};
        
        \begin{scope}[shift={(\a5+\f8:8)}]
\draw \foreach \x in {30,15,...,-30}        
      {
        (0,0)--(\x:1) node[minimum width=1pt] {}
      } ;
      \end{scope}
      \begin{scope}[shift={(\a2+\e8:8)}]
\draw \foreach \x in {30,15,...,-30}        
      {
        (0,0)--(\x:1) node[minimum width=1pt] {}
      }  ;
            \end{scope}
      \begin{scope}[shift={(\a4+\g8:8)}]
\draw \foreach \x in {30,15,...,-30}        
      {
        (0,0)--(\x:1) node[minimum width=1pt] {}
      }  ;
\end{scope}

\end{scope} 
\end{tikzpicture}
\caption{An $(h_0,h_1,h_2,h_3)$-web $W$.}
\label{figweb}
\end{figure}
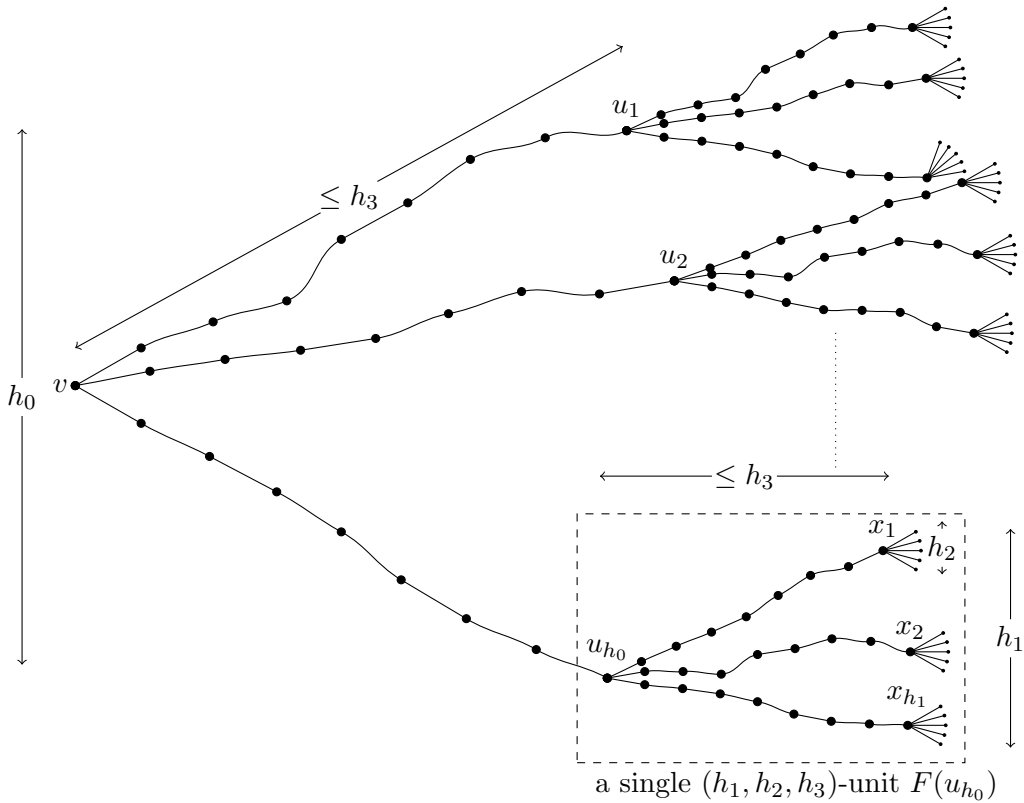

Throughout Sections~\ref{Hmsbst exp}~and~\ref{sec-dense}, we will define a parameter $m$ as follows and assume that:
$$
m:=\frac{2}{\ep_1}\log^3\left(\frac{450n}{d}\right);\quad  0 < \frac{1}{d}, \frac{1}{K} \ll \eps_1, \frac{1}{L} \leq 1; \quad\text{and}\quad \log^{100}n \leq d \leq \frac{n}{K}.
$$
These assumptions imply the following useful inequalities (whose derivations we omit).
If $d/30 \leq x \leq n$, then
\begin{eqnarray}\label{eq-n-large}
\ep(x)\ge \ep(n) > \frac{1}{m};\quad\text{ and also }\quad  n\ge Ldm^{100}  \quad \mbox{ and }\quad  d\ge m^{30}.
\end{eqnarray}

The following lemma guarantees a large collection of webs with disjoint interiors and is a key ingredient in Section~\ref{sec-dense}.

\begin{lemma}\label{lem-web}
Suppose that $0<1/d,1/K \ll \epsilon_1, 1/L \leq 1$ and $n,d\in \mathbb{N}$ with $\log^{100}n \leq d \leq n/K$. Let $H$ be an $n$-vertex $(\ep_1,d/30)$-expander with $d/10\le \delta (H)\le\Delta(H)\leq Ld$. Then $H$ contains~$200d$ webs $W_1,\ldots, W_{200d}$ such that the following hold.
	\begin{itemize}
		\item[(i)] $W_i$ is an $(m^3, m^3, d/100,4m)$-web for all $i \in [200d]$;
		\item[(ii)] $\Int(W_1),\ldots,\Int(W_{200d})$ are pairwise disjoint sets of vertices.
	\end{itemize}
\end{lemma}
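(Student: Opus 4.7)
The plan is to build the $200d$ webs one at a time, greedily, always keeping the interior of the newly built web disjoint from the union $U$ of interiors of webs already built. A direct count from the definitions gives $|\Int(W)|\leq 1+m^3(4m+1)+m^6(4m+1)\leq 5m^7$ for each $(m^3,m^3,d/100,4m)$-web $W$, so the accumulated $|U|\leq 1000dm^7$ throughout. The inequalities $d\geq m^{30}$ and $n\geq Ldm^{100}$ from~\eqref{eq-n-large} are precisely what is needed to make $|U|$ small both relative to $n$ and relative to the expansion capacity $\epsilon(x)x$ at the scales I will use below.

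To build a single web $W_i$ inside $H-U$, I first pick a core vertex $v\in V(H)\setminus U$ with $|N_H(v)\setminus U|\geq d/20$; an averaging argument using $\Delta(H)\leq Ld$ and $|U|\leq n/m^{93}$ shows most vertices qualify. I then pick $m^3$ distinct neighbours $y_1,\dots,y_{m^3}\in N_H(v)\setminus U$, which is feasible since $d/20\gg m^3$ by $d\geq m^{30}$. Now, for $j=1,\dots,m^3$ in turn, extend the edge $vy_j$ to a $v$-to-$u_j$ path $Q_j$ of length at most $4m$, and around the new core $u_j$ build an $(m^3,d/100,4m)$-unit $F_{u_j}$ as follows: iteratively produce $m^3$ internally disjoint $u_j$-to-$x_\ell^{(j)}$ paths of length at most $4m$, then pick $d/100$ neighbours of each $x_\ell^{(j)}$ as the leaves of the star $S(x_\ell^{(j)})$.

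Every individual short path in this process (a $Q_j$ or a $P_\ell^{(j)}$) is produced by iterating Lemma~\ref{diameter}. Starting from the appropriate source endpoint, I first \emph{inflate} it to a set $B$ of size at least $dm^{10}$ that is reachable from the source via paths of length at most $m/2$ avoiding $U$ and the already-built interior of $W_i$; this inflation is carried out in stages, each stage applying Lemma~\ref{diameter} at the best scale currently available. Once $|B|\geq dm^{10}$, one final application of Lemma~\ref{diameter} produces a path of length at most $m/2$ from $B$ to a target set $T$ of vertices having enough unused neighbours to play the role of a core or star-centre. A counting argument using $\delta(H)\geq d/10$, $\Delta(H)\leq Ld$ and $|U|+|\text{partial interior}|\leq O(dm^7)$ shows $|T|\geq n/2$ at every step. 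Splicing the inflation-path and the short path yields a single path of length at most $m\leq 4m$, whose internal vertices are then added to the used-interior set.

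For the $m^6$ star leaves of $W_i$, I additionally require of each $x_\ell^{(j)}$, at the moment it is selected, that it have at least $d/20$ neighbours outside the star leaves of $W_i$ already chosen. Since the chosen star leaves form a set of size at most $m^6\cdot d/100$ and $\Delta(H)\leq Ld$, the number of vertices failing this is at most $O(Ldm^6)$, negligible compared to $n/2$ by $n\geq Ldm^{100}$. A greedy allocation then produces $d/100$ fresh neighbours of each $x_\ell^{(j)}$, pairwise disjoint across all $m^6$ stars in $W_i$. The main obstacle is the inflation step above: Lemma~\ref{diameter} requires its bad set $W$ to satisfy $|W|\leq \epsilon(x)x/4$ at the scale $x$ of the two sets between which the path is drawn, and since $\epsilon(x)x$ is much smaller than $|U|=O(dm^7)$ when $x$ is of order $d$, one cannot apply Lemma~\ref{diameter} directly from the initial neighbourhood of the source vertex. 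The resolution is to inflate through increasing scales, exploiting that once $x\geq dm^{10}$ we have $\epsilon(x)x\geq \epsilon_1dm^{10}/\log^2(450n/d)\gg dm^7$, which is comfortably larger than the bad set; the tuning of the parameters in~\eqref{eq-n-large} is tailored exactly to make this inflation succeed within BFS depth $m/2$ at each stage.
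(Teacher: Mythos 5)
Your overall greedy scheme (build the $200d$ webs one at a time, keeping the accumulated interior $U$ of size $O(dm^{7})$ away from the new web) matches the paper, and your size bookkeeping for $|U|$ is fine. The proof breaks down, however, at the ``inflation'' step, which is the heart of your construction of every single connecting path. You want to grow, around a single source vertex (the prospective core $v$, a unit-core $u_j$, or a star-centre $x_\ell^{(j)}$), a set $B$ of size $dm^{10}$ inside $H-U$. But the expansion guarantee of Definition~\ref{defn-expander} only gives $|\Gamma_H(X)|\ge \eps(|X|)|X|$, and at the small scales you must pass through first ($|X|$ of order $d$) this is about $\eps_1 d/\log^2(450|X|/d)$, which is far smaller than $|U|=\Theta(dm^{7})$. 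Hence the entire external neighbourhood of the current ball can lie inside $U$, and the ball in $H-U$ may stall at size $O(d)$; your remark that ``once $x\ge dm^{10}$ we have $\eps(x)x\gg dm^{7}$'' is true but circular, since nothing gets you to that scale. Lemma~\ref{diameter} cannot substitute for a ball-growing lemma: it requires \emph{both} endpoint sets to already have size $x$ with $|W|\le\eps(x)x/4$, and the only ball-growing tool in the paper (Lemma~\ref{corner}) is restricted to the regime $d\le\log^{100}n$ and to deleting a single shortest path, not an arbitrary set of size $dm^{7}$.

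The paper avoids this obstruction by never growing neighbourhoods of single vertices. It first extracts many pairwise disjoint $d/50$-stars (Claim~\ref{claim-star}), assembles them into $(h_1,h_2,m+2)$-units (Lemma~\ref{lem-unit}), and then connects units through their \emph{exteriors}, which are prefabricated sets of size at least $dm^{9}$; every application of Lemma~\ref{diameter} is therefore between two sets already far larger than the forbidden set. The web is then assembled not greedily path-by-path to prescribed targets, but by taking a \emph{maximal} collection of exterior-to-exterior connecting paths and using a pigeonhole/maximality argument to find one core $v_i$ joined to at least $m^{3}$ units. To repair your proof you would need to replace the inflation step by this (or an equivalent) mechanism for producing large source and target sets before invoking expansion.
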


One cannot hope that the webs themselves are disjoint. Indeed, the sum of the size of the exteriors of the desired collection of webs is at least $200d\cdot m^{3}\cdot m^3\cdot d/100=2d^2m^6$, which is much larger than $n$ when $d>\sqrt{n}$.

We will prove Lemma~\ref{lem-web} in the following two subsections. First we use many disjoint stars to construct a large collection of disjoint units (Lemma~\ref{lem-unit}). We then use these units to build the desired webs.

\subsubsection{From stars to units}\label{sec-unit}

\begin{lemma}\label{lem-unit}
Suppose that $0< 1/d,1/K \ll \epsilon_1, 1/L \leq 1$ and $n,d\in \mathbb{N}$ with $\log^{100}n \leq d \leq n/K$. Let $h_1 \in [m^{10}]$ and $h_2\in [d/100]$. Suppose that $H$ is an $n$-vertex $(\ep_1,d/30)$-expander with $d/10\le \delta (H)\le \Delta(H)\leq Ld$. Let $X\subseteq V(H)$ have size at most $dm^{10}$.
Then $H-X$ contains a collection of $dm^{15}/(h_1h_2)$ pairwise vertex-disjoint $(h_1, h_2, m+2)$-units.
\end{lemma}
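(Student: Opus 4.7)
The plan is a greedy iterative construction of the $N:=dm^{15}/(h_1h_2)$ units, using the minimum-degree hypothesis to supply local stars and Lemma~\ref{diameter} to supply the short connecting paths. First, I build a pool of pairwise vertex-disjoint $h_2$-stars in $H-X$. Since $\delta(H)\ge d/10\gg h_2$ and the total vertex count needed for $h_1 N$ disjoint $h_2$-stars is at most $2dm^{15}\ll n$, such a family $\mathcal{S}$ can be built greedily (at each step pick an unused vertex and choose $h_2$ of its unused neighbours as leaves). Let $C$ be the set of centres. Throughout the remainder I maintain a set $Y$ of already-used vertices (initially $X\cup V(\mathcal{S})$ and growing as units are added), satisfying $|Y|=O(dm^{16})\ll n$ at all times.

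Then I build the units one by one. For the $k$-th unit: (a)~by an averaging argument (since $|Y|\Delta(H)\ll n\,\delta(H)$) I pick a core vertex $u\notin Y$ with $d_{H-Y}(u)\ge d/11$; (b)~I pick $h_1$ currently-unused star centres $x_1,\ldots,x_{h_1}\in C$, of which plenty remain since at most $h_1(k-1)\le h_1N/2$ have been used; (c)~for each $i$ in turn I build an internally vertex-disjoint $u,x_i$-path $P_i$ of length at most $m+2$ by invoking Lemma~\ref{diameter} with a forbidden set $W_i$ consisting of $Y$, the internal vertices of $P_1,\ldots,P_{i-1}$, and the vertices of stars in $\mathcal{S}$ other than $x_i$'s star. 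The endpoint sets passed to Lemma~\ref{diameter} are chosen to contain $u$ and $x_i$ respectively, and the resulting path of length $\le m$ extends by one edge at each end to the desired $u,x_i$-path.

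The main obstacle is step~(c): verifying the hypothesis $|W_i|\le\epsilon(x)x/4$ of Lemma~\ref{diameter}. Since $|W_i|$ may be of order $dm^{16}$ and $\epsilon(x)x$ is increasing in $x$, the endpoint sets must be chosen substantially larger than the trivial $N_H(u),N_H(x_i)$ of size $\sim d$ (for which $\epsilon(x)x\asymp d$ is much too small). To compensate, I grow BFS balls of small radius around $u$ and $x_i$ to enlarge the endpoint sets, exploiting that $u$ and $x_i$ were selected to have many neighbours outside $Y$, and that by \eqref{eq-n-large} the expansion rates afforded by the Koml\'os--Szemer\'edi inequality yield geometric growth at each step. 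The BFS-depth required to overcome $|W_i|$ is small compared with $m=\Theta(\log^3(n/d)/\epsilon_1)$; careful bookkeeping, using the definition of $\epsilon(\cdot)$, the bound $d\ge m^{30}$, and the fact that the per-unit vertex footprint is only $O(h_1(m+h_2))$, shows that the total path length can be kept within $m+2$ while $|Y|$ remains of order $dm^{16}$ throughout. Balancing these growth/budget estimates and organising the construction so that $u$ is chosen (in each iteration) in a way compatible with the forbidden set $W_i$ is the technical heart of the argument.
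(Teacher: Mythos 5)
Your proposal founders on the step you yourself flag as the technical heart: reconciling the hypothesis of Lemma~\ref{diameter} with the length bound $m+2$ in the definition of an $(h_1,h_2,m+2)$-unit. If you grow BFS balls of radius $r$ around the prescribed endpoints $u$ and $x_i$ and apply Lemma~\ref{diameter} to those balls, the path it returns has length up to $m$ and lands at arbitrary points of the two spheres, so the resulting $u,x_i$-path has length up to $m+2r$, not $m+2$. The radius $r$ cannot be taken to be $1$: with $r=1$ the endpoint sets have size $\Theta(d)$, so $\epsilon(x)x/4=\Theta(\epsilon_1 d)$, while your forbidden set $W_i$ (containing $Y$, of size $\Theta(dm^{16})$) is far larger. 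To inflate a ball from size $\Theta(d)$ to size $\Omega(dm^{17})$ you need the product of the factors $(1+\epsilon(\cdot))$ to reach $m^{17}$, which forces $r=\Omega(\epsilon_1^{-1}\log m)$ even at the fastest (constant) expansion rate near $x\approx d$; hence $m+2r>m+2$, and the length bound — which is genuinely needed downstream (it controls $|\Int(W_i)|$ in Lemma~\ref{lem-web} and the path lengths in the webs) — is violated. So the claim that ``careful bookkeeping shows the total path length can be kept within $m+2$'' is not salvageable in this form.

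The paper escapes this exact tension by refusing to prescribe which core vertex is joined to which star. It first finds many disjoint $d/50$-stars and splits their centres into a candidate-core set $V$ and a candidate-branch set $U$; it then takes a maximal internally disjoint family of $v,u$-paths ($v\in V$, $u\in U$) of length at most $m+2$, each consisting of one edge of $S(v)$, a Lemma~\ref{diameter} path, and one edge of $S(u)$. The endpoint sets fed to Lemma~\ref{diameter} are the \emph{unions of the leaves} of all $V$-stars and of all still-unused $U$-stars — these have size $\geq dm^{19}$, comfortably beating the forbidden set of size $\leq dm^{17}$ with zero cost in path length. Maximality plus a pigeonhole count then yields a single $v_i$ joined to $\geq m^{10}$ distinct $u_j$'s, from which the unit is assembled after trimming the stars. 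If you want to keep a one-unit-at-a-time greedy outer loop, you must still replace your step~(c) by some such ``connect large unions, then pigeonhole a popular core'' argument; connecting a fixed $u$ to a fixed $x_i$ within length $m+2$ in the presence of a forbidden set of size $dm^{16}$ is exactly what Lemma~\ref{diameter} cannot deliver.
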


We briefly sketch the proof of Lemma~\ref{lem-unit}.
Suppose we have already found some vertex-disjoint units and wish to find another, $F$, to add to the collection.
Remove those vertices in $X'$ used in the units that we have already found.
Our graph $H-X'$ contains a collection $\mathcal{S}$ of many large disjoint stars since it is still almost regular.
We partition the set of centres of stars in $\mathcal{S}$ into two groups $U$ and $V$. 
 Then take a maximal collection $\mathcal{P}$ of short paths in $H-X'$ which go between $U$ and $V$, whose interiors are disjoint and do not intersect $U$ or $V$.
Note that these paths could intersect the leaves of stars in $\mathcal{S}$.
We argue that this collection of paths can be extended unless there is some $v \in V$ which is the endpoint of every path in some large subset $\mathcal{P}'\subseteq \mathcal{P}$.
Let $U' \subseteq U$ be the non-$v$ endpoints of paths in $\mathcal{P}'$.
Let $v$ be the core vertex of $F$ and the vertices $I$ in paths of $\mathcal{P}'$ form the interior of $F$.
If a leaf of a $U'$-centred star lies in $I$, remove it.
Few such leaves are removed since the paths in $\mathcal{P}'$ are short and the stars of $\mathcal{S}$ are large.
The remaining leaves form the exterior of $F$.
Figure~\ref{figunit} illustrates the proof of Lemma~\ref{lem-unit}.

\begin{figure}
\tikzstyle{every node}=[circle, draw, fill=black,
inner sep=0pt, minimum width=2pt]
\begin{tikzpicture}[]

\begin{scope}[yshift=-10cm]
\draw[red,thick] (30:1) node {}
to [out=20,in={20+180}] (20:2) node[black] {}
to [out=20,in={20+1+180}] (20+1:3) node[black] {}
to [out=20+1,in={20+180}] (20:4) node[black] {}
to [out=20,in={20+-1+180}] (20+-1:5) node[black] {}
to [out=20+-1,in={20+180}] (20:6) node[black] {}
to [out=20,in={20+-1+180}] (20+-1:7) node[black] {}
to [out=20+-1,in={20+180}] (8,2.5) node[black] {};

\draw[red,thick] (15:1) node {}
to [out=10,in={10+-5+180}] (10+-5:2) node[black] {}
to [out=10+-5,in={10+-8+180}] (10+-8:3) node[black] {}
to [out=10+-8,in={10+-1+180}] (10+-1:4) node[black] {}
to [out=10+-1,in={10+-1+180}] (10+-1:5) node[black] {}
to [out=10+-1,in={10+180}] (10:6) node[black] {}
to [out=10,in={10+-2+180}] (10+-2:7) node[black] {}
to [out=10+-2,in={10+180}] (8.1,1.5) node[black] {};

\draw[red,thick] (-15:1) node {}
to [out=-10+1,in={-10+180}] (-10:2) node[black] {}
to [out=-10,in={-10+-1+180}] (-10+-1:3) node[black] {}
to [out=-10+-1,in={-10+-1+180}] (-10+-1:4) node[black] {}
to [out=-10+-1,in={-10+1+180}] (-10+1:5) node[black] {}
to [out=-10+1,in={-10+2+180}] (-10+2:6) node[black] {}
to [out=-10+2,in={-10+180}] (-10:7) node[black] {}
to [out=-10,in={-10+180}] (8.12,-1.5) node[black] {};

\begin{scope}
\draw[red,thick] \foreach \x in {30,15,-15}        
{
	(0,0)--(\x:1) node[black,minimum width=2pt] {}
} ;
\draw[] \foreach \x in {0,-30}        
{
	(0,0)--(\x:1) node[black,minimum width=2pt] {}
} ;
\end{scope}

\begin{scope}[shift={(9,4)}]
\draw \foreach \x in {30,15,...,-30}        
{
	(0,0)--(180+\x:1) node[minimum width=2pt] {}
} ;
\end{scope}
\begin{scope}[shift={(9,2.5)}]
\draw[red,thick] \foreach \x in {30,15,...,-30}        
{
	(0,0)--(180+\x:1) node[black,minimum width=2pt] {}
} ;
\end{scope}
\begin{scope}[shift={(9,1.0)}]
\draw[red,thick] \foreach \x in {30,15,...,-30}        
{
	(0,0)--(180+\x:1) node[black,minimum width=2pt] {}
}  ;
\end{scope}
\begin{scope}[shift={(9,-1)}]
\draw[red,thick] \foreach \x in {30,15,...,-30}        
{
	(0,0)--(180+\x:1) node[black,minimum width=2pt] {}
}  ;
\end{scope}
\begin{scope}[shift={(9,-2.2)}]
\draw \foreach \x in {30,15,...,-30}        
{
	(0,0)--(180+\x:1) node[black,minimum width=2pt] {}
}  ;
\end{scope}

\node[label=left:$v_1$,minimum width=3pt] at (0,2) {};
\node[label=left:$v_i$,minimum width=3pt] at (0,0) {};
\node[label=left:$v_{m^{20}}$,minimum width=3pt] at (0,-1.2) {};
\node[label=right:$u_1$,minimum width=3pt] at (9,4) {};
\node[label=right:$u_j$,minimum width=3pt] at (9,2.5) {};
\node[label=right:$u_k$,minimum width=3pt] at (9,1.0) {};
\node[label=right:$u_\ell$,minimum width=3pt] at (9,-1) {};
\node[label=right:$u_{m^{40}}$,minimum width=3pt] at (9,-2.2) {};
\node[draw=none,fill=none,label=right:$U$] at (9.8,0.9) {};
\node[draw=none,fill=none,label=right:$V$] at (-1.2,0.4) {};
\node[draw=none,fill=none,label=below:$X'$] at (4.5,-2.2) {};
\node[draw=none,fill=none,label=right:{$P_{v_i,u_j}$}] at (3.8,1.9) {};
\draw[dotted] (0.5,1.5)--(0.5,0.5);
\draw[dotted] (8.5,3.5)--(8.5,3);
\draw[dotted] (8.5,2)--(8.5,1.5);
\draw[dotted] (8.5,0.5)--(8.5,-0.5);

\draw[rounded corners] (8.75,4.25) rectangle (9.8,-2.45);
\draw[rounded corners] (-0.8,2.25) rectangle (0.25,-1.45);
\draw[rounded corners] (2,-2) rectangle (7,-2.95);
\end{scope}

\begin{scope}[yshift=-8cm]
\begin{scope}
\draw \foreach \x in {30,15,...,-30}        
{
	(0,0)--(\x:1) node[minimum width=2pt] {}
} ;
\end{scope}
\begin{scope}[yshift=-3.2cm]
\draw \foreach \x in {30,15,...,-30}        
{
	(0,0)--(\x:1) node[minimum width=2pt] {}
} ;
\end{scope}

\draw[] (15:1) node {}
to [out=20,in={20+180}] (20:2) node {}
to [out=20,in={20+1+180}] (20+1:3) node {}
to [out=20+1,in={20+180}] (20:4) node {}
to [out=20,in={20+-1+180}] (20+-1:5) node {}
to [out=20+-1,in={20+180}] (20:6) node {}
to [out=20,in={20+-1+180}] (20+-1:7) node {}
to [out=20+-1,in={20+180}] (8.15,2.5) node {};

\draw[] (0:1) node {}
to [out=-10+1,in={-10+180}] (-10:2) node {}
to [out=-10,in={-10+-1+180}] (-10+-1:3) node {}
to [out=-10+-1,in={-10+-1+180}] (3.75,-0.63) node {}
to [out=-10+-1,in={-10+1+180}] (-10+1:5) node {}
to [out=-10+1,in={-10+2+180}] (5.9,-0.97) node {}
to [out=-10+2,in={-10+180}] (-10:7) node {}
to [out=-10,in={-10+180}] (8.1,-1.5) node {};

\end{scope} 
	\end{tikzpicture}
	\caption{The proof of Lemma~\ref{lem-unit}: a unit with core vertex $v_i \in V$ which avoids $X'$.}
	\label{figunit}
\end{figure}
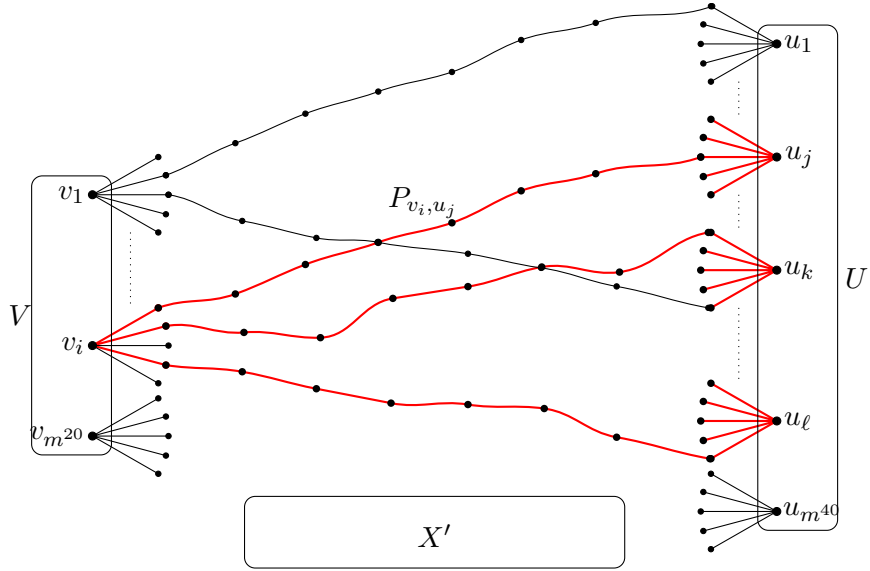

\medskip
\noindent
\emph{Proof of Lemma~\ref{lem-unit}.} Note that $dm^{15}/(h_1h_2)$ vertex-disjoint $(h_1, h_2, m+2)$-units together contain at most $(h_1h_2 + (m+2)h_1)dm^{15}/(h_1h_2) \leq 1.1d m^{16}$ vertices. Thus it suffices to show that we can find an $(h_1, h_2, m+2)$-unit in $H-X'$ for an arbitrary $X'\subseteq V(H)$ of size at most $2dm^{16}$ (then one can take $X'$ to be the union of $X$ and at most $1.1dm^{16}$ vertices in the units we have already built). Fix such a set $X'$. We first find many vertex-disjoint stars in $H-X'$.
	\begin{claim}\label{claim-star}
		$H-X'$ contains a collection of $m^{50}$ pairwise vertex-disjoint $d/50$-stars.
	\end{claim}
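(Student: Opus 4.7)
I would prove Claim~\ref{claim-star} by a direct greedy construction. Suppose that fewer than $m^{50}$ pairwise vertex-disjoint $d/50$-stars have been found so far, and let $Y \subseteq V(H)$ be the union of $X'$ with the vertex sets of those stars. Each completed star uses $1 + d/50$ vertices, so at every stage
$$|Y| \leq |X'| + m^{50}\left(1 + \frac{d}{50}\right) \leq dm^{51},$$
using $|X'| \leq 2dm^{16}$ and $d \geq m^{30}$ from~\eqref{eq-n-large}. To extend the collection by one more star, it suffices to find a vertex $v \in V(H) \setminus Y$ with $d_H(v, V(H)\setminus Y) \geq d/50$; then $v$ together with any $d/50$ of its neighbors outside $Y$ forms the next star. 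Call such a $v$ \emph{good}, and call every other vertex of $V(H)\setminus Y$ \emph{bad}.

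The key quantitative step is a double-counting bound on the number of bad vertices. A bad vertex $v$ satisfies
$$d_H(v, Y) \geq \delta(H) - \frac{d}{50} \geq \frac{d}{10} - \frac{d}{50} = \frac{2d}{25},$$
since $\delta(H) \geq d/10$. Double counting the edges between $Y$ and $V(H)\setminus Y$,
$$\frac{2d}{25}\cdot |\{\text{bad vertices}\}| \leq \sum_{v\in V(H)\setminus Y} d_H(v,Y) \leq \sum_{y\in Y}d_H(y) \leq |Y|\cdot \Delta(H) \leq Ld^2 m^{51},$$
so there are at most $13Ldm^{51}$ bad vertices. By~\eqref{eq-n-large} we have $n \geq Ldm^{100}$, and hence the number of good vertices is at least $n - |Y| - 13Ldm^{51} > 0$ throughout. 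The greedy process therefore runs for the full $m^{50}$ iterations and produces the required family of stars.

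There is no real obstacle here: the claim is a clean consequence of the near-regularity $d/10 \leq \delta(H) \leq \Delta(H) \leq Ld$ together with the generous slack $n \geq Ldm^{100}$ supplied by~\eqref{eq-n-large}. The only bookkeeping point is to check that $|Y|$ stays negligible compared to $n$ and that the total number of edges incident to $Y$ cannot make too many potential centres bad, both of which are comfortably handled by the hypotheses on $|X'|$ and the target number $m^{50}$ of stars.
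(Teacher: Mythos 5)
Your proof is correct and follows essentially the same route as the paper: the paper also takes a maximal collection of disjoint $d/50$-stars, assumes it has fewer than $m^{50}$ members, and uses $\delta(H)\ge d/10$, $\Delta(H)\le Ld$ and $n\ge Ldm^{100}$ to show the graph minus the used vertices still has a vertex of degree at least $d/50$. The only cosmetic difference is that the paper extracts the new star from a lower bound on the average degree of the remaining graph, whereas you count the ``bad'' potential centres directly; the quantitative content is the same.
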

	\begin{claimproof}
	Consider a maximal collection $\mathcal{S}$ of vertex-disjoint $d/50$-stars in $H-X'$ and let $X''$ be the set of vertices spanned by the stars of $\mathcal{S}$. If $\mathcal{S}$ contains less than $m^{50}$ stars, then $|X''|\le dm^{50}$.
	
Then the fact that $d/10\le \delta (H)\le \Delta(H)\leq Ld$ implies that
	\begin{eqnarray*}
	d(H-X'-X'')\ge \frac{n\cdot\delta(H)-|X'\cup X''|\cdot 2\De(H)}{n}\ge \frac{d}{10}-\frac{2dm^{50}\cdot 2Ld}{n}\stackrel{\eqref{eq-n-large}}{\ge} \frac{d}{50}.
	\end{eqnarray*}	
Thus $H-X'-X''$ contains a $d/50$-star, contradicting to the maximality of $\cS$.
	\end{claimproof}
	
	\medskip
	\noindent
	By Claim~\ref{claim-star}, we can find $m^{20}+m^{40}$ pairwise vertex-disjoint $d/50$-stars $S(v_1),\dots, S(v_{m^{20}})$ and $S(u_1),\dots, S(u_{m^{40}})$
	in $H-X'$ such that $S(x)$ has centre $x$ for all $x\in \{v_i, u_j: i\in [m^{20}], j\in [m^{40}]\}$. Set $V:=\{v_1,\ldots,v_{m^{20}}\}$ and $U:=\{u_1,\ldots,u_{m^{40}}\}$. 
Let $\mathcal{P}$ be a maximal subset of $U \times V$ such that a collection $\mathcal{Q} := \{P_{v_i,u_j} : (v_i,u_j)\in \mathcal{P}\}$ of paths with the following properties exists.
	\begin{itemize}
		\item[(C1)] $P_{v_i,u_j}$ is a $v_i,u_j$-path in $H$ of length at most $m+2$ which contains an edge of $S(u_j)$;
		\item[(C2)] for each $(v_i,u_j)\in\mathcal{P}$, $\Int(P_{v_i,u_j})$ is disjoint from $X'\cup U \cup V$;
		\item[(C3)] all paths in $\mathcal{Q}$ are pairwise internally vertex-disjoint. 
	\end{itemize}
Let $\Gamma_{\mathcal{P}}(v_i) := \{ u_j : (v_i,u_j)\in \mathcal{P}\}$.	We claim that there is a $v_i \in V$ such that 
	\begin{align}\label{eq: index connects to many}
	|\Gamma_{\mathcal{P}}(v_i)| \geq m^{10}. 
	\end{align} 
	Suppose to the contrary that 	$|\Gamma_{\mathcal{P}}(v_i)| < m^{10}$ for all $i\in [m^{20}]$. Let
	\begin{align*}
	P'&:=\bigcup_{(v_i,u_j)\in\mathcal{P}} \Int(P_{v_i,u_j});\quad J:= \{j \in [m^{40}]: V(S(u_j))\cap P'=\emptyset\};\\
	A&:= \bigcup_{i\in [m^{20}]}(V(S(v_i))-\lbrace v_i \rbrace); \quad B:=\bigcup_{j\in J} (V(S(u_j))-\lbrace u_j\rbrace)\enspace \quad \text{ and }\quad W:= X'\cup U \cup V\cup P'.
	\end{align*}
We will construct a path between $A$ and $B$ which avoids $W$ by using Lemma~\ref{diameter} to contradict the maximality of $\mathcal{P}$. In order to do this, we estimate the sizes of $A, B$ and $W$.
We have
	\begin{align}\label{eq: P' size}
	|P'|\leq \sum_{(v_i,u_j)\in\mathcal{P}} |\Int(P_{v_i,u_j})|\overset{\textrm{(C1)}}{\leq}   \sum_{(v_i,u_j)\in\mathcal{P}} m\leq  m \sum_{i\in [m^{20}]} |\Gamma_{\mathcal{P}}(v_i)|  \leq m\cdot m^{10} \cdot m^{20}=m^{31}.
	\end{align}
This implies that 
\begin{align}\label{eq: J size}
 |J|\geq m^{40} - |P'| \geq m^{40} - m^{31} \geq \frac{m^{40}}{2}.
 \end{align} 	
Since the $S(v_{i})$ are vertex-disjoint $d/50$-stars, we have 
	\begin{eqnarray}\label{eq: A size}
		|A|= \left|\bigcup_{i\in[m^{20}]}(V(S(v_i))-\lbrace v_i\rbrace)\right| \geq m^{20}\cdot \frac{d}{50} \ge dm^{19},
	\end{eqnarray}
Also \eqref{eq: J size} implies that
\begin{align}\label{eq: B size}
|B|= \left|\bigcup_{j\in J} (V(S(u_j))-\lbrace u_j \rbrace)\right| \geq \frac{m^{40}}{2} \cdot  \frac{d}{50} \geq dm^{19}.
\end{align}
Recall that, by~\eqref{eq-n-large}, $d\ge m^{30}$. Thus
	\begin{eqnarray*}
	|W|= |X'|+ |U| + |V|+|P'| \stackrel{\eqref{eq: P' size}}{\leq} 2dm^{16}+m^{40} + m^{20}+ m^{31}\stackrel{\eqref{eq-n-large}}{\leq} d m^{17} < \frac{1}{4} \epsilon(dm^{19}) dm^{19}.
	\end{eqnarray*}
This together with \eqref{eq: A size} and \eqref{eq: B size} allows us to apply Lemma~\ref{diameter} with $A,B,W$ and $d m^{19}$ playing the roles of $X,X',W$ and $x$ respectively. Then we can find a minimal path $Q$ of length at most $\frac{2}{\epsilon_1}\log^3(450n/d)=m$ in $H-W$ from $A$ to $B$. Then there exists $i'\in [m^{20}]$ and $j'\in [m^{40}]$ such that each endpoint of $Q$ belongs to $S(v_{i'})$ and $S(u_{j'})$.  Then $H[Q\cup \{v_{i'},u_{j'}\}]$ contains a path $P_{v_{i'},u_{j'}}$ from $v_{i'}$ to $u_{j'}$. Note that $P_{v_{i'},u_{j'}}$ has length at most $m+2$.

Note that $(v_{i'},u_{j'}) \notin \mathcal{P}$ because $u_{j'} \in J$ and $\mathcal{Q}$ satisfies (C1).
Let $\mathcal{P}' := \mathcal{P} \cup \lbrace (v_{i'},u_{j'}) \rbrace$ and $\mathcal{Q}' := \mathcal{Q} \cup \lbrace P_{v_{i'},u_{j'}} \rbrace$.
We claim that $(\mathcal{P}',\mathcal{Q}')$ satisfies (C1)--(C3).
Indeed, $(\mathcal{P}',\mathcal{Q}')$ satisfies (C1) since every path in $\mathcal{Q}$ as well as $P_{v_{i'},u_{j'}}$ has length at most $m+2$ and contains an edge of $S(u_{j'})$. Also (C2) holds because $\Int(P_{v_{i'},u_{j'}}) \subseteq Q$ is disjoint from $W=X'\cup U \cup V\cup P'$ and $\mathcal{Q}$ satisfies (C2). Finally (C3) is satisfied since $\Int(P_{v_i,u_j}) \subseteq Q$ is disjoint from $P'=\bigcup_{(v_i,u_j)\in\mathcal{P}} \Int(P_{v_i,u_j})$ and $\mathcal{Q}$ satisfies (C3).
This contradicts the maximality of $\mathcal{P}$, and thus there exists a vertex $v_i \in V$ satisfying \eqref{eq: index connects to many}. 

Now we use $v_i$ to construct an $(h_1,h_2,m+2)$-unit as desired (see Figure~\ref{figunit}).
Let $U'\subseteq N_{\mathcal{P}}(v_i)$ be a subset of $U$ such that $|U'|=h_1 \leq m^{10}$.
Let $u_j\in N_{\mathcal{P}}(v_i)$ be arbitrary.
Then by (C2) $u_j \notin \Int(P_{v_i,u_{j'}})$ for any $u_{j'} \in U'$, and also 
$$\left|V(S(u_j))\setminus \left(\bigcup_{u_{j'} \in U'} \Int(P_{v_i,u_{j'}})\right)\right|\geq \frac{d}{50} - m^{10} \cdot m \geq \frac{d}{100} \geq h_2.$$
Thus $V(S(u_j))\setminus(\bigcup_{u_{j'} \in U'} \Int(P_{v_i,u_{j'}}))$ contains an $h_2$-star $S'(u_j)$ and $\bigcup_{u_j \in U'} (P_{v_i,u_j}\cup S'(u_j))$ forms an $(h_1,h_2,m+2)$-unit disjoint from $X'$. This finishes the proof. 
\hfill$\square$

\subsubsection{From units to webs}\label{sec-web}

We can now use Lemma~\ref{lem-unit} to prove Lemma~\ref{lem-web} (and the proof itself is similar).

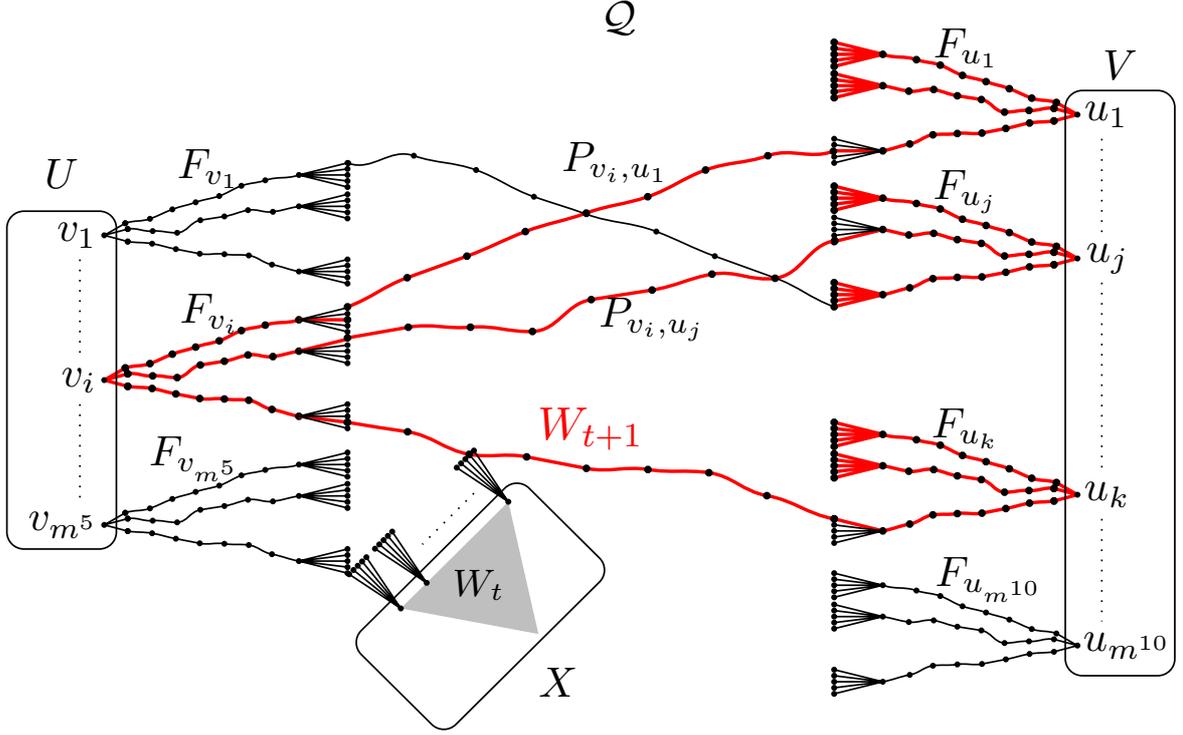
\begin{figure}
	\scalebox{1.6}{
\tikzstyle{every node}=[circle, draw, fill=black,
inner sep=0pt, minimum width=1pt]
\begin{tikzpicture}[]

\begin{scope}

\begin{scope}[scale=0.2]
\draw[red,thick] (30:1) node[black] {}
to [out=20,in={20+180}] (20:2) node[black] {}
to [out=20,in={20+1+180}] (20+1:3) node[black] {}
to [out=20+1,in={20+180}] (20:4) node[black] {}
to [out=20,in={20+-1+180}] (20+-1:5) node[black] {}
to [out=20+-1,in={20+180}] (20:6) node[black] {}
to [out=20,in={20+-1+180}] (20+-1:7) node[black] {}
to [out=20+-1,in={20+180}] (8,2.5) node[black] {};

\draw[red,thick] (15:1) node[black] {}
to [out=10,in={10+-5+180}] (10+-5:2) node[black] {}
to [out=10+-5,in={10+-8+180}] (10+-8:3) node[black] {}
to [out=10+-8,in={10+-1+180}] (10+-1:4) node[black] {}
to [out=10+-1,in={10+-1+180}] (10+-1:5) node[black] {}
to [out=10+-1,in={10+180}] (10:6) node[black] {}
to [out=10,in={10+-2+180}] (10+-2:7) node[black] {}
to [out=10+-2,in={10+180}] (8,1.2) node[black] {};

\draw[red,thick] (-15:1) node[black] {}
to [out=-10+1,in={-10+180}] (-10:2) node[black] {}
to [out=-10,in={-10+-1+180}] (-10+-1:3) node[black] {}
to [out=-10+-1,in={-10+-1+180}] (-10+-1:4) node[black] {}
to [out=-10+-1,in={-10+1+180}] (-10+1:5) node[black] {}
to [out=-10+1,in={-10+2+180}] (-10+2:6) node[black] {}
to [out=-10+2,in={-10+180}] (-10:7) node[black] {}
to [out=-10,in={-10+180}] (8,-1.5) node[black] {};

\begin{scope}
\draw[red,thick] \foreach \x in {30,15,-15}        
{
	(0,0)--(\x:1) node[black] {}
} ;
\end{scope}

\begin{scope}[shift={(8,2.5)}]
\draw[red,thick] (0,0) -- (2,0) node[black] {};
\draw[black] \foreach \x in {0.5,0.25,-0.25,-0.5}        
{
	(0,0)--(2,\x) node[black] {}
} ;
\end{scope}
\begin{scope}[shift={(8,1.2)}]
\draw[red,thick] (0,0) -- (2,0.5);
\draw[black] \foreach \x in {0.25,0,...,-0.5}        
{
	(0,0)--(2,\x) node[black] {}
}  ;
\end{scope}
\begin{scope}[shift={(8,-1.5)}]
\draw[red,thick] (0,0) -- (2,-0.25) node[black] {};
\draw[black] \foreach \x in {0.5,0.25,0,-0.5}         
{
	(0,0)--(2,\x) node[black] {}
}  ;
\end{scope}
\node[label=left:{\small{${v_i}$}}] at (0,0) {};
\node[draw=none,fill=none,label=left:{\small{$F_{v_i}$}}] at (6,2.75) {};
\end{scope}

\begin{scope}[yshift=1.2cm,scale=0.2]
\draw[black] (30:1) node {}
to [out=20,in={20+180}] (20:2) node[black] {}
to [out=20,in={20+1+180}] (20+1:3) node[black] {}
to [out=20+1,in={20+180}] (20:4) node[black] {}
to [out=20,in={20+-1+180}] (20+-1:5) node[black] {}
to [out=20+-1,in={20+180}] (20:6) node[black] {}
to [out=20,in={20+-1+180}] (20+-1:7) node[black] {}
to [out=20+-1,in={20+180}] (8,2.5) node[black] {};

\draw[black] (15:1) node {}
to [out=10,in={10+-5+180}] (10+-5:2) node[black] {}
to [out=10+-5,in={10+-8+180}] (10+-8:3) node[black] {}
to [out=10+-8,in={10+-1+180}] (10+-1:4) node[black] {}
to [out=10+-1,in={10+-1+180}] (10+-1:5) node[black] {}
to [out=10+-1,in={10+180}] (10:6) node[black] {}
to [out=10,in={10+-2+180}] (10+-2:7) node[black] {}
to [out=10+-2,in={10+180}] (8,1.2) node[black] {};

\draw[black] (-15:1) node {}
to [out=-10+1,in={-10+180}] (-10:2) node[black] {}
to [out=-10,in={-10+-1+180}] (-10+-1:3) node[black] {}
to [out=-10+-1,in={-10+-1+180}] (-10+-1:4) node[black] {}
to [out=-10+-1,in={-10+1+180}] (-10+1:5) node[black] {}
to [out=-10+1,in={-10+2+180}] (-10+2:6) node[black] {}
to [out=-10+2,in={-10+180}] (-10:7) node[black] {}
to [out=-10,in={-10+180}] (8,-1.5) node[black] {};

\begin{scope}
\draw[black] \foreach \x in {30,15,-15}        
{
	(0,0)--(\x:1) node[black] {}
} ;
\end{scope}

\begin{scope}[shift={(8,2.5)}]
\draw[black] \foreach \x in {0.5,0.25,...,-0.5}        
{
	(0,0)--(2,\x) node[black] {}
} ;
\end{scope}
\begin{scope}[shift={(8,1.2)}]
\draw[black] \foreach \x in {0.5,0.25,...,-0.5}        
{
	(0,0)--(2,\x) node[black] {}
}  ;
\end{scope}
\begin{scope}[shift={(8,-1.5)}]
\draw[black] \foreach \x in {0.5,0.25,...,-0.5}         
{
	(0,0)--(2,\x) node[black] {}
}  ;
\end{scope}
\node[label=left:{\small{${v_1}$}}] at (0,0) {};
\node[draw=none,fill=none,label=left:{\small{$F_{v_1}$}}] at (6,2.75) {};
\end{scope} 

\begin{scope}[yshift=-1.2cm,scale=0.2]
\draw[black] (30:1) node {}
to [out=20,in={20+180}] (20:2) node[black] {}
to [out=20,in={20+1+180}] (20+1:3) node[black] {}
to [out=20+1,in={20+180}] (20:4) node[black] {}
to [out=20,in={20+-1+180}] (20+-1:5) node[black] {}
to [out=20+-1,in={20+180}] (20:6) node[black] {}
to [out=20,in={20+-1+180}] (20+-1:7) node[black] {}
to [out=20+-1,in={20+180}] (8,2.5) node[black] {};

\draw[black] (15:1) node {}
to [out=10,in={10+-5+180}] (10+-5:2) node[black] {}
to [out=10+-5,in={10+-8+180}] (10+-8:3) node[black] {}
to [out=10+-8,in={10+-1+180}] (10+-1:4) node[black] {}
to [out=10+-1,in={10+-1+180}] (10+-1:5) node[black] {}
to [out=10+-1,in={10+180}] (10:6) node[black] {}
to [out=10,in={10+-2+180}] (10+-2:7) node[black] {}
to [out=10+-2,in={10+180}] (8,1.2) node[black] {};

\draw[black] (-15:1) node {}
to [out=-10+1,in={-10+180}] (-10:2) node[black] {}
to [out=-10,in={-10+-1+180}] (-10+-1:3) node[black] {}
to [out=-10+-1,in={-10+-1+180}] (-10+-1:4) node[black] {}
to [out=-10+-1,in={-10+1+180}] (-10+1:5) node[black] {}
to [out=-10+1,in={-10+2+180}] (-10+2:6) node[black] {}
to [out=-10+2,in={-10+180}] (-10:7) node[black] {}
to [out=-10,in={-10+180}] (8,-1.5) node[black] {};

\begin{scope}
\draw[black] \foreach \x in {30,15,-15}        
{
	(0,0)--(\x:1) node[black] {}
} ;
\end{scope}

\begin{scope}[shift={(8,2.5)}]
\draw[black] \foreach \x in {0.5,0.25,...,-0.5}        
{
	(0,0)--(2,\x) node[black] {}
} ;
\end{scope}
\begin{scope}[shift={(8,1.2)}]
\draw[black] \foreach \x in {0.5,0.25,...,-0.5}        
{
	(0,0)--(2,\x) node[black] {}
}  ;
\end{scope}
\begin{scope}[shift={(8,-1.5)}]
\draw[black] \foreach \x in {0.5,0.25,...,-0.5}         
{
	(0,0)--(2,\x) node[black] {}
}  ;
\end{scope}
\node[label=left:{\small{${v_{m^5}}$}}] at (0,0) {};
\node[draw=none,fill=none,label=left:{\small{$F_{v_{m^5}}$}}] at (6,2.75) {};
\end{scope}

\begin{scope}[yshift=2.2cm,xshift=8cm,scale=0.2,yscale=1,xscale=-1]

\begin{scope}[shift={(8,2.5)}]
\draw[red,thick] \foreach \x in {0.5,0.25,...,-0.5}        
{
	(0,0)--(2,\x) node[black] {}
} ;
\end{scope}
\begin{scope}[shift={(8,1.2)}]
\draw[red,thick] \foreach \x in {0.5,0.25,...,-0.5}        
{
	(0,0)--(2,\x) node[black] {}
}  ;
\end{scope}
\begin{scope}[shift={(8,-1.5)}]
\draw[red,thick] (0,0)--(2,0);
\draw[black] \foreach \x in {0.5,0.25,-0.25,-0.5}         
{
	(0,0)--(2,\x) node[black] {}
}  ;
\end{scope}
\draw[red,thick] (30:1) node {}
to [out=20,in={20+180}] (20:2) node[black] {}
to [out=20,in={20+1+180}] (20+1:3) node[black] {}
to [out=20+1,in={20+180}] (20:4) node[black] {}
to [out=20,in={20+-1+180}] (20+-1:5) node[black] {}
to [out=20+-1,in={20+180}] (20:6) node[black] {}
to [out=20,in={20+-1+180}] (20+-1:7) node[black] {}
to [out=20+-1,in={20+180}] (8,2.5) node[black] {};

\draw[red,thick] (15:1) node {}
to [out=10,in={10+-5+180}] (10+-5:2) node[black] {}
to [out=10+-5,in={10+-8+180}] (10+-8:3) node[black] {}
to [out=10+-8,in={10+-1+180}] (10+-1:4) node[black] {}
to [out=10+-1,in={10+-1+180}] (10+-1:5) node[black] {}
to [out=10+-1,in={10+180}] (10:6) node[black] {}
to [out=10,in={10+-2+180}] (10+-2:7) node[black] {}
to [out=10+-2,in={10+180}] (8,1.2) node[black] {};

\draw[red,thick] (-15:1) node {}
to [out=-10+1,in={-10+180}] (-10:2) node[black] {}
to [out=-10,in={-10+-1+180}] (-10+-1:3) node[black] {}
to [out=-10+-1,in={-10+-1+180}] (-10+-1:4) node[black] {}
to [out=-10+-1,in={-10+1+180}] (-10+1:5) node[black] {}
to [out=-10+1,in={-10+2+180}] (-10+2:6) node[black] {}
to [out=-10+2,in={-10+180}] (-10:7) node[black] {}
to [out=-10,in={-10+180}] (8,-1.5) node[black] {};
\begin{scope}
\draw[red,thick] \foreach \x in {30,15,-15}        
{
	(0,0)--(\x:1) node[black] {}
} ;
\end{scope}               

\node[label=right:{\small{${u_1}$}}] at (0,0) {};
\node[draw=none,fill=none,label=left:{\small{$F_{u_1}$}}] at (2.8,2.75) {};
\end{scope}

\begin{scope}[yshift=1.008cm,xshift=8cm,scale=0.2,yscale=1,xscale=-1]

\begin{scope}[shift={(8,2.5)}]
\draw[red,thick] \foreach \x in {0.5,0.25,...,-0.5}        
{
	(0,0)--(2,\x) node[black] {}
} ;
\end{scope}
\begin{scope}[shift={(8,1.2)}]
\draw[red,thick] (0,0)--(2,-0.5) node[black] {};
\draw[black] \foreach \x in {0.5,0.25,...,-0.25}        
{
	(0,0)--(2,\x) node[black] {}
}  ;
\end{scope}
\begin{scope}[shift={(8,-1.5)}]
\draw[red,thick] \foreach \x in {0.5,0.25,...,-0.5}         
{
	(0,0)--(2,\x) node[black] {}
}  ;
\end{scope}
\draw[red,thick] (30:1) node {}
to [out=20,in={20+180}] (20:2) node[black] {}
to [out=20,in={20+1+180}] (20+1:3) node[black] {}
to [out=20+1,in={20+180}] (20:4) node[black] {}
to [out=20,in={20+-1+180}] (20+-1:5) node[black] {}
to [out=20+-1,in={20+180}] (20:6) node[black] {}
to [out=20,in={20+-1+180}] (20+-1:7) node[black] {}
to [out=20+-1,in={20+180}] (8,2.5) node[black] {};

\draw[red,thick] (15:1) node {}
to [out=10,in={10+-5+180}] (10+-5:2) node[black] {}
to [out=10+-5,in={10+-8+180}] (10+-8:3) node[black] {}
to [out=10+-8,in={10+-1+180}] (10+-1:4) node[black] {}
to [out=10+-1,in={10+-1+180}] (10+-1:5) node[black] {}
to [out=10+-1,in={10+180}] (10:6) node[black] {}
to [out=10,in={10+-2+180}] (10+-2:7) node[black] {}
to [out=10+-2,in={10+180}] (8,1.2) node[black] {};

\draw[red,thick] (-15:1) node {}
to [out=-10+1,in={-10+180}] (-10:2) node[black] {}
to [out=-10,in={-10+-1+180}] (-10+-1:3) node[black] {}
to [out=-10+-1,in={-10+-1+180}] (-10+-1:4) node[black] {}
to [out=-10+-1,in={-10+1+180}] (-10+1:5) node[black] {}
to [out=-10+1,in={-10+2+180}] (-10+2:6) node[black] {}
to [out=-10+2,in={-10+180}] (-10:7) node[black] {}
to [out=-10,in={-10+180}] (8,-1.5) node[black] {};
\begin{scope}
\draw[red,thick] \foreach \x in {30,15,-15}        
{
	(0,0)--(\x:1) node[black] {}
} ;
\end{scope}               

\node[label=right:{\small{${u_j}$}}] at (0,0) {};
\node[draw=none,fill=none,label=left:{\small{$F_{u_j}$}}] at (2.8,2.75) {};
\end{scope} 

\begin{scope}[yshift=-0.95cm,xshift=8cm,scale=0.2,yscale=1,xscale=-1]
\begin{scope}[shift={(8,2.5)}]
\draw[red,thick] \foreach \x in {0.5,0.25,...,-0.5}        
{
	(0,0)--(2,\x) node[black] {}
} ;
\end{scope}
\begin{scope}[shift={(8,1.2)}]
\draw[red,thick] \foreach \x in {0.5,0.25,...,-0.5}        
{
	(0,0)--(2,\x) node[black] {}
}  ;
\end{scope}
\begin{scope}[shift={(8,-1.5)}]
\draw[red,thick] (0,0)--(2,0.5) node[black] {};
\draw[black] \foreach \x in {0.25,0,...,-0.5}         
{
	(0,0)--(2,\x) node[black] {}
}  ;
\end{scope}
\draw[red,thick] (30:1) node {}
to [out=20,in={20+180}] (20:2) node[black] {}
to [out=20,in={20+1+180}] (20+1:3) node[black] {}
to [out=20+1,in={20+180}] (20:4) node[black] {}
to [out=20,in={20+-1+180}] (20+-1:5) node[black] {}
to [out=20+-1,in={20+180}] (20:6) node[black] {}
to [out=20,in={20+-1+180}] (20+-1:7) node[black] {}
to [out=20+-1,in={20+180}] (8,2.5) node[black] {};

\draw[red,thick] (15:1) node {}
to [out=10,in={10+-5+180}] (10+-5:2) node[black] {}
to [out=10+-5,in={10+-8+180}] (10+-8:3) node[black] {}
to [out=10+-8,in={10+-1+180}] (10+-1:4) node[black] {}
to [out=10+-1,in={10+-1+180}] (10+-1:5) node[black] {}
to [out=10+-1,in={10+180}] (10:6) node[black] {}
to [out=10,in={10+-2+180}] (10+-2:7) node[black] {}
to [out=10+-2,in={10+180}] (8,1.2) node[black] {};

\draw[red,thick] (-15:1) node {}
to [out=-10+1,in={-10+180}] (-10:2) node[black] {}
to [out=-10,in={-10+-1+180}] (-10+-1:3) node[black] {}
to [out=-10+-1,in={-10+-1+180}] (-10+-1:4) node[black] {}
to [out=-10+-1,in={-10+1+180}] (-10+1:5) node[black] {}
to [out=-10+1,in={-10+2+180}] (-10+2:6) node[black] {}
to [out=-10+2,in={-10+180}] (-10:7) node[black] {}
to [out=-10,in={-10+180}] (8,-1.5) node[black] {};

\begin{scope}
\draw[red,thick] \foreach \x in {30,15,-15}        
{
	(0,0)--(\x:1) node[black] {}
} ;
\end{scope}

\node[label=right:{\small{${u_k}$}}] at (0,0) {};
\node[draw=none,fill=none,label=left:{\small{$F_{u_k}$}}] at (2.8,2.75) {};
\end{scope}

\begin{scope}[yshift=-2.2cm,xshift=8cm,scale=0.2,yscale=1,xscale=-1]
\draw[black] (30:1) node {}
to [out=20,in={20+180}] (20:2) node[black] {}
to [out=20,in={20+1+180}] (20+1:3) node[black] {}
to [out=20+1,in={20+180}] (20:4) node[black] {}
to [out=20,in={20+-1+180}] (20+-1:5) node[black] {}
to [out=20+-1,in={20+180}] (20:6) node[black] {}
to [out=20,in={20+-1+180}] (20+-1:7) node[black] {}
to [out=20+-1,in={20+180}] (8,2.5) node[black] {};

\draw[black] (15:1) node {}
to [out=10,in={10+-5+180}] (10+-5:2) node[black] {}
to [out=10+-5,in={10+-8+180}] (10+-8:3) node[black] {}
to [out=10+-8,in={10+-1+180}] (10+-1:4) node[black] {}
to [out=10+-1,in={10+-1+180}] (10+-1:5) node[black] {}
to [out=10+-1,in={10+180}] (10:6) node[black] {}
to [out=10,in={10+-2+180}] (10+-2:7) node[black] {}
to [out=10+-2,in={10+180}] (8,1.2) node[black] {};

\draw[black] (-15:1) node {}
to [out=-10+1,in={-10+180}] (-10:2) node[black] {}
to [out=-10,in={-10+-1+180}] (-10+-1:3) node[black] {}
to [out=-10+-1,in={-10+-1+180}] (-10+-1:4) node[black] {}
to [out=-10+-1,in={-10+1+180}] (-10+1:5) node[black] {}
to [out=-10+1,in={-10+2+180}] (-10+2:6) node[black] {}
to [out=-10+2,in={-10+180}] (-10:7) node[black] {}
to [out=-10,in={-10+180}] (8,-1.5) node[black] {};

\begin{scope}
\draw[black] \foreach \x in {30,15,-15}        
{
	(0,0)--(\x:1) 
} ;
\end{scope}

\begin{scope}[shift={(8,2.5)}]
\draw[black] \foreach \x in {0.5,0.25,...,-0.5}        
{
	(0,0)--(2,\x) node[black] {}
} ;
\end{scope}
\begin{scope}[shift={(8,1.2)}]
\draw[black] \foreach \x in {0.5,0.25,...,-0.5}        
{
	(0,0)--(2,\x) node[black] {}
}  ;
\end{scope}
\begin{scope}[shift={(8,-1.5)}]
\draw[black] \foreach \x in {0.5,0.25,...,-0.5}         
{
	(0,0)--(2,\x) node[black] {}
}  ;
\end{scope}
\node[label=right:{\small{${u_{m^{10}}}$}}] at (0,0) {};
\node[draw=none,fill=none,label=left:{\small{$F_{u_{m^{10}}}$}}] at (1.0,2.75) {};
\end{scope} 

\begin{scope}[shift={(2,0.67)},scale=0.522]
\draw[red,thick] (0,-0.12) node[black] {} to [out=20,in={20+180}] ({20}:1) node[black] {}
to [out=20,in={20+180}] (20:2) node[black] {}
to [out=20,in={20+1+180}] (20+1:3) node[black] {}
to [out=20+1,in={20+180}] (20:4) node[black] {}
to [out=20,in={20+-1+180}] (20+-1:5) node[black] {}
to [out=20+-1,in={20+180}] (20:6) node[black] {}
to [out=20,in={20+-1+180}] (20+-1:7) node[black] {}
to [out=20+-1,in={20+180}] (7.65,2.35) node[black] {};
\end{scope}

\begin{scope}[shift={(2,0.35)},scale=0.507]               
\draw[red,thick] (0,0) node[black] {} to [out=10,in={10+180}] ({10}:1) node[black] {}
to [out=10,in={10+-5+180}] (10+-5:2) node[black] {}
to [out=10+-5,in={10+-8+180}] (10+-8:3) node[black] {}
to [out=10+-8,in={10+-1+180}] (10+-1:4) node[black] {}
to [out=10+-1,in={10+-1+180}] (10+-1:5) node[black] {}
to [out=10+-1,in={10+180}] (10:6) node[black] {}
to [out=10,in={10+-2+180}] (10+-2:7) node[black] {}
to [out=10+-2,in={10+180}] (7.9,1.59) node[black] {};
\end{scope}              

\begin{scope}[shift={(2,-0.35)},scale=0.5] 
\draw[red,thick] (0,0) node[black] {} to [out=-10,in={-10+1+180}] ({-10+1}:1) node[black] {}
to [out=-10+1,in={-10+180}] (2,-0.53) node[black] {}
to [out=-10,in={-10+-1+180}] (-10+-1:3) node[black] {}
to [out=-10+-1,in={-10+-1+180}] (-10+-1:4) node[black] {}
to [out=-10+-1,in={-10+1+180}] (-10+1:5) node[black] {}
to [out=-10+1,in={-10+2+180}] (-10+2:6) node[black] {}
to [out=-10+2,in={-10+180}] (-10:7) node[black] {}
to [out=-10,in={-10+180}] (8.0,-1.6) node[black] {};
\end{scope}    

\begin{scope}[shift={(6,0.67)},scale=0.522,xscale=-1]
\draw[black] (0,-0.12) node[black] {} to [out=20,in={20+180}] ({20}:1) node[black] {}
to [out=20,in={20+180}] (20:2) node[black] {}
to [out=20,in={20+1+180}] (20+1:3) node[black] {}
to [out=20+1,in={20+180}] (3.9,1.35) node[black] {}
to [out=20,in={20+-1+180}] (20+-1:5) node[black] {}
to [out=20+-1,in={20+180}] (20:6) node[black] {}
to [out=20,in={20+-1+180}] (20+-1:7) node[black] {}
to [out=20+-1,in={20+180}] (7.65,2.16) node[black] {};
\end{scope}

\node[draw=none,fill=none,label=right:{\small{$P_{v_i,u_1}$}}] at (3.7,1.8) {};   
\node[draw=none,fill=none,label=right:{\small{$P_{v_i,u_j}$}}] at (4.0,0.5) {};   
\node[draw=none,fill=none,label=right:{\textcolor{red}{$W_{t+1}$}}] at (3.5,-0.4) {};   
\node[draw=none,fill=none,label=right:{\small{$\mathcal{Q}$}}] at (4,3) {};   
\node[draw=none,fill=none,label=right:{\small{$X$}}] at (3.5,-2.5) {}; 
\node[draw=none,fill=none,label={\small{$U$}}] at (-0.35,1.5) {}; 
\node[draw=none,fill=none,label={\small{$V$}}] at (8.35,2.4) {};                \draw[dotted] (-0.2,1.0)--(-0.2,0.2);
\draw[dotted] (-0.2,-0.2)--(-0.2,-1);
\draw[dotted] (8.2,2.0)--(8.2,1.208);
\draw[dotted] (8.2,0.808)--(8.2,-0.75);
\draw[dotted] (8.2,-1.15)--(8.2,-2.0);

\draw[rounded corners] (7.9,2.4) rectangle (8.8,-2.45);
\draw[rounded corners] (-0.8,1.4) rectangle (0.1,-1.4);

\begin{scope}[shift={(2.65,-1.68)},rotate=135]
\filldraw[draw=none,fill=lightgray] (0,0.3)--(0,-0.95)--(-0.95,-0.35) -- cycle;
\draw \foreach \y in {-0.95,0,0.3}
{
	node at (0,\y) {}
};
\draw[dotted] (0.25,-0.8)--(0.25,-0.2);
\draw[black] \foreach \y in {-0.95,0,0.3} \foreach \x in {0.1,0.05,...,-0.1}         
{
	(0,\y)--(0.5,\y+\x) node[black] {}
}  ;  
\draw[rounded corners] (0.1,0.8) -- (0.1,-1.15) -- (-1,-1.15) -- (-1,0.8) -- cycle;

\end{scope}

\node[draw=none,fill=none,label=right:{\footnotesize{$W_{t}$}}] at (2.8,-1.7) {};

\end{scope}

\end{tikzpicture}}
	\caption{The proof of Lemma~\ref{lem-web}: a web $W_{t+1}$ with core vertex $v_i$ whose interior avoids $X$ (the union of the interiors of $W_1,\ldots,W_{t+1}$). Observe that $W_{t+1}$ can and typically does intersect with the exteriors $\bigcup_{i\in[t]} \Ext(W_{i})$ of previously-built webs, but it does not intersect with the interiors $\bigcup_{i\in[t]} \Int(W_i)$ of previously-built webs.}
	\label{figwebbuild}
\end{figure}

\begin{proof}[Proof of Lemma~\ref{lem-web}]
We iteratively construct $(m^3,m^3,d/100,4m)$-webs $W_1,\dots, W_{200d}$ such that $\Int(W_i)\cap \Int(W_j)=\emptyset$ for all $ij \in \binom{[200d]}{2}$. Suppose that we have constructed $W_1, \dots , W_t$ for some $t<200d$. Let $X:= \bigcup_{i\in[t]} \Int(W_i)$. Then
	\begin{align}\label{eq: size X}
	|X|\le 200d\cdot m^3\cdot m^3\cdot 8m\le dm^{8}.
	\end{align}

Now we construct an $(m^3, m^3, d/100, 4m)$-web $W_{t+1}$ in $H-X$.
Apply Lemma~\ref{lem-unit} with $H,X,m^5,d/100$ playing the roles of $H,X,h_1,h_2$.
Since $dm^{15}(m^5 \cdot d/100)^{-1} \geq m^{10}+m^5$, this implies that $H-X$ contains
vertex-disjoint $(m^5,d/100, m+2)$-units $F_{v_1},\dots, F_{v_{m^{5}}}$ and $F_{u_1}, \dots, F_{u_{m^{10}}}$
 such that $F_{w}$ has core vertex $w$ for all $w\in\{v_i,u_j: i\in[m^5],j\in[m^{10}]\}$.
	
	Set $V:=\{v_1,\ldots,v_{m^{5}}\}$ and $U:=\{u_1,\ldots,u_{m^{10}}\}$. Let $\mathcal{P}\subseteq V\times U$ be a maximal subset such that a collection $\mathcal{Q} := \{P_{v_i,u_j} : (v_i,u_j)\in \mathcal{P}\}$ of paths with the following properties exists. Recall the definition of $P(F,w)$ given in Definition~\ref{defn-unit}.
	\begin{itemize}
		\item[(D1)] $P_{v_i,u_j}$ is a $v_i,u_j$-path of length at most $4m$ such that $P_{v_i,u_j}=P(F_{v_i},v_{ij})\cup P^*_{ij}\cup P(F_{u_j},u_{ij})$, where $v_{ij}\in \Ext(F_{v_i})$, $u_{ij}\in \Ext(F_{u_j})$ and $P^*_{ij}$ is a $v_{ij},u_{ij}$-path;
		\item[(D2)] for each $(v_i,u_j)\in\mathcal{P}$, $P^*_{ij}$ is of length at most $m$ and disjoint from $X\cup U \cup \bigcup_{k\in [m^{5}]}\Int(F_{v_k})$, and $P_{v_i,u_j}$ is disjoint from $X$;
		\item[(D3)] all paths in $\mathcal{Q}$ are pairwise internally vertex-disjoint. 
	\end{itemize}
Let $N_{\mathcal{P}}(v_i) := \{ u_j : (v_i,u_j)\in \mathcal{P}\}$.	We claim that there is a $v_i \in V$ such that 
	\begin{align}\label{eq: index connects to many2}
	|N_{\mathcal{P}}(v_i)| \geq m^{3}. 
	\end{align} 
	Suppose to the contrary that 	$|N_{\mathcal{P}}(v_i)| < m^{3}$ for all $i\in [m^{5}]$. Let
	\begin{align*}
	P'&:=\bigcup_{(v_{i},u_j)\in\mathcal{P}}\Int(P_{v_i,u_j});\quad J:= \{j \in [m^{10}]: V(F_{u_j})\cap P'=\emptyset\};\\
A&:= \bigcup_{i\in [m^5]} \{ w \in \Ext(F_{v_i}) : V(P(F_{v_i},w))\cap V(P')=\emptyset\}; \quad B:=\bigcup_{j\in J} \Ext(F_{u_j})\quad \text{ and }\\
C&:= P' \cup X\cup U \cup \bigcup_{k\in [m^{5}]}\Int(F_{v_k}).
\end{align*}
We will construct a path between $A$ and $B$ which avoids $C$ by using Lemma~\ref{diameter} to contradict the maximality of $\mathcal{P}$. In order to do this, we estimate the sizes of $A, B$ and $C$.
	
Since each $P_{v_i,u_j} \in \mathcal{Q}$ has length at most $4m$ and no $v_i \in V$ satisfies~\eqref{eq: index connects to many2}, we have
\begin{align}\label{eq: P' size 2}
	|P'|\leq \sum_{i\in [m^5]} \sum_{u_j\in N_{\mathcal{P}}(v_i)} |\Int(P_{v_i,u_j})|< m^{5}\cdot m^{3} \cdot 4m = 4m^{9}.
\end{align}
Thus we have
\begin{align}\label{eq: J size 2}
|J|\geq m^{10} - |P'| \geq m^{10} - 4m^{9} \geq \frac{m^{10}}{2}.
\end{align} 
For any $i \in [m^5]$,
recall that $\Int(F_{v_i})$ consists of $m^5$ pairwise internally vertex-disjoint paths.
By (D1),~(D2) and the fact that $|N_{\mathcal{P}}(v_i)|<m^3$, the set $P'$ contains at most $m^3$ of these paths. Thus there are at least $m^5-m^3\ge m^5/2$ unused paths, implying that
\begin{eqnarray}\label{eq: A size 2}
\nonumber |A| \geq  \sum_{i\in [m^5]} \frac{m^5}{2}\cdot \frac{d}{100} - |P'| \overset{\eqref{eq: P' size 2}}{\ge}\frac{dm^{10}}{200}-4m^9 \geq 10d m^9.
\end{eqnarray}
Also 
\begin{eqnarray*}
|B|=\left|\bigcup_{j\in J} \Ext(F_{u_j})\right|= |J| \cdot m^5 \cdot \frac{d}{100} \stackrel{\eqref{eq: J size 2}}{\geq}   \frac{dm^{15}}{200} \geq 10dm^{9}.
\end{eqnarray*}	

Then, by \eqref{eq-n-large}, \eqref{eq: size X} and \eqref{eq: P' size 2},
\begin{eqnarray*}
|C|&=&|P'|+|X|+|U|+\left|\bigcup_{k\in [m^{5}]}\Int(F_{v_k}) \right| 
\leq 4m^9 + dm^8 + m^{10} +\sum_{k \in [m^5]} m^5(m+2) \nonumber \\ &\leq& 2dm^8 \leq \frac{1}{4}\ep(10dm^9)\cdot 10dm^9.
\end{eqnarray*}	
Thus, we can apply Lemma~\ref{diameter} with $ A,B,C,10d m^{9}$ playing the roles of $X,X',W,x$ respectively to find a minimal path $Q$ of length at most $m$ in $H-C$ from $v\in A$ to $u\in B$. Let $i' \in [m^5]$ and $j' \in [m^{10}]$ be the indices such that $v\in \Ext(F_{v_{i'}})$ and $u\in \Ext(F_{u_{j'}})$. Then $P_{v_{i'},u_{j'}}:=Q\cup P(F_{v_{i'}},v)\cup P(F_{u_{j'}},u)$ is a $v_{i'},u_{j'}$-path of length at most
$$
m+ |P(F_{v_{i'}},v)|+|P(F_{u_{j'}},u)|\leq m + 2(m+3) \leq 4m.
$$
Note that by the definitions of $J$ and $B$ and the fact that $\mathcal{Q}$ satisfies (D1), we have $(v_{i'},u_{j'})\notin\mathcal{P}$. Let
$$
\mathcal{P}' := \mathcal{P}\cup \{ (v_{i'},u_{j'})\} \text{ and } \mathcal{Q}' := \mathcal{Q}\cup \{P_{v_{i'},u_{j'}}\}.
$$
We claim that $(\mathcal{P}',\mathcal{Q}')$ satisfies (D1)--(D3).
It is easy to see that (D1) holds; while (D2) and~(D3) follow from (D1), $Q\cap C=\emptyset$ and the fact that all units, in particular $F_{v_{i'}}$ and $F_{u_{j'}}$, are in $H-X$. This contradicts the maximality of $\mathcal{P}$.
Thus there exists a vertex $v_i \in V$ satisfying \eqref{eq: index connects to many2}. 

We can now construct the desired web (see Figure~\ref{figwebbuild}).
Let $U'\subseteq N_{\mathcal{P}}(v_i)$ be a subset of $U$ such that $|U'|= m^{3}$.
Let $P'':= \bigcup_{u_j\in U'} \Int(P_{v_{i},u_{j}})$. Then
$|P''|\leq |U'|\cdot 4m = 4m^4$. Recall that $P''$ does not contain any vertex in $U$ by (D2) and that $F_{u_j}$ is an $(m^5,d/100,m+2)$-unit for each $u_j\in U'$.
Thus $F_{u_j}- P''$ contains an $(m^3,d/100,m+2)$-unit $F'_{u_j}$ because $m^5 - 4m^4 \geq m^3$. Then $\bigcup_{u_j\in U'} \left( P_{v_{i},u_{j}} \cup F'_{u_j} \right)$
is an $(m^3,m^3,d/100,4m)$-web with core vertex $v_i$ in $H-X$. This finishes the proof. 	
\end{proof}


\subsection{Proof of Lemma~\ref{lem-sparse} when $d\ge \log^{100} n$}\label{sec-dense}

Using Lemma~\ref{lem-web}, we are now able to prove Lemma~\ref{lem-sparse} assuming in addition that $d\geq \log^{100} n$.
This is achieved via Theorem~\ref{lem-build-sub}, the general theorem about cycles in expander graphs stated in the introduction. We should note that the assumption $d \geq \log^{100}n$ is only used implicitly in the proof by applications of~(\ref{eq-n-large}).
Before presenting the proof, we give a brief sketch.
The desired cycle is found via an iterative procedure.
By Lemma~\ref{lem-web} we can find a collection of webs $W_1,\ldots,W_{200d}$ whose interiors are disjoint and let $Z$ be the set of core vertices of these webs and $U\in{Z\choose 100d}$. The aim is to find $u_1,\ldots,u_{k}$, $k \geq 98d$, in $U$ such that, for each $i \in [k]$, there is a short $u_i,u_{i+1}$-path $P_i$ which avoids $Z$ and the centres of any web not centred at $u_i$ or $u_{i+1}$, and $\Int(P_1),\ldots,\Int(P_{i})$ are pairwise disjoint.
If we can find such a path $P_i$ for all $i\in[k]$ (where $u_{k+1} := u_1$), then the concatenation of these paths is the desired cycle $C_U$.
To achieve this, at the $i^{\mathrm{th}}$ step, we ensure that most webs, including the current web centred at $u_i$, have few of their interior vertices in the paths $P_1,\ldots,P_{i-1}$ which we have already created. Then the exteriors of our webs are still large enough to exclude previously-built paths to find the next path $P_{i}$ between $u_i$ and some $u_{i+1}$ as required.

\begin{figure}
	\scalebox{1.6}{
\tikzstyle{every node}=[circle, draw, fill=black,
inner sep=0pt, minimum width=1pt]
\begin{tikzpicture}[decoration={markings,
	mark=between positions 0 and 1 step 2mm
	with { \draw node {}; }} ]

\begin{scope}[shift=(0:4),rotate=180]
\filldraw[draw=none,fill=gray!25] (0,0)--(1.5,0.5)--(1.5,-0.5)--cycle;
\filldraw[draw=none,fill=gray!50] (0,0)--(18.435:0.75)--(-18.435:0.75)--cycle;
\draw node[] at (0,0) (a0) {};
\draw \foreach \y in {0.5,0.2,-0.5}
{
	node at (1.5,\y) {}
};
\draw[dotted] (1.7,0)--(1.7,-0.3);
\draw[black] \foreach \y in {0.5,0.2,-0.5} \foreach \x in {0.1,0.05,...,-0.1}
{
	(1.5,\y)--(2,\y+\x) node {}
};
\draw node at (2,0.5+0.1) (a1) {};
\draw node at (1.5,0.5) (a3) {};
\draw node at (2,0.2+0.05) (a2) {};
\draw node at (1.5,0.2) (a4) {};
\end{scope}

\begin{scope}[shift=(45:4),rotate=225]
\filldraw[draw=none,fill=gray!25] (0,0)--(1.5,0.5)--(1.5,-0.5)--cycle;
\filldraw[draw=none,fill=gray!50] (0,0)--(18.435:0.75)--(-18.435:0.75)--cycle;
\draw node[] at (0,0) (b0) {};
\draw \foreach \y in {0.5,0.2,-0.5}
{
	node at (1.5,\y) {}
};
\draw[dotted] (1.7,0)--(1.7,-0.3);
\draw[black] \foreach \y in {0.5,0.2,-0.5} \foreach \x in {0.1,0.05,...,-0.1}
{
	(1.5,\y)--(2,\y+\x) node {}
};
\draw node at (2,0.5+0.1) (b1) {};
\draw node at (1.5,0.5) (b3) {};
\draw node at (2,0.2+0.05) (b2) {};
\draw node at (1.5,0.2) (b4) {};
\end{scope}

\begin{scope}[shift=(90:4),rotate=270]
\filldraw[draw=none,fill=gray!25] (0,0)--(1.5,0.5)--(1.5,-0.5)--cycle;
\filldraw[draw=none,fill=gray!50] (0,0)--(18.435:0.75)--(-18.435:0.75)--cycle;
\draw node[] at (0,0) (c0) {};
\draw \foreach \y in {0.5,0.2,-0.5}
{
	node at (1.5,\y) {}
};
\draw[dotted] (1.7,0)--(1.7,-0.3);
\draw[black] \foreach \y in {0.5,0.2,-0.5} \foreach \x in {0.1,0.05,...,-0.1}
{
	(1.5,\y)--(2,\y+\x) node {}
};
\draw node at (2,-0.5-0.1) (c1) {};
\draw node at (1.5,-0.5) (c3) {};
\draw node at (2,0.2+0.05) (c2) {};
\draw node at (1.5,0.2) (c4) {};
\end{scope}

\begin{scope}[shift=(135:4),rotate=315]
\filldraw[draw=none,fill=gray!25] (0,0)--(1.5,0.5)--(1.5,-0.5)--cycle;
\filldraw[draw=none,fill=gray!50] (0,0)--(18.435:0.75)--(-18.435:0.75)--cycle;
\draw node[] at (0,0) (d0) {};
\draw \foreach \y in {0.5,0.2,-0.5}
{
	node at (1.5,\y) {}
};
\draw[dotted] (1.7,0)--(1.7,-0.3);
\draw[black] \foreach \y in {0.5,0.2,-0.5} \foreach \x in {0.1,0.05,...,-0.1}
{
	(1.5,\y)--(2,\y+\x) node {}
};
\draw node at (2,0.5+0.1) (d1) {};
\draw node at (1.5,0.5) (d3) {};
\draw node at (2,0.2+0.05) (d2) {};
\draw node at (1.5,0.2) (d4) {};
\end{scope}

\begin{scope}[shift=(180:0.5),rotate=180]
\filldraw[draw=none,fill=gray!25] (0,0)--(1.5,0.5)--(1.5,-0.5)--cycle;
\filldraw[draw=none,fill=gray!50] (0,0)--(18.435:0.75)--(-18.435:0.75)--cycle;
\draw node[] at (0,0) (e0) {};
\draw \foreach \y in {0.5,0.2,-0.5}
{
	node at (1.5,\y) {}
};
\draw[dotted] (1.7,0)--(1.7,-0.3);
\draw[black] \foreach \y in {0.5,0.2,-0.5} \foreach \x in {0.1,0.05,...,-0.1}
{
	(1.5,\y)--(2,\y+\x) node {}
};
\draw node at (2,0.5+0.1) (e1) {};
\draw node at (1.5,0.5) (e3) {};
\draw node at (2,0.2+0.05) (e2) {};
\draw node at (1.5,0.2) (e4) {};
\draw[dashed,rounded corners] (-0.1,-0.7) rectangle (2.1,0.7);
\end{scope}

\begin{scope}[shift=(225:4),rotate=225+180]
\filldraw[draw=none,fill=gray!25] (0,0)--(1.5,0.5)--(1.5,-0.5)--cycle;
\filldraw[draw=none,fill=gray!50] (0,0)--(18.435:0.75)--(-18.435:0.75)--cycle;
\draw node[] at (0,0) (f0) {};
\draw \foreach \y in {0.5,0.2,-0.5}
{
	node at (1.5,\y) {}
};
\draw[dotted] (1.7,0)--(1.7,-0.3);
\draw[black] \foreach \y in {0.5,0.2,-0.5} \foreach \x in {0.1,0.05,...,-0.1}
{
	(1.5,\y)--(2,\y+\x) node {}
};
\draw node at (2,0.2+0.1) (f1) {};
\draw node at (1.5,0.2) (f3) {};
\draw node at (2,-0.5+0.05) (f2) {};
\draw node at (1.5,-0.5) (f4) {};
\draw node at (2,0.5+0.1) (f5) {};
\draw node at (1.5,0.5) (f6) {};
\end{scope}

\begin{scope}[shift=(270:4),rotate=90]
\filldraw[draw=none,fill=gray!25] (0,0)--(1.5,0.5)--(1.5,-0.5)--cycle;
\filldraw[draw=none,fill=gray!50] (0,0)--(18.435:0.75)--(-18.435:0.75)--cycle;
\draw node[] at (0,0) (g0) {};
\draw \foreach \y in {0.5,0.2,-0.5}
{
	node at (1.5,\y) {}
};
\draw[dotted] (1.7,0)--(1.7,-0.3);
\draw[black] \foreach \y in {0.5,0.2,-0.5} \foreach \x in {0.1,0.05,...,-0.1}
{
	(1.5,\y)--(2,\y+\x) node {}
};
\draw node at (2,0.5+0.1) (g1) {};
\draw node at (1.5,0.5) (g3) {};
\draw node at (2,0.2+0.05) (g2) {};
\draw node at (1.5,0.2) (g4) {};
\end{scope}

\begin{scope}[shift=(315:4),rotate=135]
\filldraw[draw=none,fill=gray!25] (0,0)--(1.5,0.5)--(1.5,-0.5)--cycle;
\filldraw[draw=none,fill=gray!50] (0,0)--(18.435:0.75)--(-18.435:0.75)--cycle;
\draw node[] at (0,0) (h0) {};
\draw \foreach \y in {0.5,0.2,-0.5}
{
	node at (1.5,\y) {}
};
\draw[dotted] (1.7,0)--(1.7,-0.3);
\draw[black] \foreach \y in {0.5,0.2,-0.5} \foreach \x in {0.1,0.05,...,-0.1}
{
	(1.5,\y)--(2,\y+\x) node {}
};
\draw node at (2,0.5+0.1) (h1) {};
\draw node at (1.5,0.5) (h3) {};
\draw node at (2,0.2+0.05) (h2) {};
\draw node at (1.5,0.2) (h4) {};
\end{scope}

\draw[red] plot [smooth] coordinates {(a2) (-10:1.4) (20:1.5) (b1)};
\path plot [smooth] coordinates {(a2) (-10:1.4) (20:1.5) (b1)} [postaction={draw,decorate,decoration={pre length=0.1cm, post length=0.1cm}}];

\draw[red] plot [smooth] coordinates {(b2) (60:1.4) (70:2.5) (75:3) (86:3) (80:3.5) (85:4.5) (95:4.5) (100:3.5) (105:3) (110:2.5) (c1)};
\path plot [smooth] coordinates {(b2) (60:1.4) (70:2.5) (75:3) (86:3) (80:3.5) (85:4.5) (95:4.5) (100:3.5) (105:3) (110:2.5) (c1)} [postaction={draw,decorate,decoration={pre length=0.1cm, post length=0.1cm}}];

\draw[red] plot [smooth] coordinates {(c2) (175:1.5) (d1)};
\path plot [smooth] coordinates {(c2) (175:1.5) (d1)} [postaction={draw,decorate,decoration={pre length=0.1cm, post length=0.1cm}}];

\draw[red] (a1) -- (a3) -- (a0) -- (a4) -- (a2);
\path plot coordinates {(a3) (a0) (a4)} [postaction={draw,decorate,decoration={pre length=0.1cm, post length=0.1cm}}];
\draw[red] (b1) -- (b3) -- (b0) -- (b4) -- (b2);
\path plot coordinates {(b3) (b0) (b4)} [postaction={draw,decorate,decoration={pre length=0.1cm, post length=0.1cm}}];
\draw[red] (c1) -- (c3) -- (c0) -- (c4) -- (c2);
\path plot coordinates {(c3) (c0) (c4)} [postaction={draw,decorate,decoration={pre length=0.1cm, post length=0.1cm}}];
\draw[red] (d1) -- (d3) -- (d0) -- (d4) -- (d2);
\path plot coordinates {(d3) (d0) (d4)} [postaction={draw,decorate,decoration={pre length=0.1cm, post length=0.1cm}}];

\draw[gray] (e1) -- (e3) -- (e0);
\path[gray] plot coordinates {(e3) (e0)} [postaction={draw,decorate,decoration={pre length=0.1cm, post length=0.1cm}}];
\draw[gray] (f5) -- (f6) -- (f0);
\path[gray] plot coordinates {(f6) (f0)} [postaction={draw,decorate,decoration={pre length=0.1cm, post length=0.1cm}}];

\draw[red] (f1) -- (f3) -- (f0) -- (f4) -- (f2);
\path plot coordinates {(f3) (f0) (f4)} [postaction={draw,decorate,decoration={pre length=0.1cm, post length=0.1cm}}];
\draw[red] (g1) -- (g3) -- (g0) -- (g4) -- (g2);
\path plot coordinates {(g3) (g0) (g4)} [postaction={draw,decorate,decoration={pre length=0.1cm, post length=0.1cm}}];
\draw[red] (h1) -- (h3) -- (h0) -- (h4) -- (h2);
\path plot coordinates {(h3) (h0) (h4)} [postaction={draw,decorate,decoration={pre length=0.1cm, post length=0.1cm}}];

\draw[red] plot [smooth] coordinates {(d2) (180:1.9) (f1)};
\path plot [smooth] coordinates {(d2) (180:1.9) (f1)} [postaction={draw,decorate,decoration={pre length=0.1cm, post length=0.1cm}}];

\draw[gray] plot [smooth] coordinates {(e1) (200:3) (f5)};
\path[gray] plot [smooth] coordinates {(e1) (200:3) (f5)} [postaction={draw,decorate,decoration={pre length=0.1cm, post length=0.1cm}}];

\draw[red] plot [smooth] coordinates {(f2) (185:1.5) (g1)};
\path plot [smooth] coordinates {(f2) (185:1.5) (g1)} [postaction={draw,decorate,decoration={pre length=0.1cm, post length=0.1cm}}];

\draw[red] plot [smooth] coordinates {(g2) (280:1.5) (h1)};
\path plot [smooth] coordinates {(g2) (280:1.5) (h1)} [postaction={draw,decorate,decoration={pre length=0.1cm, post length=0.1cm}}];

\draw[red] plot [smooth] coordinates {(h2) (300:1.5) (a1)};
\path plot [smooth] coordinates {(h2) (300:1.5) (a1)} [postaction={draw,decorate,decoration={pre length=0.1cm, post length=0.1cm}}];

\draw node[draw=none,fill=none,label=0:\small{$u_{i_{p-1}}$}] at (180:0.4) {};
\draw node[draw=none,fill=none,label=225:\small{$u_{i_p}$}] at (225:4) {};
\draw node[draw=none,fill=none,label=\small{$W_{i_p}$}] at (242:3.3) {};
\draw node[draw=none,fill=none,label=270:\small{$u_{i_{p+1}}$}] at (270:3.7) {};
\draw node[draw=none,fill=none,label=\small{$W_{i_{p+1}}$}] at (282:3.7) {};
\draw node[draw=none,fill=none,label=\footnotesize{$\mathrm{Cen}(W_{i_{p+1}})$}] at (282:4.6) {};

\draw node[draw=none,fill=none,label=135:\small{$u_{i_{99d}}$}] at (135:3.8) {};
\draw node[draw=none,fill=none,label=90:\footnotesize{$u_{i_{99d-1}}$}] at (89:3.6) {};
\draw node[draw=none,fill=none,label=\small{$P_p$}] at (245:1.7) {};
\draw node[draw=none,fill=none,label=\small{$P$}] at (158:3) {};
\draw node[draw=none,fill=none,label=\small{$P_{99d-1}$}] at (86:0.5) {};
\draw node[red,draw=none,fill=none,label=\textcolor{red}{\small{$C_U$}}] at (3:1.0) {};

\draw[rounded corners] (-3.9,-1) rectangle (-3.1,1);
\draw node[draw=none,fill=none,label=\footnotesize{$Z\setminus U$}] at (-3.5,-0.5) {};
\draw[->] (155:3) -- (162:1.85);
\end{tikzpicture}}
\caption{The proof of Theorem~\ref{lem-build-sub}: constructing a cycle $C_U$. Here $p \in [99d]$ is the smallest index such that $W_{i_p}$ is a good web (so the web $W_{i_{p-1}}$ enclosed in a dashed box is bad,~i.e.~its interior is over-used by the paths $P_j$). Paths $P_j$ can intersect the interior of a web but not its centre.}
\label{figjoin}
\end{figure}
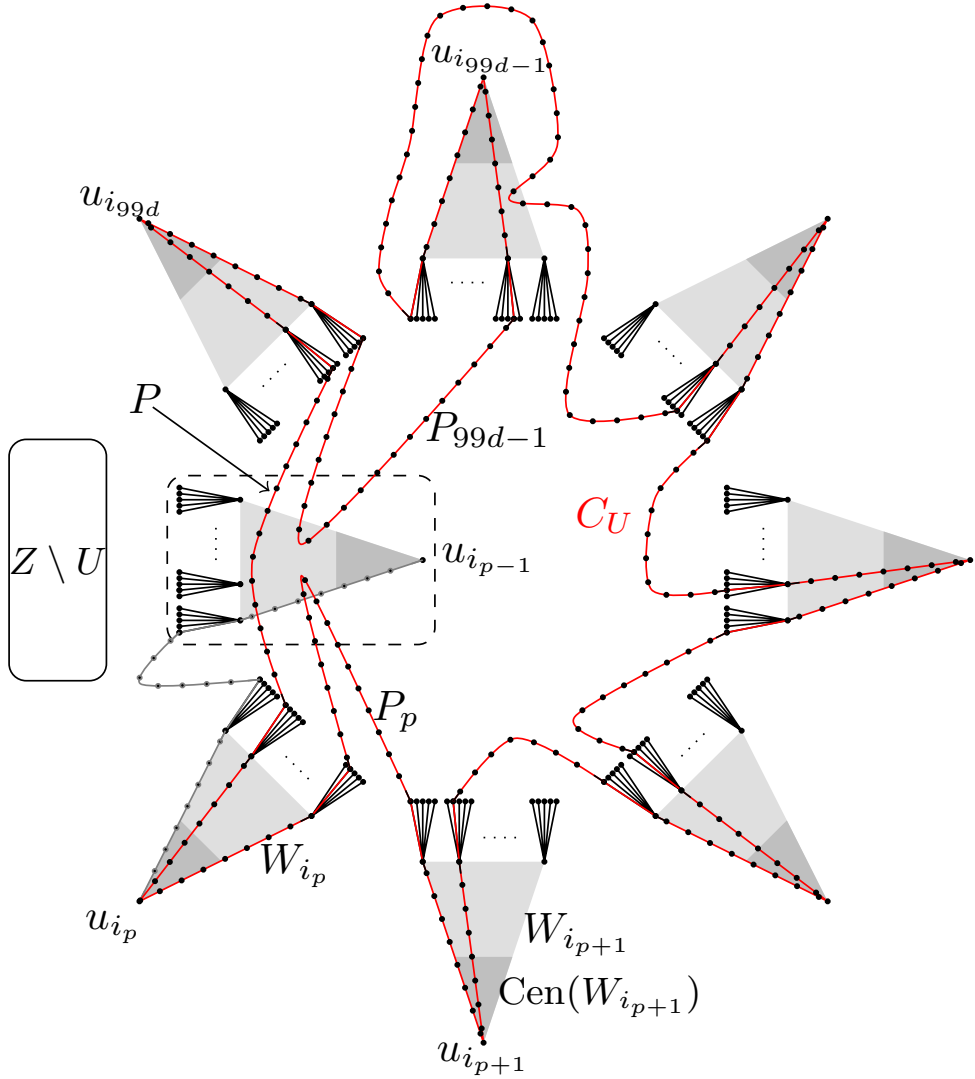

\medskip
\noindent
\emph{Proof of Theorem~\ref{lem-build-sub}.}
For given $\epsilon_1, L$ we choose $d_0, K_0$ so that 
$$0< \frac{1}{d_0}, \frac{1}{K_0} \ll \epsilon_1, \frac{1}{L} \leq 1.$$
As in the rest of the section, we let $m:= \frac{2}{\epsilon_1} \log^{3}(\frac{450n}{d})$, thus (\ref{eq-n-large}) holds.
Since $d\geq d_0, K\geq K_0$, Lemma~\ref{lem-web} implies that we can find in $H$ a collection $W_1,\ldots,W_{200d}$ of $(m^3, m^3, \frac{d}{100},4m)$-webs whose interiors $\Int(W_1),\ldots,\Int(W_{200d})$ are pairwise disjoint. Let $Z := \lbrace u_1,\ldots,u_{200d} \rbrace$ where $u_i$ is the core vertex of $W_i$ for all $i \in [200d]$. Fix an arbitrary $100d$-set $U$ in $Z$. Without loss of generality, assume that $U = \{u_1,\ldots,u_{100d}\}$. First, we show that there exists an index set $I=\{i_1,\ldots,i_{99d}\}\subseteq [100d]$ and a collection $\mathcal{Q}= \{ P_\ell : \ell\in [99d-1]\}$ of paths satisfying the following. For each $\ell\in [99d-1]$,
	\begin{itemize}
		\item[(B1)] $P_\ell$ is a $u_{i_\ell},u_{i_{\ell+1}}$-path of length at most $18m$;
		\item[(B2)] $\Int(P_\ell)$ is disjoint from $\bigcup_{k\in [200d]\setminus \{i_\ell,i_{\ell+1}\} }\Cen(W_{k})\cup Z$;
		\item[(B3)] $\Int(P_\ell)$ and $\Int(P_k)$ are disjoint for all $k \in [99d-1]\setminus \lbrace \ell \rbrace$;
		\item[(B4)] $|\Int(W_{i_{\ell+1}})\cap \bigcup_{k\in[\ell]} V(P_{{k}})| < 2 m^2$.
	\end{itemize}
	
To find such an $(I,\mathcal{Q})$, we will build a path between pairs in $U$ avoiding vertices used in previously-built paths and the centres of all other webs. During the process, we will skip a web if its interior is `over-used'.

Assume we have built $P_{1},\dots, P_{{s}}$ and determined $i_1,\ldots,i_{s+1}$ with $s<99d-1$ satisfying (B1)--(B4). Since index set $\{1\}$ with the empty collection of paths satisfies (B1)--(B4), such a collection $\{P_{1},\dots, P_{{s}}\}$ exists. Let $P':= \bigcup_{k\in[s]} \Int(P_{k})$.
For $i \in [200d]$, we say a web $W_i$ is \emph{bad} if $|\Int(W_i)\cap P'| \geq 2m^2$, and \emph{good} otherwise. Note that (B4) implies that $W_{i_{s+1}}$ is good. By (B1),
\begin{align}\label{eq: P' size 3}
|P'| \leq 18m\cdot s \leq 1800 dm.
\end{align}

\begin{claim}\label{cl: connection}
Let $W_{j_1}$ and $W_{j_2}$ be two good webs. Then there exists a path $P$ of length at most $18m$ in $H$ from $u_{j_1}$ to $u_{j_2}$ such that $\Int(P)$ is disjoint from $P'\cup Z \cup \bigcup_{k\in[200d]\setminus\{j_1,j_2\}}\Cen(W_{k})$.
\end{claim}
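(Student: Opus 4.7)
The plan is to build $P$ as a concatenation $P(W_{j_1},w_1)\cup Q\cup P(W_{j_2},w_2)$, where $w_i\in\Ext(W_{j_i})$ is a carefully chosen exterior vertex and $Q$ is a short connecting path in $H$ produced by Lemma~\ref{diameter}. Since each $P(W_{j_i},w_i)$ is a root-to-leaf path in the tree $W_{j_i}$ and so has length at most $2h_3=8m$, and $Q$ will have length at most $m$, the total length will be at most $17m\leq 18m$.

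First I will identify large ``usable'' exterior sets by setting
$$
A_i := \bigl\{ w \in \Ext(W_{j_i}) : \Int(P(W_{j_i},w))\cap T = \emptyset \text{ and } w \notin T \cup \Int(W_{j_{3-i}}) \bigr\}
$$
for $i\in\{1,2\}$, where $T := P' \cup Z \cup \bigcup_{k \in [200d]\setminus\{j_1,j_2\}} \Cen(W_k)$. The key structural observation is that the sets $\Int(W_k)$ are pairwise disjoint with $\Cen(W_k)\subseteq \Int(W_k)$, so the only contributions to $\Int(W_{j_i})\cap T$ come from $P'\cap \Int(W_{j_i})$ and the singleton $\{u_{j_i}\}=Z\cap \Int(W_{j_i})$; since $W_{j_i}$ is good, this yields $|\Int(W_{j_i})\cap T|<2m^2+1$. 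In the rooted tree $W_{j_i}$, each such blocked interior vertex lies on $P(W_{j_i},w)$ for at most $h_1 h_2 = m^3\cdot d/100$ exterior vertices (the worst case being a vertex on a centre path $Q_s$), so fewer than $(2m^2+1)\cdot dm^3/100 = O(dm^5)$ exterior vertices fail the first condition defining $A_i$. Combined with the trivial bound $|\Ext(W_{j_i})\cap(T\cup \Int(W_{j_{3-i}}))|\leq |T|+|\Int(W_{j_{3-i}})|=O(dm^4)$, using $m^3\leq d$ from~\eqref{eq-n-large} to absorb $|\Int(W_k)|\leq 5m^7$ into $O(dm^4)$, and the fact that $|\Ext(W_{j_i})|=dm^6/100$, this gives $|A_i|\geq x:=dm^6/200$.

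Finally I will apply Lemma~\ref{diameter} to $A_1$ and $A_2$ in $H$ with forbidden set $W^{\ast}:=T\cup \Int(W_{j_1})\cup \Int(W_{j_2})$, whose size is $O(dm^4)$ by the same counting. The condition $|W^{\ast}|\leq \epsilon(x)x/4$ reduces to $m^2/\log^2 m \geq C/\epsilon_1$ for some absolute constant $C$, which holds via $d\geq m^{30}$ and the hierarchy $1/d\ll\epsilon_1$. Lemma~\ref{diameter} then produces a path $Q$ of length at most $m$ from some $w_1\in A_1$ to some $w_2\in A_2$ in $H-W^{\ast}$, and the concatenation $P(W_{j_1},w_1)\cup Q\cup P(W_{j_2},w_2)$ is a genuine $u_{j_1},u_{j_2}$-path: the interiors of the three segments lie in the pairwise disjoint sets $\Int(W_{j_1})$, $V(H)\setminus W^{\ast}$, and $\Int(W_{j_2})$, while the clause $w_i\notin \Int(W_{j_{3-i}})$ in the definition of $A_i$ rules out any endpoint collision outside $\{w_1,w_2\}$, and the interior avoids $T$ by construction. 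I expect the main obstacle to be keeping $|\Int(W_{j_i})\cap T|$ small, which crucially requires exploiting the pairwise disjointness of the web interiors to neutralise the potentially huge contribution of $\bigcup_{k}\Cen(W_k)$ to $T$; without this, $T$ could easily swamp the interior of $W_{j_i}$ and invalidate the counting.
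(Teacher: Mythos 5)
Your proof is correct and follows essentially the same strategy as the paper's: identify large subsets of the exteriors of the two good webs whose root-to-leaf paths avoid the forbidden vertices (using the goodness condition together with the pairwise disjointness of web interiors for the counting), connect them by a short path via Lemma~\ref{diameter} with the forbidden set removed, and concatenate. The only differences are in the bookkeeping — you fold the avoidance of $Z$ and the other centres into the definition of $A_i$ and add the two web interiors to the forbidden set, whereas the paper requires only $V(P(W_{j_k},w))\cap P'=\emptyset$ and notes at the end that avoidance of $Z$ and $\bigcup_{k\ne j_1,j_2}\Cen(W_k)$ is automatic from the disjointness of the interiors.
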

\begin{claimproof}
For $k\in [2]$, let $A_k:= \{w\in \Ext(W_{j_k}) : V(P(W_{j_k},w)) \cap P' =\emptyset \}$. Since both $W_{j_1}, W_{j_2}$ are good webs, $P'$ intersects at most $2m^2$ of the paths of $W_{j_k}$ in their interiors. Thus,
$$|A_k|\ge (m^3-2m^2)\cdot m^3\cdot \frac{d}{100}-|P'|\overset{\eqref{eq: P' size 3}}{\ge} \frac{dm^6}{200}-1800dm\ge \frac{dm^6}{300}.$$
Note that $|\Cen(W_k)|\leq 4m(m^3+1)$ for all $k\in [200d]$.
Let
$$
C:= P' \cup \bigcup_{k\in[200d]\setminus\{j_1,j_2\} }\Cen(W_{k})\cup Z.
$$
Thus \eqref{eq-n-large} and \eqref{eq: P' size 3} imply that
\begin{align*}
|C|&\leq |P'|+\sum_{k\in[200d]\setminus\{j_1,j_2\} }|\Cen(W_{k})|+ |Z|\leq 1800 dm + 200d\cdot 4m(m^3+1) + 200 d  \nonumber \\
&\leq 1000 dm^4\leq \frac{1}{4}\cdot\ep\left(\frac{dm^6}{300}\right) \cdot\frac{dm^6}{300}.
\end{align*}
Thus by applying Lemma~\ref{diameter} with $A_1, A_2,C,dm^6/300$ playing the roles of $X,X',W,x$ respectively, there exists a path $Q$ of length at most $m$ in $H$ between $w_1 \in A_1$ and $w_2\in A_2$ avoiding $C$.
Then $Q\cup P(W_{j_1},w_1)\cup P(W_{j_2},w_2)$ contains a $u_{j_1},u_{j_2}$-path $P$ and $$|P|\leq |Q|+|P(W_{j_1},w_1)|+|P(W_{j_2},w_2)|\leq m+ 2(4m+4m+1)\leq 18m.$$ By the choice of $A_1,A_2$ and the fact that the $\Cen(W_k)$ sets are all pairwise disjoint, $\Int(P)$ is disjoint from $P'\cup Z \cup \bigcup_{k\in[200d]\setminus\{j_1,j_2\}}\Cen(W_{k})$.
\end{claimproof}

\medskip
\noindent
Since the interiors of $W_1,\dots, W_{200d}$ are pairwise disjoint, \eqref{eq: P' size 3} implies that the number of webs whose interiors contain at least $m^2$ vertices of $P'$ is at most
 		\begin{eqnarray}\label{eq-bad-webs}
		\frac{1800dm}{m^2}< \frac{d}{2}.
		\end{eqnarray}
Since $s< 99d-1$, we can choose $i_{s+2} \in U \setminus \{i_1,\dots, i_{s+1}\}$ such that
\begin{align}\label{eq: s+1 choice good}
|\Int(W_{i_{s+2}}) \cap P'|\leq m^2.
\end{align}
Recall that $W_{i_{s+1}}$ is good. Thus by Claim~\ref{cl: connection}, there is a $u_{i_{s+1}},u_{i_{s+2}}$-path $P_{s+1}$ of length at most $18m$. Then it is easy to see that $\{i_1,\dots, i_{s+2}\}$ together with $P_1,\ldots,P_{s+1}$ satisfy (B1)--(B3) since $\Int(P_{s+1})$ is disjoint from $P'\cup \bigcup_{k\in[200d]\setminus \{i_\ell,i_{\ell+1}\} }\Cen(W_{k})\cup Z$. Moreover,
\begin{eqnarray*}
\left|\Int(W_{i_{s+2}})\cap \bigcup_{k\in[s+1]} V(P_{{k}})\right| = \left|\Int(W_{i_{s+2}})\cap P'\right|+ |\Int(W_{i_{s+2}})\cap  P_{{s+1}}|\stackrel{\eqref{eq: s+1 choice good}}{\leq} m^2 + 18 m < 2m^2,
\end{eqnarray*}
so (B4) also holds. Therefore, we can repeat this process until $s=99d-1$, upon which we obtain the desired $(I,\mathcal{Q})$ satisfying (B1)--(B4).

Observe that, as before, \eqref{eq-bad-webs} implies that less than $d/2$ indices $k \in [100d]\setminus I$ are such that $W_k$ is bad.
Let $p \in [99d]$ be the minimum index such that $W_{i_p}$ is a good web (see Figure~\ref{figjoin}).
Note that $W_{i_{99d}}$ is good by (B4).
Then $p \leq d/2$ and so $|\lbrace i_p,i_{p+1},\ldots,i_{99d} \rbrace| > 98d$.
By (B1), (B2) and (B3), the concatenation of $P_{p},P_{p+1},\ldots,P_{99d-1}$ is a $u_{i_p},u_{i_{99d}}$-path avoiding $Z\setminus U$.
By Claim~\ref{cl: connection}, there exists a $u_{i_p},u_{i_{99d}}$-path $P$ of length at most $18m$ such that $\Int(P)$ is disjoint from $\bigcup_{k\in[99d-1]}\Int(P_k)\cup Z \cup \bigcup_{k\in[200d]\setminus\{i_p,i_{99d}\}}\Cen(W_{k})$. 
Thus, the concatenation of $P_p,P_{p+1},\ldots,P_{99d-1},P$ form a cycle $C_U$, as in Figure~\ref{figjoin}.
Finally, by (B1), (B2) and Claim~\ref{cl: connection},
$$
V(C_U) \cap Z = \lbrace u_{i_p},u_{i_{p+1}},\ldots,u_{i_{99d}} \rbrace  \in \binom{U}{\geq 98d},
$$
completing the proof of the theorem.
\hfill$\square$

\medskip
The proof of Lemma~\ref{lem-sparse} in the case when $d \geq \log^{100}n$ now follows easily since there are not many distinct $U,U'$ such that $C_U \cap Z$ and $C_{U'} \cap Z$ are equal. That is, the cycles $C_U$ are `almost distinguishable' by their intersection with $Z$.

\begin{proof}[Proof of Theorem~\ref{lem-build-sub} $\Rightarrow$ Lemma~\ref{lem-sparse} when $d\ge\log^{100}n$.]

Apply Theorem~\ref{lem-build-sub} to obtain a set $Z \subseteq V(H)$ of size $200d$ such that, for every subset $U \subseteq Z$ of size $100d$, there exists a cycle $C_U$ with $V(C_U) \cap Z \in \binom{U}{\geq 98d}$.	
	
	Fix an arbitrary cycle $C$ in $H$ such that $V(C)\cap Z$ has size $98d\le |V(C)\cap Z|\le 100d$. There are at most ${|Z|-|V(C)\cap Z|\choose 100d-|V(C)\cap Z|}$ ways to choose a $100d$-set $U\subseteq Z$ containing $V(C)\cap Z$. 
	In other words, for a fixed cycle $C$ in $H$,
$$|\{U \subseteq Z: V(C_U) = V(C)\}| \leq \binom{|Z|-|V(C)\cap Z|}{100d-|V(C)\cap Z|} \leq \binom{102d}{2d}.$$	
Therefore the number of Hamiltonian subsets in $H$ is
	$$ c(H)\geq \frac{ \binom{|Z|}{100d}}{ \binom{102d}{2d}} \geq 2^{50d},$$
	as desired.
\end{proof}



\subsection{Proof of Lemma~\ref{lem-sparse} when $d\le \log^{100}n$}\label{sec-sparse}

In this section, we finish the proof of Lemma~\ref{lem-sparse} by proving it under the additional assumption that $d\leq \log^{100} n$. The proof will follow easily from the next result.

\begin{lemma}\label{lemma-sparse-sub}
Let $0< 1/d \ll \ep_1,1/L\leq 1$. Suppose that $H$ is an $n$-vertex $(\ep_1, d/30)$-expander with $d\le \log^{100}n$ and $d/10\le \de(H)\le \De(H)\le Ld$.
Then $H$ contains a set $Z$ of size $200d$ such that for every subset $U\subseteq Z$ of size $100d$, there exists a cycle $C_U$ with $V(C_U)\cap Z=U$.
\end{lemma}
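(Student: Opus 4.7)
The plan, following the sketch in Section~\ref{sec: sketch}, is to choose $Z\subseteq V(H)$ of size $200d$ consisting of pairwise far-apart vertices, and for every $U\in\binom{Z}{100d}$ to build $C_U$ as the concatenation of $100d$ short paths $P_1,\ldots,P_{100d}$ joining consecutive vertices of $U$ in a fixed cyclic order, where each $P_i$ is produced by Lemma~\ref{diameter} and must avoid $Z\setminus\{u_i,u_{i+1}\}$ together with the interiors of previously-built paths.

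\textbf{Parameters and the choice of $Z$.} Let $m:=(2/\epsilon_1)\log^3(450n/d)$ be the length bound produced by Lemma~\ref{diameter}. I will pick a BFS radius $r$ and a target ball size $x_0$ jointly satisfying (i) $|B_r^H(v)|\ge x_0$ for every $v\in V(H)$; (ii) $\epsilon(x_0)x_0/4$ exceeds the maximum forbidden-set size $|W_i|$ arising in the construction; (iii) $200d\cdot(Ld)^{2r}\le n$, so that a greedy argument produces $Z=\{z_1,\ldots,z_{200d}\}$ with pairwise distance greater than $2r$ in $H$. Condition (i) follows by iterating the expansion condition from $|B_1^H(v)|\ge\delta(H)\ge d/10>t/2=d/60$: each BFS step multiplies the ball by $\ge 1+\epsilon(\text{current size})$, and for ball sizes polynomial in $d$ and polylogarithmic in $n$ this rate is $\Omega(\epsilon_1/(\log\log n)^2)$ (using $d\le\log^{100}n$), so $r=O((\log\log n)^3/\epsilon_1)$ suffices. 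Then $(Ld)^{2r}=2^{O((\log\log n)^4/\epsilon_1)}$ is far smaller than $n$ whenever $d\ge d_0(\epsilon_1,L)$ is large enough (since $d\le\log^{100}n$ forces $n\ge e^{d^{1/100}}$), so (iii) holds; consequently the balls $B_r^H(z_i)$ are pairwise disjoint.

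\textbf{Building $C_U$.} Order $U$ cyclically as $u_1,\ldots,u_{100d}$ with $u_{100d+1}:=u_1$. For $i=1,\ldots,100d$ in turn, let
\[
W_i:=(Z\setminus\{u_i,u_{i+1}\})\cup\bigcup_{j<i}\Int(P_j),
\]
so $|W_i|\le 200d+100dM$ where $M:=m+2r$ bounds each prior path length. Since $Z$ is $2r$-spread, $B_r^H(u_i)\cap Z=\{u_i\}$, so $W_i\cap B_r^H(u_i)\subseteq\bigcup_{j<i}\Int(P_j)$ has size at most $100dM$; for $x_0\ge 200dM$ this leaves $|X|,|X'|\ge x_0/2$ where $X:=B_r^H(u_i)\setminus W_i$ and $X':=B_r^H(u_{i+1})\setminus W_i$. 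Apply Lemma~\ref{diameter} with $X,X',W_i$ and $x:=x_0/2$ to obtain a path $P_i^*$ in $H-W_i$ of length at most $m$ from some $v\in X$ to some $v'\in X'$. Extending $P_i^*$ by paths of length at most $r$ inside $B_r^H(u_i)\setminus W_i$ and $B_r^H(u_{i+1})\setminus W_i$ reaching $u_i$ and $u_{i+1}$ respectively yields $P_i$ of length at most $M$. The concatenation $C_U:=P_1P_2\cdots P_{100d}$ has pairwise disjoint interiors meeting $Z$ only at $U$, so it is a cycle with $V(C_U)\cap Z=U$.

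\textbf{Main obstacle.} The delicate step is the extension of $P_i^*$ back to $u_i$ (resp.\ $u_{i+1}$) inside $H-W_i$: the natural BFS path in $H$ from $u_i$ to $v\in X$ may pass through $W_i$, so short paths in $H-W_i$ require care. The resolution is to pre-select, for each $u_k\in U$, two distinct neighbours $w_k,w_k'\in N_H(u_k)$ (possible since $|N_H(u_k)|\ge d/10\ge 2$), designated as the unique ``entry/exit'' vertices for the two paths incident to $u_k$; one then enlarges $W_j$ when building $P_j$ by $\bigcup_k N_H(u_k)\setminus\{w_j,w_{j+1}'\}$, forcing each $P_j$ to enter and leave its endpoints through the pre-selected neighbours and forcing each connection $P_j^*$ to avoid all other $Z$-neighbourhoods. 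Combined with the spread of $Z$, this implies that $W_i\cap N_H(u_i)$ consists essentially of the single neighbour used by $P_{i-1}$ to reach $u_i$, leaving an untouched BFS branch of $B_r^H(u_i)$ at $w_i$ along which $u_i$ can be connected to $v$ in $H-W_i$. The enlargement of $W_j$ adds at most $O(Ld^2)$ vertices, which is absorbed by taking $x_0$ a suitable polynomial in $d$ and $\log n$ divided by a power of $\epsilon_1$; the remaining expansion-based BFS growth analysis inside $H-W_i$ constitutes the principal technical ingredient of the proof.
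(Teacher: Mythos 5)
Your overall strategy matches the paper's: pick $Z$ as $200d$ vertices that are pairwise far apart (possible since $\Delta(H)\le Ld\le\log^{200}n$ and $d\le\log^{100}n$ force $n$ to be enormous), then for each $U$ concatenate $100d$ short connecting paths obtained from Lemma~\ref{diameter}, each avoiding $Z\setminus U$ and the interiors of earlier paths. The set-up, the greedy spreading argument, and the bookkeeping of the forbidden set $W_i$ are all fine.

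However, there is a genuine gap at exactly the point you flag as the ``main obstacle,'' and your proposed fix does not close it. The difficulty is not which neighbour of $u_i$ the incoming path $P_{i-1}$ uses, but that $P_{i-1}$ deposits up to $r$ vertices \emph{deep inside} $B^r_H(u_i)$, and in a graph this sparse ($\delta(H)$ can be as small as $d/10$ with $d\le\log^{100}n$) deleting even one short path can disconnect $u_i$ from most of its ball. Your set $X:=B^r_H(u_i)\setminus W_i$ is a ball in $H$, not in $H-W_i$; even if $|X|$ is large, the endpoint $v\in X$ produced by Lemma~\ref{diameter} need not be reachable from $u_i$ inside $H-W_i$ by a path of length $r$ (or at all). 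Reserving two ``entry/exit'' neighbours $w_k,w_k'$ only controls the first sphere around $u_k$; it says nothing about the survival of a large connected ball around $u_i$ after the interior of $P_{i-1}$ is removed. You defer precisely this (``the remaining expansion-based BFS growth analysis inside $H-W_i$'') as the ``principal technical ingredient,'' i.e.\ you name the missing step rather than prove it. Note also that the naive expansion argument fails here: for $|X|$ of order $d$ one has $\epsilon(|X|)|X|=O(d)$, which is swamped by $|W_i|=\Theta(d\cdot\mathrm{poly}\log n)$, so one cannot grow $B^\ell_{H-W_i}(u_i)$ from scratch by subtracting $|W_i|$ from the expansion. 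The paper resolves this with an external result (Lemma~\ref{corner}, from~\cite{L-M}): if $P$ is a \emph{shortest} $u,v$-path inside $B^r_H(v)$, then $|B^r_{H-(V(P)\setminus\{v\})}(v)|\ge d\log^7n$. The invariants (E3)--(E5) are designed so that each ball $B^r_H(v_i)$ is only ever touched by the at most two paths incident to $v_i$, and the portion of each such path inside the ball is a shortest path, so Lemma~\ref{corner} applies and guarantees a surviving ball of size $d\log^7n$; only from that size onward does the expansion in $H-P'$ beat $|P'|$ and let the ball grow to size $e^{\log^{1/4}n}$, at which point Lemma~\ref{diameter} can be applied. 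Without an ingredient of this type your iteration cannot be continued past the first few steps.
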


First let us see how this implies Lemma~\ref{lem-sparse}.
Let $Z$ be the set guaranteed in Lemma~\ref{lemma-sparse-sub}. Then for distinct $100d$-sets $U, U'\subseteq Z$, their corresponding cycles $C_U$ and $C_{U'}$ can be distinguished in $Z$, implying that $c(H)\ge {200d\choose 100d}>2^{50d}$ as desired. 

The idea of the proof of Lemma~\ref{lemma-sparse-sub} is as follows.
Since $H$ has small maximum degree, we can choose $Z$ to be a set of $200d$ vertices which are very far apart in $H$. Let $U := \lbrace u_1,\ldots,u_{100d} \rbrace \subseteq Z$.
We will obtain the desired cycle $C_U$ via an iterative procedure.
For each $i \in [100d]$ we will obtain a path $P_i$ between $u_i$ and $u_{i+1}$ (where $u_{100d+1}:=u_1$) which avoids the rest of $Z$ and the interiors of previously-built paths.
The concatenation of these paths is the desired cycle $C_U$.
To enable the iteration to be completed, we ensure that there is a large set of vertices within a short distance of $u_i$, and similarly for $u_{i+1}$, which have not been used in previously-built paths.
Lemma~\ref{diameter} implies that there is a short path $P_i$ between these two large sets which avoids existing paths.
We then extend $P_i$ into a $u_i,u_{i+1}$-path, as required.


To prove Lemma~\ref{lemma-sparse-sub}, we need the following result from~\cite{L-M} (Lemma~5.5 when taking $s=5$ and $q=1$).
To state it we need some notation.
Given an integer $r \geq 0$, graph $G$ and $X \subseteq V(G)$, define $B_G^r(X)$ to be the \emph{ball of radius $r$ around $X$}; that is, the set of vertices at distance at most $r$ from $X$ in $G$.
For $r \geq 1$, define $\Gamma^{r}_G(X)$ by setting  
$$
\Gamma^r_G(X) := B^r_G(X)\setminus B^{r-1}_G(X).
$$
So $\Gamma^r_G(X)$ is the $r^{\mathrm{th}}$ sphere around $X$, i.e.~the set of vertices at distance exactly $r$ from $X$.
If $v \in V(G)$, we write $\Gamma^r_G(v)$ as shorthand for $\Gamma^r_G(\lbrace v \rbrace)$ and similarly abbreviate $B^r_G(\lbrace v \rbrace)$.

\begin{lemma}[\cite{L-M}]\label{corner} 
Let $0< 1/d \ll \ep_1\leq 1$ and $n\in \mathbb{N}$ be such that $d\leq \log^{100}n$, and set $r:=(\log\log n)^5$. Let $H$ be an $n$-vertex $(\ep_1,d/30)$-expander with $\de(H)\geq d/10$. 
Let $v \in V(H)$ and $u \in B^r_H(v)$, and let $P$ be a shortest $u,v$-path in $B^r_H(v)$.
 Then $|B^{r}_{H'}(v)|\geq d\log^7 n$, where $H' := H-(V(P)\setminus \lbrace v \rbrace)$.
\end{lemma}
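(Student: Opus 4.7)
The plan is to grow the ball $B_i := B^{i}_{H'}(v)$ level by level from $v$ in $H'$, leveraging the $(\epsilon_1, d/30)$-expansion of $H$ while absorbing the small loss caused by the at most $r$ removed vertices of $V(P)\setminus\{v\}$.

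First I record the structure of $P$. Writing $P = v_0 v_1 \ldots v_\ell$ with $v_0 = v$, the shortest-path property in $B_H^r(v)$ forces $d_H(v,v_j)=j$ for every $j$ and $\ell\le r$; in particular $N_H(v)\cap V(P)=\{v_1\}$, so
\begin{equation*}
|B_1|\;\ge\;\delta(H)-1\;\ge\;d/10-1\;\ge\;d/60\;=\;t/2,
\end{equation*}
placing $B_1$ already inside the regime where the expansion property applies. Now for each subsequent step with $|B_i|\le n/2$, expansion gives $|\Gamma_H(B_i)|\ge \epsilon(|B_i|)|B_i|$, and deleting $V(P)\setminus\{v\}$ removes at most $r$ of the new neighbours, so we have the affine recurrence
\begin{equation*}
|B_{i+1}|\;\ge\;(1+\epsilon(|B_i|))\,|B_i|\;-\;r.
\end{equation*}

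The target $d\log^7 n\le \log^{107}n$ is subpolynomial in $n$, so throughout the entire range $|B_i|\le d\log^7 n$ one has the uniform lower bound $\epsilon(|B_i|)\ge a:=\epsilon_1/(c(\log\log n)^2)$ for an absolute constant $c$, since $\log(450|B_i|/d)=O(\log\log n)$. As soon as $|B_i|\ge T:=4r/a=\Theta((\log\log n)^7/\epsilon_1)$, the loss term $r$ is dominated and the recurrence yields genuine multiplicative growth $|B_{i+1}|\ge (1+a/2)|B_i|$. Iterating this geometric step for $\Omega(r)$ rounds inflates the ball by a factor $\exp(\Omega(ar))=\exp(\Omega(\epsilon_1(\log\log n)^3))$, which dwarfs the target $d\log^7 n$.

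The main obstacle is bridging from $|B_1|\ge d/10$ up to the threshold $T$. The saving grace is that while $|B_i|$ stays in any bounded multiple of $d$, say $|B_i|\le Cd$, the expansion is actually the much stronger $\epsilon(|B_i|)=\Omega_C(\epsilon_1)$ (an absolute constant), because $\log(450|B_i|/d)=O(1)$. Standard analysis of the affine recurrence $x_{i+1}\ge (1+\Omega(\epsilon_1))x_i - r$ shows that $x_i$ reaches $T$ within $O((\log T)/\epsilon_1)=O((\log\log n)/\epsilon_1)$ steps, which is negligible compared to $r$. This works provided the initial value $|B_1|\ge d/10$ already satisfies $\epsilon_1|B_1|\gtrsim r$; the hierarchy $1/d\ll \epsilon_1$ is what ensures this, with the implicit lower bound on $d$ depending on $r/\epsilon_1$ in the regime where $d$ is smallest. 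Concatenating the short pre-threshold phase with the long geometric post-threshold phase delivers $|B_r|\ge d\log^7 n$ as required.
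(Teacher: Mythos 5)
The paper does not prove this lemma itself (it is quoted from~\cite{L-M}), so your attempt is judged on its own terms, and it has a genuine gap in how the loss from deleting $V(P)\setminus\{v\}$ is handled. You subtract $r$ at every step of the ball-growing recurrence, and then need the starting value to beat this loss, i.e.\ $\epsilon_1 |B_1|\gtrsim r$, which you justify by ``the hierarchy $1/d\ll\epsilon_1$''. But the hierarchy only provides a lower bound on $d$ depending on $\epsilon_1$, not on $n$, whereas $r=(\log\log n)^5$ grows with $n$ and the only relation between $d$ and $n$ in the hypotheses is the \emph{upper} bound $d\le\log^{100}n$. So the regime $d\ll r/\epsilon_1$ is perfectly admissible (e.g.\ $d$ of size $d_0(\epsilon_1)$, or $d=(\log\log n)^2$, with $n\to\infty$), and it actually occurs in the application in Section~\ref{sec-sparse}. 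In that regime your recurrence $x_{i+1}\ge(1+\Omega(\epsilon_1))x_i-r$ with $x_1\approx d/10$ gives nothing — the right-hand side can be negative already at the first step — so the ``bridging'' phase up to the threshold $T=\Theta(r/a)$ never gets off the ground, and expansion may not even remain applicable since the lower bound can drop below $t/2$.

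The missing idea is to use the geodesic structure of $P$ rather than only $|V(P)\setminus\{v\}|\le r$. Since $P$ is a shortest $u,v$-path and (as you correctly note) $d_H(v,v_j)=j$, the path meets each sphere $\Gamma^j_H(v)$ in at most one vertex. When you grow $B_i:=B^i_{H'}(v)$, every vertex of $\Gamma_H(B_i)$ lies in $B^{i+1}_H(v)$, so the number of potential new vertices blocked by the path at step $i$ is at most $i+1$, not $r$. With the recurrence $|B_{i+1}|\ge(1+\epsilon(|B_i|))|B_i|-(i+1)$ the argument closes: at the start the loss $O(i)$ is tiny compared with $\epsilon(|B_i|)|B_i|=\Omega(\epsilon_1 d)$ (this is exactly what $1/d\ll\epsilon_1$ buys, with no dependence on $n$), and since $\epsilon(x)x$ is increasing the ball thereafter grows at least geometrically with ratio $1+\Omega(\epsilon_1/(\log\log n)^2)$ while the loss grows only linearly in $i$; your final computation, that $\exp(\Omega(\epsilon_1(\log\log n)^3))$ dwarfs $d\log^7 n\le\log^{107}n$, is fine and can be kept verbatim. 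The rest of your setup (the reduction $|\Gamma_{H'}(B_i)|\ge|\Gamma_H(B_i)|-\text{loss}$, the initial bound $|B_1|\ge\delta(H)-1\ge t/2$, and the uniform bound on $\epsilon(x)$ below the target size) is correct.
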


\begin{figure}
\tikzstyle{every node}=[circle, draw, fill=black,
inner sep=0pt, minimum width=5pt]
\begin{tikzpicture}[decoration={markings,
	mark=between positions 0 and 1 step 5mm
	with { \draw node[minimum width=2pt] {}; }} ]

\clip (-8,-0.5) rectangle (8.2,8);
\begin{scope}[rotate=20]

\begin{scope}[shift=(0:6),rotate=0]
\filldraw[draw=none,fill=gray!25] (0,0) circle (2cm);
\filldraw[draw=none,fill=gray!50] (0,0) circle (1cm);
\draw node at (100:2) (a1) {};
\draw node at (210:2) (a2) {};
\draw node at (0,0) (a0) {};
\draw[<->] (-90:0.2) -- (-90:1);
\draw node[draw=none,fill=none,label=-90:$r$] at (-90:1) {};
\draw[<->] (45:0.2) -- (45:2);
\draw node[draw=none,fill=none,label=45:$k$] at (45:2) {};
\draw node[draw=none,fill=none,label=right:$Q_i^i$] at (100:1.5) {};
\end{scope}

\begin{scope}[shift=(72:6),rotate=72]
\filldraw[draw=none,fill=gray!25] (0,0) circle (2cm);
\filldraw[draw=none,fill=gray!50] (0,0) circle (1cm);
\draw node at (150:2) (b1) {};
\draw node at (230:2) (b2) {};
\draw node at (0,0) (b0) {};
\draw node[draw=none,fill=none,label=right:$Q_i^{i+1}$] at (190:1.2) {};
\draw node[draw=none,fill=none,label=left:$Q_{i+1}^{i+1}$] at (135:1.0) {};
\end{scope}

\begin{scope}[shift=(144:6),rotate=144]
\filldraw[draw=none,fill=gray!25] (0,0) circle (2cm);
\filldraw[draw=none,fill=gray!50] (0,0) circle (1cm);
\draw node at (150:2) (c1) {};
\draw node at (210:2) (c2) {};
\draw node at (0,0) (c0) {};
\end{scope}

\begin{scope}[shift=(216:6),rotate=216]
\filldraw[draw=none,fill=gray!25] (0,0) circle (2cm);
\filldraw[draw=none,fill=gray!50] (0,0) circle (1cm);
\draw node at (150:2) (d1) {};
\draw node at (210:2) (d2) {};
\draw node at (0,0) (d0) {};
\end{scope}

\begin{scope}[shift=(-72:6),rotate=-72]
\filldraw[draw=none,fill=gray!25] (0,0) circle (2cm);
\filldraw[draw=none,fill=gray!50] (0,0) circle (1cm);
\draw node at (150:2) (e1) {};
\draw node at (210:2) (e2) {};
\draw node at (0,0) (e0) {};
\end{scope}

\draw[red,thick] plot [smooth] coordinates {(a1) (30:4.2) (50:5.2) (b2) (b0) (b1) (70:3) (50:3.5) (20:3) (7:4.5) (0:4.5) (5:3) (40:1) (90:2.0) (c2) (c0) (c1) (168:3.2) (188:4.2) (d2) (d0) (d1) (240:3.2) (260:4.2) (e2) (e0) (e1) (-40:5) (-20:4.2) (a2) (a0) (a1)};
\path plot [smooth] coordinates {(a1) (30:4.2) (50:5.2) (b2) (b0) (b1) (70:3) (50:3.5) (20:3) (7:4.5) (0:4.5) (5:3) (40:1) (90:2.0) (c2) (c0) (c1) (168:3.2) (188:4.2) (d2) (d0) (d1) (240:3.2) (260:4.2) (e2) (e0) (e1) (-40:5) (-20:4.2) (a2) (a0) (a1)} [postaction={draw,decorate,decoration={pre length=0.1cm, post length=0.1cm}}];

\begin{scope}[shift=(0:6),rotate=0]
\draw node at (100:2) {};
\draw node at (210:2) {};
\draw node[label=0:$v_i$] at (0,0) {};
\draw node[label=above:$u_i$] at (100:2) {};
\end{scope}

\begin{scope}[shift=(72:6),rotate=72]
\draw node at (150:2) {};
\draw node at (230:2) {};
\draw node[label=72:$v_{i+1}$] at (0,0) {};
\draw node[label=below:$u_{i+1}$] at (230:2) {};
\end{scope}

\begin{scope}[shift=(144:6),rotate=144]
\draw node at (150:2) {};
\draw node at (210:2) {};
\draw node[label=144:$v_{i+2}$] at (0,0) {};
\end{scope}

\begin{scope}[shift=(216:6),rotate=216]
\draw node at (150:2) {};
\draw node at (210:2) {};
\draw node[label=216:$v_i$] at (0,0) {};
\end{scope}

\begin{scope}[shift=(-72:6),rotate=-72]
\draw node at (150:2) {};
\draw node at (210:2) {};
\draw node[label=-72:$v_i$] at (0,0) {};
\end{scope}

\draw node[draw=none,fill=none,label=35:$Q$] at (35:4.2) {};
\end{scope}

\end{tikzpicture}
\caption{The proof of Lemma~\ref{lemma-sparse-sub}: the construction of $P_i = Q_i^i \cup Q \cup Q_i^{i+1}$. The red line is a segment of $C_U$, the concatenation of $P_1,\ldots,P_{100d}$.}
\label{figsparse}
\end{figure}

\begin{proof}[Proof of Lemma~\ref{lemma-sparse-sub}] 
Since $1/d \ll 1$ and $d \leq \log^{100}n$, we have $1/n \ll 1$.
	Let
	$$ r:=(\log\log n)^5 \quad \text{ and }\quad k:=(\log n)^{7/8}.
	$$
	Note that
	\begin{equation}\label{eq: Delta size}
	\De(H)\le Ld\le d^2\leq \log^{200}n.
	\end{equation}
	We first show that there exists a set of $200d$ vertices $Z$ whose pairwise distances are all greater than $2k$.
In other words, for all distinct $z,z' \in Z$, we have that $B^k_H(z) \cap B^k_H(z') = \emptyset$.	
	Indeed, given such a set $Z$ with $|Z| < 200d$, by~(\ref{eq: Delta size}) and the fact that $1/n \ll 1$,
	\begin{eqnarray*}
		|B^{2k}(Z)|\leq |Z|\cdot \sum_{0 \leq i \leq 2k}\Delta(H)^i < 2|Z|\cdot \De(H)^{2k}< 400d\cdot (\log^{200}n)^{2k} <n.
	\end{eqnarray*}
	So we can add another vertex in $V(G)\setminus B^{2k}(Z)$ that is at distance greater than $2k$ from $Z$. Thus such a set $Z$ with $|Z| = 200d$ exists.

By applying Lemma~\ref{corner} with the empty path from $v$ to $v$ playing the role of  $P$, for every $z\in Z$ we have that
\begin{align}\label{eq: BrHvp big}
|B^r_{H}(z)| \geq d\log^7n.
\end{align}
	
	Fix an arbitrary $100d$-set $U\subseteq Z$, say $U=\{v_1,\ldots, v_{100d}\}$, and let $v_{100d+1}:= v_1$ and $v_0:=v_{100d}$.
Let $I  \subseteq [100d]$ be a maximal subset such that there exists a collection $\mathcal{Q}=\{P_j: j\in I \}$ satisfying the following properties.
	\begin{itemize}
		\item[(E1)] For each $j\in I $, $P_{j}$ is a $v_j,v_{j+1}$-path with length at most $2\log^4 n$;
		\item[(E2)] the paths in $\mathcal{Q}$ are pairwise internally vertex-disjoint;
		\item[(E3)] for every $j\in I $, the path $P_{j}$ is disjoint from $\bigcup_{p\in[100d]\setminus \{j,j+1\}}B^r_H(v_p)\cup (Z\setminus U)$;
		\item[(E4)] every $j\in I $ such that $j+1 \notin I $ satisfies $|B^r_{H-(V(P_{j})\setminus \{v_{j+1}\})}(v_{j+1})|\geq d\log^{7} n$;
		\item[(E5)] every $j\in I $ such that $j-1 \notin I $ satisfies $|B^r_{H-(V(P_{j})\setminus \{v_{j}\})}(v_{j})|\geq d\log^{7} n$.
	\end{itemize}

Note that such a maximal subset exists as by~\eqref{eq: BrHvp big}, we have that $(I ,\mathcal{Q})=(\{1\},\emptyset)$ satisfies (E1)--(E5). We show next that $I=[100d]$.
Indeed, suppose that there exists $i\in [100d]\setminus I $. 
Let 
$$P':= \bigcup_{j\in I }\Int(P_j),\quad W':=\bigcup_{p\in[100d] \setminus\{i,i+1\}} B^r_H(v_p), \quad\text{ and }\quad W:= P'\cup W'\cup (Z\setminus U).$$
We would like to estimate the sizes of $P', W'$ and $W$.
Firstly,
\begin{align}\label{eq: P' size P'}
|P'| \stackrel{({\rm E1})}{\leq} |I | \cdot 2\log^{4}n < 100d \cdot 2 \log^{4}n \leq d \log^{5} n.
\end{align}
By~(\ref{eq: Delta size}) and since $1/n \ll 1$,
\begin{align*}
|W'| \leq 100d\cdot  \sum_{0 \leq \ell \leq r} (\log^{200}n)^\ell \le 100\log^{100}n\cdot 2\log^{200r}n\leq \log^{300r}n = e^{300(\log \log n)^6} \leq e^{\log^{\frac{1}{5}}n}.
\end{align*}
This implies that 
\begin{eqnarray}\label{eq: W size W}
|W|\leq |P'|+|W'|+|Z\setminus U| \leq d\log^{5} n + e^{\log^{\frac{1}{5}}n} + 100d  < \frac{1}{4} \ep(e^{\log^{\frac{1}{4}}n})e^{\log^{\frac{1}{4}}n}.
\end{eqnarray}
Note that (E3) implies that
\begin{align}\label{eq-E4}
B^r_{H-P'}(v_i) &= \begin{cases} B^r_{H}(v_i) &\mbox{if } i-1\notin I \\
B^r_{H-(V(P_{i-1})\setminus \{v_{i}\})}(v_i) & \mbox{if } i-1\in I \end{cases}\quad\text{and}\\ 
\nonumber B^r_{H-P'}(v_{i+1}) &= \begin{cases} B^r_{H}(v_{i+1}) &\mbox{if } i+1\notin I \\
B^r_{H-(V(P_{i+1})\setminus \{v_{i+1}\})}(v_{i+1}) & \mbox{if } i+1\in I. \end{cases} 
\end{align}
Now we prove the following claim.
\begin{claim}
For $p\in \{i,i+1\}$, we have that
\begin{itemize}
\item[(i)] $|B^r_{H-P'}(v_p)|\geq d\log^7 n$;
\item[(ii)] $|B^{k}_{H-P'}(v_p)|\geq e^{\log^{1/4} n}$.
\end{itemize}
	\end{claim}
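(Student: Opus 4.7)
For part~(i), I will simply read off the bound from~(\ref{eq-E4}) by a case analysis. If $p-1 \notin I$ (for $p = i$) or $p+1 \notin I$ (for $p = i+1$), then the first case of~(\ref{eq-E4}) identifies $B^r_{H-P'}(v_p)$ with $B^r_H(v_p)$, whose size is at least $d\log^7 n$ by~(\ref{eq: BrHvp big}). If instead $p-1 \in I$ for $p = i$, then since $p = i \notin I$ the hypothesis of (E4) applies with $j = i-1$, yielding $|B^r_{H-(V(P_{i-1})\setminus\{v_i\})}(v_i)| \geq d\log^7 n$, which~(\ref{eq-E4}) equates with $|B^r_{H-P'}(v_i)|$. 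The case $p = i+1$ with $i+1 \in I$ is symmetric, using (E5) with $j = i+1$ (the hypothesis $j-1 = i \notin I$ is precisely what we assumed).

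For part~(ii), my plan is an iterative BFS-growth argument. Write $Y_j := B^j_{H-P'}(v_p)$, so that~(i) supplies $|Y_r| \geq d\log^7 n$ and the target is $|Y_k| \geq e^{\log^{1/4}n}$. Since $v_p \notin P'$, each $Y_j$ is disjoint from $P'$, whence $\Gamma_{H-P'}(Y_j) = \Gamma_H(Y_j) \setminus P'$. Provided $d/60 \leq |Y_j| \leq |H|/2$, the $(\epsilon_1, d/30)$-expansion of $H$ then gives the recursion
\[
|Y_{j+1}| \geq |Y_j|\bigl(1 + \epsilon(|Y_j|)\bigr) - |P'|.
\]
Since $|Y_j|$ is monotone in $j$, I may assume throughout the iteration that $|Y_j| \leq e^{\log^{1/4}n}$ and $|Y_j| \leq |H|/2$, as otherwise the target is already met.

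The crux will be showing that expansion dominates the leakage to $P'$ at every step in this regime. While $|Y_j| \leq e^{\log^{1/4}n}$, we have $\log(450|Y_j|/d) = O(\log^{1/4}n)$, so $\epsilon(|Y_j|) \geq \epsilon_1/(2\log^{1/2}n)$ for $n$ large. Meanwhile, $|P'| \leq d\log^5 n$ by~(\ref{eq: P' size P'}) is bounded by $|Y_j|/\log^2 n$ (since $|Y_j| \geq d\log^7 n$), which is much smaller than $\epsilon(|Y_j|)|Y_j|/2$. Therefore
\[
\frac{|Y_{j+1}|}{|Y_j|} \geq 1 + \frac{\epsilon_1}{4\log^{1/2}n},
\]
and iterating across the $k - r \geq \tfrac{1}{2}\log^{7/8}n$ available steps (using $r = (\log\log n)^5 \ll k = \log^{7/8}n$) gives $|Y_k| \geq d\log^7 n \cdot \exp\!\bigl(\epsilon_1 \log^{3/8}n / 16\bigr) \geq e^{\log^{1/4}n}$ for $n$ sufficiently large, as required.

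The main obstacle is precisely this balance: the per-step multiplicative expansion is only $1 + O(1/\log^{1/2}n)$, so the accumulated growth over the $k - r$ available steps must comfortably exceed $e^{\log^{1/4}n}$ while simultaneously absorbing an additive loss of up to $|P'|$ at each step. The $\log^2 n$ gap between the lower bound $|Y_r| \geq d\log^7 n$ from~(i) and the upper bound $|P'| \leq d\log^5 n$ is exactly what makes the expansion term dominate the leakage uniformly throughout the iteration; with any weaker starting size this robust growth would break down, since the worst-case value of $\epsilon(x)$ in the relevant range is only of order $1/\log^{1/2}n$.
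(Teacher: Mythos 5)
Your proof is correct and follows essentially the same route as the paper: part~(i) is the identical case analysis via~(\ref{eq-E4}), (E4), (E5) and~(\ref{eq: BrHvp big}), and part~(ii) is the same iterated ball-growth argument, where the only cosmetic difference is that you keep the additive $-|P'|$ leakage explicit in the recursion before showing it is dominated, whereas the paper absorbs it directly into the lower bound for $|\Gamma_{H-P'}(X)|$ in~(\ref{eq: N expansion}).
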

	\begin{claimproof}
Assertion (i) is a simple consequence of~(\ref{eq: BrHvp big}), (E4) and (E5).
Indeed, (E4) and (E5) imply that, if $i-1 \in I $, then $|B^r_{H-P'}(v_i)|\geq d\log^7 n$; and if $i+1 \in I $, then $|B^r_{H-P'}(v_{i+1})|\geq d\log^7 n$.
If $i-1 \notin I $, then $|B^r_{H-P'}(v_i)| = |B^r_H(v_i)| \geq d\log^7n$ by~(\ref{eq: BrHvp big}) and~(\ref{eq-E4}), and similarly if $i+1\notin I $, then $|B^r_{H-P'}(v_p)| \geq d\log^7n$.
This proves (i).

For (ii), let $X$ be a set with $d\log^7 n \leq |X| \leq e^{\log^{1/4} n}$. Then, since $\epsilon(x)$ is decreasing,
\begin{align}\label{eq-epsilon}
\eps(|X|) \geq \ep(e^{\log^{1/4} n}) =\frac{\ep_1}{\log^2{\frac{450e^{\log^{1/4} n}}{d}}}  > \frac{\eps_1}{\log^{1/2}{n}}.
\end{align}
Since $H$ is an $(\ep_1,d/30)$-expander and $|X|\geq d\log^{7}n$, if $X\cap P'=\emptyset$, then
\begin{eqnarray}\label{eq: N expansion}
|\Gamma_{H-P'}(X)| \geq |\Gamma_{H}(X)|- |P'| \geq \ep(|X|)|X| - |P'| 
\stackrel{\eqref{eq: P' size P'},\eqref{eq-epsilon}}{\geq} \frac{\eps_1}{2\log^{1/2}{n}}|X|.
\end{eqnarray}

Note that for $\ell \geq r$, (i) implies that $|B^\ell_{H-P'}(v_p)|\geq |B^r_{H-P'}(v_p)| \geq 	d\log^{7}n$.		
Thus, if $|B^\ell_{H-P'}(v_p)|<e^{\log^{1/4} n}$, then, as $B^\ell_{H-P'}(v_p)\cap P'=\emptyset$, applying~\eqref{eq: N expansion} with $X=B^\ell_{H-P'}(v_p)$ we have
$$|B^{\ell+1}_{H-P'}(v_p)| = |B^{\ell}_{H-P'}(v_p)|+ |\Gamma_{H-P'}(B^{\ell}_{H-P'}(v_p))| \stackrel{\eqref{eq: N expansion}}{\geq} \left(1+ \frac{\eps_1}{2\log^{1/2}n}\right)|B^{\ell}_{H-P'}(v_p)|.$$
Thus, if $|B^{\ell}_{H-P'}(v_p)| < e^{\log^{1/4}n}$ for all $\ell<k$, then  
\begin{align*}
\log|B^{k}_{H-P'}(v_p)| &\geq  \log\left(1+ \frac{\eps_1}{2\log^{1/2}n}\right)^{k-r} + \log|B^{r}_{H-P'}(v_p)| \geq \frac{\eps_1(k-r)}{4\log^{1/2}n}> \log^{1/4}n,
\end{align*}		
completing the proof of (ii).
	\end{claimproof}
	
	\medskip
Observe that, for $p\in \lbrace i,i+1\rbrace$, as all the balls $B^k_H(z)$ with $z\in Z$ are pairwise disjoint, the definitions of $W'$ and $W$ imply that
\begin{eqnarray}\label{eq-H-W}
|B^k_{H-W}(v_p)| = |B^k_{H-P'}(v_p)| \geq e^{\log^{1/4}n}.
\end{eqnarray}
Together with \eqref{eq: W size W}, this allows us to apply Lemma~\ref{diameter} with $B^{k}_{H-W}(v_i), B^{k}_{H-W}(v_{i+1}),W$ and $e^{\log^{1/4} n}$ playing the roles of $X,X',W$ and $x$ respectively to show the existence of a path of length $\log^{4}n$ between $B^{k}_{H-W}(v_i)$ and $B^{k}_{H-W}(v_{i+1})$ in $H-W$.
Let $Q$ be a shortest such path, $u_p:=B^{k}_{H-W}(v_p)\cap Q$ and $Q_i^p$ be a shortest path from $v_p$ to $u_p$ in $H-W$ for $p\in\{i,i+1\}$ (see Figure~\ref{figsparse}).
Now define
$$
P_i:= Q_i^i \cup Q\cup Q_i^{i+1}, \quad I ':=I \cup \{i\} \quad\text{ and }\quad\mathcal{Q}':=\mathcal{Q}\cup \{P_i\}.
$$

Note that $|P_i| \leq |Q| + 2k \leq 2 \log^{4}n$, thus $\mathcal{Q}'$ satisfies (E1). Since $P_i\cap W =\emptyset$, $\mathcal{Q}'$ satisfies (E2) and (E3). For (E4), set $P''=P'\cup(P_i\setminus\{v_{i+1}\})$. If $i+1\notin I '$, then $i+1\notin I$, so~\eqref{eq-E4} and~\eqref{eq-H-W} imply that $B_{H-W}^r(v_{i+1})=B^r_{H-P'}(v_{i+1})=B_{H}^r(v_{i+1})$. This, together with the fact that $Q_i^i\cup Q$ is disjoint from $B_{H-W}^r(v_{i+1})$, implies that $$B^r_{H-P''}(v_{i+1})=B^r_{H-(V(P_i)\setminus\{v_{i+1}\})}(v_{i+1})=B^r_{H-(V(Q_i^{i+1})\setminus\{v_{i+1}\})}(v_{i+1}).$$
Since $Q_i^{i+1}$ is a shortest path between $v_{i+1}$ and $u_{i+1}$ in $H-W$, it is also shortest in $H$ by the definition of $W$. Thus Lemma~\ref{corner} with $v_{i+1},Q_{i}^{i+1}$ playing the roles of $v$ and $P$ implies that (E4) for $j=i$.
For $j \in I'\setminus \lbrace i \rbrace$, (E4) holds since $\mathcal{Q}$ satisfies (E4) and $\mathcal{Q}'$ satisfies (E3).
Similarly, (E5) is also satisfied. This contradicts the maximality of $I $. Thus $I =[100d]$.

We claim that the concatenation of $P_1,\ldots,P_{100d}$ forms the desired cycle $C_U$.
That $C_U$ is a cycle follows from (E1) and (E2).
Furthermore, $C_U$ contains no vertices in $Z\setminus U$ by (E3), and $U \subseteq V(C_U)$ by (E1).
Thus $V(C_U)\cap Z = U$, as required.
This completes the proof of the lemma, and hence the proof of Lemma~\ref{lem-sparse}.
\end{proof}


\section{The proof of Theorem~\ref{main}}\label{end}

We will need the following definitions.

\begin{definition}[cut vertex, block, block graph, leaf block]
	A \emph{cut vertex} $x$ of a graph $G$ is such that $G-x$ has more components than $G$.
	A \emph{block} of $G$ is a maximal subgraph $H \subseteq G$ such that no vertex of $H$ is a cut vertex of $H$.
	The blocks of $G$ form a partition of $E(G)$, and any two blocks have at most one vertex in common. Note that a block is an induced subgraph.
	The \emph{block graph} $BL(G)$ is the graph whose vertex set is the set of blocks of $G$, where two blocks are joined by an edge in $BL(G)$ whenever they share a vertex in $G$.
	A \emph{leaf block} of $G$ is a block which contains exactly one cut vertex of $G$ (so it has degree one in $BL(G)$).
	Since $BL(G)$ is always a forest, there are at least two leaf blocks of $G$ unless $G$ is $2$-connected. Note also that every cut vertex has degree at least two in every block containing it, when that block is not a single edge.
\end{definition}

Now we start the main proof.

\medskip
\noindent
\emph{Proof of Theorem~\ref{main}.}
Let $0<\alpha \leq 1$. We choose $\ep_1,K, L$ so that
\begin{equation}\label{hierarchy}
0 < \frac{1}{d_0} \ll \frac{1}{K} \ll \frac{1}{L} \ll \eps_1,\alpha \leq 1\quad\text{where}\quad\eps_1 \leq \frac{1}{130}.
\end{equation}



Now, let $d \geq d_0$ be an integer.
Assume that $G$ is a counterexample to Theorem~\ref{main} such that $n := |G|$ is minimal. Then $d(G)\geq d$; $G\notin \{K_{d+1}, K_{d+1}\ast K_d\}$ and $c(G) < (2-\alpha)2^{d+1}$.
Since $K_{d+1}$ is the only graph with average degree at least $d$ on $d+1$ vertices, we have $n \geq d+2$.

\begin{claim}\label{cl: proper subgraph}
For any $V'\subsetneq V(G)$, we have $d(G[V']) \leq d$.
Also we have $\delta(G)\ge d/2$.
\end{claim}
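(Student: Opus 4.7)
Both parts follow from a short minimality argument, whose engine is the observation that the two exceptional graphs $K_{d+1}$ and $K_{d+1}\ast K_d$ each have average degree exactly $d$.

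For the first statement, the plan is to argue by contradiction: suppose some proper subset $V' \subsetneq V(G)$ satisfies $d(G[V']) > d$. Since $|V'| < n$, the induced subgraph $G[V']$ is smaller than $G$, and since its average degree strictly exceeds $d$ it cannot be isomorphic to either of the two exceptional graphs (each of which has average degree equal to $d$). The minimality of $n$ as the order of a counterexample therefore forces $G[V']$ to satisfy the conclusion of Theorem~\ref{main}, i.e.\ $c(G[V']) \geq (2-\alpha)2^{d+1}$. Since every cycle of $G[V']$ is a cycle of $G$, one obtains $c(G) \geq c(G[V']) \geq (2-\alpha)2^{d+1}$, contradicting that $G$ itself is a counterexample.

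For the second statement, the approach is to reduce to the first part. Suppose some vertex $v \in V(G)$ has $d_G(v) < d/2$, and set $V' := V(G) \setminus \{v\}$. Using $2e(G) \geq dn$, one checks
$$
d(G[V']) = \frac{2(e(G)-d_G(v))}{n-1} > \frac{dn - d}{n-1} = d,
$$
which contradicts the first part of the claim applied to this $V'$.

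There is no real obstacle here beyond being careful to use the exact-degree observation about the exceptional graphs: strict inequality $d(G[V'])>d$ is what lets us exclude $K_{d+1}$ and $K_{d+1}\ast K_d$ immediately, after which the rest is just $c(G)\ge c(G[V'])$ and a one-line average-degree estimate.
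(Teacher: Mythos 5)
Your proof is correct and follows essentially the same route as the paper: the same exclusion of the two exceptional graphs via their exact average degree $d$, the same appeal to minimality together with $c(G)\ge c(G[V'])$, and the identical one-line computation $d(G-x)>\frac{dn-d}{n-1}=d$ for the minimum-degree part.
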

\begin{claimproof}
Suppose that there is some $V' \subsetneq V(G)$ for which $d(G[V'])>d$. Then $G[V']\notin \{K_{d+1}, K_{d+1}\ast K_{d}\}$ since both $K_{d+1}$ and $K_{d+1}\ast K_{d}$ have average degree exactly $d$. Thus the minimality of $G$ implies that $G[V']$ is not a counterexample to Theorem~\ref{main}. Then $c(G)\ge c(G[V']) \geq (2-\alpha) 2^{d+1}$, a contradiction.

Assume now that there exists a vertex $x\in V(G)$ such that $d_{G}(x) < d/2$. Then
\begin{eqnarray*}
d(G-x) = \frac{2e(G)-2d_G(x)}{n-1} > \frac{dn - d}{n-1} =  d,
\end{eqnarray*}
a contradiction to the first part of the claim.
\end{claimproof}

\medskip
\noindent
Next, we show that $G$ contains a large $2$-connected subgraph which essentially inherits the average degree of $G$.
To see this, we need the following claim.

\begin{claim}\label{cl: leaf-block}
If $G$ is not $2$-connected, then for any leaf block $F\subseteq G$, the following hold.
\begin{itemize}
	\item[(i)]  $\de_2(F)\ge d/2$; $|F|\ge d$ and $d(F)\ge d-1$; 
	\item[(ii)] If in addition $d+1\le |F|\le 1.19d$, then $c(F)\geq (1-\alpha/2) 2^{d+1}.$ 
\end{itemize}
\end{claim}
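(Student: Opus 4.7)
For part (i), I would start by letting $x \in V(F)$ be the unique cut vertex of $G$ lying in $F$. Every other vertex $v$ of $F$ satisfies $N_G(v) \subseteq V(F)$, so $d_F(v) = d_G(v) \geq d/2$ by Claim~\ref{cl: proper subgraph}, immediately giving $\delta_2(F) \geq d/2$. Note that $F$ cannot be a single edge (else its non-cut endpoint would have $d_G$-degree $1 < d/2$), so $F$ is $2$-connected. The size and density bounds follow from a decomposition of $G$: let $G_x := G - (V(F) \setminus \{x\})$, so $V(F) \cap V(G_x) = \{x\}$ and $e(G) = e(F) + e(G_x)$. Applying Claim~\ref{cl: proper subgraph} to the proper subgraph $G_x$ yields $e(G_x) \leq d|G_x|/2$, and combining with $e(G) \geq dn/2$ gives the key inequality $e(F) \geq d(|F|-1)/2$. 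Comparing this with the trivial bound $2e(F) \leq |F|(|F|-1)$ forces $|F| \geq d$, and then $d(F) = 2e(F)/|F| \geq d(1 - 1/|F|) \geq d - 1$.

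For part (ii), I would split on whether $|F| \geq d + 2$ or $|F| = d + 1$, applying Lemma~\ref{lem: small graph}(i) in each case but with a slightly shifted parameter. When $|F| \geq d + 2$, I work with $F_x := F - x$: its order satisfies $|F_x| \geq d + 1$; the bound from part~(i) together with $d_F(x) \leq |F| - 1$ gives $d(F_x) \geq d - 2$; and $\delta(F_x) \geq \delta_2(F) - 1 \geq (d-2)/2$. The size condition $|F_x| \leq 1.2(d-2)$ follows from $|F_x| \leq 1.19d - 1$ for $d$ sufficiently large. Applying Lemma~\ref{lem: small graph}(i) to $F_x$ with $d - 2$ playing the role of $d$ yields $c(F_x) \geq (1 - \alpha/2) 2^{|F_x|} \geq (1 - \alpha/2) 2^{d+1}$, and since every Hamiltonian subset of $F_x$ is also one of $F$ (not containing $x$), this completes the case.

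The tight case $|F| = d + 1$ requires the extra observation that $\delta_2(F) \geq d/2$ can be upgraded to $\delta(F) \geq d/2$. Indeed, $e(F - x) \leq \binom{d}{2} = (d^2 - d)/2$, while part~(i) gives $e(F) \geq d \cdot d / 2 = d^2/2$, so $d_F(x) = e(F) - e(F - x) \geq d/2$. Applying Lemma~\ref{lem: small graph}(i) to $F$ itself with $d - 1$ in place of $d$ (the size condition $|F| = d+1 \leq 1.2(d-1)$ holds for $d \geq 11$) then yields $c(F) \geq (1 - \alpha/2) 2^{|F|} = (1 - \alpha/2) 2^{d+1}$.

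I do not anticipate any serious obstacle; the whole proof is a careful bookkeeping exercise based on the decomposition $G = F \cup G_x$ and the minimality of $n$. The main point to verify is that applying Lemma~\ref{lem: small graph} with a slightly shifted parameter is valid: the hierarchy $1/d_0 \ll \alpha$ assumed in the proof of Theorem~\ref{main} ensures $1/(d-2) \ll \alpha$, and all numerical size conditions are satisfied once $d_0$ is taken sufficiently large.
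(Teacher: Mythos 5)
Your proof is correct and follows essentially the same route as the paper: part~(i) is the identical edge-count via $G_x = G - (V(F)\setminus\{x\})$ and Claim~\ref{cl: proper subgraph}, and part~(ii) reduces to Lemma~\ref{lem: small graph}(i) applied to $F$ or $F-x$ with a shifted degree parameter. The only difference is the organisation of the case split in~(ii): the paper splits on whether $d_F(x)\ge (d-1)/2$ (applying the lemma to $F$ with parameter $d-1$ in that case, and otherwise first deducing $|F|\ge d+2$ before passing to $F-x$ with parameter $d-2$), whereas you split on $|F|=d+1$ versus $|F|\ge d+2$ and show directly that $d_F(x)\ge d/2$ in the tight case via $e(F)\ge d^2/2$ and $e(F-x)\le\binom{d}{2}$ --- an equally valid and arguably slightly cleaner bookkeeping of the same estimates.
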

\begin{claimproof}
	Fix a leaf block $F\subsetneq G$ and let $x$ be the corresponding cut vertex, i.e.~the only vertex in $F$ which has neighbours outside of $F$. Note that $\de_2(F)\geq \de(G)\ge d/2$ by Claim~\ref{cl: proper subgraph}.
	
	Let $G':= G - (V(F)\setminus\{x\})$. Let $n':=|G'|$.
	Then we have
	$$ n = |F|+n'-1, \quad \mbox{and}\quad  e(G) = e(F) + e(G').$$
	By Claim~\ref{cl: proper subgraph}, we have $d(F)\leq d$ and $d(G') \leq d$.
	Thus, 
	$$e(F) = e(G)-e(G') \ge dn/2 - dn'/2 = d(|F|-1)/2.$$ 
	Since every vertex of $F$ has at most $|F|-1$ neighbours in $F$, this implies that
	$$|F|-1\ge d(F) \geq \frac{d(|F| -1)}{|F|} = d - \frac{d}{|F|}.$$
	Rearranging this, we see that
	\begin{eqnarray}\label{eqF}
	|F|\geq d, \quad \mbox{and}\quad  d(F)\geq d-1,
	\end{eqnarray}
	proving (i).
	
	To prove (ii),
	suppose now that $d+1\le |F|\le 1.19d$. Since $x$ is a cut vertex, we have
	\begin{eqnarray}\label{eq-min-deg}
	\de(F)\geq\min\{\de(G),d_{F}(x)\} \quad\mbox{and} \quad \de(F-x)\ge \de(G)-1\ge (d-2)/2
	\end{eqnarray}
	by Claim~\ref{cl: proper subgraph}.
	If $d_{F}(x) \geq (d-1)/2$, then~\eqref{eq-min-deg} implies that $\de(F)\ge (d-1)/2$. Since $1/(d-1)\ll \alpha$ and $|F| \leq 1.19d \leq 1.2(d-1)$, we can apply Lemma~\ref{lem: small graph} to $F$ with $d-1$ playing the role of $d$ to see that $c(F)\geq  (1-\alpha/2)2^{|F|} \geq (1-\alpha/2)2^{d+1}$ as desired.
	
	So we may assume that $d_{F}(x) < (d-1)/2$. We claim that $|F|\geq d+2$. Indeed, if this is not so, then
	$$(d-1)|F|\stackrel{(\ref{eqF})}{\leq} 2e(F) \leq 2\binom{|F|-1}{2}+ 2 d_{F}(x) < (|F|-1)(|F|-2) + (d-1) \leq (d-1)|F|,$$
	a contradiction. Thus, $|F-x| \geq d+1$. Note that
	$$d(F-x)=\frac{2e(F)-2d_{F}(x)}{|F|-1}\stackrel{(\ref{eqF})}{>}\frac{(d-1)|F|-(d-1)}{|F|-1}=d-1.$$
	Then by~\eqref{eq-min-deg} and the fact that $1/(d-2)\ll \alpha$ and $|F|-1 \leq 1.19d-1 \leq 1.2(d-2)$, we can apply Lemma~\ref{lem: small graph} to $F-x$ with $d-2$ playing the role of $d$ to get $c(F)\geq c(F-x) \geq (1-\alpha/2)2^{d+1}$. 
\end{claimproof}

\medskip
\noindent
The next claim guarantees a large $2$-connected subgraph $G_1$ of $G$, which still has average degree at least roughly $d$.

\begin{claim}\label{cl: G1-2conn}
There exists $G_1\subseteq G$ such that
\begin{itemize}
	\item[(i)] $G_1$ is $2$-connected;
    \item[(ii)] $d-1\le d(G_1)\le 4d$;
	\item[(iii)] $\de_2(G_1)\ge d/2$;
	\item[(iv)] $|G_1|\ge K^2d$.
\end{itemize}
\end{claim}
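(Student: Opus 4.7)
The plan is to take $G_1:=G$ when $G$ is $2$-connected, and $G_1$ to be a leaf block of $G$ of maximum order otherwise; conditions (i), (ii), (iii) will be verified directly from Claims~\ref{cl: proper subgraph} and~\ref{cl: leaf-block}, and the bulk of the work lies in establishing (iv). To do this, I will rule out the possibility $|G_1| < K^2 d$ in each case by producing a lower bound $c(G) \geq (2-\al)2^{d+1}$ (via Lemma~\ref{lem: small graph}, Lemma~\ref{lem: 1.2 to K}, Claim~\ref{cl: leaf-block}(ii), the additivity $c(G)=\sum_B c(B)$ over the blocks $B$ of $G$, and the minimality of $|G|$), contradicting $G$ being a counterexample.

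Suppose first that $G$ is $2$-connected. Then (i) is trivial and (iii) is $\de(G) \geq d/2$ from Claim~\ref{cl: proper subgraph}. For (ii), the lower bound holds by assumption; for the upper bound, Claim~\ref{cl: proper subgraph} applied to $G-v$ gives $d_G(v) \geq e(G) - d(n-1)/2$ for every $v \in V(G)$, and summing over $v$ yields $d(G) \leq d(n-1)/(n-2) \leq d+1 \leq 4d$. For (iv), assume for contradiction that $|G| < K^2 d$. Since $n \geq d+2$, either $n \leq 1.19d$, in which case Lemma~\ref{lem: small graph} gives $c(G) \geq (1-\al/2)2^n \geq (2-\al)2^{d+1}$; or $1.19d \leq n \leq K^2 d$, in which case Lemma~\ref{lem: 1.2 to K} (applied with $K^2$ in place of $K$, which is permissible via the hierarchy $1/d_0 \ll 1/K$) gives $c(G) > 2^{(1+1/200)d} > (2-\al)2^{d+1}$ for $d$ large. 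Both contradict $c(G) < (2-\al)2^{d+1}$, so $|G| \geq K^2 d$.

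Now suppose $G$ is not $2$-connected. I would first dispose of disconnectedness: every component $C$ is a proper subgraph, so $d(C)\le d$ by Claim~\ref{cl: proper subgraph}, and combined with $d(G) \geq d$ this forces $d(C) = d$ for every component; by minimality of $|G|$ each component $C$ is either $K_{d+1}$, $K_{d+1}\ast K_d$, or has $c(C) \geq (2-\al)2^{d+1}$, and in any case, summing over the $\geq 2$ components gives $c(G) \geq 2c(K_{d+1}) = 2^{d+2} - O(d^2) > (2-\al)2^{d+1}$ for $d$ large, a contradiction. So $G$ is connected and its block tree has at least two leaf blocks. Let $F$ be one of maximum order. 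Claim~\ref{cl: leaf-block}(i) gives $|F|\geq d$, $d(F)\geq d-1$, $\de_2(F) \geq d/2$; since $F$ is a block of order $\geq 3$, it is $2$-connected (verifying (i)); $d(F) \leq d$ by Claim~\ref{cl: proper subgraph} (since $F \subsetneq G$) verifies (ii); and (iii) is immediate. If $1.19d < |F| < K^2 d$, then Lemma~\ref{lem: 1.2 to K} applied to $F$ (with $d-1$ in place of $d$, permissible by the hierarchy) yields $c(G) \geq c(F) > 2^{(1+1/200)(d-1)} > (2-\al)2^{d+1}$ for $d$ large, a contradiction.

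The hardest sub-case will be when every leaf block has order at most $1.19d$. If at least two leaf blocks $F_1, F_2$ have order $\ge d+1$, then Claim~\ref{cl: leaf-block}(ii) together with $c(G) \geq c(F_1) + c(F_2)$ (from $c(G) = \sum_B c(B)$, which holds since every cycle is contained in a single block) gives $c(G) \geq 2(1-\al/2)2^{d+1} = (2-\al)2^{d+1}$, a contradiction. Otherwise at most one leaf block has order $\geq d+1$, so some leaf block $F_1$ equals $K_d$; let $x$ be its cut vertex and set $G' := G - (V(F_1)\setminus\{x\})$. Then $|G'| = n-(d-1) < n$ and $2e(G') = 2e(G) - d(d-1) \geq dn - d(d-1) = d|G'|$, so $d(G')\geq d$. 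By the minimality of $|G|$, either $c(G') \geq (2-\al)2^{d+1}$, whence $c(G) \geq c(G')+c(K_d) > (2-\al)2^{d+1}$; or $G'=K_{d+1}$, whence $G = K_{d+1}\ast K_d$, excluded by hypothesis; or $G'=K_{d+1}\ast K_d$, whence $G$ has exactly three blocks $K_d, K_{d+1}, K_d$, and a direct calculation gives $c(G) = 2c(K_d)+c(K_{d+1}) = 2\cdot 2^{d+1} - O(d^2) > (2-\al)2^{d+1}$ for $d$ large. Each sub-case yields a contradiction, so $|F| \geq K^2 d$ and $G_1:=F$ satisfies all four conditions.
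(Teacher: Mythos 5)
Your proof is correct and takes essentially the same route as the paper's: set $G_1:=G$ if $G$ is $2$-connected and otherwise take a largest leaf block, read off (i)--(iii) from Claims~\ref{cl: proper subgraph} and~\ref{cl: leaf-block}, and establish (iv) by the same three-way case analysis (Lemma~\ref{lem: 1.2 to K} when the block is large, Claim~\ref{cl: leaf-block}(ii) applied to two leaf blocks of order at least $d+1$, and the reduction $G'=G-(V(K_d)\setminus\{x\})$ plus minimality in the remaining case). The only deviations are cosmetic: you treat disconnected $G$ explicitly and derive $d(G)\le d+1$ directly from Claim~\ref{cl: proper subgraph} instead of invoking Theorem~\ref{Tuza} for the bound $d(G_1)\le 4d$; both variants are valid.
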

\begin{claimproof}
Suppose first that $G$ is $2$-connected. Then we set $G_1:=G$. Then $d(G_1)\le 4d$, since otherwise Theorem~\ref{Tuza} implies $c(G)\ge 2^{2d}$, a contradiction. By Claim~\ref{cl: proper subgraph}, $\de(G_1)\ge d/2$. We are left to show that $|G_1|=n\ge K^2d$. Recall that $n\ge d+2$. If $n\le 1.2d$, then by Lemma~\ref{lem: small graph}, $c(G)\ge (1-\al/2)2^{n}\ge (2-\al)2^{d+1}$, a contradiction. Thus we can assume that $1.2d\le n< K^2d$. Applying Lemma~\ref{lem: 1.2 to K} with $G,d$ and $K^2$ playing the roles of $G,d$ and $K$ respectively, we have that $c(G)\ge 2^{(1+1/200)d}\ge (2-\al)2^{d+1}$, a contradiction.

We may therefore assume that $G$ is not $2$-connected. Then $G$ contains at least two leaf blocks $G_1,G_2$, where $|G_1|\geq |G_2|$. Then by Claims~\ref{cl: proper subgraph} and~\ref{cl: leaf-block}(i), for $i\in[2]$ we have that 
\begin{align}\label{eq: G1G2 property}
\de_2(G_i)\ge d/2; \quad |G_i|\ge d \quad\text{ and }\quad d-1\le d(G_i)\le d.
\end{align} 
We may assume that 
$$d\le |G_2|\le |G_1|< K^2d,$$ otherwise $G_1$ is the desired subgraph, as above. We distinguish the following three cases.

\medskip
\noindent \textbf{Case 1:} $|G_1|\ge 1.19d$. 

\noindent
Then applying Lemma~\ref{lem: 1.2 to K} to $G_1$ with $d-1$ and $K^2$ playing the roles of $d$ and $K$ respectively, we get $c(G_1)\ge 2^{(1+1/200)(d-1)}\ge (2-\al)2^{d+1}$, a contradiction. 

\medskip
\noindent \textbf{Case 2:} $d+1\leq |G_2|\le|G_1|\leq 1.19d$. 

\noindent
Then Claim~\ref{cl: leaf-block}(ii) implies that for $i\in [2]$, $c(G_i) \geq (1-\alpha/2) 2^{d+1}$. Thus $c(G)\geq c(G_1)+ c(G_2) \geq (2-\alpha) 2^{d+1}$, a contradiction.

\medskip
\noindent \textbf{Case 3:} $|G_2|=d$.

\noindent
Note that since $d(G_2)\ge d-1$ from \eqref{eq: G1G2 property}, $G_2$ is isomorphic to $K_{d}$. Let $x_2$ be the cut vertex of $G$ in $G_2$. Consider the graph $G^*:= G  - (V(G_2)\setminus\{x_2\})$. 
Then
$$d(G^*)=\frac{2e(G^*)}{|G^*|}=\frac{2e(G)-2e(G_2)}{n-d+1}\ge\frac{dn-d(d-1)}{n-d+1}=d.$$ 
Since $c(G^*)\le c(G)<(2-\al)2^{d+1}$, the minimality of $G$ implies that $G^*$ is isomorphic to either $K_{d+1}$ or $K_{d+1}\ast K_{d}$. If $G^*\cong K_{d+1}$, then $G = K_{d+1}\ast K_{d}$, a contradiction. If $G^*\cong K_{d+1}\ast K_{d}$, then 
$$c(G)= c(G_2) + c(G^*) = 2c(K_d) + c(K_{d+1})\geq (2-\alpha) 2^{d+1},$$
a contradiction.	
\end{claimproof}

\medskip
\noindent
Let $G_1\subseteq G$ be the subgraph guaranteed by Claim~\ref{cl: G1-2conn}. 
The following claim states that, if $H$ is any subgraph of $G_1$ with large average degree, then the average degree remains large after removing any small subset of vertices $U$.
The claim will then be used to find expander subgraphs of $G_1$ which are almost regular.

\begin{claim}\label{cl: vertex deletion fine}
Suppose that $U$ is a subset of $V(G_1)$ such that $|U| \leq 10|G_1|/L$. Then $d(G_1 - U) \geq 0.9(d-1)$.
\end{claim}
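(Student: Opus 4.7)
The plan is a direct edge-counting argument. Writing $n_1 := |G_1|$, the goal is to show
\[
2e(G_1 - U) \;\geq\; 0.9(d-1)(n_1 - |U|),
\]
which, via the identity $e(G_1 - U) = e(G_1) - e(G_1[U]) - e(U, V(G_1)\setminus U)$ and the lower bound $2e(G_1) \geq (d-1)n_1$ from Claim~\ref{cl: G1-2conn}(ii), reduces to showing that the number of edges of $G_1$ incident to $U$ is at most roughly $0.05(d-1)n_1 + 0.45(d-1)|U|$.

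The first piece is straightforward. Since $|U| \leq 10 n_1/L \leq 10n/L < n$ (using that $L$ is large), the set $U$ is a proper subset of $V(G)$, and Claim~\ref{cl: proper subgraph} applied to $G[U]$ gives $d(G[U]) \leq d$, hence $e(G_1[U]) \leq e(G[U]) \leq d|U|/2$. For the cross-edges $e(U, V(G_1)\setminus U)$, I would combine two ingredients: first, the total degree sum satisfies $\sum_v d_{G_1}(v) = 2e(G_1) \leq 4d n_1$ by Claim~\ref{cl: G1-2conn}(ii); second, the condition $\delta_2(G_1) \geq d/2$ from Claim~\ref{cl: G1-2conn}(iii) forces the $n_1 - |U| - 1$ non-exceptional vertices outside $U$ to each contribute at least $d/2$, so that
\[
\textstyle \sum_{u \in U} d_{G_1}(u) \;\leq\; 2e(G_1) - (n_1 - |U| - 1)\cdot \tfrac{d}{2}.
\]
Putting these inequalities together with $e(U, V(G_1)\setminus U) \leq \sum_{u \in U} d_{G_1}(u)$, and exploiting the hierarchy $1/L \ll 1$ (so that $|U|/n_1 \leq 10/L$ is very small), a short calculation yields the desired bound.

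The main obstacle is tightness: the crude bound $\sum_{u \in U} d_{G_1}(u) \leq O(d\, n_1)$ derived above is insufficient when $U$ contains many high-degree vertices, since losing $\Theta(d n_1)$ edges would annihilate the entire lower bound $e(G_1) \geq (d-1)n_1/2$. To overcome this one must invoke the sharper bound $d(G_1) \leq 2d + O(1)$ available from Theorem~\ref{Tuza} (since $c(G_1) \leq c(G) < (2-\alpha)2^{d+1}$ forces $2^{d(G_1)/2} \leq c(G_1) < 2^{d+2}$), and use the implicit constraint that extremal configurations concentrating edges on a small subset $U$ (such as bipartite-like structures with one side of very high degree) would themselves produce $c(G_1) \gg (2-\alpha)2^{d+1}$, contradicting the minimality of $G$. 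Formalising this exclusion, and thereby controlling the degree sum over $U$, is the technical heart of the proof.
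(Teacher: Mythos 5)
There is a genuine gap: your proposal correctly diagnoses why the direct edge count fails, but then defers the entire substance of the proof to the final sentence (``Formalising this exclusion \ldots is the technical heart of the proof'') without supplying it. No bound on $d(G_1)$ can rescue the direct approach: even with $d(G_1)\le 2d+O(1)$ from Theorem~\ref{Tuza} and $\delta(G_1)\ge d/2$, the vertices of $U$ may absorb $\Theta(d\,n_1)$ edges (e.g.\ if $U$ consists of vertices of degree $\Theta(Ld)$, which is exactly the situation in the one application of this claim), whereas the conclusion requires the edges incident to $U$ to number at most roughly $0.05(d-1)n_1$. So the statement simply cannot be established by degree-sum inequalities; one is forced to argue by contradiction that a configuration in which $U$ absorbs many edges yields $c(G)\ge(2-\alpha)2^{d+1}$. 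Asserting that ``extremal configurations \ldots would themselves produce $c(G_1)\gg(2-\alpha)2^{d+1}$'' is precisely the claim to be proved, and it is not obvious: a bipartite graph with one tiny side of huge degree is not covered by any of the earlier lemmas (it need not be an expander, need not be dense relative to its order, and Theorem~\ref{Tuza} alone only gives $2^{d(H)/2}$, which is useless when $d(H)$ is of order $d/20$).

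The paper fills this gap with a specific counting argument that your proposal does not contain. Supposing $d(G_1-U)<0.9(d-1)$ with $|U|=10|G_1|/L$, one gets $e(G_1[U,B])\ge d|G_1|/40$ for $B=V(G_1)\setminus U$. Then: (a) Theorem~\ref{Tuza} forces every subgraph of $H:=G_1[U,B]$ to have average degree at most $3d$, and this upper bound is what guarantees that the set $B'$ of vertices of $B$ with $d_H(v)\ge d/100$ has size at least $t|U|/200$ where $t=L/10$ (without it, all cross-edges could concentrate on few vertices of $B$); (b) for each of the $\binom{|B'|}{t^{1/2}|U|}$ subsets $B''\subseteq B'$ of size $t^{1/2}|U|$, the bipartite graph $H[B'',U]$ has average degree at least $d/200$ and so contains, by Theorem~\ref{lem: cycle av deg}, a cycle meeting $B'$ in at least $d/400$ vertices; (c) a double count shows each vertex set arises from at most $\binom{|B'|-d/400}{t^{1/2}|U|-d/400}$ choices of $B''$, whence $c(G)\ge(t^{1/2}/400)^{d/400}\ge 2^{2d}$, a contradiction. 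Each of these three steps is essential and none is present, even in outline, in your proposal; in particular the role of the $3d$ upper bound and the choice of the intermediate scale $t^{1/2}|U|$ for $|B''|$ (large enough that exponentially many choices exist, small enough that distinct choices give distinguishable cycles) are the ideas your sketch is missing.
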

\begin{claimproof}
We may assume that $|U| = 10|G_1|/L$. 
(Indeed, if not, we can take $10|G_1|/L-|U|$ vertices in $V(G_1)\setminus U$ of largest degree and add them to $U$ to obtain $U'$ such that $d(G_1-U) \geq d(G_1-U')$.)
Let $B:= V(G_1)\setminus U$ and suppose to the contrary that $d(G_1[B]) < 0.9 (d-1)$. Claim~\ref{cl: proper subgraph} implies that $d(G_1[U])\leq d$. 
Let $H := G_1[B,U]$.
Then, since $|U|= 10|G_1|/L$ and $1/L \ll 1$,
\begin{align}\label{eq: eH lower bound}
2e(H) &= 2(e(G_1)- e(G_1[U]) - e(G_1[B])) \nonumber \\ &\geq (d-1)|G_1| - d|U| - 0.9 (d-1)(|G_1|-|U|)\nonumber\\ & \geq \frac{(d-1)|G_1|}{10} -\frac{10d|G_1|}{L} \geq \frac{d|G_1|}{20}.
\end{align}
Thus $d(H) \geq d/20$. Let $t: =|G_1|/|U|= L/10$. 

If $H$ contains a subgraph $H'$ with $d(H')\geq 3d$, then Theorem~\ref{Tuza} implies that \begin{align}
\nonumber c(G)&\geq c(H') \geq 2^{3d/2},
\end{align}
a contradiction.
Thus we may thus assume that every subgraph of $H$ has average degree at most $3d$. Let 
$$B':= \left\{ v\in B:  d_{H}(v) \geq \frac{d}{100}\right\}.$$ 
Then by the definition of $B'$, we have
\begin{align*}
e(G_1[U,B']) = e(H) - e(G_1[U,B\setminus B'])\stackrel{\eqref{eq: eH lower bound}}{\geq} \frac{d|G_1|}{40} - \frac{d( |G_1| - |B'|)}{100} \ge \frac{d|G_1|}{100} = \frac{dt|U|}{100}.
\end{align*}
Since every subgraph of $H$ has average degree at most $3d$, we have
$$\frac{ dt|U|}{50(|U|+|B'|)} \leq \frac{2 e(G_1[U,B'])}{|U|+|B'|}\leq d(H[U,B']) \leq 3d.$$
Thus by the fact that $t= L/10$ and $1/L \ll 1$,
\begin{align}\label{eq: B'}
|B'| \geq \frac{(t-150)|U|}{150} \geq \frac{t|U|}{200}.
\end{align}

Let $B''\subseteq B'$ be a subset of $B'$ with 
$$|B''| = t^{1/2}|U|=\frac{|G_1|}{t^{1/2}}\ge \frac{K^2d}{(L/10)^{1/2}}> Kd,$$ 
where we used Claim~\ref{cl: G1-2conn}(iv) and that $1/K\ll 1/L$.
Now, for each such $B''$, we define a cycle $C_{B''}$.
Let $H':= H[B'',U]$.
Then again, since $d_H(v) \geq d/100$ for all $v \in B''$,
$$d(H') = \frac{2e(H')}{|H'|} \geq \frac{ d|B''|/100 }{|B''|+|U|} \geq \frac{d|B''|/100}{2|B''|}\geq \frac{d}{200}.$$
Thus by Theorem~\ref{lem: cycle av deg}, $H'$ contains a cycle of length at least $d/200$.
Let $C_{B''}$ be any such cycle, then we have $(V(C_{B''})\setminus U) \subseteq B''$  and $|V(C_{B''})\cap B'|\geq d/400$.

For a cycle $C$ in $H$, $V(C)=V(C_{B''})$ holds for at most 
$$\binom{|B'|-|V(C)\cap B'|}{ |B''| - |V(C)\cap B'| } \leq \binom{|B'|-d/400}{t^{1/2}|U| - d/400}$$ distinct subsets $B'' \subseteq B'$ of size $t^{1/2}|U|$. Thus, we get
\begin{align*}
c(G) &\geq c(H)\geq \sum_{|B''|=t^{1/2}|U|}  \binom{|B'|-d/400}{t^{1/2}|U| - d/400}^{-1} = \binom{|B'|}{t^{1/2}|U|}\binom{|B'|-d/400}{t^{1/2}|U|- d/400}^{-1} 
 \\ &= \frac{|B'|(|B'|-1)\dots(|B'|-d/400+1)}{t^{1/2}|U|(t^{1/2}|U|-1)\dots(t^{1/2}|U|-d/400+1) } \stackrel{\eqref{eq: B'}}{\geq} \left(\frac{t^{1/2}}{400}\right)^{\frac{d}{400}} \geq \left(\frac{L^{1/2}}{4000}\right)^{\frac{d}{400}} \geq 2^{2d},
\end{align*}
a contradiction. Note that we get the final inequality since $1/L\ll 1$. 
Thus $d(G_1[B]) \geq 0.9(d-1)$, as required.
\end{claimproof}

\medskip
\noindent
Define
$$U:= \{v \in G_1: d_{G_1}(v) \geq Ld\}.$$
Then 
\begin{align}\label{eq: U size large degree}
|U| \leq 5|G_1|/L
,\end{align} otherwise 
$$d(G_1) \geq \frac{ Ld  |U|}{|G_1|} \geq 5d,$$
contradicting Claim~\ref{cl: G1-2conn}(ii). Clearly, $\Delta(G_1 - U) \leq L d$, and by Claim~\ref{cl: vertex deletion fine},
\begin{equation}\label{d1}
d_1 := d(G_1 - U) \ge 0.9(d-1) \geq 0.89d.
\end{equation}

Observe that $\eps_1 \leq \frac{1}{130}$ and
$$
0.89\left(1-\frac{13\eps_1}{\log 3}\right) \stackrel{(\ref{hierarchy})}{>} 0.8 \quad\text{ and }\quad \frac{\eps_1 d_1}{6\log^2(5/(\frac{1}{30}))} \stackrel{(\ref{d1})}{\geq} \frac{0.89\eps_1d_0}{6\log^2(150)} \stackrel{(\ref{hierarchy})}{\geq} 2.
$$
Thus we can apply Lemma~\ref{lem-expander} to $G_1 - U$ with $13,\eps_1,1/30$ and $d_1$ playing the roles of $C,\eps_1,c'$ and $d$ respectively to find a $2$-connected $(\ep_1,d_1/30)$-expander $H_1\subseteq G_1 - U$ such that
\begin{equation*}
d(H_1)\geq 0.8d;\quad \delta(H_1)\geq 0.4d; \quad \text{ and }\quad \Delta(H_1)\leq Ld.
\end{equation*}
Since $1/d, 1/K \ll \epsilon_1, 1/L\leq 1$ from \eqref{hierarchy}, if $|H_1| \geq Kd$, we can apply Lemma~\ref{lem-sparse} with $d,\eps_1$ and $L$ playing the roles of $d,\eps_1$ and $L$ respectively to see that
\begin{eqnarray}\label{eq-small-expander}
c(G)\geq c(H_1) \geq 2^{50d},
\end{eqnarray} 
a contradiction. Thus we have $|H_1|\leq Kd$ and consequently \eqref{hierarchy} and Claim~\ref{cl: G1-2conn}(iv) imply that
$$|U\cup V(H_1)|\stackrel{ \eqref{eq: U size large degree}}{\leq} \frac{5|G_1|}{L} + Kd \leq (\frac{5}{L}+ \frac{1}{K})|G_1| \leq \frac{10|G_1|}{L}.$$ 
Thus Claim~\ref{cl: vertex deletion fine} applied with $U \cup V(H_1)$ playing the role of $U$ implies that $d_2 := d(G_1 -U- H_1 )\geq 0.9(d-1)\geq 0.89d$.
Apply Lemma~\ref{lem-expander} to $G_1-U-H_1$ with $13,\eps_1,1/30$ and $d_2$ playing the roles of $C,\eps_1,c'$ and $d$ respectively to see, as before, that $G_1 - U - H_1$ contains a $2$-connected $(\ep_1,d_2/30)$-expander $H_2$ such that 
\begin{equation*}
d(H_2)\geq 0.8d;\quad \delta(H_2) \geq 0.4d; \quad \text{ and }\quad \Delta(H_2)\leq Ld.
\end{equation*}
In a similar way as we obtained \eqref{eq-small-expander}, we may assume $|H_2|\leq Kd$.

We have shown that $G_1$ contains two vertex-disjoint expanders $H_1,H_2$.
Recall that $G_1$ is $2$-connected by Claim~\ref{cl: G1-2conn}(i). Thus we can choose two minimal vertex-disjoint paths $P_1$ and $P_2$ from $V(H_1)$ to $V(H_2)$ in $G_1$. Assume that $P_1$ is from $x_1 \in V(H_1)$ to $x_2 \in V(H_2)$ and $P_2$ is from $y_1 \in V(H_1)$ to $y_2\in V(H_2)$. 

Then for any path $P$ in $H_1$ from $x_1$ to $y_1$ and any path $P'$ in $H_2$ from $x_2$ to $y_2$, we obtain a cycle $C(P,P')$ with edge-set $E(P)\cup E(P')\cup E(P_1)\cup E(P_2)$. 
That is, $$V(P) \cup V(P') \cup V(P_1) \cup V(P_2)$$ forms a Hamiltonian subset.
Thus $c(G) \geq p_{x_1y_1}(H_1)\cdot p_{x_2y_2}(H_2)$. Applying Lemmas~\ref{lem: small graph} and~\ref{lem: 1.2 to K} to $H_i$, for $i\in [2]$, and with $0.8d$ playing the role of $d$, we get
$$c(G)\geq  p_{x_1y_1}(H_1)\cdot p_{x_2y_2}(H_2) \geq 2^{0.89\cdot 0.8d} \cdot 2^{0.89\cdot 0.8d} \geq 2^{1.4d},$$
a contradiction. Thus the counterexample does not exist for any $d \geq d_0$ and we have shown that, in this range, $d(G) \geq (2-\alpha)2^{d+1}$ as long as $d(G)\geq d$ and $G\notin \{K_{d+1}, K_{d+1}\ast K_{d}\}$. 
This completes the proof of Theorem~\ref{main}.
\hfill$\square$

\section{Concluding remarks}\label{conclusion}

In this paper, we proved Conjecture~\ref{komlos} for all large $d$.
The most obvious remaining open question is to extend this to all $d$.
As we rely on the regularity lemma, we cannot hope to apply our techniques here.

%

Our proof of Koml\'os's conjecture can also be adapted to prove Theorem~\ref{bip}, the bipartite version of Conjecture~\ref{komlos} asked by Tuza~\cite{tuza3}. 
We only sketch the proof here since it is very similar to the proof of Theorem~\ref{main}.
Indeed, proceed exactly as in the proof of Theorem~\ref{main} in Section~\ref{end}, with Lemmas~\ref{lem: small graph} and~\ref{lem: 1.2 to K} replaced by Lemma~\ref{lem: small graph}$^\prime$ and~\ref{lem: 1.2 to K}$^\prime$ stated below.
(In fact much of this, though true, is redundant here since we are now assuming that $\delta(G) \geq d$.) 

Let $a \leq b$ be positive integers. Every cycle in $K_{a,b}$ contains exactly $i$ vertices from each class for some $2 \leq i \leq a$. Thus
$$
c(K_{a,b}) = \sum_{i=2}^{a} \binom{a}{i}\binom{b}{i} = \sum_{i=0}^{a} \binom{a}{a-i}\binom{b}{i} - \binom{a}{a}\cdot 1 - \binom{a}{a-1}\binom{b}{1} = \binom{a+b}{a} - (ab+1)
$$
and in particular
$$
c(K_{d,d}) = \binom{2d}{d} - (d^2+1) \sim \frac{2^{2d}}{\sqrt{\pi d}}.
$$
We can use Lemma~\ref{lem-sparse} in its present form since it guarantees $2^{50d}$ Hamiltonian subsets, which is still sufficient for Theorem~\ref{bip}.

\medskip
\noindent
\textbf{Lemma~\ref{lem: small graph}$^\prime$.}
\emph{
Let $\alpha > 0$.
Then there exists $d_0 > 0$ such that the following holds for all $d \geq d_0$.
Let $n \in \mathbb{N}$ be such that $2d\leq n\leq 2.2 d$. 
Let $G$ be an $n$-vertex bipartite graph with $\delta(G)\geq d$.
Then
\begin{itemize}
\item[(i)] $c(G)\geq (1-\alpha)\binom{n}{d} \geq (1-\alpha)c(K_{d,n-d})$;
\item[(ii)] if $x,y \in V(G)$ are distinct, then $p_{xy}(G)\geq (1-\alpha)\binom{n-2}{d-1}$.
\end{itemize} 
}

\medskip
\noindent
Observe that $c(K_{d,n-d})$ is at least almost twice as large as $c(K_{d,d})$ when $n > 2d$, and $p_{x'y'}(K_{d,n-d}) \geq \binom{n-2}{d-1}$ for any distinct vertices $x',y' \in V(K_{d,n-d})$.

\medskip
\noindent
\textbf{Lemma~\ref{lem: 1.2 to K}$^\prime$.}
\emph{
For all $K>0$, there exists $d_0 = d_0(K)$ such that the following holds for all $d \geq d_0$.
Let $G$ be a $2$-connected bipartite graph on $n$ vertices where $2.19 d \leq n \leq Kd$ and $\delta_2(G)\geq d$.
Let $x,y$ be two distinct vertices of $G$. Then
\begin{equation}
c(G)> 2^{(2+ \frac{1}{200})d}\ \text{ and }\ p_{xy}(G) > 2^{1.89d}.
\end{equation}
}

\noindent
The proof of Lemma~\ref{lem: small graph}$^\prime$ is very similar to its original counterpart.
Any graph satisfying the hypotheses is an almost balanced almost complete bipartite graph and thus a similar probabilistic argument shows that almost every balanced subset is Hamiltonian.
(A version of Dirac's theorem states that any balanced bipartite graph with parts of order $k$ and minimum degree at least $(k+1)/2$ is Hamiltonian.)
To prove Lemma~\ref{lem: 1.2 to K}$^\prime$, one must simply replace Lemma~\ref{lem: sun} with the following statement.

\medskip
\noindent
\textbf{Lemma~\ref{lem: sun}$^\prime$.}
\emph{
Let $n \geq 2.18d$ and let $G$ be an $n$-vertex bipartite graph with $\delta(G) \geq d$.
Then $G$ contains at least one of the following:
\begin{itemize}
\item[(i)] two vertex-disjoint cycles $C_1,C_2$ with $|C_1| + |C_2| \geq 3.8d$;
\item[(ii)] a path $P$ with $|P| \geq (2+1/100)d$;
\item[(iii)] an $(a,b)$-sun with $a \geq 2d$ and $b \geq d/20$.
\end{itemize}
}

\medskip
\noindent
Again, the proof is very similar.


\medskip
\noindent
In a graph $G$, say that $U \subseteq V(G)$ is a \emph{weak Hamiltonian subset} of $G$ if $G[U]$ contains a spanning cycle; or $U = \lbrace x,y \rbrace$ and $xy \in E(G)$; or $|U| \in \lbrace 0,1 \rbrace$.
Observe that every subset of the vertices of a complete graph is weak Hamiltonian.
Theorem~\ref{main} immediately implies that
there exists $d_0 > 0$ such that for all integers $d \geq d_0$, every graph $G$ with average degree at least $d$ contains at least as many weak Hamiltonian subsets as $K_{d+1}$ (that is, $2^{d+1}$).
This proves another conjecture of Tuza~\cite{tuza3}, stated in the quest for `nicer formulas' than~(\ref{complete}).

\section{Acknowledgements}

We are indebted to the referee for her/his careful reading and valuable comments.

\medskip

{\footnotesize \obeylines \parindent=0pt
	
	\begin{tabular}{lllll}
        Jaehoon Kim	&\ & 	Hong Liu, Maryam Sharifzadeh and Katherine Staden         \\
		School of Mathematics &\ & Mathematics Institute		  		 	 \\
		University of Birmingham &\ & University of Warwick 	  			 	 \\
		Birmingham &\ & Coventry                             			 \\
        B15 2TT      &\ & 		CV4 7AL				  				\\
		UK &\ & UK						      \\
	\end{tabular}
}

\begin{flushleft}
	{\it{E-mail addresses}:
		 \tt{j.kim.3@bham.ac.uk, $\lbrace$h.liu.9,~m.sharifzadeh,~k.l.staden$\rbrace$@warwick.ac.uk.}}
\end{flushleft}

\appendix 

\section{}

Here we give the details of the omitted proofs of Proposition~\ref{homomorphism} and Lemma~\ref{blowuppath}.
\begin{proof}[Proof of Proposition~\ref{homomorphism}]
	It is convenient to prove a general claim from which (i)--(iii) will follow.
	Let $G$ be a graph with $V(G) = \lbrace z_1,\ldots,z_\ell \rbrace$.
	Given $k \in \mathbb{N}$, let $(z_1,\dots, z_\ell)^k$ denote the sequence 
	$$
	(z_1,\dots, z_\ell,z_1,\ldots,z_\ell,\ldots,z_1,\dots, z_\ell)
	$$
	of length $k\ell$.
	Consider the following claim.
	
	\begin{claim}\label{subclaim}
		Let $N \in \mathbb{N}$ and let $R := (y_1,\ldots,y_\ell)$ be a circuit in $G$ with $V(G) = \lbrace y_1,\ldots,y_\ell \rbrace$. Let $x,x' \in V(G)$ be two not necessarily distinct vertices.
		Then there is a walk $W$ in $G$ from $x$ to $x'$ and $N \cdot{\rm deg}(y,R) \leq {\rm deg}(y,W) \leq  (N+1)\cdot{\rm deg}(y,R)$ for all $y\in V(G)$.
	\end{claim}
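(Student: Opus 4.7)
The plan is to construct $W$ explicitly by concatenating (i) $N$ full cyclic traversals of $R$, each starting and ending at $x$, with (ii) a single forward arc of $R$ from $x$ to $x'$. Since $V(G) = \{y_1,\ldots,y_\ell\}$, I would first fix indices $a,b \in [\ell]$ with $y_a = x$ and $y_b = x'$.

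The key building block is the walk $R_a := (y_a, y_{a+1}, \ldots, y_{a+\ell-1}, y_a)$ (indices mod $\ell$), which is a cyclic shift of $R$ with one extra copy of $y_a$ appended. In $R_a$, each $y \in V(G)$ appears ${\rm deg}(y, R)$ times, except $y_a$ which appears ${\rm deg}(y_a, R) + 1$ times. Concatenating $N$ copies of $R_a$ and identifying shared endpoints gives a walk $W_N$ of length $N\ell + 1$ from $x$ to $x$ in which ${\rm deg}(y, W_N) = N \cdot {\rm deg}(y, R)$ for $y \neq y_a$ and ${\rm deg}(y_a, W_N) = N \cdot {\rm deg}(y_a, R) + 1$. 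Next, let $A$ be the arc $(y_a, y_{a+1}, \ldots, y_b)$ along $R$ in the forward direction mod $\ell$ (with $A := (y_a)$ if $a = b$); this arc has at most $\ell$ entries, so ${\rm deg}(y, A) \leq {\rm deg}(y, R)$ for every $y$, and ${\rm deg}(x, A) \geq 1$. Setting $W := W_N \cdot A$ (identified at the shared vertex $y_a$) produces a walk from $x$ to $x'$ in $G$.

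A direct bookkeeping gives ${\rm deg}(y, W) = N \cdot {\rm deg}(y, R) + {\rm deg}(y, A)$ for every $y \in V(G)$: for $y \neq x$ this is immediate, and for $y = x$ the $+1$ contributed by $W_N$ is cancelled by the single overlap at $y_a$ when concatenating with $A$. Combined with $0 \leq {\rm deg}(y, A) \leq {\rm deg}(y, R)$ and ${\rm deg}(x, A) \geq 1$, the desired bounds
$$
N \cdot {\rm deg}(y, R) \leq {\rm deg}(y, W) \leq (N+1) \cdot {\rm deg}(y, R)
$$
follow at once, for every $y \in V(G)$. I do not anticipate any significant obstacle: the argument is essentially a clean counting exercise once the construction is fixed, and the only mild care needed is to avoid double-counting $y_a$ at the three concatenation seams (between consecutive copies of $R_a$, and between $W_N$ and $A$).
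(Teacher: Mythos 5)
Your construction is correct and is essentially the paper's argument: both proofs wind around the circuit $N$ times and attach a partial arc determined by the positions of $x$ and $x'$, so that each vertex $y$ is used $N\cdot{\rm deg}(y,R)$ times plus a correction in $[0,{\rm deg}(y,R)]$. The only cosmetic difference is that you base the $N$ loops at $x$ and append a forward arc to $x'$, which avoids the paper's case split between $s<t$ and $s=t$.
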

	
	\begin{claimproof}
		Assume, without loss of generality, that $x$ appears no later than $x'$ in $R$.
		Then we can choose $1 \leq s \leq t \leq \ell$ such that $y_s = x$ and $y_t = x'$.
		Define the concatenation
		$$
		W := \left\{\begin{array}{ll}
		(y_t, \ldots, y_{\ell})(y_1,\ldots,y_{\ell})^{N}(y_1,\ldots,y_s) & \text{if } s<t \\
		(y_t, \ldots, y_{\ell})(y_1,\ldots,y_{\ell})^{N-1}(y_1,\ldots,y_s) & \text{if } s=t,
		\end{array}\right.
		$$
		where $(y_1,\ldots,y_s)$ and $(y_t, \ldots, y_{\ell})$ are subwalks of $R$. It is clear by the definition of $R$ that $W$ is a walk in $G$.
		Then clearly we have
		$$N \cdot {\rm deg}(y,R) \leq {\rm deg}(y,W) \leq  (N+1)\cdot{\rm deg}(y,R),$$
		as required.
	\end{claimproof}
	
	\medskip
	\noindent
	To prove (i), write $C = x_1\ldots x_a$ and define a circuit
	$$
	R := (x_1,\ldots,x_a).
	$$
	Then ${\rm deg}(x,R)=1$ for all $x \in V(C)$ and we are done by applying Claim~\ref{subclaim} with $2n$ playing the role of $N$.

	To prove (ii),
	write $P = x_1\ldots x_a$ and let 
	$$
	R := (x_1,\dots, x_a, x_{a-1},\dots, x_{2})
	$$
	be a circuit of length $2a-1$.
	Clearly ${\rm deg}(x,R)=1$ for $x \in \lbrace x_1,x_a \rbrace$ and ${\rm deg}(x,R)=2$ for $x \in V(P) \setminus \lbrace x_1,x_a \rbrace$.
	We are done by applying Claim~\ref{subclaim} with $n$ playing the role of $N$.

	For (iii), write $V(S) := \lbrace x_1,\ldots,x_a \rbrace \cup \lbrace y_{i_1},\ldots,y_{i_b} \rbrace$ where $\Cor(S) = \lbrace x_{i_k},y_{i_k} : k \in [b] \rbrace$.
	Define $R$ to be the circuit in $S$ obtained by concatenating the two cycles, one with $x_{i_j}$'s and the other with $y_{i_j}$'s for $j\in [b]$: that is,
	$$
	R := (x_{i_1},x_{i_1+1},\dots, x_a, x_1,\dots, x_{i_1-1}, y_{i_1}, x_{i_1+1},\dots, x_{i_b-1}, y_{i_{b}},x_{i_{b}+1},\dots, x_a,x_1,\ldots,x_{i_1-1}).
	$$
	It is easy to see that, for $x \in \Cor(S)$ we have ${\rm deg}(x,R)=1$ and for $x \in V(S)\setminus \Cor(S)$ we have ${\rm deg}(x,R)=2$.
	We are immediately done by applying Claim~\ref{subclaim} with $n$ playing the role of $N$.
\end{proof}

\medskip
We now turn to the proof of Lemma~\ref{blowuppath}. Given a graph $R$ with $V(R)=[r]$ and an integer $\ell \leq r$, we say that a graph $R'$ is the \emph{$(\ell,2)$-blow-up} of $R$ if $V(R') = [r+\ell]$ and 
$$
E(R') = E(R) \cup \bigcup_{i \in [\ell]}\left(\left\lbrace \lbrace i+r,x\rbrace: x \in N_R(i) \right\rbrace \cup \left\lbrace \lbrace i+r,j+r\rbrace : j \in N_{R}(i)\cap [\ell] \right\rbrace\right).
$$
That is, $R'$ is obtained from $R$ by duplicating vertices $[\ell] \subseteq V(R)$.

\begin{lemma}\label{superregularpartition}
	Suppose $0 < 1/n \ll \epsilon \ll \gamma, 1/\Delta_R \leq 1$ and $1/n \ll 1/r$ and $0\leq \ell \leq r$ with $r,n,\ell \in \mathbb{N}$.
	Let $R$ be a graph with $V(R) = [r]$ and $\Delta(R)\leq \Delta_R$.
	Let $G$ be an $(\epsilon,\gamma)$-super-regular graph with respect to a vertex partition $(R,V_1,\dots, V_r)$ such that
	$$
	2n\leq |V_i|\leq 2n+2 \text{ for } i \in [\ell] \quad\text{ and }\quad n\leq |V_i| \leq n+2 \text{ for } \ell+1 \leq i \leq r.
	$$
	Let $R'$ be the $(\ell,2)$-blow-up of $R$.
	For $i \in [r]\setminus [\ell]$, let $V'_i:= V_i$.
	Then for all $i\in [\ell]$, there exists a partition $V_i = V'_{i}\cup V'_{r+i}$ such that $n\leq |V'_{i}|, |V'_{r+i}| \leq n+2$ and $G$ is $(4\epsilon,\gamma/2)$-super-regular with respect to vertex partition $(R',V'_1,\dots,V'_{r+\ell})$.
\end{lemma}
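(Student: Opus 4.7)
The plan is to construct the partition probabilistically: for each $i\in[\ell]$, split $V_i$ uniformly at random into $V'_i$ of size $a_i:=\lceil|V_i|/2\rceil$ and $V'_{r+i}:=V_i\setminus V'_i$, and show that with positive probability the resulting partition $(R',V'_1,\dots,V'_{r+\ell})$ makes $G$ $(4\epsilon,\gamma/2)$-super-regular. A direct check of the three cases $|V_i|\in\{2n,2n+1,2n+2\}$ confirms $|V'_i|,|V'_{r+i}|\in[n,n+2]$. For $i\in[r]\setminus[\ell]$, of course $V'_i:=V_i$.

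For the regularity condition on each edge $\{i,j\}\in E(R')$, I would argue by direct inheritance from the original partition rather than by Chernoff. The key observation is that any new cluster $V'_i$ is a subset of some original $V_i$ with $|V'_i|\geq n\geq|V_i|/3$, hence $|V'_i|\geq\epsilon|V_i|$ since $\epsilon\ll 1$. Thus, given any $X\subseteq V'_i$ with $|X|\geq 4\epsilon|V'_i|$ and $Y\subseteq V'_j$ with $|Y|\geq 4\epsilon|V'_j|$, we have $|X|\geq (4/3)\epsilon|V_i|\geq\epsilon|V_i|$ and similarly $|Y|\geq\epsilon|V_j|$. Therefore by $(\epsilon,\gamma)$-regularity of $(V_i,V_j)$ and the triangle inequality
\[
|d_G(X,Y)-d_G(V'_i,V'_j)|\leq|d_G(X,Y)-d_G(V_i,V_j)|+|d_G(V'_i,V'_j)-d_G(V_i,V_j)|\leq 2\epsilon\leq 4\epsilon,
\]
and similarly $d_G(V'_i,V'_j)\geq d_G(V_i,V_j)-\epsilon\geq\gamma-\epsilon\geq\gamma/2$. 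Hence $(V'_i,V'_j)$ is $(4\epsilon,\gamma/2)$-regular for every edge of $R'$, deterministically.

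For the minimum-degree condition of super-regularity, I would use a Chernoff bound. Fix an edge $\{i,j\}\in E(R')$ and a vertex $v\in V'_j$; we need $d_G(v,V'_i)\geq(\gamma/2)|V'_i|$. In the only nontrivial case $i\in[\ell]\cup\{r+1,\dots,r+\ell\}$, the random variable $d_G(v,V'_i)$ is hypergeometric with mean $d_G(v,V_{\bar\imath})\cdot|V'_i|/|V_{\bar\imath}|\geq\gamma|V'_i|$, where $V_{\bar\imath}$ denotes the original cluster from which $V'_i$ is drawn. Proposition~\ref{Chernoff Bounds} then gives
\[
\Pr\Big[d_G(v,V'_i)<(\gamma/2)|V'_i|\Big]\leq 2\exp(-\gamma^2 n/12).
\]
Taking a union bound over all $v\in V(G)$ and all edges of $R'$ — at most $O(r\Delta_R\cdot rn)=\mathrm{poly}(r,n)$ events — and using the hierarchy $1/n\ll\gamma,1/r$, the failure probability is $o(1)$. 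Thus with positive probability every degree condition is satisfied simultaneously, yielding the desired partition.

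The argument is essentially routine; the main (mild) subtlety is choosing the $a_i$ so that the parts land in $[n,n+2]$ for all three possible values of $|V_i|$, and checking that the inheritance constant $|V'_i|/|V_i|\geq 1/3$ combined with $\epsilon\ll 1$ suffices to replace the $\sqrt{\epsilon}$ loss in Lemma~\ref{slicing} with a constant-factor loss, landing within the prescribed $4\epsilon$ budget. Once this is observed, the regularity part is purely deterministic and only the degree condition requires randomness.
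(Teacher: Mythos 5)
Your proposal is correct and follows essentially the same route as the paper's (sketched) proof: a uniformly random balanced split of each $V_i$ with $i\in[\ell]$, a deterministic slicing argument for the regularity of each pair, and a Chernoff/hypergeometric bound plus union bound for the degree condition. Your explicit constant-ratio slicing computation (using $|V'_i|\geq|V_i|/3$ to land within the $4\epsilon$ budget) is exactly the right way to make precise what the paper attributes to the slicing lemma.
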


The proof of this lemma is a standard application of slicing lemma and Chernoff bounds. We provide only a sketch here: For each $i\in [\ell]$, we take a random partition $V'_{i}\cup V'_{r+i}$ of $V_i$ such that $|V'_{i}| = n$. Then slicing lemma ensures that $G[V'_i,V'_j]$ is $(4\epsilon,\gamma/2)$-regular for each $ij\in E(R')$.
For a vertex $v\in V_i$ and $j\in N_{R'}(i)$, Chernoff bounds give us that 
$$\mathbb{P}[|d_{G}(v)\cap V'_j| \geq \gamma n/2]\geq 1 - e^{-\epsilon n^2}.$$
Thus, union bounds over all vertices $v$ and all $i,j\in [r+\ell]$ show that the conclusion holds with high probability.

\begin{proof}[Proof of Lemma~\ref{blowuppath}]
	Let 
	$$N_1  := \{i\in [r]: n\leq n_i\leq n+2 \} \enspace \text{ and } \enspace N_2 := \{i\in [r]: 2n\leq n_i\leq 2n+2\}.$$ 
	So $[r] = N_1 \cup N_2$ is a partition.
	Without loss of generality, we may assume that $N_2  = \{1,\dots, \ell \}$ for some $0\leq \ell \leq r$. 
	Let $R'$ be the $(\ell,2)$-blow-up of $R$.
	Apply Lemma~\ref{superregularpartition} to find a new partition $V'_1,\dots, V'_{r+\ell}$ of $G$ such that $G$ is $(4\epsilon,\gamma/2)$-super-regular with respect to vertex partition $(R',V'_1,\dots, V'_{r+\ell})$ such that $|V'_i| \in \{n,n+1,n+2\}$ for each $i\in [r+\ell]$.
	Let $W' := (w_1',\ldots,w'_m)$ be a walk in $R'$ obtained as follows. For all $t \in [m]$, let
	$w_t' := w_t$ if $w_t \in N_1$,
	and for each $w_t'$ such that $w_t \in N_2$, we choose $w_t' \in \lbrace w_t,w_t+r \rbrace$ so that $\mathrm{deg}(i,W') = |V'_i|$ and $\mathrm{deg}(i+r,W')=|V'_{i+r}|$ for all $i \in [\ell]$. This is possible by (iv) and fact that $n_i = |V'_i|+|V'_{i+r}|$. Thus $\mathrm{deg}(i,W')=|V_i'|$ for all $i \in [r+\ell]$.
	
	Let $Q = (q_1,\dots, q_m)$ be a path of length $m$. For each $i\in [r+\ell]$, let $X_i:= \{ q_j: w'_j = i\}$.
	Since $W'$ is a walk in $R'$ with $\mathrm{deg}(i,W')=|V_i'|=|X_i|$, $Q$ admits a vertex partition $(R',X_1,\dots, X_{r+\ell})$. Let $G' := G-\lbrace x,y \rbrace$.
	Then $G'$ is $(5\eps,\gamma/3)$-super-regular with respect to vertex partition $(R',V''_1,\dots, V''_{r+\ell})$, where $V''_i := V'_i \setminus \lbrace x,y \rbrace$ for all $i \in [r+\ell]$.
	Let $S_{q_2} := N_{G'}(x,V''_{w'_2})$ and $S_{q_{m-1}} := N_{G'}(y,V''_{w'_{m-1}})$.
	Now $w'_1 w'_2 \in E(R')$ by definition.
	Since $G'[V''_{w'_1},V''_{w'_2}]$ is $(5\eps,\gamma/3)$-super-regular, we have that $|S_{q_2}| \geq \gamma|V''_{w'_2}|/3$.
	Similarly $|S_{q_{m-1}}| \geq \gamma|V''_{w'_{m-1}}|/3$.
	
	Let $Q^*$ denote the truncated path $(q_2,\dots, q_{m-1})$. Theorem~\ref{blowup} implies that there is an embedding of $Q^*$ into $G'$ such that $q_2$ is mapped to a vertex in $S_{q_2}$ and $q_{m-1}$ is mapped to a vertex in $S_{q_{m-1}}$ and $V(Q^*)=V(G')$.
	By the choice of $S_{q_2}$ and $S_{q_{m-1}}$, this embedding can be extended to $x$ and $y$ to obtain a path $P$ which spans $V(G)$ and has endpoints $x$ and $y$.
\end{proof}

\end{document}